\setlist[description]{font=\normalfont\scshape}
\xpatchcmd{\proof}{\itshape}{\normalfont\bfseries}{}{}
\newtheoremstyle{repeat}{}{}{\itshape}{}{\bfseries}{.}{.5em}{#3, repeated}
\newtheorem{theorem}{Theorem}[section]
\newtheorem{proposition}[theorem]{Proposition}
\newtheorem{lemma}[theorem]{Lemma}
\newtheorem{corollary}[theorem]{Corollary}
\newtheorem{fact}[theorem]{Fact}
\theoremstyle{definition}
\newtheorem{definition}[theorem]{Definition}
\newtheorem{remark}[theorem]{Remark}
\newtheorem{convention}[theorem]{Convention}
\newtheorem{example}[theorem]{Example}
\newtheorem{question}[theorem]{Question}
\newtheorem{claim}{Claim}
\theoremstyle{repeat}
\newtheorem*{repeated-theorem}{Repeat}
\renewcommand{\L}{\mathcal{L}}
\newcommand{\MM}{\mathfrak{M}}
\newcommand{\Z}{\mathbb{Z}}
\newcommand{\E}{\mathcal{E}}
\DeclareMathOperator{\tp}{tp}
\DeclareMathOperator{\Lstp}{Lstp}
\DeclareMathOperator{\qftp}{qftp}
\DeclareMathOperator{\Aut}{Aut}
\renewcommand{\d}{\operatorname{d}}
\renewcommand{\phi}{\varphi}
\newcommand{\equivls}{\equiv^\textup{Ls}}
\newcommand{\op}{{\textup{op}}}
\newcommand{\lex}{{\textup{lex}}}
\newcommand{\EAfield}{{\textup{EA-field}}}
\newcommand{\EA}{{\textup{EA}}}
\newcommand{\ACF}{{\textup{ACF}}}
\newcommand{\real}{{\textup{real}}}
\def\Ind#1#2{#1\setbox0=\hbox{$#1x$}\kern\wd0\hbox to 0pt{\hss$#1\mid$\hss}
\lower.9\ht0\hbox to 0pt{\hss$#1\smile$\hss}\kern\wd0}
\def\ind{\mathop{\mathpalette\Ind{}}}
\def\Notind#1#2{#1\setbox0=\hbox{$#1x$}\kern\wd0\hbox to 0pt{\mathchardef
\nn="3236\hss$#1\nn$\kern1.4\wd0\hss}\hbox to 0pt{\hss$#1\mid$\hss}\lower.9\ht0
\hbox to 0pt{\hss$#1\smile$\hss}\kern\wd0}
\def\nind{\mathop{\mathpalette\Notind{}}}
\tikzset{every picture/.style={line width=0.75pt}} 
\title{Kim-independence in positive logic}
\author{Jan Dobrowolski and Mark Kamsma}
\thanks{The first author was supported by DFG project BA
6785/2-1. He also acknowledges the financial support of his visit to UEA by EPSRC grant EP/S017313/1. The second author was supported by a studentship from the UEA}
\email[Jan Dobrowolski]{dobrowol@math.uni.wroc.pl}
\address[Jan Dobrowolski]{Institute for Mathematical Logic and Basic Research Department of Mathematics and Computer Science at the University of Münster, Orléans-Ring 10, 48149 Münster and \newline
Instytut Matematyczny Uniwersytetu Wroc\l{}awskiego, pl. Grunwaldzki 2/4, 50-383 Wroc\l{}aw}
\email[Mark Kamsma]{m.kamsma@uea.ac.uk}
\urladdr[Mark Kamsma]{https://markkamsma.nl}
\address[Mark Kamsma]{School of Mathematics, University of East Anglia, Norwich, Norfolk, NR4 7TJ, UK}
\date{\today}
\begin{document}

\maketitle

\begin{abstract}
An important dividing line in the class of unstable theories is being NSOP$_1$, which is more general than being simple. In NSOP$_1$ theories forking independence may not be as well-behaved as in stable or simple theories, so it is replaced by another independence notion, called Kim-independence. We generalise Kim-independence over models in NSOP$_1$ theories to positive logic---a proper generalisation of full first-order logic where negation is not built in, but can be added as desired. For example, an important application is that we can add hyperimaginary sorts to a positive theory to get another positive theory, preserving NSOP$_1$ and various other properties. We prove that, in a thick positive NSOP$_1$ theory, Kim-independence over existentially closed models has all the nice properties that it is known to have in an NSOP$_1$ theory in full first-order logic. We also provide a Kim-Pillay style theorem, characterising which thick positive theories are NSOP$_1$ by the existence of a certain independence relation. Furthermore, this independence relation must then be the same as Kim-independence. Thickness is the mild assumption that being an indiscernible sequence is type-definable.

In full first-order logic Kim-independence is defined in terms of Morley sequences in global invariant types. These may not exist in thick positive theories. We solve this by working with Morley sequences in global Lascar-invariant types, which do exist in thick positive theories. We also simplify certain tree constructions that were used in the study of Kim-independence in full first-order logic. In particular, we only work with trees of finite height.
\end{abstract}

\tableofcontents

\section{Introduction}
The study of (ternary) independence relations in model theory goes back to Shelah's notion of forking independence, which is an abstract generalisation of classical independence notions such as linear independence in vector spaces and algebraic independence in algebraically closed fields. Forking independence was initially used to study stable theories, in which it enjoys particularly nice properties. It was later discovered that forking independence can be useful in studying the broader class of simple theories, as it retains most of its features in that class (\cite{Kim98,KiPi}). Moreover,  the fundamental properties of forking independence in simple theories, such as transitivity, symmetry, and local character, fail in \emph{all} non-simple theories, which suggested that forking independence might not be so useful in studying any broader class of theories. On the other hand, some natural examples of non-simple theories admitting useful notions of independence have been known, including the theories of infinite-dimensional vector spaces with a generic bilinear form (\cite{Gra}), $\omega$-free PAC fields (\cite{Cha,Cha1}), and random parametrised equivalence relations. Inspired by some ideas of Kim (cf.\ \cite{Kim09}), and building on \cite{chernikov_model-theoretic_2015}, Kaplan and Ramsey have defined in \cite{kaplan_kim-independence_2020} the notion of Kim-independence (denoted by $\ind^K$), and they have proved that in NSOP$_1$ theories---a class containing all simple theories and, among many others, the three non-simple theories mentioned above  (\cite[Section 6]{chernikov_model-theoretic_2015})---it satisfies over models all the main properties of forking independence in simple theories except base-monotonicity. 

The goal of this paper is to generalise the theory of Kim-independence in NSOP$_1$ theories to the class of thick positive theories. Positive model theory, introduced in \cite{ben-yaacov_positive_2003,BYP} (with some ideas in a similar direction present also in \cite{Hr97} and \cite{Pi00}), provides a framework generalising that of full first-order logic and allows to study a wider range of objects using model-theoretic techniques. An important class of such objects, which motivated the study undertaken in \cite{ben-yaacov_positive_2003}, is that of the hyperimaginary extensions $T^\text{heq}$ of theories $T$ in full first-order logic. In the context of NSOP$_1$ theories, elimination of hyperimaginaries has been assumed in \cite{Kim20} in order to carry out a construction of weak canonical bases. It was asked there (in the discussion following Definition 4.1) whether $T^\text{heq}$ satisfies the existence axiom for forking independence provided that $T$ does. We observe that this is indeed true (\thref{thm:hyperimaginaries-existence-axiom-for-forking}), which might be helpful in eliminating the assumption of elimination of hyperimaginaries in \cite{Kim20} by working with Kim-independence in $T^\text{heq}$.

In \cite{haykazyan_existentially_2021}  Haykazyan and Kirby studied the theory ECEF of existentially closed exponential fields, and, working with an arbitrary JEP-refinement (which, intuitively, corresponds to a completion of an incomplete theory in full first-order logic), they have found an invariant ternary relation satisfying over models the following properties: strong finite character, existence, monotonicity, symmetry, and independence theorem. They have also proved that, for any positive theory, the existence of such a relation implies NSOP$_1$, so in particular the JEP-refinements of ECEF are NSOP$_1$. As in the full first-order setting, a natural question whether every positive NSOP$_1$ theory admits a ternary relation satisfying these properties arises.

Another class of examples of non-simple NSOP$_1$ theories in positive logic comes from a recent work \cite{acf0g}, where d'Elb\'ee, Kaplan and Neuhauser show that for any intergral domain $R$ all JEP-refinements of the theory $F_{R\text{-module}}$ of fields with a generic $R$-submodule are NSOP$_1$ but not simple.  In particular, this applies to the theory of algebraically closed fields of characteristic zero with a generic additive subgroup.

We work under the mild assumption that the theory is thick. This means that being an indiscernible sequence is type-definable. Theories in full first-order logic, and their hyperimaginary extensions, are always thick. The theories ECEF and $F_{R\text{-module}}$ mentioned above are also thick.

\textbf{Main results.} The main results of our paper state that in every thick NSOP$_1$ theory, Kim-independence satisfies: symmetry (\thref{thm:symmetry}), the (strong) independence theorem (\thref{thm:independence-theorem}, \thref{thm:strong-independence-theorem}), transitivity (\thref{thm:transitivity}) and local character (\thref{cor:kim-independence-local-character}), as well as invariance under automorphisms, existence, extension, monotonicity and (strong) finite character. Moreover, we prove a Kim-Pillay style theorem: in any thick positive theory $T$, if there exists a ternary relation $\ind$ satisfying all the above properties, then  $T$ is NSOP$_1$ and $\ind = \ind^K$ (\thref{thm:kim-pillay-style}).

\textbf{Challenges.} In contrast to the full first-order setting, in a positive theory, a type over an existentially closed model may fail to have an invariant global extension. 
This is a fundamental obstacle to generalising Kim-independence to the positive setting, as the original definition of it relies on existence of invariant extensions in the full first-order setting. We show, however, that in a thick theory any type over an existentially closed model $M$ extends to a global $M$-Lascar-invariant type. We define Kim-independence in an arbitrary thick positive theory replacing the use of invariant types by Lascar-invariant types.

One of the difficulties in adapting the results of \cite{kaplan_kim-independence_2020, kaplan_transitivity_2019} to the positive setting is that the tree modelling property \cite[Theorem 4.3]{KKS}, on which most of the constructions there rely, is not available in the positive setting. This forced us in particular to work only with trees of finite height, which turns out to be enough due to compactness and a careful choice of the global types with which we work.  
Consequently, we substitute the notion of a tree Morley sequence used in \cite{kaplan_kim-independence_2020} with a weaker notion of a parallel-Morley sequence. In particular, we do not have a counterpart of the chain condition (\cite[Corollary 5.15]{kaplan_kim-independence_2020}) for parallel-Morley sequences,  which causes some additional technical difficulties in our proof of the strong independence theorem.

Our proofs yield in particular alternative proofs of the results in full first-order logic on Kim-independence not using any combinatorial tools other than Ramsey theorem: while we do use Erd\H{o}s-Rado theorem to extract indiscernible sequences, in the full first-order setting this can be always replaced by the standard use of Ramsey theorem; the technique of extracting strongly indiscernible trees from s-indiscernible trees from \cite{kaplan_kim-independence_2020} relying on Erd\H{o}s-Rado theorem is not used by us.


\textbf{Overview.} The paper is organised as follows. In Section \ref{sec:preliminaries} we review some basic terminology and facts about positive logic and NSOP$_1$ theories, and we make some observations which are used throughout the paper. In Section \ref{sec:global-ls-invariant-types} we define a notion of a Morley sequence in a global Lascar-invariant type, and we prove some basic properties of these.  In Section \ref{sec:kim-dividing} we define Kim-dividing in an arbitrary thick NSOP$_1$ theory, we give several characterisations of Kim-dividing and we establish some basic properties of Kim-independence.
In Section \ref{sec:cr-morley} we develop some tools which we later use in certain tree constructions: the EM-modelling property, which is a weak version of the modelling property used in \cite{kaplan_kim-independence_2020}, parallel-Morley sequences, which serve as our substitute for the notion of a tree Morley sequence from \cite{kaplan_kim-independence_2020}, and $q$-spread-outness, which is a variant of spread-outness used in \cite{kaplan_kim-independence_2020}.
Sections \ref{sec:symmetry}, \ref{sec:independence-theorem} and \ref{sec:transitivity} contain the proofs of the main properties of Kim-independence in thick positive theories: symmetry, independence theorem and transitivity, and Section \ref{sec:kim-pillay-style-theorem} is dedicated to proving a Kim-Pillay-style characterisation of the NSOP$_1$ property among thick positive theories by existence of an abstract independence relation satisfying certain properties, and the characterisation of Kim-independence in NSOP$_1$ theories as the only relation satisfying them. 
In Section \ref{sec:examples} we describe in details some examples of thick NSOP$_1$ theories: Poizat's example of a thick non-semi-Hausdorff theory, (JEP refinements of) the positive theory of existentially closed exponential fields studied in \cite{haykazyan_existentially_2021}, and the
hyperimaginary extensions of  NSOP$_1$ theories.

\textbf{Acknowledgements.} We are grateful to Jonathan Kirby for suggesting this project to us and for many helpful conversations and comments. We also thank Rosario Mennuni for helpful discussions, in particular for pointing out the example described in Subsection  \ref{subsec:thick-non-semi-hausdorff} to us. Finally, we thank the anonymous referee for their remarks which helped improve the presentation of this paper.
\section{Preliminaries}
\label{sec:preliminaries}
In this section we recall the basics of positive logic that we need in this paper. For a more extensive treatment we refer to \cite{ben-yaacov_positive_2003, poizat_positive_2018}.

Throughout the paper variables will be of arbitrary (possibly infinite) length, unless stated otherwise.
\begin{definition}
\thlabel{def:positve-syntax}
Fix a signature $\L$. A \emph{positive existential formula} in $\L$ is one that is obtained from combining atomic formulas using $\wedge$, $\vee$, $\top$, $\bot$ and $\exists$. An \emph{h-inductive sentence} is a sentence of the form $\forall x(\phi(x) \to \psi(x))$, where $\phi(x)$ and $\psi(x)$ are positive existential formulas. A \emph{positive theory} is a set of h-inductive sentences.
\end{definition}
Note that every positive existential formula $\phi(x)$ is equivalent to one of the form $\exists y \psi(x, y)$, where $\psi(x, y)$ is positive quantifier-free. Positive existential sentences and their negations can be used as axioms in a positive theory, since $\forall x \phi(x)$ and $\forall x \neg \phi(x)$ are equivalent to $\forall x(\top \to \phi(x))$ and $\forall x(\phi(x) \to \bot)$ respectively.

As in full first-order logic, we will assume that $\L$ contains a symbol $=$ interpreted in every $\L$-structure as equality.
\begin{remark}
\thlabel{rem:morleyisation}
We can study full first-order logic as a special case of positive logic. This is done through a process called \emph{Morleyisation}. For this we add a relation symbol $R_\phi(x)$ to our language for every formula $\phi(x)$ in full first-order logic. Then we have our theory (inductively) express that $R_\phi(x)$ and $\phi(x)$ are equivalent. This way every formula in full first-order logic is (equivalent to) a relation symbol, and thus in particular to a positive existential formula.

Many definitions later in this section simplify in this case to familiar concepts. Every homomorphism will be an elementary embedding, and thus in particular an immersion. So every model will be an e.c.\ model. A theory has JEP if and only if it is complete, and the JEP-refinements correspond to completions.
\end{remark}
Since we will only be considering full first-order logic as a special case of positive logic, we will make the following convention.
\begin{convention}
\thlabel{conv:positive-is-standard}
Whenever we say ``formula'' or ``theory'' we will mean ``positive existential formula'' and ``positive theory'' respectively, unless explicitly stated otherwise. This also means that every formula and theory we consider will be implicitly assumed to be positive (existential).
\end{convention}
In full first-order logic we consider elementary embeddings because they preserve and reflect truth of full first-order formulas. Since we do not have negation in positive logic, there is a difference between preserving and reflecting truth of positive existential formulas.
\begin{definition}
\thlabel{def:homomorphism}
A function $f: M \to N$ between $\L$-structures is called a \emph{homomorphism} if for every $\phi(x)$ and every $a \in M$ we have
\[
M \models \phi(a) \implies N \models \phi(f(a)).
\]
We call $f$ an \emph{immersion} if additionally the converse implication holds for all $\phi(x)$ and all $a \in M$.
\end{definition}
In positive model theory we study the existentially closed models.
\begin{definition}
\thlabel{def:ec-model}
We call a model $M$ of $T$ an \emph{existentially closed model} or an \emph{e.c.\ model} if the following equivalent conditions hold:
\begin{enumerate}[label=(\roman*)]
\item every homomorphism $f: M \to N$ with $N \models T$ is an immersion;
\item for every $a \in M$ and $\phi(x)$ such that there is a homomorphism $f: M \to N$ with $N \models T$ and $N \models \phi(f(a))$, we have that $M \models \phi(a)$;
\item for every $a \in M$ and $\phi(x)$ such that $M \not \models \phi(a)$ there is $\psi(x)$ with $T \models \neg \exists x (\phi(x) \wedge \psi(x))$ and $M \models \psi(a)$.
\end{enumerate}
\end{definition}
\begin{fact}
\thlabel{fact:positive-logic-facts}
Let $T$ be some theory.
\begin{enumerate}[label=(\roman*)]
\item \emph{(Unions)} The union of a chain of (e.c.) models is an (e.c.) model.
\item \emph{(Amalgamation)} If one of $M_1 \leftarrow M \to M_2$ is an immersion then there are $M_1 \to N \leftarrow M_2$ making the relevant square commute. In particular, every e.c.\ model is an amalgamation base.
\item \emph{(Existential completion)} For every $M \models T$ there is a homomorphism $f: M \to N$, where $N$ is an e.c.\ model of $T$.
\item \emph{(Compactness)} Let $\Sigma(x)$ be a set of positive existential formulas and suppose that for every finite $\Sigma_0(x) \subseteq \Sigma(x)$ there is $M \models T$ with $a \in M$ such that $M \models \Sigma_0(a)$. Then there is an e.c.\ model $N$ of $T$ with $a \in N$ such that $N \models \Sigma(a)$.
\end{enumerate}
\end{fact}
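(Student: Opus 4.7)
The plan is to prove each of the four parts in turn; all are folkloric but require care because homomorphisms in positive logic preserve rather than reflect positive existential formulas. For (i), I would take a chain $(M_i)_i$ with union $M$ and argue that $M \models T$ because an h-inductive sentence $\forall x(\varphi(x) \to \psi(x))$ survives unions: for $a \in M$, pick $i$ with $a \in M_i$; since $\varphi$ and $\psi$ are positive existential, both are preserved under the inclusion $M_i \hookrightarrow M$, so the implication transfers. For the e.c.\ case, suppose every $M_i$ is e.c.\ and $f\colon M \to N$ is a homomorphism into a model of $T$. Given $a \in M$ and $\phi$ with $N \models \phi(f(a))$, I would choose $i$ with $a \in M_i$ and observe that $f{\restriction}M_i$ is a homomorphism into $N$; e.c.-ness of $M_i$ then gives $M_i \models \phi(a)$, which transfers up the inclusion to $M$.

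For (ii), I would suppose $g\colon M \to M_1$ is an immersion and $h\colon M \to M_2$ a homomorphism, and consider the union of the positive existential diagrams of $M_1$ and $M_2$, identified along the images of $M$. A finite inconsistency would produce positive existential $\varphi^*(m)$ and $\psi^*(m)$, with $m$ a tuple from $M$, such that $\varphi^*$ holds in $M_1$, $\psi^*$ holds in $M_2$, and $T$ proves $\forall x(\varphi^*(x) \wedge \psi^*(x) \to \bot)$. Here is precisely where the immersion hypothesis is used: $\varphi^*(m)$ is reflected down to $M$ via $g$, so $M \models \varphi^*(m)$, and then applying $h$ gives $M_2 \models \varphi^*(m) \wedge \psi^*(m)$, contradicting the h-inductive consequence. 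Compactness from part (iv) then yields a model amalgamating $M_1$ and $M_2$, and part (iii) lets us assume it is e.c.

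For (iii), I would iteratively enumerate pairs $(a, \phi(x))$ where $a$ lies in the current model and $\phi$ is consistent in the sense that some homomorphic extension realises $\phi(a)$, and replace the current model by such an extension, taking unions at limit stages by (i). After sufficiently many steps every potential witness of non-e.c.-ness has been killed. For (iv), I would reduce compactness for positive theories to compactness for full first-order logic: if every finite subset of $\Sigma(x)$ is satisfied in some model of $T$, then $T \cup \Sigma(x)$ is first-order consistent, so has some model $M$; applying (iii) yields a homomorphism from $M$ to an e.c.\ model of $T$, and since positive existential formulas are preserved by homomorphisms this e.c.\ model realises $\Sigma$.

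The main obstacle is the persistent asymmetry between preservation and reflection of positive existential formulas: the amalgamation in (ii) is the one place where the immersion hypothesis is truly needed, and it must be invoked precisely to pull a positive existential formula back from $M_1$ down to $M$. The other three parts only need the preservation direction and so reduce to familiar chain and compactness arguments.
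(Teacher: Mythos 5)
The paper states these as a Fact without proof — they are standard facts of positive logic, implicitly delegated to the references in the Preliminaries — so there is no ``paper proof'' to compare against; your sketches follow the usual arguments and are essentially correct, including the correct identification of the sole use of the immersion hypothesis in (ii). One clarification is needed in (i): the step from $M\models\phi(a)$ to an instance over some $M_i$ is a \emph{reflection} step, not a preservation step; you should pick $j\geq i$ large enough that $M_j$ contains both $a$ and the existential witnesses for $\phi(a)$ in $M$, use that substructure inclusions reflect positive quantifier-free formulas to get $M_j\models\phi(a)$, then apply the axiom in $M_j$ and preserve $\psi(a)$ upward. Relatedly, (iii) applies (i) to chains of homomorphisms (which need not be injective in positive logic), so (i) should be understood in the generality of direct limits rather than just nested unions — the same witness-pulling argument handles that case.
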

In the statement of compactness, \thref{fact:positive-logic-facts}(iv), we have explicitly mentioned positive existential formulas because it is crucial that we cannot use all formulas from full first-order logic in $\Sigma(x)$. This is actually one of the big obstacles in this paper. We provide two examples to indicate how full compactness can fail.
\begin{example}
\thlabel{ex:compactness-failure}
Consider the theory $T$ with a symbol for inequality and $\omega$ many disjoint unary predicates $P_n(x)$. Then e.c.\ models of $T$ are precisely those which consist of $\omega$-many disjoint infinite sets, one for each predicate. If we had full compactness then the set
\[
\Sigma(x) = \{ \neg P_n(x) : n < \omega \}
\]
would have a realisation in some e.c.\ model, which is impossible.
\end{example}
\begin{example}
\thlabel{ex:bounded-set-compactness}
It could happen that there is a definable set that is infinite and bounded. This does not contradict compactness: it just means that inequality is not positively definable on that set. Such situations might arise when adding hyperimaginaries as real elements, which can be done in positive logic (see section \ref{subsec:hyperimaginaries}).
\end{example}
\begin{definition}
\thlabel{def:jep}
We say that a theory $T$ has the \emph{joint embedding property} or \emph{JEP} if the following equivalent conditions hold:
\begin{enumerate}[label=(\roman*)]
\item for any two models $M_1$ and $M_2$ there are homomorphisms $M_1 \to N \leftarrow M_2$;
\item if $T \models \neg \phi \vee \neg \psi$ then $T \models \neg \phi$ or $T \models \neg \psi$;
\item for some e.c.\ model $M$ of $T$, all h-inductive sentences that are true in $M$ are implied by $T$.
\end{enumerate}
For a theory $T$ we call an extension $T'$ of $T$ a \emph{JEP-refinement} of $T$ if it has JEP and every e.c.\ model of $T'$ is also an e.c.\ model of $T$.
\end{definition}
As hinted in \thref{rem:morleyisation}, having JEP is like requiring the theory to be complete. We can always find a JEP-refinement (a `completion') by taking the set of h-inductive sentences that are true in some e.c.\ model.

Fix a sufficiently large cardinal $\bar{\kappa}$. We will say a set is $\emph{small}$ if it is of cardinality smaller than $\bar{\kappa}$.
\begin{convention}
\thlabel{conv:monster-model}
We will assume our theory $T$ has JEP so we can work in a \emph{monster model} $\MM$ (sometimes also called a universal domain), that is:
\begin{itemize}
\item \emph{existentially closed}: $\MM$ is an e.c.\ model;
\item \emph{very homogeneous}: any partial immersion $f: \MM \to \MM$ with small domain and codomain extends to an automorphism on all of $\MM$;
\item \emph{very saturated}: any finitely satisfiable small set of formulas over $\MM$ is satisfiable in $\MM$.
\end{itemize}
We will assume all parameter sets considered to be small, except when we consider the monster model as a parameter set. We will use lowercase Latin letters $a, b, \ldots$ for (possibly small infinite) tuples inside the monster model and uppercase Latin letters $A, B, \ldots$ for (small) parameter sets inside the monster model. We will use letters $M$ and $N$ when these sets are e.c.\ models.

As is common, we use the notation $\models \phi(a)$ to abbreviate $\MM \models \phi(a)$.
\end{convention}
The above also means that the right notion of a type in positive model theory is that of a positive existential type. That is, we write $\tp(a/B)$ for the set of all positive existential formulas over $B$ satisfied by $a$. So we have $\tp(a/B) = \tp(a'/B)$ if and only if there is an automorphism $f: \MM \to \MM$ fixing $B$ such that $f(a) = a'$. We also write $a \equiv_B a'$ in this case.
By a type (over $A$) in $T$ we will always mean a maximal consistent with $T$ set of positive existential formulas (over $A$). By a partial type (over $A$) in $T$ we will mean any consistent set of positive existential formulas (over $A$).

There are some subtle differences in possible definitions of saturatedness, see for example \cite[Section 2.4]{poizat_positive_2018}. We are only interested in e.c.\ models, so for us it will mean the following. Constructing models of a certain level of saturation is then standard.
\begin{definition}
\thlabel{def:saturated}
Let $M$ be an e.c.\ model of some theory $T$. We say that $M$ is \emph{$\kappa$-saturated} if for every $A \subseteq M$ with $|A| < \kappa$ we have that a set $\Sigma(x)$ of formulas over $A$ is satisfiable in $M$ if and only if it is finitely satisfiable in $M$.
\end{definition}
\begin{fact}
\thlabel{fact:saturated-models}
For any $\kappa \geq |A| + |T|$ there is a $\kappa^+$-saturated $N \supseteq A$ with $|N| \leq 2^\kappa$.
\end{fact}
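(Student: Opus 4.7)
The plan is to imitate the classical construction of saturated models: build a continuous chain $(M_\alpha)_{\alpha < \kappa^+}$ of e.c.\ models, each of size at most $2^\kappa$, with $A \subseteq M_0$, and then take $N = \bigcup_{\alpha < \kappa^+} M_\alpha$. The work is in arranging the successor step so that every small finitely satisfiable set of formulas over $M_\alpha$ gets realised in $M_{\alpha+1}$.

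First I would build $M_0 \supseteq A$ with $|M_0| \leq 2^\kappa$ by choosing any homomorphism from the substructure generated by $A$ into an e.c.\ model via \thref{fact:positive-logic-facts}(iii) and then passing to a suitable small e.c.\ submodel; a direct-limit/existential-completion argument shows that one can do this with $|M_0| \leq (|A| + |T|)^{\aleph_0} \leq 2^\kappa$. For the successor step, assume $|M_\alpha| \leq 2^\kappa$; then there are at most $(2^\kappa)^\kappa = 2^\kappa$ pairs $(B, \Sigma(x))$ with $B \subseteq M_\alpha$ of size $\leq \kappa$ and $\Sigma(x)$ a finitely satisfiable partial type over $B$. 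Enumerate them and realise them one at a time: given the current e.c.\ extension and a finitely satisfiable $\Sigma(x)$, use compactness \thref{fact:positive-logic-facts}(iv) to get some e.c.\ model realising $\Sigma(x)$, then amalgamate with the current model over $B$ using \thref{fact:positive-logic-facts}(ii) (the current model is e.c., hence an amalgamation base), and take a small e.c.\ completion of the resulting amalgam. Iterating this along the $2^\kappa$-long enumeration and taking unions (using \thref{fact:positive-logic-facts}(i) at limit stages) produces $M_{\alpha+1}$ of size at most $2^\kappa$ in which every such $\Sigma(x)$ is realised.

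At limit stages take unions, which remain e.c.\ by \thref{fact:positive-logic-facts}(i). Finally, $|N| \leq \kappa^+ \cdot 2^\kappa = 2^\kappa$, and to see $\kappa^+$-saturation let $B \subseteq N$ with $|B| \leq \kappa$ and let $\Sigma(x)$ be a finitely satisfiable set of formulas over $B$. Since $\operatorname{cf}(\kappa^+) = \kappa^+ > \kappa$, there is some $\alpha$ with $B \subseteq M_\alpha$; finite satisfiability of $\Sigma(x)$ in $N$ implies finite satisfiability in $M_\alpha$ (by the e.c.\ property, as each finite $\Sigma_0 \subseteq \Sigma$ is a positive existential formula preserved/reflected between $M_\alpha$ and $N$), so $\Sigma(x)$ gets realised in $M_{\alpha+1} \subseteq N$.

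The main obstacle is the cardinality bookkeeping in the successor step: unlike in full first-order logic, one cannot simply invoke a downward L\"owenheim--Skolem theorem to shrink a saturated-enough e.c.\ model back to size $2^\kappa$, because taking substructures need not preserve the e.c.\ property. The trick is therefore to realise the $2^\kappa$ many finitely satisfiable types one at a time via amalgamation plus small existential completions, so that the size bound $2^\kappa$ is maintained throughout and only \thref{fact:positive-logic-facts} is used.
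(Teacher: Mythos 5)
The paper states \thref{fact:saturated-models} as a background fact without proof, so there is no paper argument to compare against; the question is simply whether your proposal works. The overall chain construction is the standard one and is fine, but the successor step has a real gap. You want to realise a finitely satisfiable $\Sigma(x)$ over $B \subseteq M_\alpha$ inside an e.c.\ extension of the current model $P$. Your plan is: produce, by compactness, some e.c.\ model $R$ realising $\Sigma$, then amalgamate the span $P \leftarrow B \to R$ using \thref{fact:positive-logic-facts}(ii), invoking that ``the current model is e.c., hence an amalgamation base.'' This misapplies the amalgamation lemma: in the span $P \leftarrow B \to R$ the object playing the role of ``$M$'' in \thref{fact:positive-logic-facts}(ii) is $B$, not $P$, so what is needed is that $B \to P$ or $B \to R$ be an immersion. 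But $B$ is just an arbitrary subset of size $\leq \kappa$; it need not be a model of $T$, let alone e.c., and neither leg out of it has any reason to be an immersion. The fact that $P$ is an amalgamation base is irrelevant here because $P$ is not in the middle of the span, so as written the amalgamation step is unjustified.

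The fix is easy and removes the amalgamation altogether. Apply compactness not to $\Sigma(x)$ over $B$ but to $\Sigma(x) \cup \operatorname{Diag}^+(P)$, the union of $\Sigma$ with the positive diagram of the current e.c.\ model $P$. Finite satisfiability of $\Sigma$ in $P$ (equivalent to finite satisfiability in $M_\alpha$ or in the eventual $N$, as you correctly observe, since the inclusions between these e.c.\ models are immersions) makes this larger set finitely satisfiable, so \thref{fact:positive-logic-facts}(iv) yields an e.c.\ model $R$ together with a homomorphism $P \to R$ and a point $c \in R$ realising $\Sigma$; since $P$ is e.c., the map $P \to R$ is automatically an immersion, and you can work with $R$ directly in place of the would-be amalgam. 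One further caveat: your closing claim that the argument ``only uses \thref{fact:positive-logic-facts}'' is too optimistic, since parts (iii) and (iv) give no bound on the size of the e.c.\ model they produce. The ``small e.c.\ completion'' step still needs a downward L\"owenheim--Skolem lemma for e.c.\ models (any subset of size $\leq \lambda \geq |T|$ of an e.c.\ model is contained in an e.c.\ model of size $\leq \lambda$ immersed in it). That lemma is standard in positive logic, and you are right that it cannot be obtained by naively taking substructures, but it is a genuine extra ingredient that your proof should cite rather than claim to avoid.
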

The following definitions are taken from \cite{ben-yaacov_positive_2003, ben-yaacov_thickness_2003}. We added the notion of being Boolean.
\begin{definition}
\thlabel{def:hausdorff-thick}
Let $T$ be a theory and work in a monster model. We call a theory $T$:
\begin{itemize}
\item \emph{Boolean} if every formula in full first-order logic is equivalent to a positive existential formula, modulo $T$;
\item \emph{Hausdorff} if for any two distinct types $p(x)$ and $q(x)$ there are $\phi(x) \not \in p(x)$ and $\psi(x) \not \in q(x)$ such that $\models \forall x(\phi(x) \vee \psi(x))$;
\item \emph{semi-Hausdorff} if equality of types is type-definable, so there is a partial type $\Omega(x, y)$ such that $\tp(a) = \tp(b)$ if and only if $\models \Omega(a, b)$;
\item \emph{thick} if being an indiscernible sequence is type-definable, so there is a partial type $\Theta((x_i)_{i < \omega})$ such that $(a_i)_{i < \omega}$ is indiscernible if and only if $\models \Theta((a_i)_{i < \omega})$.
\end{itemize}
\end{definition}
\begin{remark}
\thlabel{rem:hausdorff-thick-names}
The reason for the name Hausdorff is that this corresponds to the type spaces being Hausdorff, where formulas correspond to closed sets. The name thick is based on the notion of thick formulas, which were originally defined in the setting of full first-order logic (see also \cite{ben-yaacov_thickness_2003}).

We have added the notion of a Boolean theory. The name comes from the fact that the Lindenbaum-Tarski algebra of positive existential formulas forms a Boolean algebra, and this is in fact an equivalent assertion. In \cite{haykazyan_spaces_2019} these theories are called ``positively model complete'', but we think this name is more descriptive.

Through Morleyisation, Boolean theories are essentially the same as theories in full first-order logic, and so we will treat them as the same. The list of properties in \thref{def:hausdorff-thick} is really a hierarchy, so Boolean implies Hausdorff implies semi-Hausdorff implies thick.
\end{remark}
\begin{definition}
\thlabel{def:lascar-distance}
Let $a, a'$ be two tuples and let $B$ be any parameter set. We write $\d_B(a, a') \leq n$ if there are $a = a_0, a_1, \ldots, a_n = a'$ such that $a_i$ and $a_{i+1}$ are on a $B$-indiscernible sequence for all $0 \leq i < n$.
\end{definition}
\begin{fact}[{\cite[Proposition 1.5]{ben-yaacov_thickness_2003}}]
\thlabel{fact:thick-lascar-distance}
A theory is thick if and only if the property ``$\d_B(x, x') \leq n$'' is type-definable over $B$ for all $B$ and $n < \omega$.
\end{fact}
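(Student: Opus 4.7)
The plan is to prove both directions; the reverse one is more subtle.

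For the forward direction (thick $\Rightarrow$ $\d_B(x, x') \leq n$ type-definable), I start with the partial type $\Theta((x_i)_{i<\omega})$ from thickness. Letting $b$ be an enumeration of $B$, the partial type $\Theta_B((x_i)_{i<\omega}) := \Theta((x_i b)_{i<\omega})$ (with $x_i b$ meaning concatenation) type-defines $B$-indiscernibility. Now $\d_B(x, y) \leq 1$ is exactly the existential projection $\exists (z_i)_{i \geq 2}\, \Theta_B(x, y, z_2, z_3, \ldots)$. The key technical lemma is that projections of type-definable sets along small variable tuples remain type-definable in positive logic; this relies on $\MM$-saturation together with the closure of positive existential formulas under existential quantification. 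Concretely, the partial type $\Pi_B^1(x, y) := \{\phi(x, y, b') : \phi\text{ positive existential over }B, \, T \vdash \forall x\, y\, z\, (\bigwedge \Theta_B(x, y, z) \to \phi(x, y, b'))\}$ defines $\d_B \leq 1$: for any finite $\psi \subseteq \Theta_B(x, y, z)$ the formula $\exists z\, \psi$ is positive existential and lies in $\Pi_B^1$, yielding finite satisfiability and hence realisation. Finally $\d_B(x, y) \leq n$ is the finite existential projection $\exists z_1, \ldots, z_{n-1}\, \bigwedge_{i < n} \Pi_B^1(z_i, z_{i+1})$ with $z_0 := x$, $z_n := y$, so again type-definable.

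For the reverse direction ($\d_B \leq n$ type-definable for all $B, n$ $\Rightarrow$ thick), I would type-define indiscernibility using the assumed partial types. My guiding characterisation is that $(a_i)_{i<\omega}$ is $B$-indiscernible if and only if $\d_B((a_0, \ldots, a_{k-1}, a_k),\, (a_0, \ldots, a_{k-1}, a_i)) \leq 1$ for all $k < i$; an elementary argument (replacing $Ba_{<k}$-indiscernible sequences of singletons with $B$-indiscernible sequences of $(k+1)$-tuples and vice versa) shows this is equivalent to $\d_{Ba_{<k}}(a_k, a_i) \leq 1$. The forward direction of the characterisation is straightforward: the sequence $((a_0, \ldots, a_{k-1}, a_l))_{l \geq k}$ of $(k+1)$-tuples is $B$-indiscernible and witnesses the required Lascar distance. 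This yields the candidate partial type
\[
\Theta_B((x_i)_{i<\omega}) := \bigcup_{k < i < \omega} \Pi_B^1\bigl((x_0, \ldots, x_{k-1}, x_k),\, (x_0, \ldots, x_{k-1}, x_i)\bigr),
\]
which is type-definable over $B$; taking $B = \emptyset$ would then give thickness.

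The main obstacle is the other direction of this characterisation: verifying that $\Theta_B$ actually forces full $B$-indiscernibility. The hypotheses give $\tp(a_k/Ba_{<k}) = \tp(a_i/Ba_{<k})$ for every $k < i$, and one must argue, by induction on tuple length, that this singleton condition over initial segments suffices to pin down $\tp(a_{i_1}, \ldots, a_{i_m}/B)$ as a function of $m$ alone. The induction step requires extending an automorphism fixing $B$ by one new element in a way that respects the hypothesis on both the "$i$-tuple" and the "$j$-tuple" simultaneously; in the general case this forces one to invoke $\d_B \leq n$ for $n > 1$ to bridge Lascar moves between non-aligned subtuples, which is exactly why the hypothesis of the fact ranges over all $n$ rather than only $n = 1$.
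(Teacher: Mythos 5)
Your forward direction is essentially right: relativise $\Theta$ to $B$ by plugging in an enumeration of $B$, then note that existential projection of a partial type along a (possibly infinite) tuple of variables is again type-definable in positive logic by the compactness argument you sketch, and iterate for $\d_B\le n$. (The notation $T\vdash\forall xyz(\bigwedge\Theta_B\to\phi)$ is informal since $\bigwedge\Theta_B$ is not a formula, but the intent — $\Pi_B^1$ is the deductive closure of $\Theta_B$ restricted to $x,y$ — is the standard one and the argument goes through.)

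The reverse direction, however, has a genuine gap, and the specific partial type $\Theta_B$ you write down is not equivalent to $B$-indiscernibility, so no amount of further argument would close it. Your $\Theta_B$ amounts to requiring $\d_{Ba_{<k}}(a_k, a_i)\le 1$ for all $k<i$, and this is strictly weaker. Counterexample (already in full first-order logic, $B=\emptyset$): take the theory of an equivalence relation $E$ with infinitely many infinite classes, let $a_0$ lie in one class and $a_1,a_2,\ldots$ all lie in a second class. For $k=0$ each pair $(a_0,a_i)$ lies on an $\emptyset$-indiscernible sequence of representatives of pairwise distinct classes; for $k\ge 1$ each pair $(a_k,a_i)$ lies on an $a_{<k}$-indiscernible sequence inside $[a_1]$. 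So your $\Theta_\emptyset$ holds, but the sequence is not indiscernible: $\tp(a_0 a_1)$ says ``different classes'' while $\tp(a_1 a_2)$ says ``same class.'' The fix, which your remark about needing $n>1$ gestures at but your $\Theta_B$ does not implement, is to compare \emph{arbitrary} increasing subtuples and to allow Lascar distance $2$: include in $\Theta_B$ the conditions $\d_B\bigl((x_{i_1},\ldots,x_{i_m}),(x_{j_1},\ldots,x_{j_m})\bigr)\le 2$ for all $m$ and all $i_1<\cdots<i_m$, $j_1<\cdots<j_m$. One direction is immediate since $\d_B\le 2$ implies equality of types over $B$. For the other, extend the $B$-indiscernible sequence to length $\omega+\omega$, pick an $m$-tuple $\bar a_{\bar l}$ with indices beyond both $\bar i$ and $\bar j$, and observe that $(\bar a_{\bar i},\bar a_{\bar l},\bar a_{\bar l+m},\ldots)$ and $(\bar a_{\bar j},\bar a_{\bar l},\bar a_{\bar l+m},\ldots)$ are $B$-indiscernible sequences of $m$-tuples, giving $\d_B(\bar a_{\bar i},\bar a_{\bar l})\le 1$ and $\d_B(\bar a_{\bar l},\bar a_{\bar j})\le 1$. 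The paper does not prove this fact itself (it cites Ben-Yaacov), but the ``Lascar distance at most $2$'' device is exactly what it uses in the proof of \thref{cor:E_s-typedef} for trees.
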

The following appears as \cite[Lemma 3.1]{Pi00} and \cite[Lemma 1.2]{ben-yaacov_simplicity_2003}.
\begin{lemma}
\thlabel{lem:indiscernible-sequence-based-on-long-sequence}
Let $A$ be any parameter set, $\kappa$ any cardinal, and let $\lambda = \beth_{(2^{|T| + |A| + \kappa})^+}$. Then for any sequence $(a_i)_{i < \lambda}$ of $\kappa$-tuples there is an $A$-indiscernible sequence $(b_i)_{i < \omega}$ such that for all $n < \omega$ there are $i_1 < \ldots < i_n < \lambda$ with $b_1 \ldots b_n \equiv_A a_{i_1} \ldots a_{i_n}$.
\end{lemma}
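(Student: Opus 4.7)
The plan is to apply the Erd\H{o}s--Rado partition theorem iteratively to extract an $A$-indiscernible sequence whose finite-tuple types occur among the $a_i$. Let $\mu = 2^{|T| + |A| + \kappa}$, so $\lambda = \beth_{\mu^+}$. The number of positive existential formulas over $A$ in finitely many variables of length $\kappa$ is at most $|T| + |A| + \kappa$, hence the number of positive existential types over $A$ of a finite tuple of $\kappa$-tuples is at most $\mu$. For each $n \geq 1$ we have the $\mu$-coloring
\[
c_n \colon [\lambda]^n \to \{\text{types over } A\}, \qquad \{i_1 < \cdots < i_n\} \mapsto \tp(a_{i_1}, \ldots, a_{i_n}/A).
\]

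The first step is to build, by iterating Erd\H{o}s--Rado inside the previously chosen monochromatic set, a descending chain $\lambda \supseteq S_1 \supseteq S_2 \supseteq \cdots$ such that each $S_n$ is $c_n$-monochromatic with some constant type $p_n$ and each $|S_n|$ is still large enough to run the next step. The standard relation $\beth_{n-1}(\mu)^+ \to (\mu^+)^n_\mu$ handles one extraction at a time, and since $\lambda = \beth_{\mu^+}$ has uncountable cofinality $\mu^+$ and dwarfs every $\beth_\alpha(\mu)$ for $\alpha < \mu^+$, there is ample room to carry out $\omega$-many nested extractions---for instance via the stronger partition relation $\beth_{n+\alpha}(\mu)^+ \to ((\beth_\alpha(\mu))^+)^{n+1}_\mu$, or directly via the $\omega$-dimensional Erd\H{o}s--Rado partition $\beth_\omega(\mu)^+ \to (\omega)^{<\omega}_\mu$. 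Nestedness of the $S_n$'s forces coherence of the family $\{p_n\}$: for $m \geq n$, the restriction of $p_m$ to any $n$ of its coordinates equals $p_n$, witnessed by any realising tuple from $S_m \subseteq S_n$.

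Next, using this coherent family, the partial type over $A$
\[
\Sigma((x_i)_{i<\omega}) := \bigcup_{n<\omega} \{p_n(x_{i_1}, \ldots, x_{i_n}) : i_1 < \cdots < i_n < \omega\}
\]
is finitely satisfiable: a finite subset uses only finitely many variables and formulas from finitely many $p_n$, and is realised by any increasing $N$-tuple from $S_N$ for $N$ sufficiently large. Compactness (\thref{fact:positive-logic-facts}(iv)) combined with homogeneity of $\MM$ yields $(b_i)_{i<\omega} \subseteq \MM$ realising $\Sigma$; by construction every increasing $n$-tuple from $(b_i)$ realises the single type $p_n$, so $(b_i)_{i<\omega}$ is $A$-indiscernible, and for each $n$ any choice of $i_1 < \cdots < i_n$ in $S_n$ witnesses the matching condition $b_1\ldots b_n \equiv_A a_{i_1}\ldots a_{i_n}$.

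The main obstacle is the combinatorial bookkeeping in the Erd\H{o}s--Rado iteration: a single application only produces a monochromatic set of size $\mu^+$, insufficient to iterate $\omega$ times on its own, so one must invoke a strengthened partition relation. The extremely generous choice $\lambda = \beth_{\mu^+}$ makes this comfortable rather than tight, and the rest of the argument (coherence plus compactness plus homogeneity) is then routine.
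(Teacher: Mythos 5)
Your overall architecture is sound: produce a coherent family of types $(p_n)$, each realised by an increasing tuple from $(a_i)_{i<\lambda}$, then realise the resulting partial type by compactness. The gap is in the construction of the family. Producing a single infinite nested chain $\lambda \supseteq S_1 \supseteq S_2 \supseteq \cdots$ with each $S_n$ monochromatic for $c_n$ is not possible in general, and neither of your two justifications holds. For the iterated Erd\H{o}s--Rado via $\beth_{\alpha+n}(\mu)^+ \to (\beth_\alpha(\mu)^+)^{n+1}_\mu$: writing $|S_n| = \beth_{\delta_n}(\mu)^+$, passing from $S_n$ to a $c_{n+1}$-monochromatic $S_{n+1} \subseteq S_n$ of that size requires $\delta_n \geq \delta_{n+1} + n$, which forces $(\delta_n)_{n<\omega}$ to be a strictly decreasing infinite sequence of ordinals---impossible, no matter how enormous $\lambda$ is; the generosity of $\lambda$ buys a large $\delta_1$ but cannot supply an infinite descent. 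For the relation $\beth_\omega(\mu)^+ \to (\omega)^{<\omega}_\mu$: this is not a theorem of ZFC. The least $\kappa$ with $\kappa \to (\omega)^{<\omega}_2$ (hence a fortiori with $\mu$ colours) is the $\omega$-Erd\H{o}s cardinal, a strongly inaccessible cardinal whose existence is unprovable in ZFC; consistently, no $\kappa$ at all satisfies the relation. Already $\omega \not\to (\omega)^{<\omega}_2$ fails outright (colour an $n$-tuple $\{i_1<\dots<i_n\}$ by whether $i_1 \geq n$; no infinite set is homogeneous for all $n$ at once).

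The standard argument applies Erd\H{o}s--Rado only finitely many times for each fixed $n$. Since $\lambda = \beth_{\mu^+}$ dominates $\beth_m(\mu)^+$ for every $m<\omega$, a finite nested iteration produces, for every $n<\omega$, an $n$-indiscernible (over $A$) subset $I_n \subseteq \lambda$ of size $\mu^+$ with associated coherent finite type sequence $\bar p^{(n)} = (p_1^{(n)},\dots,p_n^{(n)})$. But the $I_n$ need not be nested and the $\bar p^{(n)}$ need not extend one another as $n$ grows, so a genuine fusion step is required: one must extract a single infinite coherent sequence $(p_k)_{k<\omega}$ whose every finite fragment is witnessed inside some $I_n$. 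This cannot be done by a naive pigeonhole over $n<\omega$ (countably many indices $n$ distributed among up to $\mu$ possible values at each stage); instead one passes to a limit along a non-principal ultrafilter on $\omega$, equivalently uses compactness of the positive type spaces $\S_k(A)$ and their inverse limit under restriction. Once that limit sequence $(p_k)_k$ is in hand, finite satisfiability of your $\Sigma$ is checked exactly as you describe, with the sets $I_n$ supplying the witnessing tuples. It is this fusion step that your outline skips.
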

\begin{definition}
\thlabel{def:indiscernible-sequence-based-on-long-sequence}
In the notation of \thref{lem:indiscernible-sequence-based-on-long-sequence} we say that $(b_i)_{i < \omega}$ is \emph{based on} the sequence $(a_i)_{i < \lambda}$ (over $A$).
\end{definition}
Often the parameter set $A$ will be clear from the context (it will be the set that the new sequence is indiscernible over), so we may leave out the ``over $A$''.
\begin{definition}
\thlabel{def:lambda-t}
We write  $\lambda_\kappa:=\beth_{(2^\kappa)^+}$ for any cardinal $\kappa$ and $\lambda_T:=\lambda_{|T|}$.
\end{definition}
\begin{lemma}
\thlabel{lem:find-indisc-sequence-through-model}
Let $M$ be a $\lambda_T$-saturated e.c.\ model of a thick theory. Then $a \equiv_M b$ implies $\d_M(a, b) \leq 2$.
\end{lemma}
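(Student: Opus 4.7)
The plan is to exhibit an intermediate element $c \in \MM$ satisfying $\d_M(a,c) \leq 1$ and $\d_M(c,b) \leq 1$, which by \thref{def:lascar-distance} gives $\d_M(a,b) \leq 2$. Both ``$\d_M(\cdot,\cdot)\leq 1$'' conditions are type-definable over $M$ by thickness (\thref{fact:thick-lascar-distance}). Write $p = \tp(a/M) = \tp(b/M)$.

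First I would use saturation of $\MM$ to build a long sequence $((a_\alpha,b_\alpha))_{\alpha<\lambda_T^+}$ of $M$-conjugates of the pair $(a,b)$, and apply \thref{lem:indiscernible-sequence-based-on-long-sequence} to extract an $M$-indiscernible sequence of pairs $((e_i,f_i))_{i<\omega}$ with each $(e_i,f_i)\equiv_M (a,b)$. In particular, both $(e_i)_{i<\omega}$ and $(f_i)_{i<\omega}$ are $M$-indiscernible sequences of realisations of $p$. Choose $\rho\in\Aut(\MM/M)$ with $\rho(e_0,f_0)=(a,b)$ and set $c:=\rho(e_1)$. Then $(a,c)=(\rho(e_0),\rho(e_1))$ lies on the $M$-indiscernible sequence $\rho((e_i)_{i<\omega})$, so $\d_M(a,c)\leq 1$ follows immediately.

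The main obstacle is to verify $\d_M(c,b)\leq 1$, i.e.\ to produce an $M$-indiscernible sequence of tuples of the sort of $a$ that contains the pair $(c,b)=(\rho(e_1),\rho(f_0))$. Pulling back by $\rho$, this requires an $M$-indiscernible single-element sequence through $(e_1,f_0)$. By the pair indiscernibility of $((e_i,f_i))_{i<\omega}$ one has $\tp(e_j,f_i/M)$ constant for all $i<j$, but the naive flattenings of the pair sequence are not $M$-indiscernible because they mix orderings in which $e$-indices precede and succeed $f$-indices. The resolution is to perform a \emph{secondary} extraction of indiscernibles from a long sequence of such ``shifted'' pairs $(e_{j_\alpha},f_{i_\alpha})$ with $i_\alpha<j_\alpha$, all of which share the same $M$-type; here $\lambda_T$-saturation of $M$ is used to guarantee that such a long raw sequence can be realised inside $M$ (by realising the $M_0$-type of $((e_i,f_i))_{i<\omega}$ for each small $M_0\subseteq M$ with $|M_0|<\lambda_T$), after which compactness of $\MM$ and thickness provide the desired single-element $M$-indiscernible sequence. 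This secondary extraction is the technically delicate step of the argument.
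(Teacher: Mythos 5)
Your framework is the right one: exhibit $c$ with $\d_M(a,c)\leq 1$ and $\d_M(c,b)\leq 1$, and the first half (producing $c=\rho(e_1)$ with $\d_M(a,c)\leq 1$ from the pair-indiscernible sequence) is fine. But the second half has a genuine gap, and I don't think it can be repaired along the lines you sketch. What you need is a single $M$-indiscernible sequence of $a$-sort tuples containing both $e_1$ and $f_0$, i.e.\ $\d_M(e_1,f_0)\leq 1$. Your ``secondary extraction'' operates on a long sequence of \emph{pairs} $(e_{j_\alpha},f_{i_\alpha})$; extracting an $M$-indiscernible subsequence of those gives an indiscernible sequence of \emph{pairs}, which certifies something like $\d_M\bigl((e_1,f_0),(g,h)\bigr)\leq 1$ for some second pair $(g,h)$ --- it does not separate $e_1$ and $f_0$ onto a common sequence of singletons. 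And showing $\d_M(e_1,f_0)\leq 1$ directly is essentially as hard as the statement you are trying to prove: you only know $e_1\equiv_M f_0$ plus some extra structure from the pair sequence, and the extra structure (that $\tp(e_jf_i/M)$ is constant for $i<j$) does not give a uniform type for \emph{consecutive} entries of a flattened sequence, which is exactly the obstruction you noticed and left unresolved.

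The paper sidesteps the whole difficulty by not trying to produce $c$ in one go. It first uses compactness: by type-definability of ``$\d_M(\cdot,\cdot)\leq 1$'' it is enough to show, for each finite conjunction $\phi(x,y)$ of formulas in that partial type (so $\phi$ is over a finite $m\subseteq M$), that $\phi(x,a)\wedge\phi(x,b)$ is consistent. Then $\lambda_T$-saturation of $M$ is used to build, \emph{inside $M$}, a long sequence $(a_i)_{i<\lambda_T}$ coherent with $a$ over $m$; an extracted $m$-indiscernible sequence realises $\phi$, which transfers back to give $a_{i_0}\in M$ with $\models\phi(a_{i_0},a)$. The crucial move, which your proposal does not use, is that because $a_{i_0}\in M$ and $a\equiv_M b$, the condition $\phi(a_{i_0},b)$ is then automatic. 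Placing the witness inside $M$ makes the ``$\phi(x,b)$'' half of the claim free. Without that device you are left having to establish $\d_M(c,b)\leq 1$ by hand for a specific $c\notin M$, which is the gap in your argument.
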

\begin{proof}
By thickness, $\d_M(x, y) \leq 1$ is $M$-type-definable. Let $\phi(x, y)$ be a finite conjunction of formulas in $\d_M(x, y) \leq 1$. It is enough to show that $\phi(x, a) \wedge \phi(x, b)$ is satisfiable, because then the partial type ``$\d_M(x, a) \leq 1$ and $\d_M(x, b) \leq 1$'' is finitely satisfiable.

Since $\phi$ is just a formula, we may as well assume $a$ and $b$ to be finite. Let $m$ denote the (finite) part of $M$ that appears in $\phi$. By $\lambda_T$-saturatedness of $M$ there is a sequence $(a_i)_{i < \lambda_T}$ in $M$ such that $a_i (a_j)_{j < i} \equiv_m a (a_j)_{j < i}$ for all $i < \lambda_T$. Using \thref{lem:indiscernible-sequence-based-on-long-sequence} we then find $m$-indiscernible $(a_i')_{i < \omega}$ based on $(a_i)_{i < \lambda_T}$. So $\models \phi(a_0', a_1')$, and thus there are $i_0 < i_1 < \lambda_T$ such that $M \models \phi(a_{i_0}, a_{i_1})$. By construction we have that $a_{i_1} a_{i_0} \equiv_m a a_{i_0}$, so $\models \phi(a_{i_0}, a)$. Since $a \equiv_M b$ and $a_{i_0} \in M$ we also have $\models \phi(a_{i_0}, b)$, and we are done.
\end{proof}
\begin{lemma}
\thlabel{lem:lstp-base-on-sequence}
Let $T$ be a thick theory. Let $B \supseteq A$, $\kappa$ any cardinal and set $\lambda = \lambda_{|T| + |B| + \kappa}$. Then for any $A$-indiscernible sequence $(a_i)_{i < \lambda}$ of $\kappa$-tuples, there is $B$-indiscernible $(a_i')_{i < \lambda}$ based on $(a_i)_{i < \lambda}$ such that $\d_A((a_i)_{i < \lambda}, (a_i')_{i < \lambda}) \leq 1$.
\end{lemma}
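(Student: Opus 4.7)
The plan is to apply the extraction lemma to obtain a $B$-indiscernible sequence based on $(a_i)_{i<\lambda}$, and then use compactness—exploiting thickness throughout—to realise the Lascar-distance and basedness conditions simultaneously.

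First, I would apply \thref{lem:indiscernible-sequence-based-on-long-sequence} with parameter set $B$ to $(a_i)_{i<\lambda}$, noting that $\lambda = \lambda_{|T|+|B|+\kappa}$ is exactly the length required. This yields a $B$-indiscernible $(b_j)_{j<\omega}$ of $\kappa$-tuples based on $(a_i)_{i<\lambda}$ over $B$. Since thickness makes $B$-indiscernibility a partial type, compactness gives a stretch $(b_j)_{j<\lambda}$ which is $B$-indiscernible and has the same $B$-EM-type as $(b_j)_{j<\omega}$.

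Next, consider the partial type $\Sigma((y_i)_{i<\lambda})$ over $A$ with parameters $(a_i)_{i<\lambda}$, asserting (a) $B$-indiscernibility of $(y_i)_{i<\lambda}$, (b) $\d_A((a_i)_{i<\lambda},(y_i)_{i<\lambda}) \leq 1$, and (c) that every finite sub-tuple $(y_{j_1},\dots,y_{j_n})$ realises the $B$-type of $(b_1,\dots,b_n)$. Parts (a) and (b) are partial types by thickness (using \thref{fact:thick-lascar-distance} for (b)), while (c) is a partial type consisting of positive existential $B$-formulas. A realisation of $\Sigma$ is then $B$-indiscernible with the $B$-EM-type of $(b_j)$, hence based on $(a_i)$ over $B$, while also satisfying the Lascar-distance condition.

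For consistency of $\Sigma$ I would check finite fragments. Given a fragment involving variables $y_i$ for $i$ in some finite $I \subseteq \lambda$, set $C = B \cup \{a_i : i \in I\}$ and apply \thref{lem:indiscernible-sequence-based-on-long-sequence} to the tail $(a_i)_{\max(I)<i<\lambda}$ over $C$—its length is still $\lambda$, which matches $\lambda_{|T|+|C|+\kappa}$ since $|C|$ differs from $|B|$ only by a finite expansion. Assign the $y_i$ to the entries of the resulting $C$-indiscernible sequence $(c_k)_{k<\omega}$: $B$-indiscernibility is immediate, and basedness supplies indices $k_1<\dots<k_n$ (all $> \max(I)$) with $(c_1,\dots,c_n) \equiv_C (a_{k_1},\dots,a_{k_n})$. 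A $C$-automorphism $\sigma$ of $\MM$ witnessing this equivalence fixes $(a_i)_{i \in I}$ pointwise. Since the indices in $I$ and $\{k_1,\dots,k_n\}$ are disjoint and ordered, $(a_i)_{i \in I}$ and $(a_{k_l})_l$ are disjoint increasing sub-tuples of the $A$-indiscernible $(a_i)_{i<\lambda}$, so they lie on a common $A$-indiscernible sequence of $\kappa \cdot |I|$-tuples; applying $\sigma$ (an $A$-automorphism) transfers this to $\d_A((a_i)_{i \in I},(y_i)_{i \in I}) \leq 1$.

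The hardest point will be coordinating part (c) in the finite consistency check: the $B$-type of $(c_1,\dots,c_n)$ in each finite-fragment extraction must match that of $(b_1,\dots,b_n)$ globally, even though different $I$'s invoke different tail-extractions. I expect to handle this by a pigeonhole argument on the at most $2^{|T|+|B|+\kappa}$ many $B$-EM-types available to $B$-indiscernible extractions from an $A$-indiscernible tail, pre-selecting within each tail a sub-sequence realising the desired $B$-EM-type (that of $(b_j)$) before running the extraction over $C$. Once this coordination is in place, compactness yields a realisation $(a'_i)_{i<\lambda}$ of $\Sigma$ satisfying all required conditions.
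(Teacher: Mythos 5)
Your setup (stretch the extracted sequence, realise a partial type combining $\tp((b_i)/B)$ with the $\d_A\leq 1$ condition, check finite satisfiability) is essentially the paper's, but your finite satisfiability check goes wrong, and the gap you flag at the end is genuine rather than a technicality you can safely defer. Running \thref{lem:indiscernible-sequence-based-on-long-sequence} again over $C$ on the tail of $(a_i)$ produces some $C$-indiscernible $(c_k)_{k<\omega}$, but nothing controls its $B$-EM-type: the tail of an $A$-indiscernible sequence can support $B$-indiscernible extractions with many incompatible $B$-EM-types, and the extraction lemma does not let you prescribe which one you get. Your proposed pigeonhole fix would need to show that the specific EM-type of $(b_j)$ is always among those available in each tail-extraction, which is not what a counting of EM-types gives you, and which in any case is not obviously true without further argument.

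The repair is to \emph{not} run a fresh extraction at all. For a finite fragment $\Sigma_0$ in variables $y_{i_1},\dots,y_{i_n}$, the part of $\Sigma_0$ coming from $\tp((b_i)/B)$ is a finite piece of $\tp(b_{i_1}\dots b_{i_n}/B)=\tp(b_1\dots b_n/B)$. Since $(b_j)_{j<\omega}$ is based on $(a_i)_{i<\lambda}$ over $B$, there are $j_1<\dots<j_n<\lambda$ with $a_{j_1}\dots a_{j_n}\equiv_B b_1\dots b_n$, and one sees from the proof of \thref{lem:indiscernible-sequence-based-on-long-sequence} that $j_1$ may be taken arbitrarily large, in particular $>\max\{i_1,\dots,i_n\}$. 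Assigning $y_{i_k}\mapsto a_{j_k}$ then discharges both halves of $\Sigma_0$ at once: the $B$-type part by choice of the $j$'s, and the $\d_A\leq 1$ part because $(a_{i_1},\dots,a_{i_n})$ and $(a_{j_1},\dots,a_{j_n})$ are disjoint increasing blocks inside the $A$-indiscernible $(a_i)_{i<\lambda}$ and hence lie on a common $A$-indiscernible sequence. This is exactly where the paper's argument avoids your coordination problem; the witnesses are already in the tail, carrying the right $B$-types by basedness, so no second extraction (and no pigeonhole) is ever needed. Note also that your clause (a) is redundant: a realisation of $\tp((b_i)/B)$ is automatically $B$-indiscernible.
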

\begin{proof}
By \thref{lem:indiscernible-sequence-based-on-long-sequence} there is $B$-indiscernible $(b_i)_{i < \omega}$ based on $(a_i)_{i < \lambda}$. Prolong this to $B$-indiscernible $(b_i)_{i < \lambda}$. Define
\[
\Sigma((x_i)_{i < \lambda}) = \tp((b_i)_{i < \lambda}/B) \cup ``\d_A((x_i)_{i < \lambda}, (a_i)_{i < \lambda}) \leq 1 \mbox{''},
\]
and let $\Sigma_0(x_{i_1}, \ldots, x_{i_n}) \subseteq \Sigma((x_i)_{i < \lambda})$ be finite, only mentioning parameters in $B$ and $a_{i_1}, \ldots, a_{i_n}$. Let $j_1 < \ldots < j_n < \lambda$ be such that $a_{j_1} \ldots a_{j_n} \equiv_B b_1 \ldots b_n \equiv_B b_{i_0} \ldots b_{i_n}$. It follows from the proof of \thref{lem:indiscernible-sequence-based-on-long-sequence} that we may choose $j_1$ to be arbitrarily large below $\lambda$, so we may assume $j_1 > i_n$. Then $a_{j_1} \ldots a_{j_n}$ realises $\Sigma_0$. By compactness we find the required $(a_i')_{i < \lambda}$ as a realisation of $\Sigma$.
\end{proof}
The definition of dividing in positive theories is the same as in full first-order logic \cite{Pi00, ben-yaacov_simplicity_2003}. Following \cite{Pi00} we have to adjust forking to allow  infinite disjunctions because compactness can no longer guarantee disjunctions to be finite.
\begin{definition}
\thlabel{def:dividing-forking}
We say that a partial type $\Sigma(x, b)$ \emph{divides over $C$} if there is a $C$-indiscernible sequence $(b_i)_{i < \omega}$ with $b_0 \equiv_C b$ such that $\bigcup_{i < \omega} \Sigma(x, b_i)$ is inconsistent.

We say that $\Sigma(x, b)$ \emph{forks over $C$} if there is a (possibly infinite) set of formulas $\Phi(x)$ with parameters, each of which divides over $C$, such that $\Sigma(x, b)$ implies $\bigvee \Phi(x)$.

We write $a \ind_C^d b$ (or $a \ind_C^f b$) if $\tp(a/Cb)$ does not divide (fork) over $C$.
\end{definition}
\begin{remark}
\thlabel{rem:dividing-strong-finite-character}
We have that $\tp(a/Cb)$ divides over $C$ if and only if there is a formula $\phi(x, b) \in \tp(a/Cb)$ that divides over $C$. This follows directly from compactness. Note that for forking this is no longer necessarily true, because the disjunction may be infinite so we cannot apply compactness.
\end{remark}
For a type $p$ over a set $B$ and a subset $A\subseteq B$, the restriction of $p$ to $A$ is a type over $A$ which we denote by $p|_A$.
We recall the notions of an heir and a coheir, which also make sense in positive logic.
\begin{definition}
\thlabel{def:heir-coheir}
Let $M \subseteq B$ and let $p = \tp(a/B)$ be a type over $B$. We say that $p$ is a \emph{coheir} of $p|_M$, and write $a \ind_M^u B$, if $p$ is finitely satisfiable in $M$. We say that $p$ is an \emph{heir} of $p|_M$ if for every formula $\phi(x, y)$, with parameters in $M$, and every $b \in B$ such that $\phi(x, b) \in p$ there is some $b' \in M$ such that $\phi(x, b') \in p$. In this case we write $a \ind_M^h B$.
\end{definition}
\begin{remark}
\thlabel{rem:heir-dual-to-coheir}
As in full first-order logic, we have $A \ind_M^u B$ iff $B \ind_M^h A$.
\end{remark}
In \thref{prop:independence-strengths} we compare the above notions of independence further.

We recall that $2^{<\omega}$ is the set of all finite sequences of zeroes and ones. For $\eta, \nu \in 2^{<\omega}$ we write $\eta \preceq \nu$ if $\nu$ continues the sequence $\eta$. We write $\eta^\frown \nu$ for concatenation, so for example $\eta^\frown 0$ is the sequence $\eta$ with a 0 concatenated to it.
\begin{definition}
\thlabel{def:sop1}
Let $T$ be a theory and let $\phi(x, y)$ be a formula. We say that $\phi(x, y)$ has \emph{SOP$_1$} if there are $\psi(y_1, y_2)$ and $(a_\eta)_{\eta \in 2^{<\omega}}$ such that:
\begin{enumerate}[label=(\roman*)]
\item for every $\sigma \in 2^\omega$ the set $\{ \phi(x, a_{\sigma|n}) : n < \omega \}$ is consistent;
\item $\psi(y_1, y_2)$ implies that $\phi(x, y_1) \wedge \phi(x, y_2)$ is inconsistent, that is
\[
T \models \forall y_1 y_2 \neg[\psi(y_1, y_2) \wedge \exists x( \phi(x, y_1) \wedge \phi(x, y_2) )];
\]
\item for every $\eta, \nu \in 2^{<\omega}$ such that $\eta^\frown 0 \preceq \nu$ we have $\models \psi(a_{\eta^\frown 1}, a_\nu)$.
\end{enumerate}
We say that $T$ is \emph{NSOP$_1$} if no formula has SOP$_1$.
\end{definition}
\begin{remark}
\thlabel{rem:sop1-inconsistency-witness}
The idea of introducing the inconsistency witness $\psi(y_1, y_2)$ is due to Haykazyan and Kirby, \cite{haykazyan_existentially_2021}. In full first-order logic we can just take $\psi(y_1, y_2)$ to be $\neg \exists x(\phi(x, y_1) \wedge \phi(x, y_2))$, so we see that the definitions coincide there. The point of having $\psi$ is that the inconsistency in (iii) is again definable by a single formula for all relevant $\eta$ and $\nu$. This enables us to apply compactness to make the tree $(a_\eta)_{\eta \in 2^{<\omega}}$ as big as we wish.
\end{remark}
The following lemma, or rather its contrapositive, is what will actually be useful to us. If, in an NSOP$_1$ theory, we have two sequences that are `parallel to each other' in a certain way then we can transfer consistency for a formula along one sequence to the other. We will therefore give it the name ``parallel sequences lemma''.
\begin{lemma}[Parallel sequences lemma]
\thlabel{lem:sequences-give-sop1}
Suppose that $\phi(x, y)$ is a formula, and $(\bar{c}_i) = (c_{i,0}, c_{i,1})_{i \in I}$ is an infinite indiscernible sequence satisfying:
\begin{enumerate}[label=(\roman*)]
\item $c_{i,0} \equiv_{\bar{c}_{<i}} c_{i,1}$ for all $i \in I$;
\item $\{\phi(x; c_{i, 0}) : i \in I\}$ is consistent;
\item $\{\phi(x; c_{i, 1}) : i \in I\}$ is inconsistent.
\end{enumerate}
Then $T$ has SOP$_1$.
\end{lemma}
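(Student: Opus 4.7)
I will prove the lemma by contrapositive: assuming properties (i)--(iii), I will exhibit a formula and a tree $(a_\eta)_{\eta\in 2^{<\omega}}$ witnessing SOP$_1$.

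The first step is to extract the inconsistency witness $\psi$. Since $\{\phi(x,c_{i,1}):i\in I\}$ is inconsistent, positive compactness (as in the caveat after \thref{fact:positive-logic-facts}) yields a finite inconsistent subfamily, and by indiscernibility the minimal size $k$ of such a subfamily is fixed, with every $k$-element 1-side subset inconsistent. Replacing $\phi(x,y)$ by the fattening $\phi^{(k-1)}(x,y_1,\ldots,y_{k-1}):=\bigwedge_{j<k}\phi(x,y_j)$ and grouping the pair-sequence into blocks of $k-1$ consecutive pairs reduces us to the case $k=2$, where any two distinct 1-side tuples are inconsistent on the new formula. Then $\neg\exists x\bigl(\phi^{(k-1)}(x,c'_{1,1})\wedge\phi^{(k-1)}(x,c'_{2,1})\bigr)$ holds in the e.c.\ monster $\MM$, so \thref{def:ec-model}(iii) provides a formula $\psi(y_1,y_2)\in\tp(c'_{1,1},c'_{2,1})$ witnessing inconsistency in the sense required by \thref{def:sop1}(ii).

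Next, I construct the tree by compactness. Using indiscernibility and compactness, I prolong $(\bar c_i)$ to a sequence indexed by a sufficiently rich dense linear order $L$ (say $\mathbb{Q}$) preserving (i)--(iii). It then suffices, for each finite subtree $T_0\subseteq 2^{<\omega}$, to realise the finitely many SOP$_1$-conditions indexed by $T_0$; the full tree then follows by compactness. To realise a finite $T_0$, I choose an embedding $\iota:T_0\to L$ satisfying (a) branches are $\iota$-increasing and (b) $\iota(\eta^\frown 1)<\iota(\nu)$ whenever $\eta^\frown 0\preceq \nu$ (such $\iota$ exists because the associated order on $2^{<\omega}$ is a linear order embeddable into $\mathbb{Q}$), and assign to each node $\eta\in T_0$ an element $a_\eta:=c_{\iota(\eta),\varepsilon(\eta)}$ for an appropriate labelling $\varepsilon:T_0\to\{0,1\}$. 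The role of property (i) is that for any index $i$ and any parameters contained in $\bar c_{<i}$, the labels $c_{i,0}$ and $c_{i,1}$ are freely interchangeable in the type over those parameters; iterating this lets one replace mixed-label tuples by the corresponding all-$0$ (resp.\ all-$1$) tuple to conclude consistency from (ii) (resp.\ $\psi$-inconsistency from $\tp(c_{i,1},c_{j,1})$).

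The technical heart, and the main obstacle, is the coherent choice of $\iota$ and $\varepsilon$ ensuring that every branch in $T_0$ reduces (via iterated (i)-swaps) to an all-$0$ pattern while every SOP$_1$-pair $(a_{\eta^\frown 1},a_\nu)$ reduces to an "early 1-side" pattern realising $\psi$: each node $a_\eta$ simultaneously participates in many branches (where a consistency reduction is needed) and in many cross-branch pairs (where $\psi$ must hold), and each iterated use of (i) fixes only the parameters at strictly earlier positions in $L$ while possibly moving later ones. This forces the embedding $\iota$ to place 1-children strictly before the entire 0-subtree (as arranged in (b)) so that the necessary swaps can be performed in an order that respects the fixed contexts, and forces the labels $\varepsilon$ to be chosen so that the earliest position on each branch lies on the consistent side while the earliest position of each SOP$_1$-pair lies on the inconsistent side. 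The verification then proceeds by a finite, careful bookkeeping of the automorphisms produced by (i).
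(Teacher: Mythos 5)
Your proposal takes a genuinely different route from the paper's: the paper simply cites Kaplan--Ramsey's Lemma 2.3 (which reduces to their Lemma 2.2 after a preparatory ``claim'') and adapts it to positive logic, whereas you attempt a direct compactness construction of the SOP$_1$ tree from the parallel sequence. The key step of your construction is flawed, however. You claim that iterated uses of (i) let one ``replace mixed-label tuples by the corresponding all-$0$ (resp.\ all-$1$) tuple''; this is false. Condition (i) gives $c_{i,0}\equiv_{\bar c_{<i}}c_{i,1}$, so the automorphism witnessing it fixes only $\bar c_{<i}$ and moves every $c_{j,\varepsilon}$ with $j>i$ to an uncontrolled element that need not lie on the sequence. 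Thus the swap changes only the label at the \emph{largest} index of a tuple while preserving its type; once you try to change an earlier label, the later entries are no longer where you left them, and the iteration breaks down. A concrete obstruction: $\tp(c_{1,1},c_{2,1},c_{3,1})\ne\tp(c_{1,0},c_{2,0},c_{3,0})$ whenever the inconsistency in (iii) is $k$-inconsistency for some $k\le 3$, because the $\phi$-conjunction of the latter tuple is realised (by (ii)) while that of the former is not.

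This gap is fatal to the strategy as stated. The only $\psi$-instances you actually control are $\psi(c_{i,1},c_{j,\mu})$ for $i<j$ (obtained by applying (i) at the \emph{larger} index $j$, which is the one legitimate swap), so the SOP$_1$-pair requirement forces $\varepsilon(\eta^\frown 1)=1$ for every $\eta$. But then the branch $\emptyset,1,11,\dots,1^n$ carries $n$ nodes labelled $1$, and its $\phi$-set is inconsistent by (iii), violating SOP$_1$ clause (i). What is missing is exactly the preparatory ``claim'' inside Kaplan--Ramsey's Lemma 2.3 (which the paper's sketch points to): one first modifies the pair-sequence so that the equivalence in (i) can be taken over a fixed realisation $b$ of the consistent side, i.e.\ $c_{i,0}\equiv_{b\bar c_{<i}}c_{i,1}$. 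With $b$ fixed by the swaps, branch consistency is immediate from $\models\phi(b,c_{i,\varepsilon})$ and the iterated-swap obstruction disappears; this is precisely what powers Lemma 2.2. Without that preparation your assignment of positions $\iota$ and labels $\varepsilon$ cannot simultaneously satisfy branch consistency and the $\psi$-pair conditions.
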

\begin{proof}
This is the same as \cite[Lemma 2.3]{kaplan_kim-independence_2020} and that proof mostly goes through. We sketch a few small changes that are needed. Obviously we already start with an indiscernible sequence and by compactness we can freely change the order type of $I$ preserving properties (i)--(iii). Then in the claim in that proof we need to make the array $(a_{i,0}, a_{i,1})$ sufficiently long. This can easily be done by elongating the original indiscernible sequence $(\bar{c}_i)$. Then we can find an indiscernible sequence based on $(\bar{a}_i) = (a_{i,0}, a_{i,1})$. Note that properties (1)--(3) in that claim are preserved by this operation. The reason for all this is because we need to start with an indiscernible sequence in \cite[Lemma 2.2]{kaplan_kim-independence_2020} as well. Then the rest of that proof goes through. Finally, inconsistency of $\{ \phi(x, c_{l,1}), \chi(x, d_{l', 0}) \}$ should be witnessed by some formula (similarly for \cite[Lemma 2.2]{kaplan_kim-independence_2020}), but the existence of such a witness easily follows from the construction of $\chi$.
\end{proof}
\section{Global Lascar-invariant types}
\label{sec:global-ls-invariant-types}
The definition of Lascar strong types from the first-order setting easily generalises to (thick) positive logic, see \cite[Definition 3.13, Lemma 3.15]{Pi00} and \cite[Lemma 1.38]{ben-yaacov_simplicity_2003}\footnote{Simplicity is assumed here, but not used in the equivalence of the properties we mention. It is used for what is (iii) there.}.
\begin{definition}
\thlabel{def:lascar-strong-type}
We say $a$ and $b$ have the same \emph{Lascar strong type over $A$}, and write $a \equivls_A  b$, if the following equivalent conditions hold:
\begin{enumerate}[label=(\roman*)]
\item $\d_A(a, b) \leq n$ for some $n < \omega$;
\item for each bounded $A$-invariant equivalence relation $E(x, y)$ we have $E(a, b)$;
\item there are $\lambda_T$-saturated e.c.\ models $M_1, \ldots, M_n$, each containing $A$, and $a = a_0, \ldots, a_n = b$ such that $a_i \equiv_{M_{i+1}} a_{i+1}$ for all $0 \leq i < n$.
\end{enumerate}
We write $\Lstp(a/A)$ for the $\equivls_A$-equivalence class of $a$.
\end{definition}
\begin{lemma}
\thlabel{lem:equivalence-conditions-of-lstps}
The conditions in \thref{def:lascar-strong-type} are equivalent in a thick theory.
\end{lemma}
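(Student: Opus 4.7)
The plan is to prove the equivalences in the cycle (iii) $\Rightarrow$ (i) $\Rightarrow$ (ii) $\Rightarrow$ (iii), where thickness enters in the first and third implications via \thref{lem:find-indisc-sequence-through-model} and via type-definability of indiscernibility.

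For (iii) $\Rightarrow$ (i), I would apply \thref{lem:find-indisc-sequence-through-model} to each pair $a_i \equiv_{M_{i+1}} a_{i+1}$: since $M_{i+1}$ is $\lambda_T$-saturated and contains $A$, we get $\d_{M_{i+1}}(a_i, a_{i+1}) \leq 2$, and any $M_{i+1}$-indiscernible sequence is \emph{a fortiori} $A$-indiscernible, so $\d_A(a_i, a_{i+1}) \leq 2$. Concatenating the witnesses gives $\d_A(a,b) \leq 2n$.

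For (i) $\Rightarrow$ (ii), let $E$ be a bounded $A$-invariant equivalence relation. By induction on $n$, the task reduces to showing that if $a$ and $b$ appear on an $A$-indiscernible sequence, then $E(a,b)$. Start with an $A$-indiscernible $(a_i)_{i < \omega}$ with $a_0 = a$ and $a_1 = b$. Using thickness, the property of being $A$-indiscernible is type-definable, so by compactness we may prolong $(a_i)_{i < \omega}$ to an $A$-indiscernible sequence of any prescribed length $\kappa$. If $\neg E(a_0, a_1)$ held, then by indiscernibility $\neg E(a_i, a_j)$ for all $i \neq j$; taking $\kappa$ larger than the number of $E$-classes yields a contradiction to boundedness of $E$. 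Thus $E(a, b)$.

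For (ii) $\Rightarrow$ (iii), define a relation $R$ on tuples by letting $R(a,b)$ hold iff there exist $\lambda_T$-saturated e.c.\ models $M_1, \dots, M_n$ containing $A$ and a chain $a = a_0, a_1, \ldots, a_n = b$ with $a_i \equiv_{M_{i+1}} a_{i+1}$ for each $i$. It is straightforward that $R$ is reflexive (take any single $\lambda_T$-saturated $M \supseteq A$ given by \thref{fact:saturated-models}), symmetric (reverse the chain and the list of models), transitive (concatenate), and $A$-invariant (push chains through $A$-automorphisms). The main point is that $R$ is bounded: fix one $\lambda_T$-saturated e.c.\ model $M$ containing $A$ of bounded size; for any $a$ the type $\tp(a/M)$ witnesses $R(a, a')$ for every realisation $a'$ of $\tp(a/M)$ (via a length-one chain through $M$), so the number of $R$-classes is at most the number of types over $M$, which is bounded. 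Applying (ii) to $R$ yields $R(a,b)$, i.e.\ exactly the data required by (iii).

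I expect the main obstacle to be the (i) $\Rightarrow$ (ii) step: one has to legitimately prolong a given finite or $\omega$-indiscernible sequence to arbitrary length, and this needs thickness in an essential way, since in a non-thick theory indiscernibility is not type-definable and compactness cannot be invoked. The bounded-index argument in (ii) $\Rightarrow$ (iii) is mostly bookkeeping once the relation $R$ is written down, and (iii) $\Rightarrow$ (i) is a direct corollary of \thref{lem:find-indisc-sequence-through-model}.
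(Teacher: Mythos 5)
Your proof is correct, but it decomposes the lemma differently from the paper. The paper cites \cite[Lemma 3.15]{Pi00} and \cite[Lemma 1.38]{ben-yaacov_simplicity_2003} for the equivalence of (i) and (ii), and separately proves (i) $\Leftrightarrow$ (iii): for (i) $\Rightarrow$ (iii) it uses \thref{lem:indiscernible-sequence-based-on-long-sequence} (plus an automorphism) to upgrade an $A$-indiscernible sequence through $a_i, a_{i+1}$ to one that is $M_{i+1}$-indiscernible for a suitable $\lambda_T$-saturated $M_{i+1} \supseteq A$. You instead close the cycle (iii) $\Rightarrow$ (i) $\Rightarrow$ (ii) $\Rightarrow$ (iii): your (iii) $\Rightarrow$ (i) matches the paper's, your (i) $\Rightarrow$ (ii) is the standard prolong-and-count argument (proved rather than cited), and your (ii) $\Rightarrow$ (iii) observes that the relation defined in (iii) is itself a bounded $A$-invariant equivalence relation, with boundedness coming from counting types over a single fixed $\lambda_T$-saturated model. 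This makes the lemma self-contained and trades the sequence-extraction appeal in (i) $\Rightarrow$ (iii) for an elementary type count in (ii) $\Rightarrow$ (iii). One misattribution worth flagging: prolonging an $A$-indiscernible sequence to any small length does \emph{not} in fact require thickness. In positive logic the type of any tuple over a small parameter set is automatically maximal (the monster is e.c.), so a realisation of the EM-type of $(a_i)_{i<\omega}$ indexed by $\kappa$ is automatically $A$-indiscernible, and an $A$-automorphism then places the initial pair at $(a,b)$. Thickness genuinely enters only in (iii) $\Rightarrow$ (i), through \thref{lem:find-indisc-sequence-through-model}.
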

\begin{proof}
The equivalence of (i) and (ii) is proved in both \cite[Lemma 3.15]{Pi00} and \cite[Lemma 1.38]{ben-yaacov_simplicity_2003}. So we prove (i) $\Leftrightarrow$ (iii).

(i) $\implies$ (iii) Let $a = a_0, \ldots, a_n = b$ such that $a_i$ and $a_{i+1}$ are on an $A$-indiscernible sequence. Let $0 \leq i < n$, $(a'_j)_{j < \omega}$ be an $A$-indiscernible sequence with $a'_0 a'_1 = a_i a_{i+1}$ and let $M \supseteq A$ be some $\lambda_T$-saturated model. By \thref{lem:indiscernible-sequence-based-on-long-sequence} and an automorphism there is $M_{i+1} \equiv_A M$ such that $(a'_j)_{j < \omega}$ is $M_{i+1}$-indiscernible. So in particular $a_i \equiv_{M_{i+1}} a_{i+1}$, as required.

(iii) $\implies$ (i) By \thref{lem:find-indisc-sequence-through-model} $a_i \equiv_{M_i} a_{i+1}$ implies that $\d_{M_i}(a_i, a_{i+1}) \leq 2$ and as $A \subseteq M_i$ we are done.
\end{proof}
\thref{def:lascar-strong-type}(iii) allows for the following definition.
\begin{definition}
\thlabel{def:lascar-strong-automorphism}
Let $\Aut_f(\MM/A)$ be the group generated by
\[
\bigcup \{ \Aut(\MM / M) : M \text{ is a $\lambda_T$-saturated model and } A \subseteq M \}.
\]
We call its elements \emph{Lascar strong automorphisms}. It is clear that in a thick theory $a \equivls_A b$ precisely when there is $f \in \Aut_f(\MM / A)$ such that $f(a) = b$.
\end{definition}
\begin{remark}
\thlabel{rem:semi-hausdorff-drop-saturated}
If $T$ is semi-Hausdorff we may replace ``$\lambda_T$-saturated model'' by ``e.c.\ model'' in \thref{def:lascar-strong-type} and \thref{lem:equivalence-conditions-of-lstps}, see \cite[Proposition 3.13]{ben-yaacov_thickness_2003}.
\end{remark}
\begin{convention}
\thlabel{conv:global-realisations}
Recall that a \emph{global type} is a type over the monster model $\MM$. Building on \thref{conv:monster-model} about the monster model, we will use lowercase Greek letters $\alpha, \beta, \ldots$ for realisations of global types (in a bigger monster).
\end{convention}
\begin{definition}
\thlabel{def:ls-invariant-type}
A global type $q$ is called \emph{$A$-Ls-invariant}, short for \emph{$A$-Lascar-invariant}, if for a realisation $\alpha \models q$ we have that $b \equivls_A b'$ implies $\alpha b \equivls_A \alpha b'$.
\end{definition}
Note that this definition does not depend on the choice of $\alpha$. If $\alpha'$ is any other realisation of $q$, then $\alpha \equiv_\MM \alpha'$. So there is an automorphism $f$ of the bigger monster over $\MM$ with $f(\alpha) = \alpha'$. So if $b \equivls_A b'$ then $\alpha b \equivls_A \alpha b'$ and hence $f(\alpha)f(b) \equivls_{f(A)} f(\alpha)f(b')$ which is just $\alpha' b \equivls_A \alpha'b'$, since $f$ fixes $\MM$.
\begin{remark}
\thlabel{rem:local-ls-realisation-of-global-type}
Let $q$ be any global type in a thick theory, $\alpha \models q$ and let $A$ be any (small) parameter set. Then there is $a \in \MM$ with $a \equivls_A \alpha$. To see this, let $M \supseteq A$ be a $\lambda_T$-saturated model, and take any $a \models q|_M$.
\end{remark}
\begin{lemma}
\thlabel{lem:ls-invariance-easy-facts}
Suppose that $q$ is a global $A$-Ls-invariant type in a thick theory. Then:
\begin{enumerate}[label=(\roman*)]
\item for any $f \in \Aut(\MM/A)$ the type $f(q)$ is $A$-Ls-invariant;
\item for any $B \supseteq A$, $q$ is also $B$-Ls-invariant.
\end{enumerate}
\end{lemma}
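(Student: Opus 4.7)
My plan is to prove both parts using the definition of $A$-Ls-invariance together with an immediate formula-based reformulation of it: $q$ is $A$-Ls-invariant iff for every formula $\phi(x, \bar y)$ (with arbitrary parameters absorbed into $\bar y$) and every pair $\bar b \equivls_A \bar b'$, we have $\phi(x, \bar b) \in q \iff \phi(x, \bar b') \in q$. This follows at once from the definition, since $\alpha \bar b \equivls_A \alpha \bar b'$ entails $\alpha \bar b \equiv_A \alpha \bar b'$. A second recurrent ingredient is that every element of $\Aut(\MM/A)$ preserves $\equivls_A$, because $\equivls_A$ is a bounded $A$-invariant equivalence relation by \thref{def:lascar-strong-type}(ii).

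For (i), I would fix $f \in \Aut(\MM/A)$ and $\beta \models f(q)$, extend $f$ to an automorphism $\tilde f$ of the bigger monster fixing $A$, and note that $\tilde f^{-1}(\beta) \models q$. Given $b \equivls_A b'$, applying $\tilde f^{-1}$ (which fixes $A$) gives $\tilde f^{-1}(b) \equivls_A \tilde f^{-1}(b')$. Invoking $A$-Ls-invariance of $q$ on the realisation $\tilde f^{-1}(\beta)$ then yields $\tilde f^{-1}(\beta b) \equivls_A \tilde f^{-1}(\beta b')$, and applying $\tilde f$ back recovers $\beta b \equivls_A \beta b'$.

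For (ii), I would take $B \supseteq A$, $\alpha \models q$, and $b \equivls_B b'$. By \thref{def:lascar-strong-type}(i) and transitivity, it suffices to treat the case $b = b_0$, $b' = b_1$ for some $B$-indiscernible sequence $(b_j)_{j<\omega}$. The key claim I will prove is that $(b_j)_{j<\omega}$ is actually $B\alpha$-indiscernible; granting this, the sequence $(\alpha b_j)_{j<\omega}$ witnesses $\d_B(\alpha b, \alpha b') \leq 1$, so $\alpha b \equivls_B \alpha b'$. To prove the claim, fix a formula $\phi(x, \bar y)$ over $B$ and absorb its $B$-parameters into a finite $\bar e \in B$, writing $\phi(x, \bar y) = \phi'(x, \bar y, \bar e)$. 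The sequence $(b_j \bar e)_{j<\omega}$ is $B$-indiscernible, hence $A$-indiscernible, so any two increasing subtuples $(b_{j_1}, \ldots, b_{j_n}, \bar e)$ and $(b_{j_1'}, \ldots, b_{j_n'}, \bar e)$ lie on a common $A$-indiscernible sequence and are thus $\equivls_A$. The formula version of $A$-Ls-invariance then delivers $\phi'(\alpha, b_{j_1}, \ldots, b_{j_n}, \bar e) \iff \phi'(\alpha, b_{j_1'}, \ldots, b_{j_n'}, \bar e)$, which is exactly the required $B\alpha$-indiscernibility.

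The main obstacle lies in part (ii): since $\equivls_A$ is coarser than $\equivls_B$, one cannot directly deduce a $\equivls_B$-equivalence of $\alpha b, \alpha b'$ from the hypothesised $A$-Ls-invariance. The trick is to apply $A$-Ls-invariance not to the pair $b, b'$ themselves, but to subtuples drawn from a $B$-indiscernible sequence, where $B$-indiscernibility produces plenty of $\equivls_A$-equivalences; this transports the $B$-indiscernibility of the base sequence to the sequence augmented by $\alpha$, thereby upgrading a $\d_A$-bound into the desired $\d_B$-bound.
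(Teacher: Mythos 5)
Your part (i) is essentially the paper's "straightforward" argument made explicit; nothing to add there. For part (ii) you take a genuinely different route. The paper invokes characterisation (iii) of \thref{def:lascar-strong-type}: decompose $b\equivls_B b'$ into steps $b_i\equiv_{M_{i+1}}b_{i+1}$ through $\lambda_T$-saturated models $M_{i+1}\supseteq B$, note $b_iM_{i+1}\equivls_A b_{i+1}M_{i+1}$, apply $A$-Ls-invariance to get $\alpha b_iM_{i+1}\equivls_A \alpha b_{i+1}M_{i+1}$, and read off $\alpha b_i\equiv_{M_{i+1}}\alpha b_{i+1}$, which closes the $B$-chain for $\alpha b$. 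You instead decompose via characterisation (i), reduce to a single $B$-indiscernible step $(b_j)_{j<\omega}$, and then show the sequence is already $B\alpha$-indiscernible by applying the formula version of $A$-Ls-invariance to increasing subtuples. Both are sound; the paper's argument is shorter because each chain-step is immediately in the right form to feed into $A$-Ls-invariance, whereas yours needs the intermediate observation that $B$-indiscernibility supplies plenty of $\equivls_A$-equivalent subtuples. One small imprecision in your argument: two increasing subtuples $b_{\bar i}$, $b_{\bar j}$ of an $A$-indiscernible sequence need not literally lie on a \emph{common} $A$-indiscernible sequence (they may overlap, as with $(b_0,b_1)$ and $(b_1,b_2)$). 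The correct statement is $\d_A(b_{\bar i},b_{\bar j})\le 2$, obtained by passing through an intermediate block $b_{\bar k}$ of consecutive indices far to the right, which is at Lascar distance~$1$ from each. This still gives $b_{\bar i}\equivls_A b_{\bar j}$, so your conclusion stands, but the intermediate step deserves to be spelled out.
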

\begin{proof}
Point (i) is straightforward, we prove (ii). Let $\alpha \models q$ and $b \equivls_B b'$. Then there are $\lambda_T$-saturated models $M_1, \ldots, M_n$, all containing $B$, and $b = b_0, \ldots, b_n = b'$ such that $b_i \equiv_{M_{i+1}} b_{i+1}$ for all $0 \leq i < n$. Let $0 \leq i < n$, it is enough to show $\alpha b_i \equiv_{M_{i+1}} \alpha b_{i+1}$. We have $b_i M_{i+1} \equivls_{A} b_{i+1} M_{i+1}$, so by $A$-Ls-invariance $\alpha b_i M_{i+1} \equivls_{A} \alpha b_{i+1} M_{i+1}$, which implies the desired result.
\end{proof}
\begin{lemma}
\thlabel{lem:extend-to-global-ls-invariant-type}
Let $T$ be thick and let $p = \tp(a/B)$ be a coheir over $M \subseteq B$. Then there is a global $M$-Ls-invariant type extending $p$.
\end{lemma}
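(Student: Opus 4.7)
The plan is to first extend $p$ to a global type $q$ over $\MM$ that is finitely satisfiable in $M$, and then show that any such global coheir is automatically $M$-Ls-invariant, using thickness.

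For the first step, one obtains such a $q$ by a standard positive-logic argument: for instance, take a realisation $\alpha$ in a larger monster arising as an ultrafilter limit of elements $m_{\Sigma_0} \in M$ realising finite subsets $\Sigma_0 \subseteq p$, and set $q = \tp(\alpha/\MM)$. Then $\alpha \models p$, and every formula in $q$ is realised in $M$ by some $m_{\Sigma_0}$, so $q$ is finitely satisfiable in $M$.

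For the second step, let $\alpha \models q$ and suppose $b \equivls_M b'$. By \thref{def:lascar-strong-type}(i) there is a chain $b = b_0, \ldots, b_n = b'$ with each consecutive pair $b_i, b_{i+1}$ on a common $M$-indiscernible sequence. It therefore suffices to show that whenever $(c_j)_{j<\omega}$ is $M$-indiscernible in $\MM$, so is $(\alpha c_j)_{j<\omega}$: this yields $\d_M(\alpha b_i, \alpha b_{i+1}) \leq 1$ for each $i$, hence $\d_M(\alpha b, \alpha b') \leq n$, and finally $\alpha b \equivls_M \alpha b'$. To prove this claim, invoke thickness: $M$-indiscernibility is type-defined by some partial type $\Theta_M((z_j)_{j<\omega})$ over $M$. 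Suppose for contradiction that some $\theta(z_1, \ldots, z_k) \in \Theta_M$ fails on $(\alpha c_{j_1}, \ldots, \alpha c_{j_k})$. By maximality of the positive existential type of this tuple over $M$, there is $\chi(z_1, \ldots, z_k)$ over $M$ with $\models \chi(\alpha c_{j_1}, \ldots, \alpha c_{j_k})$ and $T \vdash \neg \exists z_1 \ldots z_k (\theta \wedge \chi)$. Writing $z_l = (x_l, y_l)$ and $\chi'(x, y_1, \ldots, y_k) := \chi((x, y_1), \ldots, (x, y_k))$, we obtain $\chi'(x, c_{j_1}, \ldots, c_{j_k}) \in q$, so by finite satisfiability some $m \in M$ satisfies $\chi'(m, c_{j_1}, \ldots, c_{j_k})$, i.e.\ $\chi((m, c_{j_1}), \ldots, (m, c_{j_k}))$ holds. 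But $((m, c_j))_{j<\omega}$ is $M$-indiscernible (since $m \in M$ and $(c_j)$ is), so it satisfies $\Theta_M$ and in particular $\theta((m, c_{j_1}), \ldots, (m, c_{j_k}))$ holds---contradicting the inconsistency of $\theta \wedge \chi$.

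The main obstacle is precisely this passage from global coheir to $M$-Ls-invariance: in full first-order logic one obtains the stronger conclusion of $M$-invariance by direct use of negation, whereas here thickness must stand in to provide a type-definable proxy for $M$-indiscernibility, so that the positive inconsistency witness $\chi$ plays the role of the unavailable $\neg \theta$.
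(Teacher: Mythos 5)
Your first step---extending $p$ to a global coheir $q$, i.e.\ a complete global type over $\MM$ finitely satisfiable in $M$---does not go through in positive logic. The ultrapower construction produces $\alpha$ in $\MM^I/\mathcal{U}$ whose positive type over $\MM$ \emph{computed in the ultrapower} is finitely satisfiable in $M$; but $\MM^I/\mathcal{U}$ need not be an e.c.\ model, so the embedding of $\MM^I/\mathcal{U}$ into the bigger monster need not be an immersion, and $q = \tp(\alpha/\MM)$ computed in that bigger monster may contain strictly more positive formulas than the ultrapower type. Those extra formulas need not be realised in $M$, so $q$ need not be finitely satisfiable in $M$. Nor can you instead pass to a maximal finitely-satisfiable-in-$M$ set of formulas, since without negation such a maximal set is not automatically a complete type. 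This is not a repairable technicality: the paper explicitly observes, in the remark following \thref{lem:(*)}, that finitely satisfiable global extensions may fail to exist in thick theories, and the whole point of this lemma is that Ls-invariant extensions are available where invariant or finitely satisfiable ones are not.

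Your second step, by contrast, is correct and contains the key observation shared with the paper's argument: if $(c_j)_j$ is $M$-indiscernible and $m \in M$, then $(mc_j)_j$ is $M$-indiscernible, and this together with thickness, the e.c.\ property of the monster (supplying the inconsistency witness $\chi$), and finite satisfiability in $M$ forces $M$-Ls-invariance. The paper uses the same observation but reorganises it to avoid ever needing a global coheir: instead of extending $p$ to a complete global type and then checking Ls-invariance, it adjoins to $p(x)$ all the (by thickness, type-definable) conditions $\d_M(xc,xc') \leq 1$ for pairs $c, c' \in \MM$ with $\d_M(c,c') \leq 1$, and proves the resulting partial type $\Gamma$ consistent. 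Consistency only requires that each \emph{finite} fragment of $p$ be realised by some $d \in M$---exactly the coheir hypothesis applied piecewise---and such $d$ trivially satisfies the appended Lascar-distance conditions. Any maximal extension of $\Gamma$ is then $M$-Ls-invariant by construction, so the problematic global-coheir step is bypassed entirely.
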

\begin{proof}
Define
\[
\Gamma(x) = p(x) \cup \bigcup \{ \d_M(x c, x c') \leq 1 : c, c' \in \MM \text{ with } \d_M(c, c') \leq 1 \}.
\]
We claim that $\Gamma(x)$ is consistent. For finite $p_0(x) \subseteq p(x)$ there is $d \in M$ such that $d \models p_0$. Then for any $c, c'$ with $\d_M(c, c') \leq 1$ we have that $\d_M(dc, dc') \leq 1$ because $d$ is in $M$. Any maximal extension of $\Gamma(x)$ will be a desired global $M$-Ls-invariant type.
\end{proof}
\begin{definition}
\thlabel{def:extending-lstp}
For $A \subseteq B$ we say that \emph{$\Lstp(c/B)$ extends $\Lstp(c'/A)$} if $c \equivls_A c'$.
\end{definition}
\begin{corollary}
\thlabel{cor:type-over-model-extends-to-global-ls-invariant-type}
In a thick theory we have that $\Lstp(a/M)$ extends to a global $M$-Ls-invariant type for any $a$ and $M$.
\end{corollary}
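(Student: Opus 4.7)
The plan is to deduce this directly from \thref{lem:extend-to-global-ls-invariant-type} by taking $B = M$ and $p = \tp(a/M)$. The only content is to verify that $p$ is a coheir over $M$, i.e.\ finitely satisfiable in $M$; the conclusion of that lemma then produces the required global $M$-Ls-invariant type.

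The coheir condition is immediate from $M$ being existentially closed. Given any formula $\phi(x, m) \in \tp(a/M)$, we have $\MM \models \exists x\, \phi(x, m)$, and since $M$ is e.c.\ the inclusion $M \hookrightarrow \MM$ is an immersion by \thref{def:ec-model}(i), so $M \models \exists x\, \phi(x, m)$ as well, yielding a witness $d \in M$ with $\models \phi(d, m)$. Applying \thref{lem:extend-to-global-ls-invariant-type} now produces a global $M$-Ls-invariant type $q$ extending $\tp(a/M)$, which in particular extends the partial type over $M$ determined by $\Lstp(a/M)$ (this partial type is contained in $\tp(a/M)$, since every element of $\Lstp(a/M)$ has the same type over $M$ as $a$).

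I do not anticipate any genuine obstacle here: the corollary is precisely the point-of-use specialisation of the preceding lemma, and all the technical work---the construction of the partial type $\Gamma(x)$ and the use of thickness to force Ls-invariance via the conditions $\d_M(xc,xc')\le 1$---has already been carried out in the lemma's proof. The only additional observation needed is that the coheir hypothesis becomes automatic when $B$ is taken to be the e.c.\ model $M$ itself, which is a direct consequence of the immersion characterisation of e.c.\ models.
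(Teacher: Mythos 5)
Your reduction to \thref{lem:extend-to-global-ls-invariant-type} with $B = M$ and your verification that $\tp(a/M)$ is a coheir over $M$ (automatic since $M$ is e.c.) are both correct, and this is exactly where the paper starts. But your final step is wrong, and the gap is not cosmetic.

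The lemma yields a global $M$-Ls-invariant type $q$ extending $\tp(a/M)$, so a realisation $\alpha \models q$ satisfies $\alpha \equiv_M a$. To conclude that $q$ extends $\Lstp(a/M)$ you would need $\alpha \equivls_M a$ (see \thref{def:extending-lstp}), and in a general thick theory $\equiv_M$ does \emph{not} imply $\equivls_M$ over an e.c.\ model $M$. You argue that $\Lstp(a/M)$ ``determines a partial type over $M$'' contained in $\tp(a/M)$, but this conflates the $\equivls_M$-class with the $\equiv_M$-class: the former is what must be respected, and it is strictly finer. Indeed the paper flags Poizat's example in Subsection~\ref{subsec:thick-non-semi-hausdorff} as one ``where having the same type over an e.c.\ model does not guarantee having the same Lascar strong type (over that model).'' The implication you want does hold when $M$ is $\lambda_T$-saturated (\thref{lem:find-indisc-sequence-through-model}), or after proving symmetry in NSOP$_1$ theories (\thref{cor:g-compact}(ii)), but neither is available here.

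The missing step, which the paper supplies, is an automorphism correction: take $\alpha \models q$, pick $a' \in \MM$ with $a' \equivls_M \alpha$ (possible by \thref{rem:local-ls-realisation-of-global-type}), note $a' \equiv_M a$ and choose $f \in \Aut(\MM/M)$ with $f(a') = a$. Then $f(q)$ is still $M$-Ls-invariant by \thref{lem:ls-invariance-easy-facts}(i), and its realisations have the same Lascar strong type over $M$ as $f(a') = a$, so $f(q)$ extends $\Lstp(a/M)$ as required. Without this conjugation your $q$ may lie in the wrong Lascar strong type.
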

\begin{proof}
By \thref{lem:extend-to-global-ls-invariant-type} we have that $p = \tp(a/M)$ extends to some global $M$-Ls-invariant type $q$. For $\alpha \models q$ let $a' \equivls_M \alpha$. Then there is $f \in \Aut(\MM/M)$ such that $f(a') = a$. So by \thref{lem:ls-invariance-easy-facts}(i) $f(q)$ is global $M$-Ls-invariant and is exactly what we need.
\end{proof}
\begin{definition}
\thlabel{def:global-invariant-independence}
For a type $p = \tp(a/Cb)$ write $a \ind_C^{iLs} b$ if there is a global $C$-Ls-invariant extension of $p$.
\end{definition}
\begin{proposition}
\thlabel{prop:independence-strengths}
In any thick theory $T$ we have
\[
a \ind_C^u b \implies
a \ind_C^{iLs} b \implies
a \ind_C^f b \implies
a \ind_C^d b.
\]
\end{proposition}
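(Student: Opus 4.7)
The plan is to prove the chain of implications by running through each arrow in order. None of the three implications should require new technology: they should all fall out of the lemmas established in Section~\ref{sec:global-ls-invariant-types} together with the definitions of (Ls-)invariance, forking, and dividing.

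\textbf{Step 1 (coheir to Ls-invariant):} If $a \ind_C^u b$, then $p = \tp(a/Cb)$ is finitely satisfiable in $C$. This is precisely the hypothesis of \thref{lem:extend-to-global-ls-invariant-type}, whose conclusion supplies a global $C$-Ls-invariant type extending $p$, i.e., $a \ind_C^{iLs} b$. So the first implication is an immediate citation.

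\textbf{Step 2 (Ls-invariant to non-forking):} Let $q$ be a global $C$-Ls-invariant extension of $p = \tp(a/Cb)$ and fix $\alpha \models q$ (in a bigger monster). Suppose for contradiction that $p$ forks over $C$: there is a set $\Phi(x)$ of formulas with $p \models \bigvee \Phi$ and each $\phi \in \Phi$ dividing over $C$. Since $\alpha \models p$, we have $\alpha \models \phi(x,d)$ for some $\phi(x,d) \in \Phi$. Pick a $C$-indiscernible sequence $(d_i)_{i<\omega}$ with $d_0 = d$ (using an automorphism over $C$) witnessing that $\phi(x,d)$ divides, so that $\{\phi(x,d_i) : i < \omega\}$ is inconsistent. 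Thickness gives $d_i \equivls_C d_0$ for all $i$ (they lie on the same $C$-indiscernible sequence, so $\d_C(d_i,d_0)\le 1$). By $C$-Ls-invariance of $q$, $\alpha d_i \equivls_C \alpha d_0$ for all $i$, and in particular $\alpha \models \phi(x,d_i)$ for every $i$, contradicting inconsistency. Hence $a \ind_C^f b$.

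\textbf{Step 3 (non-forking to non-dividing):} This is essentially by definition, combined with \thref{rem:dividing-strong-finite-character}. If $p = \tp(a/Cb)$ divides over $C$, then by that remark some single formula $\phi(x,b) \in p$ divides over $C$; taking $\Phi = \{\phi(x,b)\}$ shows $p$ forks over $C$. The only step that carries real content is Step~2; Steps 1 and 3 are citation-level. The main point to handle carefully in Step~2 is passing from the definition of dividing (which only gives $d_0 \equiv_C d$) to a witnessing sequence actually beginning at $d$, and then invoking thickness to upgrade indiscernibility of $(d_i)$ to $\equivls_C$, so that $C$-Ls-invariance of $q$ does the transfer.
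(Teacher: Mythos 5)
Your proposal is correct and follows essentially the same route as the paper: Step 1 cites \thref{lem:extend-to-global-ls-invariant-type}, Step 2 argues by contradiction using a realisation $\alpha$ of a global $C$-Ls-invariant extension and transferring $\alpha d \equivls_C \alpha d_i$ along the $C$-indiscernible dividing witness, and Step 3 is definitional. Your added care in Step 2 about passing from the definitional $d_0 \equiv_C d$ to a witnessing sequence that actually starts at $d$ (via an automorphism over $C$) is a small but genuine clarification the paper glosses over; the rest matches the paper's argument point for point.
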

\begin{proof}
Standard, but we write out the arguments to check they hold with the slightly changed definitions for positive logic. The first implication is precisely \thref{lem:extend-to-global-ls-invariant-type}, while the last implication is direct from the definition of dividing and forking.

We prove the middle implication. Assume $a \ind_C^{iLs} b$ and suppose for a contradiction that $p(x) = \tp(a/Cb)$ forks over $C$. Let $\Phi(x)$ be a set of formulas that all divide over $C$, such that $p(x)$ implies $\bigvee\Phi(x)$. Let $q$ be a global $C$-Ls-invariant extension of $p$ and let $\alpha \models q$. Then there must be $\phi(x, d) \in \Phi(x)$ such that $\models \phi(\alpha, d)$. Let $(d_i)_{i < \omega}$ be $C$-indiscernible with $d_0 = d$. For all $i < \omega$ we have $d \equivls_C d_i$ and thus $\alpha d \equivls_C \alpha d_i$. So in particular $\alpha \models \{ \phi(x, d_i) : i < \omega \}$, which contradicts that $\phi(x, d)$ divides over $C$.
\end{proof}
In the remainder of this section we will develop tensoring of global Ls-invariant types. This comes down to verifying that the usual constructions for global invariant types (see e.g.\ \cite[Section 2.2.1]{simon}) work when we carefully replace types by Lascar strong types everywhere.
\begin{lemma}
\thlabel{lem:tensor-lemma}
Suppose $T$ is thick, $q$ a global $A$-Ls-invariant type and $p = \Lstp(a^*/A)$. Then, for $\beta \models q$, the set
\[
R_{p, q}(A) = \{(a,b) \in \MM : a \equivls_A a^* \text{ and } b \equivls_{Aa} \beta \}
\]
is (the set of  realisations of) a Lascar strong type over $A$.
\end{lemma}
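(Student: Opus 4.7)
The plan is to verify that $R_{p,q}(A)$ coincides with a single $\equivls_A$-class. Three things must be checked: non-emptiness, that any two pairs in $R_{p,q}(A)$ share a Lascar strong type over $A$, and that $R_{p,q}(A)$ is closed under $\equivls_A$. Non-emptiness is immediate from \thref{rem:local-ls-realisation-of-global-type}: pick $b^* \in \MM$ with $b^* \equivls_{Aa^*} \beta$, giving $(a^*, b^*) \in R_{p,q}(A)$. For the equivalence of two pairs $(a_1, b_1), (a_2, b_2) \in R_{p,q}(A)$, I would chain, in the bigger monster,
\[ a_1 b_1 \equivls_A a_1 \beta \equivls_A a_2 \beta \equivls_A a_2 b_2, \]
where the outer equivalences come from $b_i \equivls_{Aa_i} \beta$ (apply a Lascar strong automorphism over $Aa_i$, which automatically fixes $a_i$), and the middle one uses $a_1 \equivls_A a^* \equivls_A a_2$ together with the $A$-Ls-invariance of $q$ applied to $\beta \models q$.

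For closure, take $(a, b) \equivls_A (a^*, b^*)$ with $(a^*, b^*) \in R_{p,q}(A)$; only $b \equivls_{Aa} \beta$ requires work. I would decompose a Lascar strong automorphism $h \in \Aut_f(\MM/A)$ with $h(a^*, b^*) = (a, b)$ as a product of automorphisms of $\MM$ fixing $\lambda_T$-saturated models $M_i \supseteq A$ pointwise, and lift each factor to the bigger monster (where the $M_i$ remain $\lambda_T$-saturated) to obtain an extension $\tilde h$ of $h$ that is a Lascar strong automorphism of the bigger monster over $A$. Applying $\tilde h$ to $b^* \equivls_{Aa^*} \beta$ then yields $b \equivls_{Aa} \tilde h(\beta)$, reducing the problem to showing $\tilde h(\beta) \equivls_{Aa} \beta$.

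This last step is the main obstacle. Since $\tilde h$ only fixes $A$ pointwise, $\tilde h(\beta) \equivls_A \beta$ comes for free but is a priori weaker than what is needed. The crucial observation is that $A$-Ls-invariance of $q$ upgrades to outright $\Aut_f(\MM/A)$-invariance: for any $\phi(x, c) \in q$ and $h \in \Aut_f(\MM/A)$ one has $h^{-1}(c) \equivls_A c$, so Ls-invariance forces $\phi(\beta, c) \Leftrightarrow \phi(\beta, h^{-1}(c))$, giving $h(q) = q$. Hence $\tilde h(\beta) \models q$, so $\beta$ and $\tilde h(\beta)$ realise $q|_{M'}$ for any $\lambda_T$-saturated model $M' \subseteq \MM$ containing $Aa$. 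Homogeneity of the bigger monster then supplies an automorphism fixing $M'$ pointwise that sends $\tilde h(\beta)$ to $\beta$; this is a Lascar strong automorphism over $Aa$, giving $\tilde h(\beta) \equivls_{Aa} \beta$. Chaining with the equivalence from the previous paragraph produces $b \equivls_{Aa} \beta$, completing the argument.
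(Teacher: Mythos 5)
Your proof is correct, and the first two steps (non-emptiness via \thref{rem:local-ls-realisation-of-global-type}, and the chain $a_1 b_1 \equivls_A a_1\beta \equivls_A a_2\beta \equivls_A a_2 b_2$ for two elements of $R_{p,q}(A)$) coincide with the paper's. Where you differ is the closure step. The paper's route is shorter: given $(a,b) \in R_{p,q}(A)$ and $ab \equivls_A a'b'$, it applies $A$-Ls-invariance of $q$ directly to the relation $ab \equivls_A a'b'$ to obtain $\beta ab \equivls_A \beta a'b'$; the Lascar strong automorphism of the bigger monster over $A$ witnessing this \emph{already fixes $\beta$} and sends $ab$ to $a'b'$, so applying it to $b \equivls_{Aa} \beta$ immediately yields $b' \equivls_{Aa'} \beta$. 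You instead start from an arbitrary $h \in \Aut_f(\MM/A)$ carrying $(a^*,b^*)$ to $(a,b)$, extend it factor-by-factor to $\tilde h$ on the bigger monster, and must then separately prove $\tilde h(\beta) \equivls_{Aa} \beta$; you do this by first observing that $A$-Ls-invariance of $q$ forces $h(q) = q$ for all $h \in \Aut_f(\MM/A)$, and then using saturation and homogeneity to upgrade $\tilde h(\beta) \models q$ to $\tilde h(\beta) \equivls_{Aa} \beta$. Both arguments are sound. Your detour is longer, but the intermediate fact that $q$ is $\Aut_f(\MM/A)$-invariant is a clean and reusable observation (it is, in effect, the semantic core of Ls-invariance); the paper's move of baking $\beta$ into the tuple before applying Ls-invariance is the device that makes the extra step unnecessary.
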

\begin{proof}
Clearly this does not depend on the choice of $a^*$ or $\beta$. The set is non-empty, as for any $b \equivls_{Aa^*} \beta$ we have $(a^*, b) \in R_{p, q}(A)$.

Let $(a, b), (a',b') \in R_{p, q}(A)$. Then $a \equivls_A a^* \equivls_A a'$, so by $A$-Ls-invariance $ab \equivls_A a \beta \equivls_A a' \beta \equivls_A a'b'$. Conversely, suppose $(a,b) \in R_{p, q}(A)$ and $ab \equivls_A a'b'$. Then $a' \equivls_A a \equivls_A a^*$. Furthermore, by $A$-Ls-invariance $\beta ab \equivls_A \beta a'b'$, so applying an automorphism to $b \equivls_{Aa} \beta$ we get $b' \equivls_{Aa'} \beta$ and conclude that $(a',b') \in R_{p, q}(A)$.
\end{proof}
\begin{theorem}
\thlabel{thm:tensor-theorem}
Suppose $T$ is thick with global $A$-Ls-invariant types $q$ and $r$. Then there is a unique global $A$-Ls-invariant type $q \otimes r$ such that for any $\alpha \models q$, $\beta \models r$ and $(\alpha', \beta') \models q \otimes r$ the following are equivalent for all $B \supseteq A$ and for all $a, b$:
\begin{enumerate}[label=(\roman*)]
\item $ab \equivls_B \alpha' \beta'$,
\item $a \equivls_B \alpha$ and $b \equivls_{Ba} \beta$.
\end{enumerate}
In particular this implies that also $\alpha' \models q$ and $\beta' \models r$.
\end{theorem}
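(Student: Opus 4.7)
The plan is to mimic the classical construction of the tensor product of global invariant types in full first-order logic (see \cite[Section 2.2.1]{simon}), substituting Lascar strong types for types throughout and using \thref{lem:tensor-lemma} as the main tool. To construct $q \otimes r$, fix realisations $\alpha^* \models q$ and $\beta^* \models r$ in sufficiently large monsters. Since $r$ is $\MM$-Ls-invariant by \thref{lem:ls-invariance-easy-facts}(ii), applying \thref{lem:tensor-lemma} with parameter set $\MM$, Lascar strong type $p := \Lstp(\alpha^*/\MM)$, and $r$ in the role of the global invariant type yields the Lascar strong type $R_{p, r}(\MM) = \{(a, b) : a \equivls_\MM \alpha^* \text{ and } b \equivls_{\MM a} \beta^*\}$ over $\MM$. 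Define $q \otimes r := \tp(\alpha^*\beta^*/\MM)$, the common positive existential type of all realisations of $R_{p, r}(\MM)$.

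I would then verify that every realisation of $q \otimes r$ lies in $R_{p, r}(\MM)$. For $(\alpha', \beta') \models q \otimes r$, one has $\tp(\alpha'\beta'/\MM) = \tp(\alpha^*\beta^*/\MM)$, and by $\lambda_T$-saturation of $\MM$ together with \thref{lem:find-indisc-sequence-through-model}, $\alpha'\beta' \equivls_\MM \alpha^*\beta^*$, placing $(\alpha', \beta')$ in $R_{p, r}(\MM)$. Consequently $\alpha' \models q$, $\beta' \models r$, and $\beta' \equivls_{\MM\alpha'} \beta^*$. The $A$-Ls-invariance of $q \otimes r$ then follows easily: for $b \equivls_A b'$, $A$-Ls-invariance of $q$ applied to $\alpha'$ gives $\alpha' b \equivls_A \alpha' b'$, and $A$-Ls-invariance of $r$ applied to $\beta'$ upgrades this to $\alpha'\beta' b \equivls_A \alpha'\beta' b'$.

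A key auxiliary fact enabling the full equivalence is that all realisations of $r$ share the same Lascar strong type over $\MM a$ for every $a$. This is shown by a chain argument: given $\beta_1, \beta_2 \models r$ and a Lascar strong automorphism $H$ over $\MM$ with $H(\beta_1) = \beta_2$ (existing by $\lambda_T$-saturation of $\MM$), two applications of $\MM$-Ls-invariance of $r$ (to $\beta_1$ and $\beta_2$, using $a \equivls_\MM H(a)$) yield a chain $\beta_1 a \equivls_\MM \beta_1 H(a) \equivls_\MM \beta_2 H(a) \equivls_\MM \beta_2 a$, whence $\beta_1 \equivls_{\MM a} \beta_2$. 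With this, for arbitrary $\alpha \models q$ and $\beta \models r$, condition (ii) is equivalent to $a \equivls_B \alpha^*$ and $b \equivls_{Ba} \beta^*$. The implication (ii) $\Rightarrow$ (i) is then a composition of Lascar strong automorphisms over $B$: $B$-Ls-invariance of $r$ applied to $\beta^*$ together with $a \equivls_B \alpha^* \equivls_B \alpha'$ produces one fixing $\beta^*$ and sending $\alpha' \to a$; combined with one witnessing $\beta' \equivls_{\MM\alpha'} \beta^*$ and one over $Ba$ witnessing $b \equivls_{Ba} \beta^*$, the composition sends $(\alpha', \beta')$ to $(a, b)$. The converse runs the same composition in reverse, using the automorphism witnessing (i).

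Uniqueness is immediate: if $q'$ is another global $A$-Ls-invariant type satisfying (i) $\iff$ (ii), then taking $(\alpha'', \beta'') \models q'$ and specialising both equivalences to $B = \MM$, $(a, b) = (\alpha', \beta')$ yields $\alpha'\beta' \equivls_\MM \alpha''\beta''$, so $q' = q \otimes r$. The main technical challenge I anticipate is the bookkeeping of Lascar strong automorphism compositions in the equivalence (i) $\iff$ (ii): the automorphisms fix different parameter sets ($B$, $\MM\alpha'$, $Ba$, $B\beta^*$) and must compose to yield one with the correct invariance. The auxiliary fact about realisations of $r$ agreeing over $\MM a$ is the Lascar analog of the classical fact that a global invariant type has a well-defined restriction to any extension of its base, and is what ensures the equivalence holds for arbitrary $\beta \models r$ rather than a fixed one.
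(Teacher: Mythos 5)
Your top-down definition $q \otimes r := \tp(\alpha^*\beta^*/\MM)$ via $R_{p,r}(\MM)$ differs from the paper's bottom-up construction $q \otimes r := \bigcup\{R_{q_M, r}(M) : M \text{ $\lambda_T$-saturated},\ A \subseteq M\}$, which is supported by the monotonicity claim $R_{q_C,r}(C) \subseteq R_{q_B,r}(B)$. Both mimic the classical tensor product, but your route runs into a gap that the paper's route is specifically designed to avoid. \thref{lem:tensor-lemma} and \thref{lem:ls-invariance-easy-facts}(ii), which you invoke with $A = \MM$, are stated only for small parameter sets: $R_{p,q}(A)$ in \thref{lem:tensor-lemma} is explicitly a subset of $\MM$, so $R_{p,r}(\MM)$ cannot literally contain your $(\alpha^*,\beta^*)$; and the proof of \thref{lem:ls-invariance-easy-facts}(ii) uses small $\lambda_T$-saturated models containing the base, which makes no sense for the base $\MM$. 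Making your construction rigorous would require re-developing the Lascar machinery ($\equivls$, Lascar strong automorphisms, Ls-invariance, \thref{lem:tensor-lemma}) over $\MM$ as a base inside a bigger monster; you do not carry this out, and the paper deliberately never needs to. The same problem recurs in your (i) $\iff$ (ii) argument when you apply $B$-Ls-invariance of $r$ to $\alpha' \equivls_B a$ with $\alpha' \notin \MM$: \thref{def:ls-invariant-type} only quantifies over tuples in $\MM$.

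Your uniqueness argument is also invalid as stated: it specialises the universal property to $B = \MM$ and $(a,b) = (\alpha',\beta')$, but the property is asserted only for small $B \supseteq A$ and $a,b \in \MM$ (\thref{conv:monster-model}), so that is not an instance of it. The correct argument, which the paper gives in one line, restricts to small $\lambda_T$-saturated $M \supseteq A$: there $\equivls_M$ coincides with $\equiv_M$ by \thref{lem:find-indisc-sequence-through-model}, so the universal property determines $(q \otimes r)|_M = R_{q_M,r}(M)$, and since every formula over $\MM$ lies over some such $M$, all of $q \otimes r$ is determined. Your auxiliary fact, that all realisations of $r$ agree in Lascar strong type over $\MM a$, also only has content (for $a \in \MM$) as the statement that $\beta_1 \equivls_\MM \beta_2$, which already follows from saturation and \thref{lem:find-indisc-sequence-through-model}; the $H$-chain you sketch is again using $\MM$-Ls-invariance of $r$ applied to bigger-monster tuples, which is not available.
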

\begin{proof}
Throughout, let $\alpha \models q$ and $\beta \models r$. For $B \supseteq A$, denote by $q_B$ the Lascar strong type $\Lstp(\alpha/B)$. By \thref{lem:ls-invariance-easy-facts}(ii) and \thref{lem:tensor-lemma}, we have a well-defined Lascar strong type $R_{q_B, r}(B)$.

\vspace{\baselineskip}\noindent
\underline{Claim.} For $A \subseteq B \subseteq C$ we have $R_{q_C, r}(C) \subseteq R_{q_B, r}(B)$.

\noindent
\underline{Proof of claim.} Let $(a, b) \in R_{q_C, r}(C)$. Then $a \equivls_C \alpha$ and $b \equivls_{Ca} \beta$. Hence $a \equivls_B \alpha$ and $b \equivls_{Ba} \beta$, so $(a,b) \in R_{q_B, r}(B)$.

\vspace{\baselineskip}
For $M \supseteq A$ a $\lambda_T$-saturated model $R_{q_M, r}(M)$ corresponds to the usual syntactic type over $M$. So viewing $R_{q_M, r}(M)$ as a set of formulas over $M$, we get by the claim that the following is a well-defined global type:
$$
q \otimes r := \bigcup \{ R_{q_M, r}(M) : M \text{ is a $\lambda_T$-saturated model and } A \subseteq M \}.
$$

First we verify that $q \otimes r$ satisfies the universal property we claimed. So let $(\alpha', \beta') \models q \otimes r$ and $B \supseteq A$. Let $M \supseteq B$ be a $\lambda_T$-saturated model and pick $a'b' \equivls_M \alpha' \beta'$. Then by construction $(a', b') \in R_{q_M, r}(M)$ and so by the claim $(a', b') \in R_{q_B, r}(B)$. So for any $a,b$ we have $ab \equivls_B \alpha' \beta'$ if and only if $ab \equivls_B a'b'$ if and only if $(a, b) \in R_{q_B, r}(B)$ if and only if $a \equivls_B \alpha$ and $b \equivls_{Ba} \beta$.

Uniqueness follows because any global type satisfying this universal property must restrict to $R_{q_M, r}(M) = (q \otimes r)|_M$ for all $\lambda_T$-saturated $M \supseteq A$.

Finally we prove $A$-Ls-invariance. Let $d \equivls_A d'$, and pick $a,b$ in $\MM$ such that $ab \equivls_{Add'} \alpha' \beta'$. So $a \equivls_{Add'} \alpha'$ and thus by $A$-Ls-invariance of $q$:
\[
ad \equivls_A \alpha' d \equivls_A \alpha' d' \equivls_A ad'.
\]
Then $A$-Ls-invariance of $r$ gives us $\beta' ad \equivls_A \beta' ad'$. From the universal property we get $b \equivls_{Add'a} \beta'$, so $abd \equivls_A abd'$. Because, by assumption, $ab \equivls_{Add'} \alpha' \beta'$, we conclude that $\alpha' \beta' d \equivls_A \alpha' \beta' d'$ and we are done.
\end{proof}
\begin{lemma}
\thlabel{lem:tensor-facts}
For any global $A$-Ls-invariant types $p, q, r$ in a thick theory we have:
\begin{enumerate}[label=(\roman*)]
\item associativity: $(p \otimes q) \otimes r = p \otimes (q \otimes r)$;
\item monotonicity: for any $q'(x_0)= q(x_0, x_1)|_{x_0}\subseteq q(x_0, x_1)$ and $r'(y_0) =r(y_0, y_1)|_{y_0}\subseteq r(y_0, y_1)$, we have $q' \otimes r' \subseteq q \otimes r$.
\end{enumerate}
\end{lemma}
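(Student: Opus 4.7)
The plan is to derive both parts from the universal property of $\otimes$ established in \thref{thm:tensor-theorem}, together with the observation that a global $A$-Ls-invariant type is determined by its restrictions to $\lambda_T$-saturated models $M \supseteq A$ (which, by the proof of that theorem, coincide with Lstp-classes of representatives).

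For (i), I would fix $\alpha \models p$, $\beta \models q$, $\gamma \models r$ and iterate the universal property on each side. Starting from $(\alpha', \beta', \gamma') \models (p \otimes q) \otimes r$, an outer application of \thref{thm:tensor-theorem} characterises $(a, b, c) \equivls_B (\alpha', \beta', \gamma')$ as $(a, b) \equivls_B (\alpha', \beta')$ together with $c \equivls_{Bab} \gamma$; an inner application (now to $p \otimes q$) rewrites the first clause as $a \equivls_B \alpha$ and $b \equivls_{Ba} \beta$. The symmetric argument for $(\alpha'', \beta'', \gamma'') \models p \otimes (q \otimes r)$, outer for $p$ and inner for $q \otimes r$, yields the same characterisation. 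Both tensor products thus share the same Lstp-class over every $B \supseteq A$, in particular over every $\lambda_T$-saturated $M \supseteq A$, which forces equality as global types.

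For (ii), I would first verify that $q'$ is global $A$-Ls-invariant: for $(\alpha_0, \alpha_1) \models q$ and $b \equivls_A b'$, invariance of $q$ gives $\alpha_0 \alpha_1 b \equivls_A \alpha_0 \alpha_1 b'$, and projecting yields $\alpha_0 b \equivls_A \alpha_0 b'$; the same works for $r'$, so $q' \otimes r'$ is defined. Fix a formula $\phi(x_0, y_0) \in q' \otimes r'$ whose parameters lie in some $\lambda_T$-saturated $M \supseteq A$. By the construction in \thref{lem:tensor-lemma}, $\models \phi(a_0, b_0)$ holds whenever $a_0 \equivls_M \alpha_0$ and $b_0 \equivls_{Ma_0} \beta_0$. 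Now given any $(a_0, a_1, b_0, b_1)$ realising $(q \otimes r)|_M$, the conditions $(a_0, a_1) \equivls_M (\alpha_0, \alpha_1)$ and $(b_0, b_1) \equivls_{M(a_0, a_1)} (\beta_0, \beta_1)$ project to the single coordinates as $a_0 \equivls_M \alpha_0$ and $b_0 \equivls_{M(a_0, a_1)} \beta_0$, and the inclusion $Ma_0 \subseteq M(a_0, a_1)$ further yields $b_0 \equivls_{Ma_0} \beta_0$. Hence $\models \phi(a_0, b_0)$, so $\phi \in (q \otimes r)|_M \subseteq q \otimes r$.

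I do not foresee a real obstacle. The only routine facts used---that Lascar strong types are preserved under projection of tuples and under shrinking the base parameter set---follow immediately from \thref{def:lascar-strong-automorphism}, since Lascar strong automorphisms act coordinatewise and $\Aut_f(\MM/C) \subseteq \Aut_f(\MM/C')$ whenever $C' \subseteq C$.
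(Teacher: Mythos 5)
Your proof is correct and follows essentially the same line as the paper's: both parts are derived from the universal property of \thref{thm:tensor-theorem}, iterated on each side for associativity and projected to the sub-tuple for monotonicity, with equality of global types deduced from agreement of Lascar strong types over every $B \supseteq A$. The only minor addition is your explicit verification that $q'$ and $r'$ are $A$-Ls-invariant, which the paper leaves implicit.
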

\begin{proof}
(i) Let $(\alpha, \beta, \gamma) \models (p \otimes q) \otimes r$ and $(\alpha', \beta', \gamma') \models p \otimes (q \otimes r)$. We will prove that $\alpha \beta \gamma \equivls_B \alpha' \beta' \gamma'$ for all $B \supseteq A$. Let $abc \equivls_B \alpha \beta \gamma$, then $b \equivls_{Ba} \beta$ and $c \equivls_{Bab} \gamma$. So we have $bc \equivls_{Ba} \beta' \gamma'$. Since also $a \equivls_B \alpha$ we thus conclude that $abc \equivls_B \alpha' \beta' \gamma'$.

(ii) Let $(\alpha, \beta) = ((\alpha_0, \alpha_1), (\beta_0, \beta_1)) \models q \otimes r$ and let $ab \equivls_B \alpha \beta$, where $B \supseteq A$ is arbitrary. Then in particular $a_0 \equivls_B \alpha_0$ and $b_0 \equivls_{Ba_0} \beta_0$. So if we let $(\alpha', \beta') \models q' \otimes r'$ then $\alpha_0 \beta_0 \equivls_B a_0 b_0 \equivls_B \alpha' \beta'$. So $(\alpha_0, \beta_0) \models q' \otimes r'$ and we are done.
\end{proof}
\begin{definition}
\thlabel{def:ordinal-tensor-and-morley-sequence}
For a global $A$-Ls-invariant type, we define $q^{\otimes \delta}$ for an ordinal $\delta \geq 1$ by induction as follows.
\begin{itemize}
\item $q^{\otimes 1} = q$,
\item $q^{\otimes \delta + 1} = q^{\otimes \delta} \otimes q$,
\item $q^{\otimes \delta} = \bigcup_{\gamma < \delta} q^{\otimes \gamma}$ when $\delta$ is a limit.
\end{itemize}
A \emph{Morley sequence in $q$ (over $A$)} is a sequence $(a_i)_{i < \delta}$ such that $(a_i)_{i < \delta} \equivls_A (\alpha_i)_{i < \delta}$, where $(\alpha_i)_{i < \delta} \models q^{\otimes \delta}$.
\end{definition}
Note that we define Morley sequences in terms of Lascar strong types here. So saying that $(a_i)_{i < \omega}$ is a Morley sequence in $q$ over $A$ is generally a stronger statement than just saying $(a_i)_{i < \omega} \models q^{\otimes \omega}|_A$. Of course, if $A$ is a $\lambda_T$-saturated model in a thick theory then the two coincide.
\begin{lemma}
\thlabel{lem:tensor-smaller-ordinal}
Suppose that $q$ is a global $A$-Ls-invariant type and let $(\alpha_i)_{i < \delta} \models q^{\otimes \delta}$. Then for any strictly increasing sequence $(i_\eta)_{\eta < \gamma}$ in $\delta$ we have that $(\alpha_{i_\eta})_{\eta < \gamma} \models q^{\otimes \gamma}$.
\end{lemma}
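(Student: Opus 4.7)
The plan is to induct on $\gamma \geq 1$, using the universal property from \thref{thm:tensor-theorem} at each step. Before starting, I would record the auxiliary observation that for every $j < \delta$, the initial segment $(\alpha_i)_{i \leq j}$ realises $q^{\otimes(j+1)}$. This is immediate by induction on $j$ from the recursive \thref{def:ordinal-tensor-and-morley-sequence}: the successor clause gives $q^{\otimes(j+1)} = q^{\otimes j} \otimes q$ and the limit clause takes unions, so no information about an initial segment is ever lost.

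For the base case $\gamma = 1$, we need $\alpha_{i_0} \models q$. If $i_0 = 0$ this is the initial segment observation. Otherwise, apply the universal property of $q^{\otimes i_0} \otimes q$ to $(\alpha_i)_{i \leq i_0}$ with $\beta^* \models q$: this yields $\alpha_{i_0} \equivls_A \beta^*$, and in particular $\alpha_{i_0} \models q$.

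For the successor case $\gamma = \gamma' + 1$, let $(\beta_\eta)_{\eta < \gamma} \models q^{\otimes \gamma'} \otimes q$, so $(\beta_\eta)_{\eta < \gamma'} \models q^{\otimes \gamma'}$ and $\beta_{\gamma'} \models q$. By the induction hypothesis, $(\alpha_{i_\eta})_{\eta < \gamma'} \equivls_A (\beta_\eta)_{\eta < \gamma'}$. By the universal property of $q^{\otimes \gamma} = q^{\otimes \gamma'} \otimes q$ applied with $B = A$, it suffices to show additionally that
\[
\alpha_{i_{\gamma'}} \equivls_{A(\alpha_{i_\eta})_{\eta < \gamma'}} \beta_{\gamma'}.
\]
Applying the universal property of $q^{\otimes i_{\gamma'}} \otimes q$ to $(\alpha_i)_{i \leq i_{\gamma'}}$ (using the auxiliary observation) with $B = A(\alpha_j)_{j < i_{\gamma'}}$ and $\beta^* = \beta_{\gamma'}$, we get $\alpha_{i_{\gamma'}} \equivls_{A(\alpha_j)_{j < i_{\gamma'}}} \beta_{\gamma'}$. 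Since $\{i_\eta : \eta < \gamma'\} \subseteq \{j : j < i_{\gamma'}\}$, and since Lascar strong type equality over a set implies it over any subset (by condition (i) of \thref{def:lascar-strong-type}, as a $B$-indiscernible sequence is also $B'$-indiscernible for $B' \subseteq B$), this restricts to what we need. The limit case is trivial: $q^{\otimes \gamma} = \bigcup_{\gamma' < \gamma} q^{\otimes \gamma'}$, and applying the inductive hypothesis to each initial subsequence $(i_\eta)_{\eta < \gamma'}$ yields the required realisation after taking the union.

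The only mildly delicate step is the restriction of Lascar strong equivalence from the full initial segment of parameters to the chosen subsequence in the successor case; everything else is a direct unwinding of the universal property. I do not anticipate a genuine obstacle — this is really a bookkeeping induction made clean by \thref{thm:tensor-theorem}.
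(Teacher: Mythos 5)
Your approach and the paper's are essentially the same: the key steps --- viewing $(\alpha_i)_{i \leq i_{\gamma'}}$ as a realisation of $q^{\otimes i_{\gamma'}} \otimes q$, extracting via the universal property the Lascar strong type of $\alpha_{i_{\gamma'}}$ over the preceding coordinates, restricting that Lascar strong equivalence down to the subsequence parameters, and then applying the universal property of $q^{\otimes \gamma'} \otimes q$ --- are exactly the paper's. However, the successor step as written does not prove what is claimed. You apply the universal property of $q^{\otimes \gamma} = q^{\otimes \gamma'} \otimes q$ ``with $B = A$'', which yields only $(\alpha_{i_\eta})_{\eta < \gamma} \equivls_A (\beta_\eta)_{\eta < \gamma}$, i.e.\ Lascar strong equivalence over $A$. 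That is strictly weaker than $(\alpha_{i_\eta})_{\eta < \gamma} \models q^{\otimes \gamma}$: two tuples of the bigger monster with the same Lascar strong type over $A$ need not realise the same global type over $\MM$ (e.g.\ in DLO the types at $-\infty$ and at $+\infty$ have trivial Lascar strong type over $\emptyset$ but are distinct global types). To conclude realisation of the global type you must establish the $\equivls_B$-equivalence for \emph{every} small $B \supseteq A$ (equivalently every small $\lambda_T$-saturated $M \supseteq A$), which is precisely how the paper's proof opens the successor step.

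A related issue, which is why the paper passes through local representatives: you invoke the universal property with parameter set $A(\alpha_j)_{j < i_{\gamma'}}$ containing elements of the bigger monster, and the resulting Lascar strong equivalence is over that mixed set. As stated in \thref{thm:tensor-theorem}, the universal property compares tuples $a,b$ \emph{inside} $\MM$ against realisations of the tensor in the bigger monster, with all parameter sets inside $\MM$. The paper avoids the extra bookkeeping by first choosing $a_{\leq i_\gamma} \in \MM$ with $a_{\leq i_\gamma} \equivls_B \alpha_{\leq i_\gamma}$ and then reasoning only over parameter sets inside $\MM$. Once you do this and replace ``$B = A$'' by ``an arbitrary $B \supseteq A$'', your sketch becomes the paper's proof; you flagged the restriction of Lascar strong equivalence as the only delicate point, but the quantification over $B$ and the passage to local representatives are the pieces actually missing.
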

\begin{proof}
From the construction of $q^{\otimes \delta}$ it is clear that for $\gamma < \delta$ and $(\alpha_i)_{i < \delta} \models q^{\otimes \delta}$ we have $(\alpha_i)_{i < \gamma} \models q^{\otimes \gamma}$.

We prove the lemma by induction to $\gamma$. The base case and the limit step are easy, so we prove the successor step. So suppose $(\alpha_{i_\eta})_{\eta < \gamma} \models q^{\otimes \gamma}$. We will prove $(\alpha_{i_\eta})_{\eta < \gamma}\alpha_{i_\gamma} \equivls_B \alpha_{<\gamma}\alpha_\gamma$ for all $B \supseteq A$. Let $a_{\leq i_\gamma} \equivls_B \alpha_{\leq i_\gamma}$, then in particular $(a_{i_\eta})_{\eta < \gamma} \equivls_B (\alpha_{i_\eta})_{\eta < \gamma}$ and $a_{i_\gamma} \equivls_{B(a_{i_\eta})_{\eta < \gamma}} \alpha_{i_\gamma}$. By the induction hypothesis and the universal property this means $(a_{i_\eta})_{\eta < \gamma} a_{i_\gamma} \equivls_B \alpha_{< \gamma} \alpha_\gamma$, which concludes the successor step. 
\end{proof}
By \thref{lem:tensor-smaller-ordinal} we have that $(a_i)_{i < \delta} \models q^{\otimes \delta}|_A$ if and only if $(a_{i_1}, \ldots, a_{i_n}) \models q^{\otimes n}|_A$ for all $i_1 < \ldots < i_n < \delta$. From this perspective it makes sense to make the following convention, even though we technically have not defined $q^{\otimes I}$ for arbitrary linear orders $I$.
\begin{convention}
\thlabel{conv:modelling-tensor-type}
Let $I$ be any linear order and let $q$ be a global $A$-Ls-invariant type. Then by $(a_i)_{i \in I} \models q^{\otimes I}|_A$ we mean that for any $i_1 < \ldots < i_n$ in $I$ we have $(a_{i_1}, \ldots, a_{i_n}) \models q^{\otimes n}|_A$.
\end{convention}
\begin{proposition}
\thlabel{prop:properties-morley-sequence-in-q}
For any Morley sequence $(a_i)_{i < \delta}$ in a global $A$-Ls-invariant type $q$ the following hold:
\begin{enumerate}[label=(\roman*)]
\item for all $i < \delta$, $a_i \equivls_{Aa_{<i}} \alpha$, where $\alpha \models q$;
\item $(a_i)_{i < \delta}$ is $A$-indiscernible.
\end{enumerate}
\end{proposition}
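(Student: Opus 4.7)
For part (i), the plan is to apply the universal property of the tensor product from \thref{thm:tensor-theorem} to the decomposition $q^{\otimes(i+1)} = q^{\otimes i} \otimes q$. First I would invoke \thref{lem:tensor-smaller-ordinal} (with $\gamma = i+1$) to conclude that $(\alpha_{<i}, \alpha_i) \models q^{\otimes i} \otimes q$. Next, since $(a_j)_{j < \delta} \equivls_A (\alpha_j)_{j < \delta}$ by the definition of a Morley sequence, we can restrict to obtain $a_{<i} a_i \equivls_A \alpha_{<i} \alpha_i$. Then the universal property in \thref{thm:tensor-theorem}, applied with $B = A$ and $(a,b) = (a_{<i}, a_i)$, immediately yields $a_i \equivls_{A a_{<i}} \alpha$ for some (equivalently, any) $\alpha \models q$.

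For part (ii), the plan is to show that for any $i_1 < \ldots < i_n$ and $j_1 < \ldots < j_n$ in $\delta$, the tuples $(a_{i_1}, \ldots, a_{i_n})$ and $(a_{j_1}, \ldots, a_{j_n})$ have the same type over $A$ (which in fact gives something stronger: they have the same Lascar strong type over $A$). By \thref{lem:tensor-smaller-ordinal} both $(\alpha_{i_1}, \ldots, \alpha_{i_n})$ and $(\alpha_{j_1}, \ldots, \alpha_{j_n})$ realise $q^{\otimes n}$. Since $q^{\otimes n}$ is a single global type, any two of its realisations have the same type over $\MM \supseteq A$, and hence any automorphism of the bigger monster sending one to the other fixes $\MM$, which is a $\lambda_T$-saturated model containing $A$; such an automorphism belongs to $\Aut_f(\bigMM/A)$, so the two realisations have the same Lascar strong type over $A$.

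It then remains to transfer this back to the $a_i$'s. By the definition of a Morley sequence, $(a_i)_{i < \delta} \equivls_A (\alpha_i)_{i < \delta}$; restricting to the relevant sub-tuples gives $(a_{i_1}, \ldots, a_{i_n}) \equivls_A (\alpha_{i_1}, \ldots, \alpha_{i_n})$ and similarly with the $j_k$'s. Combining with the previous paragraph and transitivity of $\equivls_A$ we get $(a_{i_1}, \ldots, a_{i_n}) \equivls_A (a_{j_1}, \ldots, a_{j_n})$, and in particular they have the same type over $A$, establishing $A$-indiscernibility.

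The main subtlety, rather than an obstacle, is keeping straight the passage between Lascar strong types and ordinary types, and the fact that the realisations $\alpha_i$ live in a bigger monster than the $a_i$; both parts reduce to careful bookkeeping with the universal property of $\otimes$ and the previously established passage between the two notions via $\lambda_T$-saturated models.
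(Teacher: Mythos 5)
Your proposal is correct and follows essentially the same route as the paper: for (i) restrict the Lascar strong type equivalence from the definition of a Morley sequence to an initial segment and invoke the universal property of $q^{\otimes i}\otimes q$; for (ii) use \thref{lem:tensor-smaller-ordinal} to see all increasing $n$-subtuples of $(\alpha_i)$ realise $q^{\otimes n}$, hence have the same (Lascar strong) type over $A$, and then transfer back to the $a_i$'s. (One small wording slip: after observing two realisations of $q^{\otimes n}$ have the same type over $\MM$, you should conclude that \emph{some} automorphism of the bigger monster fixing $\MM$ sends one to the other, not that every such automorphism fixes $\MM$; but this does not affect the argument.)
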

\begin{proof}
We first prove (i). Let $(\alpha_i)_{i < \delta} \models q^{\otimes \delta}$ and $i < \delta$. Then $a_{<i} a_i \equivls_A \alpha_{<i} \alpha_i$. As $\alpha_{<i} \alpha_i \models q^{\otimes i} \otimes q$, the universal property yields $a_i \equivls_{Aa_{<i}} \alpha_i$, as required.

For (ii), consider any $i_1 < \ldots < i_n < \delta$. By \thref{lem:tensor-smaller-ordinal} we have that 
$ \alpha_{i_1} \ldots \alpha_{i_n} \equiv_\MM \alpha_1 \ldots \alpha_n$, so in particular $ \alpha_{i_1} \ldots \alpha_{i_n} \equivls_A \alpha_1 \ldots \alpha_n$. 
As $(a_i)_{i<\delta}\equiv_A (\alpha_i)_{i<\delta}$, we conclude that 
$ a_{i_1} \ldots a_{i_n} \equiv_A a_1 \ldots a_n$.
\end{proof}
\section{Kim-dividing}
\label{sec:kim-dividing}
The idea of Kim-dividing is to restrict dividing witnesses to non-forking Morley sequences. Proving the existence of such sequences over arbitrary sets turns out to be difficult, and is in fact an open problem for NSOP$_1$ theories in full first-order logic, see \cite[Remark 2.6, Question 6.6]{dobrowolski_independence_2019}. In \cite{kaplan_kim-independence_2020} this is solved by using Morley sequences in some global invariant type. In full first-order logic any type over a model extends to a global invariant type. In positive logic we need to assume the theory to be semi-Hausdorff to find global invariant extensions \cite[Lemma 3.11]{ben-yaacov_thickness_2003}, because they may not exist otherwise (see Subsection \ref{subsec:thick-non-semi-hausdorff}). In the more general setting of thick positive theories we can always find global Ls-invariant extensions and the notion of a Morley sequence makes sense in such a global Ls-invariant type, see Section \ref{sec:global-ls-invariant-types}. Since we can generally only extend types over e.c.\ models to global Ls-invariant types, we will consider Kim-dividing only over e.c.\ models (cf.\ \thref{q:sets}).
\begin{definition}
\thlabel{def:q-dividing}
Let $\Sigma(x, b)$ be a partial type in a thick theory, possibly with parameters in $M$, and let $q$ be a global $M$-Ls-invariant extension of $\tp(b/M)$. We say that $\Sigma(x, b)$ \emph{$q$-divides over $M$} if for any (equivalently: some) Morley sequence $(b_i)_{i < \omega}$ in $q$ (over $M$) the set $\bigcup_{i < \omega} \Sigma(x, b_i)$ is inconsistent.
\end{definition}
By compactness $q$-dividing does not depend on the length of the Morley sequence, as long as it is infinite.
\begin{proposition}
\thlabel{prop:q-dividing-equivalent-statements}
Let $T$ be thick, let $q$ be a global $M$-Ls-invariant extension of $\tp(b/M)$ and write $p(x, y) = \tp(ab/M)$. Then the following are equivalent.
\begin{enumerate}[label=(\roman*)]
\item The type $p(x, b)$ does not $q$-divide.
\item For any $f \in \Aut(\MM/M)$ the type $p(x, b)$ does not $f(q)$-divide.
\item For any (equivalently some) $(b_i)_{i < \omega} \models q^{\otimes \omega}|_M$ the set $\bigcup_{i < \omega} p(x, b_i)$ is consistent.
\item There is an $Ma$-indiscernible sequence $(b_i)_{i < \omega} \models q^{\otimes \omega}|_M$ with $b_0 = b$.
\end{enumerate}
\end{proposition}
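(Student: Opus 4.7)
The plan is to exploit the fact that all sequences appearing in the four clauses — Morley sequences in $q$, realisations of $q^{\otimes \omega}|_M$, and Morley sequences in $f(q)$ for $f \in \Aut(\MM/M)$ — share the same type over $M$. Consistency of $\bigcup_{i<\omega} p(x, b_i)$ therefore depends only on the type $q^{\otimes \omega}|_M$, and all four clauses can be rephrased as statements about this one type.

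With this in hand, (i)$\Leftrightarrow$(iii) is immediate: every Morley sequence in $q$ over $M$ realises $q^{\otimes \omega}|_M$ by definition, and by \thref{lem:tensor-smaller-ordinal} and \thref{conv:modelling-tensor-type} any two realisations of $q^{\otimes \omega}|_M$ have the same type over $M$. This also justifies the ``equivalently some'' clauses in (i) and (iii). For (i)$\Leftrightarrow$(ii), note that $f(q)$ is $M$-Ls-invariant by \thref{lem:ls-invariance-easy-facts}(i); applying $f$ to any realisation $(\alpha_i) \models q^{\otimes \omega}$ yields a realisation of $f(q)^{\otimes \omega}$ with the same type over $M$ (since $f$ fixes $M$), so $f(q)^{\otimes \omega}|_M = q^{\otimes \omega}|_M$ and the equivalence reduces to (iii). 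The direction (iv)$\Rightarrow$(iii) is also straightforward: given the sequence from (iv) with $b_0 = b$, $Ma$-indiscernibility upgrades $\models p(a, b)$ to $\models p(a, b_i)$ for every $i$, so $a$ witnesses consistency.

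The main step is (iii)$\Rightarrow$(iv). I take a sufficiently long sequence $(b_i)_{i<\lambda} \models q^{\otimes \lambda}|_M$; after an automorphism over $M$ I may assume $b_0 = b$. Using (iii), pick $c$ realising $\bigcup_{i<\lambda} p(x, b_i)$. The case $i = 0$ gives $cb \equiv_M ab$, hence $c \equiv_{Mb} a$, so an automorphism over $Mb$ sending $c$ to $a$ produces a translated sequence that still realises $q^{\otimes \lambda}|_M$, still begins with $b$, and satisfies $a b_i \equiv_M a b$, and in particular $b_i \equiv_{Ma} b$, for every $i$. Now apply \thref{lem:indiscernible-sequence-based-on-long-sequence} to extract an $Ma$-indiscernible sequence $(b_i')_{i<\omega}$ based on $(b_i)_{i<\lambda}$. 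By \thref{lem:tensor-smaller-ordinal} every finite subsequence of $(b_i)$ realises the corresponding $q^{\otimes n}|_M$, and ``based on'' transfers this to $(b_i')$, so by \thref{conv:modelling-tensor-type} the extracted sequence realises $q^{\otimes \omega}|_M$. Since $b_0' \equiv_{Ma} b$, a final automorphism over $Ma$ shifts the sequence to start at $b$, preserving both $Ma$-indiscernibility and the tensor-power condition.

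The main obstacle, then, is the bookkeeping in (iii)$\Rightarrow$(iv): ensuring that each automorphism move and the extraction via \thref{lem:indiscernible-sequence-based-on-long-sequence} preserve the property of realising $q^{\otimes \omega}|_M$. This is exactly where \thref{conv:modelling-tensor-type} and \thref{lem:tensor-smaller-ordinal} earn their keep, by reducing the global tensor-power condition to a finite-subsequence check that automorphisms over $M$ and ``based on'' both respect.
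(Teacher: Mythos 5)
Your proof is correct and follows essentially the same route as the paper's: the equivalences (i) $\Leftrightarrow$ (ii) $\Leftrightarrow$ (iii) come from the fact that consistency of $\bigcup_i p(x,b_i)$ depends only on $\tp((b_i)_i/M)$, and the step to (iv) proceeds by taking a long sequence realising $q^{\otimes\lambda}|_M$, realising the union of $p(x,b_i)$'s, extracting an indiscernible sequence (noting that being based on a $q^{\otimes\lambda}|_M$-sequence preserves the tensor-power type over $M$), and finishing with an automorphism. The only difference is cosmetic: you translate $c$ to $a$ before extracting, whereas the paper's proof extracts over the realisation $a^*$ and performs a single automorphism at the end to bring $a^*b_0'$ to $ab$.
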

\begin{proof}
\underline{(i) $\Leftrightarrow$ (ii) $\Leftrightarrow$ (iii)} This follows because consistency of $\bigcup_{i < \omega} p(x, b_i)$ only depends on $\tp((b_i)_{i < \omega}/M)$, together with the fact that given a Morley sequence $(b_i)_{i < \omega}$ in $q$ we have that $(f(b_i))_{i < \omega}$ is a Morley sequence in $f(q)$.

\underline{(i) $\implies$ (iv)} Let $(b_i)_{i < \lambda}$ be a Morley sequence in $q$ for big enough $\lambda$. Let $a^*$ realise $\bigcup_{i < \lambda} p(x, b_i)$ and let $(b_i')_{i < \omega}$ be $Ma^*$-indiscernible, based on $(b_i)_{i < \lambda}$. So there is $i < \lambda$ such that $a^* b_0' \equiv_M a^* b_i \equiv_M ab$. Let $(b_i'')_{i < \omega}$ with $b''_0 = b$ be such that $a (b_i'')_{i < \omega} \equiv_M a^* (b_i')_{i < \omega}$. Then $(b_i'')_{i < \omega}$ is $Ma$-indiscernible. Furthermore, since $(b_i)_{i < \lambda}$ was already $M$-indiscernible, we have $(b_i'')_{i < \omega} \equiv_M (b_i')_{i < \omega} \equiv_M (b_i)_{i < \omega}$, so $(b_i'')_{i < \omega} \models q^{\otimes \omega}|_M$.

\underline{(iv) $\implies$ (iii)} For such an $Ma$-indiscernible sequence $(b_i)_{i < \omega}$ we have $ab = ab_0 \equiv_M ab_i$ for all $i < \omega$. So $a$ realises $\bigcup_{i < \omega} p(x, b_i)$.
\end{proof}
\begin{proposition}
\thlabel{prop:q-dividing-extending-to-complete-types}
Let $T$ be thick, let $\Sigma(x, b)$ be a partial type with parameters in $M$ and let $q$ be a global $M$-Ls-invariant extension of $\tp(b/M)$. If $\Sigma(x, b)$ does not $q$-divide over $M$ then there is a complete $p(x, b) \supseteq \Sigma(x, b)$ that does not $q$-divide over $M$.
\end{proposition}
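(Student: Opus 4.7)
The plan is to run the same ``extract-and-realise'' construction that appears in the implication (i) $\implies$ (iv) of \thref{prop:q-dividing-equivalent-statements}, but to read off a completed type from it rather than just an indiscernible sequence.

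Concretely, I would first appeal to part (iii) of \thref{prop:q-dividing-equivalent-statements}: since $\Sigma(x,b)$ does not $q$-divide over $M$, for a sufficiently long Morley sequence $(b_i)_{i<\lambda}$ in $q$ over $M$ (with $\lambda$ large enough to allow the next step) the set $\bigcup_{i<\lambda}\Sigma(x,b_i)$ is consistent; let $a^*$ realise it. Then I would use \thref{lem:indiscernible-sequence-based-on-long-sequence} to extract an $Ma^*$-indiscernible sequence $(b_i')_{i<\omega}$ based on $(b_i)_{i<\lambda}$. Because the $b_i'$ inherit their type over $Ma^*$ from tuples of the original Morley sequence, we still have $\models\Sigma(a^*,b_i')$ for all $i<\omega$, and $(b_i')_{i<\omega}\models q^{\otimes\omega}|_M$ (being based on a Morley sequence in $q$).

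At this point $b_0'\equiv_M b$, so I apply an $M$-automorphism sending $a^*b_0'$ to some $ab$ and push the whole sequence to $(b_i'')_{i<\omega}$ with $b_0''=b$. This yields an $Ma$-indiscernible Morley sequence in $q$ starting with $b$ and with $\models\Sigma(a,b)$. Now set $p(x,b):=\tp(a/Mb)$: this is a complete type over $Mb$ extending $\Sigma(x,b)$, and the sequence $(b_i'')_{i<\omega}$ is precisely the witness required by \thref{prop:q-dividing-equivalent-statements}(iv) applied to $p(x,b)$, so $p(x,b)$ does not $q$-divide over $M$.

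Nothing here looks like a serious obstacle: the construction is essentially the one already carried out in the previous proposition, and the only slightly delicate point is checking that $\Sigma(x,b_i')$ is preserved under the basing/indiscernibility step, which is immediate because the finite subtuples of $(b_i')$ have the type (over $Ma^*$) of subtuples of $(b_i)$, for which $a^*$ already satisfies $\Sigma$. Thickness is not needed beyond what is already used in \thref{prop:q-dividing-equivalent-statements}, so the statement follows as a direct corollary of the equivalences there.
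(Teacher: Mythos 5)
Your proof is correct. The strategy is the same as the paper's in spirit — take a long Morley sequence $(b_i)_{i<\lambda}$ in $q$, realise $\bigcup_{i<\lambda}\Sigma(x,b_i)$ by some $a^*$, and extract a witness — but the extraction step differs: you appeal to \thref{lem:indiscernible-sequence-based-on-long-sequence} to get an $Ma^*$-indiscernible sequence based on the long one (exactly replaying the (i)~$\Rightarrow$~(iv) step of \thref{prop:q-dividing-equivalent-statements}), whereas the paper uses a direct pigeonhole: choosing $\lambda$ larger than the number of types over $M$, some index $i_0$ has $a b_i \equiv_M a b_{i_0}$ for infinitely many $i$, and those $b_i$ already form an infinite subsequence of the Morley sequence (hence $\models q^{\otimes\omega}|_M$ by \thref{lem:tensor-smaller-ordinal}), so $p(x,y)=\tp(a b_{i_0}/M)$ works via \thref{prop:q-dividing-equivalent-statements}(iii) and invariance. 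Your route is slightly heavier (it re-invokes Erd\H{o}s--Rado through \thref{lem:indiscernible-sequence-based-on-long-sequence}) but also produces the \thref{prop:q-dividing-equivalent-statements}(iv) witness explicitly; the paper's pigeonhole avoids the indiscernibility-extraction machinery. One small point of wording: the sequence $(b_i'')_{i<\omega}$ you produce need not be a Morley sequence \emph{in $q$} (it may be one in an $M$-conjugate of $q$), but it does satisfy $(b_i'')_{i<\omega}\models q^{\otimes\omega}|_M$, which is all that \thref{prop:q-dividing-equivalent-statements}(iv) requires, so the argument goes through.
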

\begin{proof}
Let $(b_i)_{i < \lambda} \models q^{\otimes \lambda}|_M$ with $b_0 = b$. Then there is some $a \models \bigcup_{i < \lambda} \Sigma(x, b_i)$. Then, assuming we chose $\lambda$ large enough, there is some $i_0 < \lambda$ such that for infinitely many $i < \lambda$ we have $a b_i \equiv_M a b_{i_0}$. Set $p(x, y) = \tp(a b_{i_0} / M)$, then $p(x, b_{i_0})$ does not $q$-divide while also $\Sigma(x, b_{i_0}) \subseteq p(x, b_{i_0})$. By invariance $p(x, b)$ does not $q$-divide.
\end{proof}
The following lemma is the core of the connection between Kim-dividing and NSOP$_1$ theories. It tells us that $q$-dividing does not depend on the global Lascar-invariant type $q$. More discussion on the origins of this lemma can be found in \cite{kaplan_kim-independence_2020}. Briefly put: Kim proved that in simple theories a formula divides with respect to every Morley sequence if and only if it divides with respect to some Morley sequence \cite[Proposition 2.1]{Kim98}. The lemma below is an analogue of that for NSOP$_1$ theories.
\begin{proposition}[Kim's lemma]
\thlabel{prop:kims-lemma}
If $T$ is thick NSOP$_1$, then $q$-dividing does not depend on $q$. That is, if $q$ and $r$ are global $M$-invariant types extending $\tp(b/M)$ then a partial type $\Sigma(x, b)$ $q$-divides if and only if it $r$-divides.
\end{proposition}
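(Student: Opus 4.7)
The plan is to derive a contradiction with NSOP$_1$ by invoking the parallel sequences lemma (\thref{lem:sequences-give-sop1}). Suppose for contradiction that $q$ and $r$ are global $M$-Ls-invariant extensions of $\tp(b/M)$ and $\Sigma(x,b)$ $q$-divides but does not $r$-divide. The set $\bigcup_{i<\omega}\Sigma(x,b_i)$ being inconsistent for a Morley sequence $(b_i)$ in $q$ means, by compactness, that some finite conjunction $\phi(x,b)$ of formulas in $\Sigma(x,b)$ already $q$-divides. So I may work with a single formula $\phi(x,b)$ that $q$-divides but does not $r$-divide, and the goal is to produce an $M$-indiscernible sequence of pairs satisfying the hypotheses of \thref{lem:sequences-give-sop1}.

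First I would set up the two witnessing Morley sequences. Using that $\phi(x,b)$ does not $r$-divide, \thref{prop:q-dividing-equivalent-statements}(iv) supplies a realisation $a$ of $\bigcup_{i<\omega}\phi(x,c_i)$ where $(c_i)_{i<\omega}$ is an $Ma$-indiscernible Morley sequence in $r$ over $M$ with $c_0=b$. On the other side, fix a Morley sequence $(b_i)_{i<\omega}$ in $q$ over $M$ with $b_0=b$ for which $\{\phi(x,b_i):i<\omega\}$ is inconsistent.

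The heart of the proof is to interleave these into a single $M$-indiscernible sequence of pairs $(\bar d_i)_{i<\omega}$, $\bar d_i=(d_i^q,d_i^r)$, such that
\begin{enumerate}[label=(\roman*)]
\item $d_i^q\equiv_{M\bar d_{<i}} d_i^r$,
\item $\{\phi(x,d_i^q):i<\omega\}$ is inconsistent, and
\item $\{\phi(x,d_i^r):i<\omega\}$ is consistent.
\end{enumerate}
I build this by recursion, using the $M$-Ls-invariance of $q$ and $r$ at each step: at stage $n$, having chosen $(\bar d_i)_{i<n}$, I extend the $q$-column by taking a realisation of $q$ restricted to the prior parameters and simultaneously pick a realisation of $r$ over the same prior parameters; since $q$ and $r$ both extend $\tp(b/M)$, the two new elements have the same type over $M$, and \thref{lem:ls-invariance-easy-facts}(ii) upgrades this to equivalence over the accumulated parameters. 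After producing a sequence of pairs of length $\lambda_T$ in this way, \thref{lem:indiscernible-sequence-based-on-long-sequence} and \thref{lem:lstp-base-on-sequence} let me extract an $M$-indiscernible sequence of pairs based on it; invariance under basing preserves (i), while (ii) and (iii) follow from the construction of the $q$- and $r$-columns from the Morley sequences above. Applying \thref{lem:sequences-give-sop1} to this indiscernible sequence (with the columns in the appropriate roles) then forces $T$ to have SOP$_1$, contradicting the assumption.

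The main obstacle is the inductive construction in the middle step, specifically coordinating three constraints at once: the $q$-column must remain a Morley sequence in $q$ so that inconsistency of $\{\phi(x,d_i^q)\}$ is preserved after extraction; the $r$-column must remain compatible with an $Ma$-indiscernible Morley sequence in $r$ so that consistency of $\{\phi(x,d_i^r)\}$ survives; and within each pair the components must be sufficiently close for condition (i) to hold. The first two are handled by stacking choices through $q$- and $r$-restrictions to the growing parameter set, while (i) is the delicate bookkeeping where $M$-Ls-invariance is essential — this is precisely the reason the positive-logic framework requires the Ls-invariant (rather than invariant) hypothesis in \thref{def:q-dividing}.
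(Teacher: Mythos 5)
Your overall plan---reduce to a single formula by compactness and then produce an $M$-indiscernible sequence of pairs feeding into \thref{lem:sequences-give-sop1}---matches the paper's strategy, and you correctly identify where the difficulty lives. But the inductive step that you propose for building the sequence of pairs does not work, and this is not a bookkeeping issue but a genuine gap.

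Concretely: at stage $n$ you pick $d_n^q\models q|_{M\bar d_{<n}}$ and $d_n^r\models r|_{M\bar d_{<n}}$ independently, and then assert that since both have the same type over $M$, ``\thref{lem:ls-invariance-easy-facts}(ii) upgrades this to equivalence over the accumulated parameters.'' That lemma only says $A$-Ls-invariance of a type implies $B$-Ls-invariance for $B\supseteq A$; it says nothing comparing $q|_{M\bar d_{<n}}$ with $r|_{M\bar d_{<n}}$, and indeed those two are \emph{different} types over $M\bar d_{<n}$ whenever $q\neq r$ (and $\bar d_{<n}$ is sizeable). If $d_n^q\equiv_{M\bar d_{<n}}d_n^r$, then $d_n^r$ would realise $q|_{M\bar d_{<n}}$ as well as $r|_{M\bar d_{<n}}$, which is generally impossible. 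So no choice of $d_n^q,d_n^r$ along these lines satisfies your condition~(i). There is also a second, smaller omission: for the Ls-invariance argument to interchange the two entries of a pair you need $d_i^q\equivls_M d_i^r$, not merely $\equiv_M$; the paper first passes (via \thref{prop:q-dividing-equivalent-statements}(ii)) to $q,r$ extending the same \emph{Lascar strong} type $\Lstp(b/M)$, a reduction you never make.

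The paper's solution is to couple $q$ and $r$ from the start by taking a Morley sequence $(\bar b_i)_{i<\omega}$ in the tensor product $q\otimes r$ (developed in Section~\ref{sec:global-ls-invariant-types}), whose columns are automatically Morley in $q$ and in $r$ respectively by \thref{lem:tensor-facts}(ii). The pair-equivalence is then obtained not over the \emph{prefix} but over the \emph{tail}: applying the parallel sequences lemma to $(\bar b_i)_{i<\omega^{\op}}$, the required condition becomes $b_{i,0}\equiv_{M\bar b_{>i}}b_{i,1}$. This is now provable, because for a Morley sequence in $q\otimes r$ the tail $(\bar b_j)_{j>i}$ realises an $M$-Ls-invariant type (by associativity of $\otimes$), so $b_{i,0}\equivls_M b_{i,1}$ passes through to $b_{i,0}\bar b_{>i}\equivls_M b_{i,1}\bar b_{>i}$. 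A forward construction cannot see this structure, which is why your approach gets stuck exactly where you identify ``the delicate bookkeeping.''
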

\begin{proof}
This is essentially the proof of \cite[Proposition 3.15]{kaplan_kim-independence_2020}, adapted to the thick positive logic setting. By \thref{prop:q-dividing-equivalent-statements}(ii) we may assume that $q$ and $r$ extend $\Lstp(b/M)$. Suppose that $\Sigma(x, b)$ does not $q$-divide while it $r$-divides. We will prove that $T$ has SOP$_1$. Let $(\bar{b}_i)_{i < \omega} = (b_{i,0}, b_{i,1})_{i<\omega}$ be a Morley sequence in $q \otimes r$. By \thref{lem:tensor-facts}(ii) and induction, $(b_{i,0})_{i < \omega}$ and $(b_{i,1})_{i < \omega}$ are Morley sequences in $q$ and $r$ respectively.

Since $\Sigma(x, b)$ $r$-divides, the set $\bigcup_{i < \omega} \Sigma(x, b_{i,1})$ is inconsistent. So by compactness there is an $M$-formula $\phi(x, y) \in \Sigma(x, y)$ such that $\{\phi(x, b_{i,1}) : i < \omega\}$ is inconsistent. Because $\Sigma(x, b)$ does not $q$-divide we have that $\{\phi(x, b_{i,0}) : i < \omega\}$ is consistent.

We wish to apply the parallel sequences lemma (\thref{lem:sequences-give-sop1}) to $\phi(x, y)$ and $(\bar{b}_i)_{i < \omega^\op}$ where $\omega^\op$ carries the opposite order of $\omega$. So we are left to prove that $b_{i,0} \equiv_{M \bar{b}_{> i}} b_{i,1}$ for all $i < \omega$. We do this by proving that $b_{i,0} (\bar{b_i})_{i < j < n} \equiv_M b_{i,1} (\bar{b_i})_{i < j < n}$ for all $i < n < \omega$. Let $(\bar{\beta}_i)_{i < \omega} \models (q \otimes r)^{\otimes \omega}$. By \thref{lem:tensor-facts}(i) we have $(q \otimes r)^{\otimes n} = (q \otimes r)^{\otimes i+1} \otimes (q \otimes r)^{\otimes n - i - 1}$. So we have $\bar{\beta}_{< n} \models (q \otimes r)^{\otimes i+1} \otimes (q \otimes r)^{\otimes n - i - 1}$ and as $\bar{b}_{<n} \equivls_M \bar{\beta}_{<n}$ we have $(\bar{b}_j)_{i<j<n} \equivls_{M \bar{b}_{\leq i}} (\bar{\beta}_j)_{i<j<n}$. As $b_{i,0} \equivls_M b \equivls_M b_{i,1}$ we get by $M$-Ls-invariance that $b_{i,0} (\bar{\beta}_j)_{i<j<n} \equivls_M b_{i,1} (\bar{\beta}_j)_{i<j<n}$. Putting the two together yields the required result.
\end{proof}
\begin{definition}
\thlabel{def:kim-dividing}
We say $\Sigma(x, b)$ \emph{Kim-divides (over $M$)} if it $q$-divides for some global $M$-Ls-invariant $q$ that extends $\tp(b/M)$. We write $a \ind_M^K b$ when $\tp(a/Mb)$ does not Kim-divide over $M$ and call this \emph{Kim-independence}.
\end{definition}
\begin{remark}
\thlabel{rem:kim-dividing-over-ec-models}
By \thref{lem:extend-to-global-ls-invariant-type} we can extend any type over an e.c.\ model $M$ in a thick theory to a global $M$-Ls-invariant type. So assuming NSOP$_1$, we have by \thref{prop:kims-lemma} that $\tp(a/Mb)$ Kim-divides if and only if it $q$-divides for \emph{any} global $M$-invariant extension $q$ of $\tp(b/M)$.
\end{remark}
In some constructions it will be necessary to stay within the same Lascar strong type. For this we introduce the technical tool of $q$-Ls-dividing.
\begin{definition}
\thlabel{def:q-ls-dividing}
Let $T$ be thick and let $q$ be a global $M$-Ls-invariant extension of $\Lstp(b/M)$. We say that $\Lstp(a/Mb)$ \emph{does not $q$-Ls-divide (over $M$)} if there is a Morley sequence $(b_i)_{i < \omega}$ in $q$ with $b_0 = b$ that is $Ma$-indiscernible.
\end{definition}
\begin{remark}
\thlabel{rem:q-dividing-length-does-not-matter}
The length of the Morley sequence does not matter in \thref{def:q-ls-dividing}, as long as it is infinite. However, the argument here takes a little more care than for $q$-dividing.

One direction is clear: if there is an $Ma$-indiscernible Morley sequence $(b_i)_{i < \delta}$ in $q$ for some $\delta \geq \omega$, then we can just take an initial segment. For the other direction we let $N \supseteq M$ be $\lambda_T$-saturated and $(b_i)_{i < \omega} \models q^{\otimes \omega}|_N$. Then $(b_i)_{i < \omega}$ is a Morley sequence in $q$. Applying a Lascar strong automorphism we find $a' b_0 \equivls_M ab$ such that $(b_i)_{i < \omega}$ is $Ma'$-indiscernible. Let $n$ be such that $\d_M(a'b_0, ab) \leq n$. Consider the set of formulas
\[
q^{\otimes \delta}|_N((y_i)_{i < \delta}) \cup \text{``$(xy_i)_{i < \delta}$ is $M$-indiscernible''} \cup \d_M(x y_0, ab) \leq n.
\]
This set is finitely satisfiable, hence it has a realisation. So we find an $Ma''$-indiscernible Morley sequence $(b_i')_{i < \delta}$ in $q$ with $a'' b_0' \equivls_M ab$. The result follows by applying a Lascar strong automorphism.
\end{remark}
\begin{lemma}
\thlabel{lem:q-ls-dividing-vs-q-dividing}
Let $T$ be thick and let $q$ be a global $M$-Ls-invariant extension of $\Lstp(b/M)$. A type $p = \tp(a/Mb)$ does not $q$-divide if and only if there is a realisation $a' \models p$ such that $\Lstp(a'/Mb)$ does not $q$-Ls-divide.
\end{lemma}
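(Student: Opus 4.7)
The backward direction is immediate: if $a' \models p$ and $(b_i)_{i<\omega}$ is a Morley sequence in $q$ with $b_0 = b$ that is $Ma'$-indiscernible, then $a' b_i \equiv_M a' b_0 = a' b \equiv_M ab$ for all $i$, so $a'$ witnesses consistency of $\bigcup_i p(x, b_i)$, and hence $p$ does not $q$-divide.

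For the forward direction, my plan is to produce the desired $a'$ and Morley sequence by first realising $\bigcup_i p(x, b_i)$ along a given Morley sequence and then extracting an $Ma^*$-indiscernible sub-sequence. First I would fix a Morley sequence $(b_i)_{i<\lambda}$ in $q$ with $b_0 = b$ for $\lambda$ sufficiently large; this can be arranged because $q$ extends $\Lstp(b/M)$, using a Lascar strong automorphism over $M$ to move the first entry onto $b$ (this preserves Morleyness in $q$ since $M$-Ls-invariance implies $q$ is fixed by such automorphisms). Non-$q$-dividing of $p$ yields $a^* \models \bigcup_i p(x, b_i)$, and completeness of $p$ gives $a^* b_i \equiv_M ab$ for every $i$.

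Next I would apply \thref{lem:lstp-base-on-sequence} with $A = M$ and $B = Ma^*$ to extract an $Ma^*$-indiscernible sequence $(b'_i)_{i<\lambda}$ based on $(b_i)$ over $Ma^*$ with $\d_M((b_i), (b'_i)) \leq 1$. The bound on $\d_M$ yields $(b'_i) \equivls_M (b_i)$, so $(b'_i)$ is still a Morley sequence in $q$; the ``based on $Ma^*$'' part, together with $Ma^*$-indiscernibility of $(b'_i)$, gives $b'_0 \equiv_{Ma^*} b_{i_0}$ for some $i_0$, whence $a^* b'_0 \equiv_M a^* b_{i_0} \equiv_M ab$. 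Since $b'_0 \equivls_M b$, I would then pick $\tau \in \Aut_f(\MM/M)$ with $\tau(b'_0) = b$ and set $a' := \tau(a^*)$; the Lascar strong automorphism $\tau$ fixes $q$, so $(\tau(b'_i))$ remains a Morley sequence in $q$, now starting at $b$ and being $Ma'$-indiscernible, and $a' b = \tau(a^* b'_0) \equiv_M a^* b'_0 \equiv_M ab$ gives $a' \models p$, as required.

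The main subtlety I anticipate is reconciling two requirements on the extracted sequence $(b'_i)$: it must be Lascar-$M$-equivalent to the original (so that it remains Morley in $q$), while also retaining enough connection to $a^*$ to ensure $a^* \models p(x, b'_0)$. Applying \thref{lem:lstp-base-on-sequence} with $B = Ma^*$ rather than $B = M$ is exactly what allows these two demands to coexist, since it delivers both the $\d_M \leq 1$ bound (Lascar control) and the ``based on'' property relativised to $Ma^*$ (type-theoretic control).
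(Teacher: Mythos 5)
Your argument is correct but takes a somewhat different route from the paper's. The paper's forward direction is very short: it takes any Morley sequence $(b'_i)$ in $q$ over $M$ with $b'_0=b$ together with the $Ma$-indiscernible sequence $(b_i)\models q^{\otimes\omega}|_M$ (with $b_0=b$) supplied by \thref{prop:q-dividing-equivalent-statements}(iv), notes that both realise the \emph{complete} type $q^{\otimes\omega}|_M$ over $M$ and are therefore $M$-conjugate, and pulls $a$ along that conjugation to get $a'$. You instead replay the extraction argument behind \thref{prop:q-dividing-equivalent-statements}(iv), but strengthen it: where that proof uses plain basing via \thref{lem:indiscernible-sequence-based-on-long-sequence}, you invoke the Lascar-controlled version \thref{lem:lstp-base-on-sequence}, so the extracted $Ma^*$-indiscernible sequence stays at Lascar distance $\leq 1$ over $M$ and hence remains a Morley sequence in $q$. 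In short, the paper's trick is to drop Lascar control temporarily and recover it by conjugating between two realisations of a complete type over $M$; your version never lets go of it. Both are sound. One small remark on the final step: the most direct reason that a Lascar strong automorphism $\tau$ preserves Morleyness in $q$ over $M$ is that every $\tau\in\Aut_f(\MM/M)$ satisfies $\tau(x)\equivls_M x$ for all $x$ (each generator fixes some $\lambda_T$-saturated $N\supseteq M$ pointwise, and \thref{lem:find-indisc-sequence-through-model} gives $\d_N(x,\tau(x))\leq 2$), so $\tau$ preserves the defining $\equivls_M$-condition; the route you sketch through ``$\tau$ fixes $q$'' can also be made to work, but needs an extra detour through normality of $\Aut_f(\MM/M)$.
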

\begin{proof}
The right to left direction is clear by \thref{prop:q-dividing-equivalent-statements}(iv). For the other direction we let $(b_i')_{i < \omega}$ be a Morley sequence in $q$ with $b_0' = b$. By \thref{prop:q-dividing-equivalent-statements}(iv) there is $(b_i)_{i < \omega} \models q^{\otimes \omega}|_M$ that is $Ma$-indiscernible with $b_0 = b$. Pick $a'$ such that $a'(b_i')_{i < \omega} \equiv_M a(b_i)_{i < \omega}$ and we are done.
\end{proof}
\begin{corollary}
\thlabel{cor:q-ls-dividing-is-q-dividing}
Let $T$ be thick and let $q$ be a global $M$-Ls-invariant extension of $\Lstp(b/M)$. Suppose that there is $M \subseteq N \subseteq b$ such that $N$ is $\lambda_T$-saturated. Then $\tp(a/Mb)$ does not $q$-divide if and only if $\Lstp(a/Mb)$ does not $q$-Ls-divide.
\end{corollary}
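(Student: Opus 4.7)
The plan is to split the biconditional and route both directions through \thref{lem:q-ls-dividing-vs-q-dividing}. The right-to-left direction is immediate from the definitions: if $\Lstp(a/Mb)$ does not $q$-Ls-divide, then there is, by definition, a Morley sequence $(b_i)_{i<\omega}$ in $q$ with $b_0=b$ that is $Ma$-indiscernible, and this sequence in particular realises $q^{\otimes\omega}|_M$, so by \thref{prop:q-dividing-equivalent-statements}(iv) the type $\tp(a/Mb)$ does not $q$-divide.

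For the forward direction, I would begin by invoking \thref{lem:q-ls-dividing-vs-q-dividing} to obtain some $a'$ with $a' \equiv_{Mb} a$ such that $\Lstp(a'/Mb)$ does not $q$-Ls-divide, and let $(b_i')_{i<\omega}$ be a witnessing Morley sequence in $q$, so $b_0' = b$ and $(b_i')_{i<\omega}$ is $Ma'$-indiscernible. The idea is to transport this witness from $a'$ to $a$ by an automorphism. Since $a \equiv_{Mb} a'$, pick $\pi \in \Aut(\MM/Mb)$ with $\pi(a')=a$. Because $N \subseteq Mb$, the map $\pi$ fixes $N$ pointwise, and as $N$ is a $\lambda_T$-saturated e.c.\ model containing $M$, we have $\pi \in \Aut(\MM/N) \subseteq \Aut_f(\MM/M)$ by \thref{def:lascar-strong-automorphism}.

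The remaining work is to check that $(\pi(b_i'))_{i<\omega}$ still witnesses non-$q$-Ls-dividing for $a$. Automatically, $\pi(b_0') = \pi(b) = b$ and $(\pi(b_i'))_{i<\omega}$ is $Ma$-indiscernible since $\pi$ fixes $M$ and maps $a'$ to $a$. The only nontrivial point is that $(\pi(b_i'))_{i<\omega}$ remains a Morley sequence in $q$ over $M$: writing $(b_i')_{i<\omega} \equivls_M (\beta_i)_{i<\omega}$ for some $(\beta_i)_{i<\omega} \models q^{\otimes\omega}$, it suffices to observe that any $f \in \Aut_f(\MM/M)$ satisfies $f(c) \equivls_M c$ for every tuple $c$, which is a straightforward induction on the word length of $f$ expressed as a product of automorphisms fixing $\lambda_T$-saturated models over $M$, combined with \thref{lem:find-indisc-sequence-through-model}. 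Applying this to $\pi$ and $c=(b_i')_{i<\omega}$, transitivity of $\equivls_M$ gives $(\pi(b_i'))_{i<\omega} \equivls_M (\beta_i)_{i<\omega}$, so $(\pi(b_i'))_{i<\omega}$ is a Morley sequence in $q$.

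The only potential obstacle is the upgrade from $a \equiv_{Mb} a'$ to a usable automorphism, and the content of the proof is exactly the observation that the $\lambda_T$-saturated model $N$ sitting inside $b$ automatically converts any $Mb$-automorphism into a Lascar strong automorphism over $M$; once that is noticed, the transport is routine.
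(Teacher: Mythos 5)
Your proof is correct and follows the paper's route: both directions are routed through \thref{lem:q-ls-dividing-vs-q-dividing}, and the forward direction in both hinges on the fact that the $\lambda_T$-saturated $N\subseteq b$ upgrades $a\equiv_{Mb}a'$ to a Lascar-strong-type-preserving transport over $M$. Your observation that an $Mb$-automorphism fixes $N$ and is therefore already in $\Aut_f(\MM/M)$ is the same content as the paper's step from $a'b\equiv_N ab$ to $a'b\equivls_M ab$, and your subclaim that Lascar strong automorphisms over $M$ preserve $\equivls_M$ is available directly from the statement in \thref{def:lascar-strong-automorphism} rather than needing a separate induction.
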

\begin{proof}
By \thref{lem:q-ls-dividing-vs-q-dividing} we only need to prove the left to right direction. So suppose that  $\tp(a/Mb)$ does not $q$-divide. Then there is $a'$ with $a' \equiv_{Mb} a$ such that $\Lstp(a'/Mb)$ does not $q$-Ls-divide. In particular we have that $a'b \equiv_N ab$, so $a'b \equivls_M ab$. It follows that $\Lstp(a/Mb)$ does not $q$-Ls-divide.
\end{proof}
\begin{proposition}
\thlabel{prop:obvious-properties-kim-dividing}
In a thick NSOP$_1$ theory Kim-independence always satisfies the following properties.
\begin{enumerate}[label=(\roman*)]
\item Strong finite character: if $a \nind_M^K b$ then there is a formula $\varphi(x, b, m) \in \tp(a/Mb)$ such that for any $a' \models \varphi(x, b, m)$ we have $a' \nind_M^K b$.
\item Existence over models: $a \ind_M^K M$.
\item Monotonicity: $aa' \ind_M^K bb' \implies a \ind_M^K b$.
\end{enumerate}
\end{proposition}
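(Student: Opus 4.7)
For strong finite character (i), I would unpack $a \nind_M^K b$ via a global $M$-Ls-invariant extension $q$ of $\tp(b/M)$, which exists by \thref{cor:type-over-model-extends-to-global-ls-invariant-type}: there is a Morley sequence $(b_i)_{i<\omega}$ in $q$ such that $\bigcup_{i<\omega} \tp(a/Mb_i)$ is inconsistent. Compactness reduces this to finitely many formulas, and closing under conjunction produces a single $\varphi(x, b, m) \in \tp(a/Mb)$ (with parameters $m \in M$) such that $\{\varphi(x, b_i, m) : i < \omega\}$ is already inconsistent. Then any $a' \models \varphi(x, b, m)$ has $\varphi(x, b, m) \in \tp(a'/Mb)$, so $\tp(a'/Mb)$ still $q$-divides along the same Morley sequence, giving $a' \nind_M^K b$.

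For existence over models (ii), my plan hinges on the observation that in $\MM$ the type $\tp(M/M)$---with $M$ on the left denoting the enumeration of the e.c.\ model---has $M$ itself as its \emph{unique} realisation, since every $f \in \Aut(\MM/M)$ fixes each element of $M$ and hence the enumeration tuple as a whole. Consequently, for any global $M$-Ls-invariant extension $q$ of $\tp(M/M)$ and any Morley sequence $(b_i)_{i<\omega}$ in $q$ (living in $\MM$), each $b_i$ satisfies $\tp(b_i/M) = \tp(M/M)$ and is therefore equal to $M$. So $\bigcup_{i<\omega} \tp(a/Mb_i) = \tp(a/M)$ is trivially consistent, yielding $a \ind_M^K M$.

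For monotonicity (iii), I would assume $aa' \ind_M^K bb'$ and pick a global $M$-Ls-invariant extension $q$ of $\tp(bb'/M)$. By \thref{prop:q-dividing-equivalent-statements}(iv) there is an $Maa'$-indiscernible $(b_i b_i')_{i<\omega} \models q^{\otimes \omega}|_M$ with $b_0 b_0' = bb'$. Let $q|_y$ denote the restriction of $q$ to the variables corresponding to $b$; the universal property (\thref{thm:tensor-theorem}) gives that $q|_y$ is a global $M$-Ls-invariant extension of $\tp(b/M)$, and iterating \thref{lem:tensor-facts}(ii) shows $(b_i)_{i<\omega} \models (q|_y)^{\otimes \omega}|_M$. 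Since the projection is also $Ma$-indiscernible, \thref{prop:q-dividing-equivalent-statements}(iv) applied in the reverse direction yields that $\tp(a/Mb)$ does not $q|_y$-divide, i.e.\ $a \ind_M^K b$. The main (mild) obstacle across the three items is this tensor-restriction bookkeeping in (iii); the other two are immediate from compactness and the rigidity of realisations of $\tp(M/M)$, and notably none of them require any serious use of NSOP$_1$ beyond the definition of Kim-dividing itself.
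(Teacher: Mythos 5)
Your three arguments are correct and match the paper's approach, which simply invokes the definitions and compactness. One remark worth correcting: you close by saying none of the items need ``any serious use of NSOP$_1$ beyond the definition of Kim-dividing itself,'' but your proof of (iii) does quietly invoke Kim's Lemma (\thref{prop:kims-lemma}): you establish that $\tp(a/Mb)$ does not $q|_y$-divide for one particular restriction $q|_y$, yet $a\ind_M^K b$ requires non-$q'$-dividing for \emph{every} global $M$-Ls-invariant $q'\supseteq\tp(b/M)$, and there is no obvious way to lift an arbitrary such $q'$ to a global $M$-Ls-invariant extension of $\tp(bb'/M)$ restricting to it. So (iii), unlike (i) and (ii), genuinely leans on NSOP$_1$ through Kim's Lemma.
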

\begin{proof}
All follow directly from the definitions, using compactness for (i).
\end{proof}
\begin{remark}
\thlabel{rem:kim-dividing-implies-dividing}
Let $T$ be a thick theory. Then Kim-dividing implies dividing because any Morley sequence in some $q$ is in particular an indiscernible sequence. So by \thref{prop:independence-strengths}:
\[
a \ind_M^u b \implies
a \ind_M^{iLs} b \implies
a \ind_M^f b \implies
a \ind_M^d b \implies
a \ind_M^K b
\]
\end{remark}
\begin{proposition}
\thlabel{prop:ls-dividing-extension}
Let $T$ be a thick theory, $M$ an e.c.\ model of $T$, and let $a, b, c$ be tuples. Let also $q(x, y)$ be a global $M$-Ls-invariant extension of $\Lstp(bc/M)$ and write $r(x) = q|_x$. If $\Lstp(a/Mb)$ does not $r$-Ls-divide then there is $c^* b \equivls_M cb$ such that $\Lstp(a/Mbc^*)$ does not $q$-Ls-divide.
\end{proposition}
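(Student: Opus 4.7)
The plan is to exhibit an $Ma$-indiscernible Morley sequence $(b_i^*, c_i^*)_{i<\omega}$ in $q$ over $M$ whose first element has the form $(b, c^*)$ for some $c^*$. Setting $c^* := c_0^*$, this both witnesses that $\Lstp(a/Mbc^*)$ does not $q$-Ls-divide, and ensures $c^* b \equivls_M cb$: the first element of a Morley sequence in $q$ is Lascar-equivalent over $M$ to any realisation of $q|_M$, in particular to $(b,c)$ (as $q$ extends $\Lstp(bc/M)$).

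I would start from the $Ma$-indiscernible Morley sequence $(b_i)_{i<\omega}$ in $r$ over $M$ with $b_0 = b$, guaranteed by the hypothesis. To lift this to a Morley sequence in $q$ whose first-coordinate projection is the given sequence, fix any realisation $(\beta_i, \gamma_i)_{i<\omega} \models q^{\otimes \omega}$; monotonicity (\thref{lem:tensor-facts}(ii)) ensures $(\beta_i)_{i<\omega} \models r^{\otimes \omega}$, so $(\beta_i)_{i<\omega} \equivls_M (b_i)_{i<\omega}$ since both are Morley sequences in $r$ over $M$. Any Lascar strong automorphism over $M$ realising this equivalence transports $(\beta_i, \gamma_i)_{i<\omega}$ to $(b_i, c_i)_{i<\omega}$, a Morley sequence in $q$ over $M$ extending the given first coordinate.

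To obtain $Ma$-indiscernibility, I would prolong $(b_i, c_i)_{i<\omega}$ to a Morley sequence $(b_i, c_i)_{i<\lambda}$ in $q$ of length $\lambda = \lambda_{|T| + |Mab|}$, then apply \thref{lem:indiscernible-sequence-based-on-long-sequence} over a $\lambda_T$-saturated e.c.\ model $N \supseteq Mab$ to extract an $N$-indiscernible sequence $(b_i^*, c_i^*)_{i<\omega}$ based on it. This sequence is in particular $Ma$-indiscernible, and each finite subtuple realises $q^{\otimes n}|_M$ in type, matching a subtuple of the original Morley sequence. Saturation of $N$ together with \thref{lem:find-indisc-sequence-through-model} upgrades $N$-type equivalence to Lascar equivalence over $N$ (hence over $M$), so each finite subtuple is Lascar-equivalent over $M$ to a realisation of $q^{\otimes n}$, and $(b_i^*, c_i^*)_{i<\omega}$ is a Morley sequence in $q$ over $M$. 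Finally, since $b \in N$ and the prolongation starts at $b_0 = b$, a last Lascar strong automorphism over $Ma$ (justified by $\equiv_N$ implying $\equivls_N$ for the saturated $N$, and $\Aut_f(\MM/N) \subseteq \Aut_f(\MM/Ma)$) realigns the first coordinate to $b$, producing the desired sequence.

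The main obstacle is the delicate bookkeeping required to achieve simultaneously (i) $Ma$-indiscernibility of the extracted sequence, (ii) being a genuine Morley sequence in $q$ (which over a non-saturated e.c.\ model $M$ is strictly stronger than type-realisation of $q^{\otimes \omega}|_M$), and (iii) the alignment $b_0^* = b$. The interplay between Lascar distance bounds (in the spirit of \thref{rem:q-dividing-length-does-not-matter}) and indiscernible extraction over a saturated base must be handled carefully: the upgrade from type-equivalence over $N$ to Lascar-equivalence over $M$ (and then $Ma$) is exactly what allows one to treat the type-theoretically constructed extracted sequence as an honest Morley sequence and to realign its first coordinate without breaking $Ma$-indiscernibility.
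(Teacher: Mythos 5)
Your overall strategy mirrors the paper's: build a $q$-Morley sequence whose first coordinate is controlled, extract an indiscernible Morley sequence over a saturated model, and realign the first coordinate to $b$ by a Lascar strong automorphism over $Ma$. The lift step is fine, modulo the small caveat that one should take a $q$-Morley sequence realised in $\MM$ and apply a Lascar strong automorphism of $\MM$ to its first coordinate, rather than ``transport'' $(\beta_i,\gamma_i)$, which live in the bigger monster.

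The gap is in the realignment. After you prolong $(b_i,c_i)_{i<\omega}$ to a $q$-Morley sequence $(b_i,c_i)_{i<\lambda}$, the new terms $b_j$ for $\omega\le j<\lambda$ are only constrained to continue a Morley sequence in $r$ over $M$; nothing forces $b_j\equivls_{Ma}b$. When you extract $(b_i^*,c_i^*)_{i<\omega}$ over $N$, the element $b_0^*$ is $\equiv_N$ (hence $\equivls_N$, hence $\equivls_{Ma}$) to $b_j$ for \emph{some} $j<\lambda$, with no control over $j$ --- the extraction tends to sample from late in the sequence. So ``$b\in N$ and $b_0=b$'' do not yield $b_0^*\equivls_{Ma}b$, and the final Lascar strong automorphism over $Ma$ you invoke need not exist. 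The paper avoids this by arranging $Ma$-indiscernibility of the \emph{entire} $\lambda$-length first coordinate before extraction: it starts with an arbitrary $\lambda$-length $q$-Morley sequence $(b_i,c_i)$, finds $a'$ with $a'b_0\equivls_M ab$ such that $(b_i)_{i<\lambda}$ is $Ma'$-indiscernible (this is where \thref{rem:q-dividing-length-does-not-matter} is really used), and moves $a'b_0$ to $ab$ by a Lascar strong automorphism over $M$; after extraction every $b_i'$ is then on an $Ma$-indiscernible sequence with $b_0'=b$, so it is $\equivls_{Ma}b$ and the realignment goes through. Your version is salvageable if, instead of lifting the $\omega$-length sequence and then prolonging the $q$-Morley pair, you first take a $\lambda$-length $Ma$-indiscernible $r$-Morley sequence starting at $b$ (again by \thref{rem:q-dividing-length-does-not-matter}) and lift \emph{that} to a $q$-Morley sequence.
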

\begin{proof}
Let $(b_i, c_i)_{i < \lambda}$ be a Morley sequence over $M$ in $q$ for some big enough $\lambda$. Since $(b_i)_{i < \lambda}$ is a Morley sequence over $M$ in $r$ and $\Lstp(a/Mb)$ does not $r$-divide there is $a'$ with $a'b_0 \equivls_M ab$ such that $(b_i)_{i < \lambda}$ is $Ma'$-indiscernible.

Let $f \in \Aut_f(\MM / M)$ be such that $f(a'b_0) = ab$ and put $(b_i', c_i') = (f(b_i), f(c_i))$. Then $b_0' = b$, $(b_i')_{i < \lambda}$ is $Ma$-indiscernible and $(b_i', c_i')_{i < \lambda}$ is a Morley sequence over $M$ in $q$.

Let $M' \supseteq Ma$ be $\lambda_T$-saturated and use \thref{lem:lstp-base-on-sequence} to find $M'$-indiscernible $(b_i'', c_i'')_{i < \lambda}$ based on $(b_i', c_i')_{i < \lambda}$ and such that $\d_M((b_i'', c_i'')_{i < \lambda}, (b_i', c_i')_{i < \lambda}) \leq 1$. In particular $(b_i'', c_i'')_{i < \lambda}$ is a Morley sequence over $M$ in $q$. Let $i < \lambda$ be such that $b_0'' \equiv_{M'} b_i'$ then $b_0'' \equivls_{Ma} b_i' \equivls_{Ma} b_0'= b$. So there is $g \in \Aut_f(\MM/Ma)$ such that $g(b_0'') = b$. Set $c^* = g(c_0'')$, so $bc^* \equivls_M b_0'' c_0'' \equivls_M b_0' c_0' \equivls_M b_0 c_0 \equivls_M b c$. Finally, since $(g(b_i''), g(c_i''))_{i < \lambda}$ is a Morley sequence over $M$ in $q$ starting with $bc^*$ that is $Ma$-indiscernible, we conclude that $\Lstp(a/Mbc^*)$ does not $q$-Ls-divide.
\end{proof}
\begin{corollary}[Extension]
\thlabel{cor:extension}
In a thick NSOP$_1$ theory we have that if $a \ind_M^K b$ then for any $c$ there is $c' \equivls_{Mb} c$ such that $a \ind_M^K bc'$.
\end{corollary}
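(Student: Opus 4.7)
The strategy is to deduce the corollary from \thref{prop:ls-dividing-extension}, which already does the real work in the Lascar strong type formalism. The only task is to translate the Kim-independence hypothesis into the language of $q$-Ls-dividing and back again. The two bridges we need are Kim's lemma (\thref{prop:kims-lemma}), which makes it irrelevant which global $M$-Ls-invariant extension we choose to test $q$-dividing against, and \thref{lem:q-ls-dividing-vs-q-dividing}, which passes between $q$-dividing of $\tp(a/Mb)$ and $q$-Ls-dividing of $\Lstp(a/Mb)$ (up to replacing $a$).

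Concretely, I would proceed as follows. First, use \thref{cor:type-over-model-extends-to-global-ls-invariant-type} to extend $\Lstp(bc/M)$ to a global $M$-Ls-invariant type $q(x,y)$, and set $r(x) := q|_x$; this is then a global $M$-Ls-invariant extension of $\Lstp(b/M)$. From the hypothesis $a \ind^K_M b$, Kim's lemma together with \thref{rem:kim-dividing-over-ec-models} gives that $\tp(a/Mb)$ does not $r$-divide. Apply \thref{lem:q-ls-dividing-vs-q-dividing} to produce $a^*$ with $a^* \equiv_{Mb} a$ such that $\Lstp(a^*/Mb)$ does not $r$-Ls-divide. Now \thref{prop:ls-dividing-extension}, applied to $a^*, b, c$ and $q$, yields $c^*$ with $c^* b \equivls_M cb$ for which $\Lstp(a^*/Mbc^*)$ does not $q$-Ls-divide. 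The easy direction of \thref{lem:q-ls-dividing-vs-q-dividing} then implies that $\tp(a^*/Mbc^*)$ does not $q$-divide, so another application of Kim's lemma gives $a^* \ind^K_M bc^*$. Finally, pick an automorphism $g$ fixing $Mb$ with $g(a^*) = a$ and let $c' = g(c^*)$; then $a \ind^K_M bc'$ by invariance of Kim-independence under automorphisms, and $bc' \equiv_{Mb} bc^* \equivls_M bc$.

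The main obstacle, and the only delicate point, is the last bookkeeping step: we need $c' \equivls_{Mb} c$ in the strong sense of Lascar strong types, whereas the automorphism $g$ provided by $a^* \equiv_{Mb} a$ is a priori just an ordinary automorphism fixing $Mb$. Two possible remedies are to strengthen the conclusion of \thref{lem:q-ls-dividing-vs-q-dividing} to yield $a^* \equivls_{Mb} a$ (by a closer reading of its proof, which ultimately relies on \thref{prop:q-dividing-equivalent-statements}(iv) to produce an $Ma$-indiscernible sequence), or to chain $bc^* \equivls_M bc$ with $bc' \equiv_{Mb} bc^*$ to deduce $bc' \equivls_M bc$ and hence $c' \equivls_{Mb} c$. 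Beyond this packaging, everything follows by combining results already in place.
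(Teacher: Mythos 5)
Your argument for the weaker conclusion (obtaining $c'$ with $c' \equiv_{Mb} c$ and $a \ind^K_M bc'$) is essentially the paper's first step, and it is correct: extend $\Lstp(bc/M)$ to a global $M$-Ls-invariant $q$, set $r = q|_x$, pass from $a \ind_M^K b$ to ``$\tp(a/Mb)$ does not $r$-divide,'' use \thref{lem:q-ls-dividing-vs-q-dividing} to replace $a$ by some $a^* \equiv_{Mb} a$ with $\Lstp(a^*/Mb)$ not $r$-Ls-dividing, apply \thref{prop:ls-dividing-extension} to get $c^*$, transfer back, and move by an $Mb$-automorphism.

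The gap is exactly where you flag it, but both of the remedies you suggest fail. Remedy~2 is simply false: $bc' \equivls_M bc$ does \emph{not} imply $c' \equivls_{Mb} c$. Lascar equivalence over $M$ is computed via $M$-indiscernible sequences, whereas Lascar equivalence over $Mb$ requires $Mb$-indiscernible sequences; fixing the coordinate $b$ on both sides of $\equivls_M$ is weaker. Even the first link in your chain is suspect: an $Mb$-automorphism witnessing $bc' \equiv_{Mb} bc^*$ need not be a Lascar strong automorphism over $M$, so $bc' \equiv_{Mb} bc^*$ together with $bc^* \equivls_M bc$ does not yield $bc' \equivls_M bc$ in the first place. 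Remedy~1 asks for a strengthening of \thref{lem:q-ls-dividing-vs-q-dividing} to give $a^* \equivls_{Mb} a$ rather than $a^* \equiv_{Mb} a$, but the lemma as proved only produces $a^*$ via an $M$-automorphism matching the Morley sequences, hence only $\equiv_{Mb}$. Indeed \thref{cor:q-ls-dividing-is-q-dividing} shows the upgrade $\equiv_{Mb} \Rightarrow \equivls_{Mb}$ is available precisely when $b$ contains a $\lambda_T$-saturated model, which need not be the case here, so in general the strengthening you want is unavailable.

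The paper closes the gap differently: it isolates the weak statement (with $c' \equiv_{Mb} c$) and then applies it \emph{twice}. Pick a $\lambda_T$-saturated model $N \supseteq Mb$; by the weak version find $N' \equiv_{Mb} N$ with $a \ind^K_M N'$, then again find $c' \equiv_{N'} c$ with $a \ind^K_M N'c'$. Since $N'$ is a $\lambda_T$-saturated model containing $Mb$, equality of types over $N'$ implies Lascar equivalence over $Mb$, so $c' \equivls_{Mb} c$, and monotonicity gives $a \ind^K_M bc'$. This ``saturated-model sandwich'' is the missing ingredient; without it, or some substitute for it, your proof does not establish the strong ($\equivls_{Mb}$) version of extension.
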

\begin{proof}
We first prove a weaker version where we conclude $c' \equiv_{Mb} c$ instead of $c' \equivls_{Mb} c$.

Let $q(x, y)$ be an $M$-Ls-invariant extension of $\Lstp(bc/M)$ and write $r(x) = q|_x$, where $x$ matches $b$. Since $a \ind_M^K b$ there is $a'b \equiv_M ab$ such that $\Lstp(a'/Mb)$ does not $r$-Ls-divide. By \thref{prop:ls-dividing-extension} we thus find $b c^* \equivls_M b c$ such that $\Lstp(a'/Mbc^*)$ does not $q$-Ls-divide. Letting $c'$ be such that $abc' \equiv_M a'bc^*$ then satisfies $a \ind_M^K bc'$ and furthermore we have $bc' \equiv_M bc^* \equiv_M bc$.

Now we use the weaker version to prove the full version. Let $N \supseteq Mb$ be some $\lambda_T$-saturated model. By the above we can find $N' \equiv_{Mb} N$ such that $a \ind_M^K N'$. Then using the above again we find $c' \equiv_{N'} c$ such that $a \ind_M^K N'c'$. Since $Mb \subseteq N'$ we thus get $c' \equivls_{Mb} c$ and $a \ind_M^K bc'$, as required.
\end{proof}
\section{EM-modelling and parallel-Morley sequences}
\label{sec:cr-morley}
In this section we will introduce some tools which will be useful later in certain tree constructions.
\begin{definition}[{\cite[Defintion 2.1]{KKS}}]
\thlabel{def:tree-languages}
The \emph{Shelah language}
\[
L_s = \{\trianglelefteq, \wedge, <_{lex},(P_\alpha)_{\alpha<\omega}\}
\]
consists of binary relation symbols $\trianglelefteq, <_{lex}$, a binary function symbol $\wedge$, and unary relation symbols $P_\alpha$.  We will consider a tree $\omega^{\leq k}$ (with $k<\omega$) as an $L_s$-structure, where $\trianglelefteq$ is interpreted as the containment relation, $<_{lex}$ as the lexicographic order, $\wedge$ as the meet function and  $P_\alpha$ as the $\alpha$-th level of the tree.
\end{definition}
\begin{definition}[{\cite[Definition 3.7]{KKS}}]
\thlabel{def:em-types}
Let $I$ be an arbitrary index structure and $C$ an arbitrary set of parameters. The \emph{EM-type} of a tuple $A = (a_i)_{i \in I}$ over $C$ is the partial type in variables $(x_i)_{ i \in I}$, consisting of all the formulas of the form $\phi(x_{\bar{i}})$ over $C$ (where $\bar{i}$ is a tuple in $I$) satisfying the following property: $\models \phi(a_{\bar{j}})$ holds whenever $\bar{j}$ is a tuple in $I$ with $\qftp_I(\bar{j}) = \qftp_I(\bar{i})$. We let $EM_I(A/C)$ denote this partial type.

In particular, we will write $EM_s(A/C)$  [respectively, $EM_<(A/C)$] for $EM_I(A/C)$ where $I$ is considered as an $L_s$-structure [respectively, a $\{<\}$-structure].
\end{definition}
\begin{definition}
\thlabel{def:em-based}
 Let $I$ be an index structure and let $A=(a_i)_{i\in I}$ and $B=(b_i)_{i\in I}$ be $I$-indexed tuples of compatible parameters. We will say that $A$ is \emph{$EM_I$-based on $B$ over $C$} if $EM_I(A/C)\supseteq EM_I(B/C)$. 
\end{definition}
\begin{corollary}
\thlabel{cor:em-based-indiscernible-sequence}
If $A$ is any set of parameters, then for any compatible sequence $(a_i)_{i<\omega}$ there is an $A$-indiscernible sequence $(b_i)_{i<\omega}$ which is $EM_<$-based on $(a_i)_{i<\omega}$ over $A$.
\end{corollary}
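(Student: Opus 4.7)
The plan is to prolong $(a_i)_{i<\omega}$ to a sequence $(a'_i)_{i<\lambda}$ of length $\lambda := \lambda_{|T|+|A|}$ whose $EM_<$-type over $A$ refines that of $(a_i)_{i<\omega}$, and then apply \thref{lem:indiscernible-sequence-based-on-long-sequence} to extract an $A$-indiscernible sequence based on it. The first step is a compactness argument, the second is a direct application, and verifying that the resulting sequence is $EM_<$-based on the original reduces to chasing definitions.

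More concretely, I would set $\Sigma((x_i)_{i<\lambda})$ to be the partial type consisting, for every $\phi(x_{j_1},\ldots,x_{j_n}) \in EM_<((a_k)_{k<\omega}/A)$ (with $j_1<\ldots<j_n<\omega$) and every choice of indices $i_1<\ldots<i_n<\lambda$, of the formula $\phi(x_{i_1},\ldots,x_{i_n})$. To check consistency, take any finite subtype mentioning only variables $x_{i_1},\ldots,x_{i_k}$ with $i_1<\ldots<i_k$: substituting $x_{i_r}\mapsto a_r$ satisfies it, since each formula of the subtype is of the form $\phi(x_{i_{t_1}},\ldots,x_{i_{t_n}})$ with $t_1<\ldots<t_n$, and by definition of $EM_<((a_k)/A)$ the formula $\phi(a_{t_1},\ldots,a_{t_n})$ holds. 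Positive compactness (\thref{fact:positive-logic-facts}(iv)) then produces a realisation $(a'_i)_{i<\lambda}$ inside the monster. By construction, every formula in $EM_<((a_k)_{k<\omega}/A)$ holds on every increasing tuple drawn from $(a'_i)_{i<\lambda}$.

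Next, apply \thref{lem:indiscernible-sequence-based-on-long-sequence} to the sequence $(a'_i)_{i<\lambda}$ to obtain an $A$-indiscernible sequence $(b_i)_{i<\omega}$ which is based on it, in the sense that for each $n<\omega$ there exist $i_1<\ldots<i_n<\lambda$ with $b_1\ldots b_n \equiv_A a'_{i_1}\ldots a'_{i_n}$. To conclude that $(b_i)_{i<\omega}$ is $EM_<$-based on $(a_i)_{i<\omega}$ over $A$, take any $\phi(x_{j_1},\ldots,x_{j_n}) \in EM_<((a_k)/A)$. By the previous paragraph, $\models \phi(a'_{i_1},\ldots,a'_{i_n})$ for any indices witnessing basedness, hence $\models \phi(b_1,\ldots,b_n)$; and by $A$-indiscernibility of $(b_i)$ this gives $\models \phi(b_{l_1},\ldots,b_{l_n})$ for every $l_1<\ldots<l_n<\omega$, which is exactly the membership $\phi \in EM_<((b_i)/A)$.

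I do not anticipate a real obstacle: the argument is just positive compactness followed by extraction via \thref{lem:indiscernible-sequence-based-on-long-sequence}. The only mild care needed is to notice that the compactness step works purely with positive existential formulas (so \thref{fact:positive-logic-facts}(iv) applies), and that finite subtypes of $\Sigma$ are witnessed by finite initial segments of the original sequence.
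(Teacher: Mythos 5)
Your approach is exactly the paper's: prolong the sequence by compactness to one whose $EM_<$-type over $A$ contains $EM_<((a_i)_{i<\omega}/A)$, then extract an $A$-indiscernible sequence via \thref{lem:indiscernible-sequence-based-on-long-sequence} and observe that being $EM_<$-based is transitive along this process. The one slip is the cardinal: you take $\lambda = \lambda_{|T|+|A|}$, but \thref{lem:indiscernible-sequence-based-on-long-sequence} requires $\lambda_{|T|+|A|+\kappa}$ where $\kappa$ is the length of each tuple $a_i$; the paper accordingly uses $\lambda_{|T|+|A|+|a_0|}$. If the $a_i$ are long tuples with $|a_0|$ exceeding $|T|+|A|$, your $\lambda$ is too small for the extraction step, so you should replace $\lambda$ by $\lambda_{|T|+|A|+|a_0|}$ throughout.
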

\begin{proof}
By compactness there is a sequence $(a'_i)_{i<\lambda_{|T|+|A|+|a_0|}}$ which is $EM_<$-based on $(a_i)_{i<\omega}$ over $A$. 
Then by \thref{lem:indiscernible-sequence-based-on-long-sequence} there is an $A$-indiscernible sequence $(b_i)_{i<\omega}$ which is   $EM_<$-based on $(a'_i)_{i<\lambda_{|T|+|A|+|a_0|}}$ over $A$, hence $EM_<$-based on $(a_i)_{i<\omega}$ over $A$.
\end{proof}
In what follows we consider $\omega^{\leq k}$ as an $L_s$-structure (see \thref{def:tree-languages}). We will only work with trees of width $\omega$, as we will only need those, but everything naturally works for arbitrary (infinite) widths.
\begin{definition}
\thlabel{def:s-indiscernible}
We call a tree $(a_\eta)_{\eta \in \omega^{\leq k}}$ \emph{s-indiscernible over $C$} if for any $\bar{\eta}, \bar{\nu} \subseteq \omega^{\leq k}$ such that $\bar{\eta} \equiv_{qf} \bar{\nu}$ we have that $a_{\bar{\eta}} \equiv_C a_{\bar{\nu}}$.
\end{definition}
\begin{lemma}\thlabel{lem:L_s-extending}
Suppose $\bar{\eta}=(\eta_0,\dots,\eta_{n-1})\equiv_{qf}\bar{\nu}=(\nu_0,\dots,\nu_{n-1})$ are tuples of elements of $\omega^{\leq k}$ for some $k<\omega$.
Then there exists a sequence $I$ of $n$-tuples of elements of $\omega^{\leq k}$  such that $\bar{\eta}\smallfrown I$ and $\bar{\nu}\smallfrown I$ are qf-indiscernible sequences in $\omega^{\leq k}$.
\end{lemma}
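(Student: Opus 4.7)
I will construct $I$ explicitly as a sequence of ``disjoint shifted copies'' of $\bar\eta$, exploiting the infinite width of $\omega^{\leq k}$. Concretely, write each non-root $\eta_j$ as $(x_{j,1},\dots,x_{j,\ell_j})$ and let $F_\eta\subseteq\omega$ be the set of first coordinates of the non-root components of $\bar\eta$; define $F_\nu$ similarly and set $F=F_\eta\cup F_\nu$. For each $i\geq 1$ I pick an order-preserving injection $f_i\colon F_\eta\to\omega$ with the $f_i(F_\eta)$ pairwise disjoint, disjoint from $F$, and arranged in strictly increasing blocks: every element of $f_i(F_\eta)$ is larger than everything in $F\cup f_1(F_\eta)\cup\dots\cup f_{i-1}(F_\eta)$. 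The $i$-th entry of $I$ is then $J_i=(J_{i,0},\dots,J_{i,n-1})$ with $J_{i,j}=(f_i(x_{j,1}),x_{j,2},\dots,x_{j,\ell_j})$ when $\eta_j$ is non-root and $J_{i,j}=\emptyset$ when $\eta_j=\emptyset$.

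The verification rests on two observations. First, each $J_i$ is qf-equivalent to $\bar\eta$ (and hence to $\bar\nu$), because substituting first coordinates by an injective, order-preserving map preserves levels, the prefix relation, meets, and lex order among the components. Second, for any two distinct tuples in either $(\bar\eta,J_1,J_2,\dots)$ or $(\bar\nu,J_1,J_2,\dots)$, the first coordinates used lie in disjoint, linearly ordered blocks of $\omega$---namely $F$ for $\bar\eta$ or $\bar\nu$ and $f_i(F_\eta)$ for $J_i$, with earlier positions using strictly lower blocks. Consequently every cross-meet between elements from two different tuples equals $\emptyset$, and every element of an earlier tuple is lex-below every element of a later tuple. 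These cross-relations are therefore uniform across all pairs (and hence across all finite subsequences of equal length), which gives qf-indiscernibility of both sequences.

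The only mild obstacle is bookkeeping for components of $\bar\eta$ or $\bar\nu$ that are themselves the root $\emptyset$, since ``first coordinate'' is undefined for them. These are handled by copying them verbatim into $J_i$: setting $J_{i,j}=\emptyset$ whenever $\eta_j=\emptyset$. Since $\emptyset$ is its own meet with anything and is a prefix of every element, the internal and cross-meet patterns forced by $\bar\eta\equiv_{qf}\bar\nu$ are still matched, and the verification in the previous paragraph goes through unchanged.
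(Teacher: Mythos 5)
Your construction is essentially the paper's: the paper chooses $l$ bounding all first coordinates appearing in $\bar\eta$ and $\bar\nu$ and sets $\chi^m_{n'}(0)=\eta_{n'}(0)+ml$, $\chi^m_{n'}(i)=\eta_{n'}(i)$ for $i>0$, which is exactly the specific choice $f_m(x)=x+ml$ of your order-preserving injections with disjoint, increasing images. The paper leaves the qf-indiscernibility check implicit; your block-structure verification, together with the observation that root components interact uniformly with everything, correctly supplies what is omitted.
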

\begin{proof}
Let $l<\omega$ be such that $\bar{\eta},\bar{\nu}\subseteq \{\emptyset\}\cup\{\xi\in \omega^{\leq k}\backslash \{\emptyset\}:\xi(0)<l\}$.
For every $0<m<\omega$ choose a tuple $\bar{\chi}^m\subseteq \{\emptyset\}\cup\{\xi\in \omega^{\leq k}\backslash \{\emptyset\}:ml\leq \xi(0)<(m+1)l\}$ such that $\bar{\chi}^m\equiv_{qf}\bar{\eta}\equiv_{qf}\bar{\nu}$  (for example, for every $n'<n$ put $\chi^m_{n'}(0)=\eta_{n'}(0)+ml$ and $\chi^m_{n'}(i)=\eta_{n'}(i)$ for every $0<i\leq k$). Finally, put $I=(\bar{\chi}^m)_{0<m<\omega}$.
\end{proof}
\begin{corollary}\thlabel{cor:E_s-typedef}
If $T$ is thick then s-indiscernibility is type-definable, i.e.\ for every $k<\omega$ and a tuple of variables $y$ there is a partial type $\pi((x_\eta)_{\eta\in \omega^{\leq k}},y)$ over $\emptyset$ such that for all $D$ with $|D| = |y|$: $((a_\eta)_{\eta\in \omega^{\leq k}},D)\models \pi $ if and only if $(a_\eta)_{\eta\in \omega^{\leq k}}$ is s-indiscernible over $D$.

More specifically, we can take $\pi((x_\eta)_{\eta\in \omega^{\leq k}},y)$ to be the partial type that expresses that for any $(\eta_0,\dots,\eta_{n-1})\equiv_{qf} (\nu_0,\dots,\nu_{n-1})$ the Lascar distance of $(x_{\eta_0},\dots,x_{\eta_{n-1}})$ and $(x_{\nu_0},\dots,x_{\nu_{n-1}})$ over $y$ is at most 2. 
\end{corollary}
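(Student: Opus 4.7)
The plan is to use \thref{lem:L_s-extending} to find a ``bridge'' tuple that connects $a_{\bar{\eta}}$ and $a_{\bar{\nu}}$ via two $D$-indiscernible sequences, yielding Lascar distance at most $2$. First I would observe that the partial type described in the statement is well-defined: by thickness (\thref{fact:thick-lascar-distance}), the property ``$\d_y(\bar{z}, \bar{z}') \leq 2$'' is type-definable in $y, \bar{z}, \bar{z}'$. Hence taking the corresponding set of formulas for each pair of qf-equivalent tuples $\bar{\eta} \equiv_{qf} \bar{\nu}$ in $\omega^{\leq k}$ and forming the union yields a partial type, which I call $\pi((x_\eta)_{\eta \in \omega^{\leq k}}, y)$.

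The easy direction is that if $((a_\eta)_{\eta \in \omega^{\leq k}}, D) \models \pi$ then the tree is s-indiscernible over $D$: for any $\bar{\eta} \equiv_{qf} \bar{\nu}$ in $\omega^{\leq k}$, the bound $\d_D(a_{\bar{\eta}}, a_{\bar{\nu}}) \leq 2$ gives $a_{\bar{\eta}} \equivls_D a_{\bar{\nu}}$ by \thref{def:lascar-strong-type}(i), and in particular $a_{\bar{\eta}} \equiv_D a_{\bar{\nu}}$, which is exactly s-indiscernibility.

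The main step is the converse, which is where \thref{lem:L_s-extending} enters. Assume $(a_\eta)_{\eta \in \omega^{\leq k}}$ is s-indiscernible over $D$ and fix $\bar{\eta} \equiv_{qf} \bar{\nu}$. Applying \thref{lem:L_s-extending} produces a sequence $I = (\bar{\chi}^m)_{0 < m < \omega}$ of $n$-tuples in $\omega^{\leq k}$ such that both $\bar{\eta} \smallfrown I$ and $\bar{\nu} \smallfrown I$ are qf-indiscernible in $\omega^{\leq k}$. By s-indiscernibility over $D$, the sequences of tuples $(a_{\bar{\eta}}, a_{\bar{\chi}^1}, a_{\bar{\chi}^2}, \dots)$ and $(a_{\bar{\nu}}, a_{\bar{\chi}^1}, a_{\bar{\chi}^2}, \dots)$ are $D$-indiscernible, since any finite subtuple of indices has the same qf-type as the corresponding subtuple in $\omega^{\leq k}$. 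Thus $a_{\bar{\eta}}$ and $a_{\bar{\chi}^1}$ appear together on a $D$-indiscernible sequence, and likewise $a_{\bar{\nu}}$ and $a_{\bar{\chi}^1}$; chaining these two steps gives $\d_D(a_{\bar{\eta}}, a_{\bar{\nu}}) \leq 2$, as required.

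The main obstacle is purely bookkeeping: ensuring that the partial type obtained by collecting, over all pairs of qf-equivalent tuples in $\omega^{\leq k}$ of arbitrary arity, the type-definition of ``$\d_y \leq 2$'' really defines s-indiscernibility, and that the qf-indiscernibility supplied by \thref{lem:L_s-extending} holds jointly for the two extensions $\bar{\eta} \smallfrown I$ and $\bar{\nu} \smallfrown I$ rather than only separately. Once the joint qf-indiscernibility is in hand, the rest is a direct unwinding of definitions.
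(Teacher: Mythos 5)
Your proof is correct and takes essentially the same route as the paper: use thickness to type-define Lascar distance at most two, note the easy direction, and for the converse apply \thref{lem:L_s-extending} to obtain the bridge sequence $(\bar{\chi}^m)_m$ so that $a_{\bar\eta}$ and $a_{\bar\nu}$ each sit on a $D$-indiscernible sequence with $a_{\bar\chi^1}$.
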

\begin{proof}
Let $\pi$ be as above and consider arbitrary $(a_\eta)_{\eta\in \omega^{\leq k}}$ and $D$. If $((a_\eta)_{\eta\in \omega^{\leq k}},D)\models \pi$ then $(a_\eta)_{\eta\in \omega^{\leq k}}$ is indiscernible over $D$ as being at Lascar distance at most 2 over $D$ implies equality of types over $D$.

Conversely, if $((a_\eta)_{\eta\in \omega^{\leq k}},D)$ is s-indiscernible over $D$ and $\bar{\eta}=(\eta_0,\dots,\eta_{n-1})\equiv_{qf} \bar{\nu}=(\nu_0,\dots,\nu_{n-1})$, then with $I=(\bar{\chi}^m)_{0<m<\omega}$ given by  \thref{lem:L_s-extending} we have that $a_{\bar{\eta}}\smallfrown(a_{\bar{\chi}^m})_{0<m<\omega}$ and  $a_{\bar{\nu}}\smallfrown(a_{\bar{\chi}^m})_{0<m<\omega}$ are both indiscernible sequences over $D$, so $a_{\bar{\eta}}$ and $a_{\bar{\nu}}$ are at Lascar distance at most 2 over $D$.
\end{proof}
We now adapt the proof of \cite[Theorem 4.3]{KKS} to obtain the $EM_s$-modeling property for positive logic.
\begin{proposition}
\thlabel{prop:em-modeling}
Suppose $T$ is thick and consider arbitrary set of parameters $D$ and $k<\omega$. Then for any  tree $A=(a_\eta)_{\eta\in \omega^{\leq k}}$ of compatible tuples there is an s-indiscernible over $D$ tree  $C=(c_\eta)_{\eta\in \omega^{\leq k}}$ which is $EM_s$-based on $A$ over $D$.
\end{proposition}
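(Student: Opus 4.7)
The plan is to adapt the proof of \cite[Theorem 4.3]{KKS} to positive logic, combining compactness with an iterated application of Erd\H{o}s-Rado. First, for a sufficiently large cardinal $\kappa$, I would use compactness to construct a tree $A^* = (a^*_\eta)_{\eta \in \kappa^{\leq k}}$ of compatible tuples with $EM_s(A/D) \subseteq EM_s(A^*/D)$. The relevant partial type, whose instances are---for each $\phi(x_{\bar\eta}) \in EM_s(A/D)$ and each $\bar\eta' \in \kappa^{\leq k}$ with $\bar\eta' \equiv_{qf} \bar\eta$---the formula $\phi(y_{\bar\eta'})$, is finitely satisfiable: any finite $L_s$-substructure of $\kappa^{\leq k}$ embeds qf-type-preservingly into $\omega^{\leq k}$, so a finite subset can be realised using the corresponding tuples from $A$ itself.

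Next, there are only countably many qf-types of finite tuples in $\omega^{\leq k}$; enumerate them as $q_0, q_1, \ldots$ Iteratively, for each $i < \omega$, colour the tuples of qf-type $q_i$ in the current sub-tree $X_i^{\leq k}$ (with $X_0 = \kappa$) by their $\tp(\cdot/D)$-type computed from $A^*$, and apply Erd\H{o}s-Rado to find $X_{i+1} \subseteq X_i$ of very large order type on which this colouring is constant. Choosing $\kappa$ sufficiently large allows this iteration to yield $X = \bigcap_i X_i$ of order type at least $\omega$.

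Finally, the order-preserving bijection $X \cong \omega$ induces an $L_s$-isomorphism $X^{\leq k} \cong \omega^{\leq k}$; reindex along it to obtain $C = (c_\eta)_{\eta \in \omega^{\leq k}}$. By the Ramsey step, all tuples in $C$ of the same qf-type have the same type over $D$, so $C$ is s-indiscernible over $D$ in the sense of \thref{def:s-indiscernible}. By the stretching step combined with the $L_s$-embedding $X^{\leq k} \hookrightarrow \kappa^{\leq k}$, every formula in $EM_s(A/D)$ holds on the corresponding tuples of $C$, so $C$ is $EM_s$-based on $A$ over $D$.

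The main technical subtlety is the iteration of Erd\H{o}s-Rado: each application handles a colouring by as many as $|S(D)|$ colours, and $\kappa$ must be chosen large enough to absorb countably many such iterations while still leaving $|X| \geq \omega$ (taking $\kappa$ to exceed $\beth_{\omega_1}(|T|+|D|)$ suffices). The positive-logic setting poses no further obstacle here, as Erd\H{o}s-Rado is purely combinatorial and colourings by positive existential types over $D$ behave no differently from colourings by full first-order types; the thickness hypothesis is in fact not needed in this proposition, though it is what allows the resulting s-indiscernibility to be exploited type-definably later via \thref{cor:E_s-typedef}.
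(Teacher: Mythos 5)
There is a genuine gap in the iterated Erd\H{o}s-Rado step, and it cannot be patched the way you suggest: the tuples of a fixed $L_s$-qf-type in $\kappa^{\leq k}$ are \emph{not} parameterised by $[\kappa]^n$, because coordinates at inner levels are ``free'' relative to the tree structure and can be interleaved arbitrarily with the other coordinates. A single Erd\H{o}s-Rado application homogenises the colouring $\{x_1 < \dots < x_n\} \mapsto \tp(a^*_{\bar\eta(x_1,\dots,x_n)}/D)$ for one fixed pattern of reading a tuple of nodes off an increasing $n$-tuple of indices, but tuples of the same qf-type that are obtained by a different interleaving of their coordinates are not controlled. Concretely, already for $k=2$: the level-2 nodes $(x_0,x_1)$ and $(x_1,x_0)$ of $\kappa^{\leq 2}$ have the same $L_s$-qf-type (a single level-2 node), yet the homogeneity you obtain only speaks of patterns like ``pick an increasing pair and form $(x_0,x_1)$'', so nothing forces $a^*_{(x_0,x_1)} \equiv_D a^*_{(x_1,x_0)}$. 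After reindexing along $X\cong\omega$ one gets $c_{(0,1)} = a^*_{(x_0,x_1)}$ and $c_{(1,0)} = a^*_{(x_1,x_0)}$, so $C$ need not be s-indiscernible. For a concrete failure, take $T$ the theory of a linear order and $a_{(i,j)}$ the ordered pair consisting of the $i$-th and $j$-th members of a fixed increasing sequence: then $c_{(0,1)}$ realises the ``increasing pair'' type while $c_{(1,0)}$ realises the ``decreasing pair'' type, so $C$ cannot be s-indiscernible, although a legitimate $EM_s$-based s-indiscernible tree certainly exists here (take every level-2 node to be an increasing pair). The issue cannot be repaired merely by refining the enumeration of qf-types: it is a structural mismatch between $L_s$-qf-types and the combinatorics of $[\kappa]^n$.

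The paper's proof sidesteps this entirely by arguing by induction on the height $k$. At the inductive step one only needs to s-indiscernibilise the level-1 subtrees one at a time (using the inductive hypothesis, which is Ramsey for height $k-1$), and then apply Erd\H{o}s-Rado only to the resulting \emph{sequence} of subtrees via \thref{cor:em-based-indiscernible-sequence}; there the extraction really is a colouring of $[\kappa]^n$ and works as usual. This ``level-by-level'' reduction is precisely what makes the Ramsey combinatorics tractable. It also explains why thickness is needed: the mutual indiscernibility claim in the inductive step (\thref{claim:mut-ind}) relies on \thref{cor:E_s-typedef} to encode s-indiscernibility as a partial type, so your remark that thickness is ``in fact not needed'' does not survive scrutiny of the proof that actually works.
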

\begin{proof}
 We proceed by induction on $k$. The case $k=0$ is trivial. Suppose the assertion holds for some $k$ and consider any $A=(a_\eta)_{\eta\in \omega^{\leq k+1}}$. For any $i<\omega$ consider an $\omega^{\leq k} $-indexed tree $A_i:=(a_{i\smallfrown \eta})_{\eta \in \omega^{\leq k}}$. Using the inductive hypothesis we choose inductively for each $i<\omega$ a tree $B_i=(b^i_\eta)_{\eta\in \omega^{\leq k}}$ which is s-indiscernible over $Da_\emptyset B_{<i} A_{>i}$ and $EM_s$-based on $A_i$ over $Da_\emptyset B_{<i} A_{>i}$.
 Let $B=(b_\eta)_{\eta\in \omega^{\leq k+1}}$ where $b_\emptyset=a_\emptyset$ and $b_{i\smallfrown \xi}=b^i_\xi$ for every $i<\omega$ and $\xi\in \omega^{\leq k}$.
 \begin{claim}
 \thlabel{claim:mut-ind}
   $B_i$ is s-indiscernible over $Db_{\emptyset}B_{\neq i}$ for every $i<\omega$.
 \end{claim}
Fix $i<\omega$. We will show by induction on $j$ that $B_i$ is s-indiscernible over $Db_{\emptyset}B_{<i}B_{i+1}\dots B_{j-1}A_{\geq j}$ for every $j>i$, which is enough by \thref{cor:E_s-typedef}. For $j=i+1$ this follows directly from the choice of $B_i$. Now suppose the assertion holds for some $j>i$.
By \thref{cor:E_s-typedef} there is a type $\pi((x_\eta)_{\eta\in \omega^{\leq k}},\bar{y})$ over $D':=Db_{\emptyset}B_{<i}B_{i+1}\dots B_{j-1}A_{>j}$, where $\bar{y}=(y_\eta)_{\eta\in \omega ^{\leq k}}$, expressing that $(x_\eta)_{\eta\in \omega^{\leq k}}$ is s-indiscernible over $D'\bar{y}$. Then $B_iA_j\models \pi$. Note that the type $\pi(B_i,\bar{y})$ is invariant under all permutations of  $\bar{y}$, hence if $\phi(y_{\eta_0},\dots,y_{\eta_{n-1}})\in \pi(B_i,\bar{y})$ then  $\phi(y_{\nu_0},\dots,y_{\nu_{n-1}})\in \tp(A_j/D'B_i)$ for all $\nu_0,\dots,\nu_{n-1}\in \omega^{\leq k}$. In particular,  $\pi(B_i,\bar{y})\subseteq EM_s(A_j/D'B_i)$. Thus, by the choice of $B_j$, we have that $\pi(B_i,\bar{y})\subseteq EM_s(B_j/D'B_i)$, so in particular $B_iB_j\models \pi$. Hence $B_i$ is indiscernible over $D'B_j=Db_{\emptyset}B_{<i}B_{i+1}\dots B_{j}A_{\geq j+1}$, as required.
\begin{claim}
$B$ is $EM_s$-based on $A$ over $D$.
\end{claim}
Consider any $i<\omega$ and the trees $E=(e_\eta)_{\eta\in \omega^{\leq k+1}}$ and $F=(f_\eta)_{\eta\in \omega^{\leq k+1}}$ given by $e_\emptyset=f_\emptyset=a_\emptyset$, $e_{j\smallfrown \eta}=\begin{cases}
b_{j\smallfrown \eta}\mbox{ for } j<i \\                                                                                                                                                                                                      
 a_{j\smallfrown \eta} \mbox{ for } j\geq i                                                                                                                                                                                                   \end{cases}
$, and $f_{j\smallfrown \eta}=\begin{cases}
b_{j\smallfrown \eta}\mbox{ for } j\leq i \\                                                                                                                                                                                                      
 a_{j\smallfrown \eta} \mbox{ for } j> i                                                                                                                                                                                                   \end{cases}
$.
We will prove that $\pi_0:=EM_s(E/D) \subseteq EM_s(F/D) =:\pi_1$
which clearly is sufficient to prove the claim.
Let $\bar{x}=(x_\eta)_{\eta\in \omega^{\leq k+1}}$ be a tuple of variables compatible with $a_\eta$'s. We naturally view $\pi_0$ and $\pi_1$ as partial types in the variable $\bar{x}$. Consider any formula $\phi(x_{\eta_0},\dots,x_{\eta_l},x_{\eta_{l+1}},\dots,x_{\eta_{l'}})\in \pi_0$ over $D$ with $\eta_0,\dots,\eta_l\in K_i:=\{i\smallfrown \xi:\xi\in \omega^{\leq k}\}$ and $\eta_{l+1},\dots,\eta_{l'}\in \omega^{\leq k+1}\backslash K_i$. We will be done if we show that $\models \phi(f_{\eta_0},\dots, f_{\eta_{l'}})$.
Write $\eta_t=i\frown \xi_t$ for $t=0,1,\dots,l$. For any $\xi'_0,\dots,\xi'_l\in \omega^{\leq k}$ with $\qftp_{L_s}(\xi'_0,\dots,\xi'_l)=\qftp_{L_s}(\xi_0,\dots,\xi_l)$ we have $\qftp_{L_s}(\eta_0,\dots, \eta_{l'})=\qftp_{L_s}(i\smallfrown \xi_0,\dots,i\smallfrown \xi_l,\eta_{l+1},\dots,\eta_{l'})=\qftp_{L_s}(i\smallfrown \xi'_0,\dots,i\smallfrown \xi'_l,\eta_{l+1},\dots,\eta_{l'})$, hence, as $\phi\in \pi_0$, we get that $\models \phi(e_{i\smallfrown \xi'_0},\dots,e_{i\smallfrown \xi'_l},e_{\eta_{l+1}},\dots,e_{\eta_{l'}})$. This shows that $\phi(y_{\xi_0},\dots,y_{\xi_l},e_{\eta_{l+1}},\dots,e_{\eta_{l'}})\in EM_s(A_i/a_{\emptyset}A_{< i}B_{>i})$ where $A_i$ is naturally indexed by $ \omega^{\leq k}$, so, by the choice of $B_i$ we get that $\models\phi(b^i_{\xi_0},\dots,b^i_{\xi_{l}},e_{\eta_{l+1}},\dots,e_{\eta_{l'}})$. As $(b^i_{\xi_0},\dots,b^i_{\xi_{l}},e_{\eta_{l+1}},\dots,e_{\eta_{l'}})=(f_{i\smallfrown \xi_0},\dots,f_{i\smallfrown \xi_l},f_{\eta_{l+1}},\dots,f_{\eta_{l'}})=(f_{\eta_0},\dots,f_{\eta_{l'}})$, this means that $\models\phi(f_{\eta_0},\dots,f_{\eta_{l'}})$, as required.

By \thref{cor:em-based-indiscernible-sequence} we find a sequence $(C_i)_{i < \omega} = ((c^i_\eta)_{\eta \in \omega^{\leq k}})_{i < \omega}$ which is $EM_<$-based on $(B_i)_{i<\omega}$ over $Db_{\emptyset}$ and indiscernible over $Db_{\emptyset}$. Let $C=(c_\eta)_{\eta\in \omega^{\leq k+1}}$ be given by $c_\emptyset=b_\emptyset$ and $c_{i\smallfrown\xi}=c^i_\xi$ for any $\xi \in \omega^{\leq k}$ and $i < \omega$. By \thref{claim:mut-ind} and \thref{cor:E_s-typedef} we get that $C_i$ is s-indiscernible over $C_{\neq i}Dc_\emptyset$ for every $i<\omega$, which, together with $Dc_{\emptyset}$-indiscernibility of $(C_i)_{i<\omega}$ easily gives that $C$ is s-indiscernible over $D$ (as in \cite{KKS}). It is left to prove:
\begin{claim}
$C$ is $EM_s$-based on $B$ (and hence on $A$) over $D$. 
\end{claim}
 Consider any formula $\phi(x_{i_1\smallfrown \xi_1},\dots, x_{i_l\smallfrown \xi_l}, x_\emptyset)\in EM_s(B/D)$ with $i_1,\dots,i_l\in \omega$ and $\xi_1,\dots,\xi_l\in \omega^{\leq k}$. Then for every $j_1,\dots,j_l\in \omega$ with $\qftp_{\{<\}}(j_1,\dots,j_l)=\qftp_{\{<\}}(i_1,\dots,i_l)$ we have that $\qftp_{L_s}( j_1\smallfrown \xi_1,\dots,j_l\smallfrown \xi_l, \emptyset)=\qftp_{L_s}( i_1\smallfrown \xi_1,\dots,i_l\smallfrown \xi_l, \emptyset)$, so $\models \phi(b_{j_1\smallfrown \xi_1},\dots,b_{j_l\smallfrown \xi_l}, b_\emptyset)$. This means that \[
 \phi(x_{i_1\smallfrown \xi_1},\dots, x_{i_l\smallfrown \xi_l}, b_\emptyset)\in EM_<((B_i)_{i<\omega }/b_\emptyset D),
 \]
 hence by the choice of $C$ we have $\models \phi(c_{i_1\smallfrown \xi_1},\dots,c_{i_l\smallfrown \xi_l},c_\emptyset)$ so $\phi(x_{i_1\smallfrown \xi_1},\dots, x_{i_l\smallfrown \xi_l}, x_\emptyset)\in EM_s(C/D)$, as required.
\end{proof}

\begin{definition}
\thlabel{def:cr-morley-sequence}
Let $I$ be a linearly ordered set. For a global $M$-$Ls$-invariant type $q$, we will call a sequence $(a_i)_{i\in I}$ a \emph{parallel-Morley sequence in $q$ over $M$}, if there is  some $(b_i)_{i\in I}\models q^{\otimes I}|_{M}$ such that the pair $(a_i,b_i)$ starts an $Ma_{>i}b_{>i}$-indiscernible sequence for every $i\in I$. We will say that  $(a_i)_{i\in I}$ is a parallel-Morley sequence in $\tp(a/M)$ if it is a parallel-Morley sequence in some global $M$-Ls-invariant type $q\supseteq \tp(a/M)$.
\end{definition}

In the semi-Hausdorff case we can replace the condition ``$(a_i,b_i)$ starts an $Ma_{>i}b_{>i}$-indiscernible sequence'' by ``$a_i \equiv_{Ma_{>i} b_{>i}} b_i$''. The reason for which we need the stronger condition in thick theories is that equality of types is not necessarily type-definable there, so some of the compactness arguments below would not work with the weaker condition.

Note that a parallel-Morley sequence is \emph{not} required to be indiscernible. The reason for the name ``parallel-Morley sequence'' is because such a sequence is parallel to a Morley sequence, in the sense of the parallel sequences lemma (\thref{lem:sequences-give-sop1}). We make this precise in \thref{cor:cr-kims-lemma}, for which we first slightly reformulate the parallel sequences lemma.
\begin{lemma}
\thlabel{lem:sequences-give-sop1-not-indiscernible}
Let $T$ be thick and suppose $\phi(x,y)$ is a formula and $(c_{i,0},c_{i,1})_{i\in I}$ is an infinite sequence of pairs with $(c_{i,1})_{i\in I}$ indiscernible, such that:
\begin{enumerate}[label=(\roman*)]
\item for every $i\in I$, the pair $(c_{i,0},c_{i,1})$ starts a $c_{>i,0}c_{>i,1}$-indiscernible sequence;
\item $\{\phi(x; c_{i,0}) : i \in I\}$ is consistent;
\item $\{\phi(x; c_{i,1}) : i \in I\}$ is inconsistent.
\end{enumerate}
Then $T$ has SOP$_1$.
\end{lemma}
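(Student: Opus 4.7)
The plan is to reduce to the parallel sequences lemma \thref{lem:sequences-give-sop1} by extracting from $(\bar{c}_i)_{i\in I}$ an $\emptyset$-indiscernible sequence of pairs $(\bar{c}'_i)_{i<\omega}$ that preserves all three hypotheses, and then reversing the order. The crucial observation making this work is that the seemingly existential condition (i)---namely that $(c_{i,0},c_{i,1})$ starts a $c_{>i,0}c_{>i,1}$-indiscernible sequence---is in a thick theory equivalent to the type-definable condition $\d_{c_{>i,0}c_{>i,1}}(c_{i,0},c_{i,1})\leq 1$ by \thref{fact:thick-lascar-distance}, so it can be captured by a positive partial type in the variables of the sequence.

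First I would enlarge $I$ by compactness to length $\lambda=\lambda_{|T|+|\bar{c}_0|}$. The partial type in variables $(x_{i,0},x_{i,1})_{i<\lambda}$ I would use conjoins: indiscernibility of the right column $(x_{i,1})_{i<\lambda}$; all finite instances of the condition $\d(x_{i,0},x_{i,1})\leq 1$ with parameter variables $x_{j,0},x_{j,1}$ ranging over finite subsets of $\{j>i\}$; the consistency clauses $\exists x\bigwedge_{k}\phi(x,x_{i_k,0})$ for every finite $i_1<\ldots<i_m$; and the clauses asserting that every increasing $n$-subtuple of the right column has the specific positive existential type already realised by $(c_{i,1})_{i\in I}$. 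This is a positive type-definable condition, finitely satisfied in the original sequence, so by compactness consistent. Crucially, inconsistency of $\{\phi(x,x_{i,1}):i<\lambda\}$ in the enlarged sequence then comes for free: some fixed $n$ witnesses that $\exists x\bigwedge_k\phi(x,y_k)$ fails on every increasing $n$-tuple of right coordinates in the original, and that non-realisation of a positive existential formula is automatically preserved once the positive type of the right column is fixed.

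Next I would apply \thref{lem:indiscernible-sequence-based-on-long-sequence} to extract an $\emptyset$-indiscernible sequence $(\bar{c}'_i)_{i<\omega}$ based on this enlarged sequence. Based-ness transfers positive existential types of finite subtuples, so (ii) is preserved because it is positive existential; (iii) is preserved because the non-satisfaction of $\exists x\bigwedge_k\phi(x,y_k)$ on some (hence, by the indiscernibility just obtained, every) $n$-subtuple transfers; and (i) is preserved because every finite instance $\d(c'_{i,0},c'_{i,1})\leq 1$ over a finite subset of $c'_{>i,0}c'_{>i,1}$ transfers, so by compactness in the monster model one recovers the full $\d_{c'_{>i,0}c'_{>i,1}}(c'_{i,0},c'_{i,1})\leq 1$, which by \thref{fact:thick-lascar-distance} is exactly our (i).

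Finally I would apply \thref{lem:sequences-give-sop1} to $(\bar{c}'_i)_{i\in\omega^{\textup{op}}}$: reindexing preserves $\emptyset$-indiscernibility since matching indices give equivalent tuples under any permutation of coordinates (an automorphism sending one matching tuple to another sends every reordering to the corresponding reordering), and condition (i) of that lemma in the reversed order translates to $c'_{i,0}\equiv_{\bar{c}'_{>i}}c'_{i,1}$ in the original order, which is implied by our preserved starts-indiscernible clause. The main obstacle I anticipate is making (iii) survive both the compactness enlargement and the indiscernible-sequence extraction, since inconsistency is the negation of a positive existential formula and thus not directly type-definable; the resolution is to encode it indirectly through the fixed positive existential type of the right column, after which compactness together with indiscernibility takes care of everything.
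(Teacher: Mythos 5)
Your proposal is correct and follows the same overall strategy as the paper's proof: enlarge the sequence via compactness to length $\lambda_T$, extract an indiscernible sequence based on it, and feed it, order-reversed, into the parallel sequences lemma (\thref{lem:sequences-give-sop1}). The one place where you diverge is in how condition (iii)---which is the \emph{negation} of a positive existential statement, hence not directly type-definable---survives the compactness step, and that is indeed the crux. The paper extracts a single positive existential formula $\psi(y_1,\dots,y_k)$ witnessing the inconsistency via the e.c.\ property (\thref{def:ec-model}(iii)) together with indiscernibility of $(c_{i,1})_{i\in I}$, and then asks the enlarged sequence to be ``$\psi$-inconsistent'', reusing exactly the device built into the positive-logic formulation of SOP$_1$ (\thref{def:sop1}, \thref{rem:sop1-inconsistency-witness}). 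You instead fix the full positive existential type of the right column and invoke homogeneity of the monster to transport the failure of $\exists x\bigwedge_k\phi(x,y_k)$ to the new sequence. Both are valid; your route is somewhat heavier (it carries a type where a single formula suffices, and it relies on the observation---correct but worth making explicit---that a maximal positive type of a tuple in the monster determines all its first-order properties via conjugacy), while the paper's $\psi$-witness is the leaner choice and lines up with the rest of the machinery for SOP$_1$ in positive logic.
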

\begin{proof}
We may assume the tuples $c_{i,0}$ and $c_{i,1}$ to be finite. As $(c_{i,1})_{i\in I}$ is indiscernible and $\{\phi(x,c_{i,1}):i\in I\}$ is inconsistent, there is some $\psi(y_1, \ldots, y_k)$ that implies $\neg \exists x(\phi(x, y_1) \wedge \ldots \wedge \phi(x, y_k))$ such that  for any $i_1 < \ldots < i_k \in I$ we have $\models \psi(c_{i_1, 1}, \ldots, c_{i_k, 1})$. Call this $\psi$-inconsistent. By compactness there is a sequence of pairs $(\bar{c'}_i)_{i<\lambda_T} = (c'_{i,0},c'_{i,1})_{i<\lambda_T}$ such that $(c'_{i,0},c'_{i,1})$ starts a $\bar{c'}_{>i}$-indiscernible sequence for every $i<\lambda_T$, $\{\phi(x,c'_{i,0}):i<\lambda_T\}$ is consistent and $\{\phi(x,c'_{i,1}):i<\lambda_T\}$ is $\psi$-inconsistent. Then an indiscernible sequence based on $(\bar{c'}_i)_{i<\lambda_T}$ will satisfy the assumptions of \thref{lem:sequences-give-sop1}, so $T$ has SOP$_1$.
\end{proof}
By Kim's Lemma (\thref{prop:kims-lemma}) and \thref{lem:sequences-give-sop1-not-indiscernible} we easily get the following.
\begin{corollary}
\thlabel{cor:cr-kims-lemma}
Suppose $T$ is thick NSOP$_1$ with an e.c.\ model $M$, $\Sigma(x,b)$ is a partial type, $I$ is an infinite linearly ordered set, and $(b_i)_{i\in I}$ a parallel-Morley sequence in $\tp(b/M)$. If $\bigcup \{\Sigma(x,b_i) : i\in I\}$ is consistent then $\Sigma(x, b)$ does not Kim-divide over $M$. If $(b_i)_{i\in I}$ is indiscernible over $M$, then the converse also holds.
\end{corollary}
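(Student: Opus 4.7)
The plan is to deduce both implications from Kim's lemma (\thref{prop:kims-lemma}) and the parallel sequences lemma (\thref{lem:sequences-give-sop1-not-indiscernible}), exactly as suggested in the sentence preceding the corollary. As set-up, fix a global $M$-Ls-invariant type $q\supseteq\tp(b/M)$ in which $(b_i)_{i\in I}$ is parallel-Morley, together with a witnessing Morley sequence $(c_i)_{i\in I}\models q^{\otimes I}|_M$ such that $(b_i,c_i)$ starts an $Mb_{>i}c_{>i}$-indiscernible sequence of pairs for every $i\in I$. Note that $(c_i)_{i\in I}$ is itself $M$-indiscernible by \thref{prop:properties-morley-sequence-in-q}(ii).

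For the first implication I would argue by contrapositive. If $\Sigma(x,b)$ Kim-divides over $M$, then by Kim's lemma it $q$-divides, so $\bigcup_{i\in I}\Sigma(x,c_i)$ is inconsistent; compactness and closure of partial types under finite conjunction produce a single $\phi(x,y)\in\Sigma(x,y)$ such that $\{\phi(x,c_i):i\in I\}$ is inconsistent. If $\bigcup_{i\in I}\Sigma(x,b_i)$ were nevertheless consistent, then $\{\phi(x,b_i):i\in I\}$ would be consistent, and \thref{lem:sequences-give-sop1-not-indiscernible} applied to the pair sequence $(b_i,c_i)_{i\in I}$ — with indiscernible inconsistent side $(c_i)$, and pair-tail-indiscernibility furnished directly by the definition of parallel-Morley — would produce SOP$_1$, contradicting the hypothesis.

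The converse is the same argument with the two coordinates of each pair swapped. Assume $(b_i)_{i\in I}$ is $M$-indiscernible and $\bigcup_{i\in I}\Sigma(x,b_i)$ is inconsistent; extract $\phi\in\Sigma$ with $\{\phi(x,b_i):i\in I\}$ inconsistent. If $\Sigma(x,b)$ did not Kim-divide, Kim's lemma would give $\{\phi(x,c_i):i\in I\}$ consistent, and \thref{lem:sequences-give-sop1-not-indiscernible} applied to $(c_i,b_i)_{i\in I}$ — now with the inconsistent side $(b_i)$ indiscernible by the extra hypothesis, and pair-tail-indiscernibility still supplied by parallel-Morley after reversing the coordinates within each pair — would again yield SOP$_1$.

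The only thing requiring any care is bookkeeping: tracking which coordinate of the pair is the consistent side and which is the inconsistent side in each direction, and observing that uniformly swapping the two coordinates in a sequence of pairs preserves indiscernibility of that pair-sequence over any parameter set. With those two observations in place, both implications fall out in a few lines.
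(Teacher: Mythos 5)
The proof is correct and fills in precisely the route the paper indicates (it states the corollary follows "easily" from Kim's lemma and \thref{lem:sequences-give-sop1-not-indiscernible}). Both directions are argued soundly: the set-up with a witnessing $q$ and $(c_i)_{i\in I}\models q^{\otimes I}|_M$, the extraction of a single formula $\phi$ (a conjunction of members of $\Sigma$, harmless even if $\Sigma$ is not literally closed under conjunction), the observation that $(c_i)_{i\in I}$ is automatically $M$-indiscernible, and the careful bookkeeping of which coordinate of each pair is the consistent one and which is the inconsistent/indiscernible one. The only place meriting a sharper justification is the "swap" in the converse. You justify it by noting that swapping the two coordinates uniformly in a sequence of pairs preserves indiscernibility of that pair-sequence; but the paper's condition "$(b_i,c_i)$ starts an $Mb_{>i}c_{>i}$-indiscernible sequence" means $b_i, c_i, e_2, e_3,\dots$ is an $Mb_{>i}c_{>i}$-indiscernible sequence of singletons (this is how it is derived in \thref{lem:q-spread-out-trees}(iii) from $s$-indiscernibility, and is why the remark after \thref{def:cr-morley-sequence} can say it implies $b_i\equiv_{Mb_{>i}c_{>i}}c_i$). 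The symmetry you need — that $(c_i,b_i)$ then also starts an $Mb_{>i}c_{>i}$-indiscernible sequence — is nevertheless true: "starts an $X$-indiscernible sequence" is the same as having Lascar distance $\leq 1$ over $X$, which is symmetric; concretely one reverses the indiscernible sequence $b_i,c_i,e_2,\dots$ (reversal preserves indiscernibility since types are preserved under permuting variables) and then, by compactness and an automorphism over $X$, obtains an $X$-indiscernible sequence starting with $c_i,b_i$. With that observation supplied, your argument is complete.
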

\begin{definition}
\thlabel{def:spread-out}
Let $M$ be an e.c.\ model and $q$ a global $M$-Ls-invariant type.
\begin{enumerate}[label=(\roman*)]
\item We will say that a tree $(c_\eta)_{\eta\in \omega^{\leq k}}$ is \emph{$q$-spread-out over $M$} if for any $\eta_{1} \in \omega^{1},\eta_{2}\in \omega^{2},\dots,\eta_{k}\in \omega^{k}$ such that $\eta_1>_{lex} \eta_2>_{lex}\dots>_{lex}\eta_{k}$ and $(\forall  l<l'\leq k)(\eta_{l'}\wedge \eta_{l}\in \omega^{l-1})$ we have that $(c_{\eta_k},\dots ,c_{\eta_{1}})$ is a Morley sequence in $q$ over $M$.
\item We will say that $(c_\eta)_{\eta\in \omega^{\leq k}}$ is \emph{weakly $q$-spread-out over $M$} if $(c_{\eta_k},\dots ,c_{\eta_{1}})\models q^{\otimes k}|_{M}$ for $\eta_i$'s as in (i).
\end{enumerate}
\end{definition}
\begin{figure}[ht]
\centering
\begin{tikzpicture}[x=0.75pt,y=0.75pt,yscale=-0.6,xscale=0.8]

\draw    (270,190) -- (370,240) ;
\draw    (170,40) -- (370,240) ;
\draw    (220,90) -- (220,40) ;
\draw    (220,90) -- (270,40) ;
\draw    (270,140) -- (270,90) ;
\draw    (270,140) -- (320,90) ;
\draw    (320,190) -- (320,140) ;
\draw    (320,190) -- (370,140) ;
\draw    (320,190) -- (420,140) ;
\draw    (170,90) -- (270,140) ;
\draw    (370,240) -- (370,190) ;
\draw    (420,190) -- (370,240) ;
\draw  [draw opacity=0][fill={rgb, 255:red, 0; green, 0; blue, 0 }  ,fill opacity=1 ] (215,40) .. controls (215,37.24) and (217.24,35) .. (220,35) .. controls (222.76,35) and (225,37.24) .. (225,40) .. controls (225,42.76) and (222.76,45) .. (220,45) .. controls (217.24,45) and (215,42.76) .. (215,40) -- cycle ;
\draw  [draw opacity=0][fill={rgb, 255:red, 0; green, 0; blue, 0 }  ,fill opacity=1 ] (315,90) .. controls (315,87.24) and (317.24,85) .. (320,85) .. controls (322.76,85) and (325,87.24) .. (325,90) .. controls (325,92.76) and (322.76,95) .. (320,95) .. controls (317.24,95) and (315,92.76) .. (315,90) -- cycle ;
\draw  [draw opacity=0][fill={rgb, 255:red, 0; green, 0; blue, 0 }  ,fill opacity=1 ] (415,140) .. controls (415,137.24) and (417.24,135) .. (420,135) .. controls (422.76,135) and (425,137.24) .. (425,140) .. controls (425,142.76) and (422.76,145) .. (420,145) .. controls (417.24,145) and (415,142.76) .. (415,140) -- cycle ;
\draw  [draw opacity=0][fill={rgb, 255:red, 0; green, 0; blue, 0 }  ,fill opacity=1 ] (415,190) .. controls (415,187.24) and (417.24,185) .. (420,185) .. controls (422.76,185) and (425,187.24) .. (425,190) .. controls (425,192.76) and (422.76,195) .. (420,195) .. controls (417.24,195) and (415,192.76) .. (415,190) -- cycle ;

\draw (231,37.31) node [anchor=south west] [inner sep=0.75pt]    {$\eta _{4}$};
\draw (331,87.31) node [anchor=south west] [inner sep=0.75pt]    {$\eta _{3}$};
\draw (431,137.31) node [anchor=south west] [inner sep=0.75pt]    {$\eta _{2}$};
\draw (431,187.31) node [anchor=south west] [inner sep=0.75pt]    {$\eta _{1}$};

\end{tikzpicture}
\caption{An example of $\eta_i$'s from \thref{def:spread-out}}
\label{fig:spread-out}
\end{figure}
Clearly $q$-spread-outness implies weak $q$-spread-outness. We will freely use the above definition for trees of parameters indexed by trees naturally isomorphic to trees of the form $\omega'^{\leq k'}$, e.g.\ subtrees of $\omega^{\leq k}$ consisting of all nodes extending a fixed node.

The point of the conditions on the $\eta_i$'s in \thref{def:spread-out} is that this is quantifier-free definable by an $L_s$-formula. This is useful for preservation when $EM_s$-basing trees on one another, as we do in the following lemma.
\begin{lemma}
\thlabel{lem:q-spread-out-trees}
Let $k$ be a natural number, $M$ an e.c.\ model and $q$ a global $M$-Ls-invariant type.
\begin{enumerate}[label=(\roman*)]
\item If $((c_{i \smallfrown \eta})_{\eta \in \omega^{\leq k-1}})_{i<\omega}$ is a Morley sequence in a global $M$-Ls-invariant type $r(x,z)\supseteq q(x)$ over $M$, where $x$ corresponds to the elements $c_i$ and $(c_{0 \smallfrown \eta})_{\eta \in \omega^{\leq k-1}}$ is $q$-spread-out over $M$ then also $(c_\eta)_{\eta \in \omega^{\leq k}}$ is $q$-spread-out over $M$  for any choice of root $c_\emptyset$.
\item If $(c_\eta)_{\eta\in \omega^{\leq k}}$ is weakly $q$-spread-out over $M$ and $(c'_\eta)_{\eta\in \omega^{\leq k}}\models EM_s((c_\eta)_{\eta\in \omega^{\leq k}}/M)$, then also $(c'_\eta)_{\eta\in \omega^{\leq k}}$ is weakly $q$-spread-out over $M$.
\item If $(c_\eta)_{\eta\in \omega^{\leq k}}$ is weakly $q$-spread-out over $M$ and s-indiscernible over $M$, then for $a_i=c_{0^{k-i}}$ we have that $(a_i)_{i<k}$ is a parallel-Morley sequence in $q$ over $M$.
\end{enumerate}
\end{lemma}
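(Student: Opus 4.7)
For (i), my key observation is that the meet condition $\eta_{l'} \wedge \eta_l \in \omega^{l-1}$ combined with $\eta_1 >_{lex} \ldots >_{lex} \eta_k$ forces $\eta_2, \ldots, \eta_k$ to share a common first entry $i$, while $\eta_1 = (j)$ with $j > i$. Thus $\eta_2, \ldots, \eta_k$ all lie in the subtree $T_i := (c_{i \smallfrown \eta})_{\eta \in \omega^{\leq k-1}}$ and, after stripping leading $i$, witness a spread-out configuration there. Since $(T_i)_{i<\omega}$ is Morley in $r$ over $M$, we have $T_0 \equivls_M T_i$, so the $q$-spread-out property transfers from $T_0$ to $T_i$, giving $(c_{\eta_k}, \ldots, c_{\eta_2}) \equivls_M (\alpha_k, \ldots, \alpha_2) \models q^{\otimes(k-1)}$. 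To append $c_{\eta_1} = c_j$, I would use \thref{lem:tensor-smaller-ordinal} with $j > i$: the pair $(T_i, T_j) \equivls_M (\tau_0, \tau_1) \models r \otimes r$, so restricting the Lstp to the subtree root (the $x$-coordinate of $r$) gives $c_j \equivls_{MT_i} \gamma$ with $\gamma \models q = r|_x$. Since $c_{\eta_2}, \ldots, c_{\eta_k} \in T_i$, this restricts to $c_j \equivls_{Mc_{\eta_2}\ldots c_{\eta_k}} \alpha_1$ for $\alpha_1 \models q$, and the universal property of $\otimes$ (\thref{thm:tensor-theorem}) glues the two to $(c_{\eta_k}, \ldots, c_{\eta_1}) \equivls_M (\alpha_k, \ldots, \alpha_1) \models q^{\otimes k}$.

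Part (ii) should be immediate: the spread-out condition on $(\eta_k, \ldots, \eta_1)$ is expressed by quantifier-free $L_s$-formulas, so for any formula $\phi(x_k, \ldots, x_1) \in q^{\otimes k}|_M$ and any spread-out $\bar{\eta}$, weak $q$-spread-outness of $(c_\eta)$ forces $\phi(x_{\eta_k}, \ldots, x_{\eta_1}) \in EM_s((c_\eta)_{\eta \in \omega^{\leq k}}/M)$. This formula thus holds on $(c'_\eta)$ by hypothesis, so $(c'_\eta)$ is weakly $q$-spread-out.

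For (iii), I would set $b_i := c_{0^{k-i-1} \smallfrown 1}$, so that $a_i$ and $b_i$ are sibling children of $c_{0^{k-i-1}}$ at level $k-i$. Applying weak $q$-spread-outness to the spread-out configuration $\eta_l := 0^{l-1} \smallfrown 1$ yields $(b_0, \ldots, b_{k-1}) = (c_{\eta_k}, \ldots, c_{\eta_1}) \models q^{\otimes k}|_M$ directly. For the parallel-Morley witness, for each $i$ I would exhibit the sequence of sibling pairs $\bigl((c_{0^{k-i-1} \smallfrown 2j}, c_{0^{k-i-1} \smallfrown (2j+1)})\bigr)_{j<\omega}$, which starts with $(a_i, b_i)$ at $j = 0$. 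The main obstacle is the combinatorial check that any finite tuple of such pairs, together with the fixed nodes $a_{>i} b_{>i}$, has a quantifier-free $L_s$-type independent of the choice of increasing $j$-indices; once done, s-indiscernibility of $(c_\eta)$ over $M$ delivers the required $Ma_{>i}b_{>i}$-indiscernibility. The verification proceeds by noting that every $a_l = c_{0^{k-l}}$ and $b_l = c_{0^{k-l-1} \smallfrown 1}$ with $l > i$ lives strictly above level $k-i$ on or one-off from the $0$-spine, so its meets and lex-comparisons with any sibling pair at level $k-i$ are determined by qf-data unaffected by which $j$'s are chosen.
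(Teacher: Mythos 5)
Your proposal follows the paper's proof essentially step for step: the same decomposition in (i) into a spread-out configuration living in a single subtree $T_i$ plus a root $c_j$ at a higher index appended via the Morley sequence in $r$ and the universal property of $\otimes$; the same quantifier-free $L_s$-definability observation for (ii); and the same choice of witnesses $b_i=c_{0^{k-i-1}\smallfrown 1}$ for (iii). The one place you go beyond the paper is in (iii), where you exhibit an explicit sibling-pair sequence $(c_{0^{k-i-1}\smallfrown 2j},c_{0^{k-i-1}\smallfrown(2j+1)})_{j<\omega}$ as the $Ma_{>i}b_{>i}$-indiscernible sequence witnessing the parallel-Morley condition; the paper compresses this into the phrase ``by $s$-indiscernibility'', and your unpacking (checking that the qf $L_s$-type of such pairs together with the fixed spine nodes $a_{>i}b_{>i}$ is independent of the chosen $j$'s, since those spine nodes sit at levels below $k-i$ and so meet all the sibling nodes in $0^{k-i-1}$ or an initial segment thereof) is the correct justification.
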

\begin{figure}[ht]
\centering
\begin{subfigure}[b]{0.45\textwidth}
    \centering
    \begin{tikzpicture}[x=0.75pt,y=0.75pt,yscale=-0.8,xscale=0.8]
\path (180, 240);

\draw    (230,120) -- (290,180) ;
\draw    (250,140) -- (250,120) ;
\draw    (250,140) -- (270,120) ;
\draw    (310,120) -- (330,140) ;
\draw    (350,120) -- (290,180) ;
\draw    (330,120) -- (330,140) ;
\draw  [draw opacity=0][fill={rgb, 255:red, 0; green, 0; blue, 0 }  ,fill opacity=1 ] (225,120) .. controls (225,117.24) and (227.24,115) .. (230,115) .. controls (232.76,115) and (235,117.24) .. (235,120) .. controls (235,122.76) and (232.76,125) .. (230,125) .. controls (227.24,125) and (225,122.76) .. (225,120) -- cycle ;
\draw  [draw opacity=0][fill={rgb, 255:red, 0; green, 0; blue, 0 }  ,fill opacity=1 ] (245,140) .. controls (245,137.24) and (247.24,135) .. (250,135) .. controls (252.76,135) and (255,137.24) .. (255,140) .. controls (255,142.76) and (252.76,145) .. (250,145) .. controls (247.24,145) and (245,142.76) .. (245,140) -- cycle ;
\draw  [draw opacity=0][fill={rgb, 255:red, 0; green, 0; blue, 0 }  ,fill opacity=1 ] (245,120) .. controls (245,117.24) and (247.24,115) .. (250,115) .. controls (252.76,115) and (255,117.24) .. (255,120) .. controls (255,122.76) and (252.76,125) .. (250,125) .. controls (247.24,125) and (245,122.76) .. (245,120) -- cycle ;
\draw  [draw opacity=0][fill={rgb, 255:red, 0; green, 0; blue, 0 }  ,fill opacity=1 ] (265,120) .. controls (265,117.24) and (267.24,115) .. (270,115) .. controls (272.76,115) and (275,117.24) .. (275,120) .. controls (275,122.76) and (272.76,125) .. (270,125) .. controls (267.24,125) and (265,122.76) .. (265,120) -- cycle ;
\draw  [draw opacity=0][fill={rgb, 255:red, 0; green, 0; blue, 0 }  ,fill opacity=1 ] (285,180) .. controls (285,177.24) and (287.24,175) .. (290,175) .. controls (292.76,175) and (295,177.24) .. (295,180) .. controls (295,182.76) and (292.76,185) .. (290,185) .. controls (287.24,185) and (285,182.76) .. (285,180) -- cycle ;
\draw  [draw opacity=0][fill={rgb, 255:red, 0; green, 0; blue, 0 }  ,fill opacity=1 ] (325,140) .. controls (325,137.24) and (327.24,135) .. (330,135) .. controls (332.76,135) and (335,137.24) .. (335,140) .. controls (335,142.76) and (332.76,145) .. (330,145) .. controls (327.24,145) and (325,142.76) .. (325,140) -- cycle ;
\draw  [draw opacity=0][fill={rgb, 255:red, 0; green, 0; blue, 0 }  ,fill opacity=1 ] (305,120) .. controls (305,117.24) and (307.24,115) .. (310,115) .. controls (312.76,115) and (315,117.24) .. (315,120) .. controls (315,122.76) and (312.76,125) .. (310,125) .. controls (307.24,125) and (305,122.76) .. (305,120) -- cycle ;
\draw  [draw opacity=0][fill={rgb, 255:red, 0; green, 0; blue, 0 }  ,fill opacity=1 ] (325,120) .. controls (325,117.24) and (327.24,115) .. (330,115) .. controls (332.76,115) and (335,117.24) .. (335,120) .. controls (335,122.76) and (332.76,125) .. (330,125) .. controls (327.24,125) and (325,122.76) .. (325,120) -- cycle ;
\draw  [draw opacity=0][fill={rgb, 255:red, 0; green, 0; blue, 0 }  ,fill opacity=1 ] (345,120) .. controls (345,117.24) and (347.24,115) .. (350,115) .. controls (352.76,115) and (355,117.24) .. (355,120) .. controls (355,122.76) and (352.76,125) .. (350,125) .. controls (347.24,125) and (345,122.76) .. (345,120) -- cycle ;
\draw  [color={rgb, 255:red, 65; green, 117; blue, 5 }  ,draw opacity=1 ] (210,125) .. controls (210,111.19) and (227.91,100) .. (250,100) .. controls (272.09,100) and (290,111.19) .. (290,125) .. controls (290,138.81) and (272.09,150) .. (250,150) .. controls (227.91,150) and (210,138.81) .. (210,125) -- cycle ;
\draw  [color={rgb, 255:red, 65; green, 117; blue, 5 }  ,draw opacity=1 ] (290,125) .. controls (290,111.19) and (307.91,100) .. (330,100) .. controls (352.09,100) and (370,111.19) .. (370,125) .. controls (370,138.81) and (352.09,150) .. (330,150) .. controls (307.91,150) and (290,138.81) .. (290,125) -- cycle ;
\draw  [dash pattern={on 0.84pt off 2.51pt}]  (390,120) -- (420,120) ;
\draw  [color={rgb, 255:red, 74; green, 144; blue, 226 }  ,draw opacity=1 ] (300,110) -- (360,110) -- (360,130) -- (300,130) -- cycle ;
\draw  [color={rgb, 255:red, 74; green, 144; blue, 226 }  ,draw opacity=1 ] (320,130) -- (340,130) -- (340,150) -- (320,150) -- cycle ;
\draw  [color={rgb, 255:red, 65; green, 117; blue, 5 }  ,draw opacity=1 ] (190,80) -- (440,80) -- (440,170) -- (190,170) -- cycle ;
\draw [color={rgb, 255:red, 74; green, 144; blue, 226 }  ,draw opacity=1 ]   (360,120) -- (380,100) ;
\draw [color={rgb, 255:red, 74; green, 144; blue, 226 }  ,draw opacity=1 ]   (340,140) -- (380,150) ;

\draw (386.28,97.6) node [anchor=south west] [inner sep=0.75pt]  [color={rgb, 255:red, 74; green, 144; blue, 226 }  ,opacity=1 ]  {$z$};
\draw (386.28,150) node [anchor=west] [inner sep=0.75pt]  [color={rgb, 255:red, 74; green, 144; blue, 226 }  ,opacity=1 ]  {$x$};
\draw (318,61.5) node  [color={rgb, 255:red, 65; green, 117; blue, 5 }  ,opacity=1 ] [align=left] {Morley sequence in $\displaystyle r( x,\ z)$};

\end{tikzpicture}
    \caption{Part (i)}
    \label{fig:spread-out-from-morley}
\end{subfigure}
\begin{subfigure}[b]{0.45\textwidth}
    \centering
    \begin{tikzpicture}[x=0.75pt,y=0.75pt,yscale=-0.6,xscale=0.6]

\draw    (251.86,75.14) -- (401.86,225.14) ;
\draw    (281.86,105.14) -- (311.86,75.14) ;
\draw    (311.86,135.14) -- (341.86,105.14) ;
\draw    (341.86,165.14) -- (371.86,135.14) ;
\draw    (371.86,195.14) -- (401.86,165.14) ;
\draw    (431.86,195.14) -- (401.86,225.14) ;
\draw  [draw opacity=0][fill={rgb, 255:red, 0; green, 0; blue, 0 }  ,fill opacity=1 ] (246.86,75.14) .. controls (246.86,72.38) and (249.1,70.14) .. (251.86,70.14) .. controls (254.62,70.14) and (256.86,72.38) .. (256.86,75.14) .. controls (256.86,77.9) and (254.62,80.14) .. (251.86,80.14) .. controls (249.1,80.14) and (246.86,77.9) .. (246.86,75.14) -- cycle ;
\draw  [draw opacity=0][fill={rgb, 255:red, 0; green, 0; blue, 0 }  ,fill opacity=1 ] (306.86,75.14) .. controls (306.86,72.38) and (309.1,70.14) .. (311.86,70.14) .. controls (314.62,70.14) and (316.86,72.38) .. (316.86,75.14) .. controls (316.86,77.9) and (314.62,80.14) .. (311.86,80.14) .. controls (309.1,80.14) and (306.86,77.9) .. (306.86,75.14) -- cycle ;
\draw  [draw opacity=0][fill={rgb, 255:red, 0; green, 0; blue, 0 }  ,fill opacity=1 ] (276.86,105.14) .. controls (276.86,102.38) and (279.1,100.14) .. (281.86,100.14) .. controls (284.62,100.14) and (286.86,102.38) .. (286.86,105.14) .. controls (286.86,107.9) and (284.62,110.14) .. (281.86,110.14) .. controls (279.1,110.14) and (276.86,107.9) .. (276.86,105.14) -- cycle ;
\draw  [draw opacity=0][fill={rgb, 255:red, 0; green, 0; blue, 0 }  ,fill opacity=1 ] (306.86,135.14) .. controls (306.86,132.38) and (309.1,130.14) .. (311.86,130.14) .. controls (314.62,130.14) and (316.86,132.38) .. (316.86,135.14) .. controls (316.86,137.9) and (314.62,140.14) .. (311.86,140.14) .. controls (309.1,140.14) and (306.86,137.9) .. (306.86,135.14) -- cycle ;
\draw  [draw opacity=0][fill={rgb, 255:red, 0; green, 0; blue, 0 }  ,fill opacity=1 ] (336.86,105.14) .. controls (336.86,102.38) and (339.1,100.14) .. (341.86,100.14) .. controls (344.62,100.14) and (346.86,102.38) .. (346.86,105.14) .. controls (346.86,107.9) and (344.62,110.14) .. (341.86,110.14) .. controls (339.1,110.14) and (336.86,107.9) .. (336.86,105.14) -- cycle ;
\draw  [draw opacity=0][fill={rgb, 255:red, 0; green, 0; blue, 0 }  ,fill opacity=1 ] (336.86,165.14) .. controls (336.86,162.38) and (339.1,160.14) .. (341.86,160.14) .. controls (344.62,160.14) and (346.86,162.38) .. (346.86,165.14) .. controls (346.86,167.9) and (344.62,170.14) .. (341.86,170.14) .. controls (339.1,170.14) and (336.86,167.9) .. (336.86,165.14) -- cycle ;
\draw  [draw opacity=0][fill={rgb, 255:red, 0; green, 0; blue, 0 }  ,fill opacity=1 ] (366.86,135.14) .. controls (366.86,132.38) and (369.1,130.14) .. (371.86,130.14) .. controls (374.62,130.14) and (376.86,132.38) .. (376.86,135.14) .. controls (376.86,137.9) and (374.62,140.14) .. (371.86,140.14) .. controls (369.1,140.14) and (366.86,137.9) .. (366.86,135.14) -- cycle ;
\draw  [draw opacity=0][fill={rgb, 255:red, 0; green, 0; blue, 0 }  ,fill opacity=1 ] (366.86,195.14) .. controls (366.86,192.38) and (369.1,190.14) .. (371.86,190.14) .. controls (374.62,190.14) and (376.86,192.38) .. (376.86,195.14) .. controls (376.86,197.9) and (374.62,200.14) .. (371.86,200.14) .. controls (369.1,200.14) and (366.86,197.9) .. (366.86,195.14) -- cycle ;
\draw  [draw opacity=0][fill={rgb, 255:red, 0; green, 0; blue, 0 }  ,fill opacity=1 ] (396.86,165.14) .. controls (396.86,162.38) and (399.1,160.14) .. (401.86,160.14) .. controls (404.62,160.14) and (406.86,162.38) .. (406.86,165.14) .. controls (406.86,167.9) and (404.62,170.14) .. (401.86,170.14) .. controls (399.1,170.14) and (396.86,167.9) .. (396.86,165.14) -- cycle ;
\draw  [draw opacity=0][fill={rgb, 255:red, 0; green, 0; blue, 0 }  ,fill opacity=1 ] (396.86,225.14) .. controls (396.86,222.38) and (399.1,220.14) .. (401.86,220.14) .. controls (404.62,220.14) and (406.86,222.38) .. (406.86,225.14) .. controls (406.86,227.9) and (404.62,230.14) .. (401.86,230.14) .. controls (399.1,230.14) and (396.86,227.9) .. (396.86,225.14) -- cycle ;
\draw  [draw opacity=0][fill={rgb, 255:red, 0; green, 0; blue, 0 }  ,fill opacity=1 ] (426.86,195.14) .. controls (426.86,192.38) and (429.1,190.14) .. (431.86,190.14) .. controls (434.62,190.14) and (436.86,192.38) .. (436.86,195.14) .. controls (436.86,197.9) and (434.62,200.14) .. (431.86,200.14) .. controls (429.1,200.14) and (426.86,197.9) .. (426.86,195.14) -- cycle ;
\draw  [color={rgb, 255:red, 74; green, 144; blue, 226 }  ,draw opacity=1 ] (291.86,55.14) .. controls (297.38,49.62) and (337.68,80.96) .. (381.86,125.14) .. controls (426.04,169.32) and (457.38,209.62) .. (451.86,215.14) .. controls (446.34,220.66) and (406.04,189.32) .. (361.86,145.14) .. controls (317.68,100.96) and (286.34,60.66) .. (291.86,55.14) -- cycle ;
\draw  [color={rgb, 255:red, 65; green, 117; blue, 5 }  ,draw opacity=1 ] (231.86,55.14) .. controls (237.38,49.62) and (277.68,80.96) .. (321.86,125.14) .. controls (366.04,169.32) and (397.38,209.62) .. (391.86,215.14) .. controls (386.34,220.66) and (346.04,189.32) .. (301.86,145.14) .. controls (257.68,100.96) and (226.34,60.66) .. (231.86,55.14) -- cycle ;
\draw [color={rgb, 255:red, 65; green, 117; blue, 5 }  ,draw opacity=1 ]   (231.86,55.14) -- (240,30) ;
\draw [color={rgb, 255:red, 74; green, 144; blue, 226 }  ,draw opacity=1 ]   (451.86,215.14) -- (460,240) ;

\draw (240.85,76) node [anchor=east] [inner sep=0.75pt]    {$a_{0}$};
\draw (269.86,104.14) node [anchor=east] [inner sep=0.75pt]    {$a_{1}$};
\draw (299.86,134.14) node [anchor=east] [inner sep=0.75pt]    {$a_{2}$};
\draw (329.86,164.14) node [anchor=east] [inner sep=0.75pt]    {$a_{3}$};
\draw (359.86,196.14) node [anchor=east] [inner sep=0.75pt]    {$a_{4}$};
\draw (322.87,76.14) node [anchor=west] [inner sep=0.75pt]    {$a'_{0}$};
\draw (351.88,104.29) node [anchor=west] [inner sep=0.75pt]    {$a'_{1}$};
\draw (381.88,134.29) node [anchor=west] [inner sep=0.75pt]    {$a'_{2}$};
\draw (411.88,164.29) node [anchor=west] [inner sep=0.75pt]    {$a'_{3}$};
\draw (441.88,196.29) node [anchor=west] [inner sep=0.75pt]    {$a'_{4}$};
\draw (423,249.4) node [anchor=north west][inner sep=0.75pt]  [color={rgb, 255:red, 74; green, 144; blue, 226 }  ,opacity=1 ]  {$\models q^{\otimes 5} |_{M}$};
\draw (289,28) node [anchor=south east] [inner sep=0.75pt]  [color={rgb, 255:red, 65; green, 117; blue, 5 }  ,opacity=1 ] [align=left] {parallel-Morley in $\displaystyle q$};

\end{tikzpicture}
    \caption{Part (iii)}
    \label{fig:cr-morley-in-q}
\end{subfigure}
\caption{Illustrations of \thref{lem:q-spread-out-trees}}
\label{fig:q-spread-out-trees}
\end{figure}
\begin{proof}
(i) Let $\eta_k \in \omega^k, \ldots, \eta_1 \in \omega^1$ be such that $\eta_1 >_{lex} \ldots >_{lex} \eta_k$ and $(\forall  l<l'\leq k)(\eta_{l'}\wedge \eta_{l}\in \omega^{l-1})$. We will prove that $(c_{\eta_k}, \ldots, c_{\eta_1})$ is a Morley sequence in $q$. For each $\ell \geq 2$ let $\beta_\ell \in \omega^1$ be such that $\eta_{\ell} \trianglerighteq \beta_\ell$. For every $\ell > 2$ we have by assumption that $\eta_2 \wedge \eta_\ell = \eta_2|_1 = \beta_2$, hence $\beta_\ell = \beta_2 =: \beta$ (and $\eta_1 >_{\lex} \beta$ as $\eta_1 >_{\lex} \eta_2$). In particular $(c_{\eta_k}, \ldots, c_{\eta_2})$ is contained in $(c_{\beta \smallfrown \eta})_{\eta \in \omega^{\leq k-1}}$, which has the same Lascar strong type over $M$ as $(c_{0 \smallfrown \eta})_{\eta \in \omega^{\leq k-1}}$. So, as $(c_{0 \smallfrown \eta})_{\eta \in \omega^{\leq k-1}}$ is $q$-spread-out by assumption, $(c_{\eta_k}, \ldots, c_{\eta_2})$ is a Morley sequence in $q$. As $((c_{i \smallfrown \eta})_{\eta \in \omega^{\leq k-1}})_{i<\omega}$ is a Morley sequence in $r$, we have that $(c_{\eta_1 \smallfrown \eta})_{\eta \in \omega^{\leq k-1}}$, which contains $c_{\eta_1}$, has the same Lascar strong type over $M (c_{\beta \smallfrown \eta})_{\eta\in \omega^{\leq k-1}}$, which contains $Mc_{\eta_k} \ldots c_{\eta_2}$, as some realisation of $r$. Since $q(x) = r|_x$ we see that $c_{\eta_1}$ has the same Lascar strong type over $Mc_{\eta_k}, \ldots, c_{\eta_2}$ as some realisation of $q$. So we conclude that $(c_{\eta_k}, \ldots, c_{\eta_1})$ is indeed a Morley sequence in $q$.\\
(ii) This holds because the condition on $(\eta_1,\dots,\eta_k)$ in the definition of weak $q$-spread-outness is expressible by a quantifier-free $L_s$-formula.\\
(iii) Put $a'_i:=c_{0^{k-i-1}\smallfrown 1}$ for $i<k$. Then $(a'_i)_{i<k}\models q^{\otimes k}|_{M}$ by weak $q$-spread-outness, and $(a_i,a'_i)$ starts an $M_{a_{>i}a'_{>i}}$-indiscernible sequence for each $i<k$ by $s$-indiscernibility.
\end{proof}
\section{Symmetry}
\label{sec:symmetry}
\begin{lemma}[Chain condition]
\thlabel{lem:chain-lemma}
Let $T$ be a thick NSOP$_1$ theory and let $M$ be an e.c.\ model. Let $(b_i)_{i < \kappa}$ be a Morley sequence in some global $M$-Ls-invariant $q(x)$. If $(b_i)_{i < \kappa}$ is $Ma$-indiscernible then $a \ind_M^K (b_i)_{i < \kappa}$.
\end{lemma}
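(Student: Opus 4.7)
The plan is to reduce to showing $a \ind_M^K b_0 \ldots b_{n-1}$ for every finite $n$, by exhibiting a suitable ``grouped'' Morley sequence that witnesses non-Kim-dividing. Assume without loss of generality that $\kappa \geq \omega$ (if $\kappa$ is finite one can first extend $(b_i)_{i<\kappa}$ to an $Ma$-indiscernible sequence of length $\omega$ realising the same EM-type over $Ma$, which is automatically of the appropriate type to be a Morley sequence in $q$ in the sense needed for what follows). For each $n$, the idea is to partition the initial segment $(b_i)_{i<\omega n}$ into $n$-blocks and apply Kim's lemma to the resulting Morley sequence in $q^{\otimes n}$.

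Fix $n < \omega$ and set $\bar b^j := (b_{nj}, b_{nj+1}, \ldots, b_{nj+n-1})$ for $j < \omega$, so that $\bar b^0 = (b_0, \ldots, b_{n-1})$. By repeated use of associativity of tensor (\thref{lem:tensor-facts}(i)) combined with \thref{conv:modelling-tensor-type}, the sequence $(\bar b^j)_{j < \omega}$ is a Morley sequence over $M$ in the global $M$-Ls-invariant type $q^{\otimes n}$, and $q^{\otimes n}$ extends $\tp(\bar b^0/M)$. By $Ma$-indiscernibility of $(b_i)_{i<\kappa}$, we have $\bar b^j \equiv_{Ma} \bar b^0$ for each $j < \omega$, so $a$ itself realises $\bigcup_{j<\omega} \tp(a/M\bar b^j)$; in particular this union is consistent. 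Now \thref{prop:q-dividing-equivalent-statements}(iii) gives that $\tp(a/M\bar b^0)$ does not $q^{\otimes n}$-divide over $M$, and Kim's lemma (\thref{prop:kims-lemma}) promotes this to non-Kim-dividing. Hence $a \ind_M^K b_0 \ldots b_{n-1}$.

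To conclude, let $\phi(x, b_{i_1}, \ldots, b_{i_n}, m) \in \tp(a/M(b_i)_{i<\kappa})$ be an arbitrary formula; it involves only finitely many parameters from the sequence. By $Ma$-indiscernibility, $(b_{i_1}, \ldots, b_{i_n}) \equiv_{Ma} (b_0, \ldots, b_{n-1})$, so by automorphism invariance of Kim-independence and the previous paragraph $a \ind_M^K b_{i_1} \ldots b_{i_n}$, so $\phi$ does not Kim-divide over $M$. Since no formula in $\tp(a/M(b_i)_{i<\kappa})$ Kim-divides over $M$, we conclude $a \ind_M^K (b_i)_{i<\kappa}$, as required.

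The main technical point in this plan is the first claim of the middle paragraph, namely that grouping the Morley sequence into $n$-blocks yields a Morley sequence in $q^{\otimes n}$; this is clean from the tensor machinery of Section \ref{sec:global-ls-invariant-types}, since any realisation $(\beta_i)_{i<\omega n} \models q^{\otimes \omega n}$ grouped into $n$-blocks realises $(q^{\otimes n})^{\otimes \omega}$ by associativity, and $(b_i)_{i<\omega n} \equivls_M (\beta_i)_{i<\omega n}$ transfers this property to the $\bar b^j$. An equivalent reformulation would go via parallel-Morley sequences and \thref{cor:cr-kims-lemma}, noting that any Morley sequence is trivially parallel-Morley (take $a_i = b_i$ in \thref{def:cr-morley-sequence} with the constant sequence witnessing the indiscernibility condition).
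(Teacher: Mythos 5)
Your proof is essentially the paper's own argument: reduce to finite tuples by finite character plus $Ma$-indiscernibility, block the Morley sequence into $n$-tuples to obtain a Morley sequence in $q^{\otimes n}$ via associativity of the tensor, observe that $Ma$-indiscernibility hands you the consistency witness $a$, and invoke Kim's lemma. One small quibble: the parenthetical repair for finite $\kappa$ does not actually work (an $Ma$-indiscernible prolongation sharing the same EM-type over $Ma$ need not again be a Morley sequence in $q$ — for instance for $\kappa=1$ there is no constraint at all, and indeed the lemma is simply false for $\kappa=1$); the paper, like you, tacitly takes $\kappa \geq \omega$, which is the only case in which the statement has content, and that is the cleanest way to dispose of the issue.
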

\begin{proof}
We will prove that $a \ind_M^K b_{i_1} \ldots b_{i_k}$ for all $i_1 < \ldots < i_k < \kappa$. This is indeed enough by finite character. By $Ma$-indiscernibility of $(b_i)_{i < \kappa}$ we may assume $\{i_1, \ldots, i_k\} = \{0, \ldots, k-1\}$.

We have $(b_i)_{i < \omega} \equivls_M (\beta_i)_{i < \omega}$ for some $(\beta_i)_{i < \omega} \models q^{\otimes \omega}$.  Define the tuple $\gamma_i = (\beta_{ik}, \beta_{ik+1}, \ldots, \beta_{ik+k-1})$ for all $i < \omega$. Then $(\gamma_i)_{i < \omega} \models (q^{\otimes k})^{\otimes \omega}$ by associativity of tensoring (\thref{lem:tensor-facts}). We let $c_i = (b_{ik}, b_{ik+1}, \ldots, b_{ik+k-1})$ for all $i < \omega$. Then $(c_i)_{i < \omega} \equivls_M (\gamma_i)_{i < \omega}$. So $(c_i)_{i < \omega}$ is a Morley sequence in $q^{\otimes k}$ over $M$ and $(c_i)_{i < \omega}$ is $Ma$-indiscernible. So $\tp(a / M c_0) = \tp(a / M b_0 \ldots b_{k-1})$ does not $q^{\otimes k}$-divide, and thus $a \ind_M^K b_0 \ldots b_{k-1}$ as required.
\end{proof}
\begin{definition}
Suppose $M$ is an e.c.\ model, $q$  a global type extending $\Lstp(a/M)$ and $\lambda$ a cardinal. We will say that the extension $q \supseteq \Lstp(a/M)$ satisfies $(*)_\lambda$ if for every $c$ with $|c|\leq \lambda$ there is a global $M$-Ls-invariant type $r(x,y)\supseteq \Lstp(ac/M)$ extending $q(x)$ (in particular, $q$ is $M$-Ls-invariant).
\end{definition}
\begin{lemma}\thlabel{lem:(*)}
For any e.c.\ model $M$, tuple $a$ and cardinal $\lambda$ there is $q\supseteq \Lstp(a/M)$ satisfying $(*)_\lambda$.
\end{lemma}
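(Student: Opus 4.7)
The plan is to pick, once and for all, representatives of every Lascar strong type over $M$ in variables of length at most $\lambda$ that extends $\Lstp(a/M)$, to concatenate them into one long tuple, and then to apply \thref{cor:type-over-model-extends-to-global-ls-invariant-type} to this tuple in one go. The type $q$ will be the $x$-restriction of the resulting global $M$-Ls-invariant type.

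More precisely, let $\mathcal{P}$ be the (small) set of Lascar strong types $p(x,y_p)$ over $M$ with $|x|=|a|$, $|y_p|\leq\lambda$, and $p|_x=\Lstp(a/M)$. For each $p\in\mathcal{P}$ pick a representative $(a,c_p)$ by starting from any realisation of $p$ in $\MM$ and applying a Lascar strong automorphism in $\Aut_f(\MM/M)$ (available since the $x$-coordinate of the realisation is Lascar-equivalent to $a$ over $M$) to arrange that the $x$-coordinate equals $a$. Let $\bar{C}=(c_p)_{p\in\mathcal{P}}$ and apply \thref{cor:type-over-model-extends-to-global-ls-invariant-type} to $\Lstp(a\bar{C}/M)$ to obtain a global $M$-Ls-invariant type $\hat{r}(x,(y_p)_{p\in\mathcal{P}})$ extending $\Lstp(a\bar{C}/M)$. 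Set $q := \hat{r}|_x$.

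To verify $(*)_\lambda$, given any $c$ with $|c|\leq\lambda$, let $p:=\Lstp(ac/M)\in\mathcal{P}$ and put $r:=\hat{r}|_{(x,y_p)}$. Standard restriction arguments (any realisation of $r$ lifts, in a larger monster, to one of $\hat{r}$ by saturation) show that $r$ is a complete global type, that it is $M$-Ls-invariant because Lascar strong equivalence passes to subtuples of a realisation of $\hat{r}$, and that $r|_x=q$. Any realisation $(\alpha,\gamma_p)$ of $r$ arises as the $(x,y_p)$-projection of some realisation of $\hat{r}\supseteq\Lstp(a\bar{C}/M)$, so $(\alpha,\gamma_p)\equivls_M(a,c_p)\equivls_M(a,c)$; hence $r\supseteq\Lstp(ac/M)$, as required. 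The case $c=\emptyset$ gives $r=q$, confirming in particular that $q$ is itself a global $M$-Ls-invariant extension of $\Lstp(a/M)$.

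There is no serious obstacle: the work is done by \thref{cor:type-over-model-extends-to-global-ls-invariant-type}, applied simultaneously to all relevant Lascar strong types packaged into a single long tuple, together with the observation that both $M$-Ls-invariance and Lascar strong equivalence over $M$ are preserved under passing to subtuples of variables. The only mild technicality is cardinality bookkeeping to ensure that $\mathcal{P}$, and hence $\bar{C}$, is small enough to sit inside $\MM$, which is automatic because the collection of Lascar strong types over $M$ in variables of bounded length is a small set.
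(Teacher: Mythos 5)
Your argument is correct and follows essentially the same route as the paper: you pack representatives of all the (boundedly many) relevant Lascar strong types over $M$ into one small tuple, apply \thref{cor:type-over-model-extends-to-global-ls-invariant-type} once, and take the $x$-restriction. The paper phrases this as choosing a small $d$ with the property that every $\Lstp(ac/M)$ equals $\Lstp(ad'/M)$ for some subtuple $d' \subseteq d$, but the underlying construction and verification are the same as yours.
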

\begin{proof}
Let $M$, $a$ and $\lambda$ be as in the statement. Choose a small tuple $d$  such that for any $c$ with $|c|\leq \lambda$ there is some $d'\subseteq d$ with $\Lstp(ad'/M)=\Lstp(ac/M)$ (this is possible as the number of Lascar types of tuples of fixed length over $M$ is bounded by \thref{lem:find-indisc-sequence-through-model}). Now take a global $M$-Ls-invariant extension $r(x,y)$ of $\Lstp(ad/M)$, where $x$ corresponds to $a$. Then $q:=r|_x$ is an extension of $\Lstp(a/M)$ satisfying $(*)_\lambda$. 
\end{proof}
\begin{remark}
If $q\supseteq \Lstp(a/M)$ is finitely satisfiable in $M$ then it satisfies $(*)_{\lambda}$ for any cardinal $\lambda$ (\cite[Lemma 3.4]{Mennuni}). However, finitely satisfiable extensions may not exist in thick theories.
\end{remark}
\begin{theorem}[Symmetry]
\thlabel{thm:symmetry}
In a thick NSOP$_1$ theory $a \ind_M^K b$ implies $b \ind_M^K a$.
\end{theorem}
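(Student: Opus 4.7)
The plan is to argue by contradiction: assume $a \ind_M^K b$ but $b \nind_M^K a$, and derive SOP$_1$. By strong finite character (\thref{prop:obvious-properties-kim-dividing}(i)) I first fix a formula $\phi(y, a, m) \in \tp(b/Ma)$, with $m \in M$, that Kim-divides over $M$. Kim's lemma (\thref{prop:kims-lemma}) then yields a uniform $k$-inconsistency formula $\theta$ witnessing Kim-dividing of $\phi(y, a)$ along every Morley sequence in every global $M$-Ls-invariant extension of $\Lstp(a/M)$.

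Next, I will apply \thref{lem:(*)} to fix a global $M$-Ls-invariant $q \supseteq \Lstp(a/M)$ satisfying $(*)_\lambda$ for $\lambda \geq |M| + |b| + |T|$, and use $(*)_\lambda$ to lift $q$ to a global $M$-Ls-invariant type $r(x,y) \supseteq \Lstp(ab/M)$ with $r|_x = q$. The hypothesis $a \ind_M^K b$ combined with \thref{prop:q-dividing-equivalent-statements}(iv) applied to $r|_y$ provides an $Ma$-indiscernible Morley sequence $(b_j^*)_{j<\omega}$ in $r|_y$ with $b_0^* = b$, so in particular $\models \phi(b_j^*, a, m)$ for all $j$. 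Using this sequence and iterated applications of \thref{lem:q-spread-out-trees}(i), I will construct a $q$-spread-out tree $(a_\eta, b_\eta)_{\eta \in \omega^{\leq k}}$ such that $\phi(b_\eta, a_\eta, m)$ holds at every node and such that along a fixed branch the $b_\eta$'s are simultaneously realised by a single element obtained from the $Ma$-indiscernible chain $(b_j^*)$. Applying EM-modelling \thref{prop:em-modeling} then produces an s-indiscernible tree $EM_s$-based on the above; by \thref{lem:q-spread-out-trees}(ii,iii) the spine yields a parallel-Morley sequence in $q$, and the $EM_s$-type preserves the satisfaction $\phi(b_\eta', a_\eta', m)$ at every node.

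On the resulting structure two parallel sequences of pairs emerge: one on which $\{\phi(y, a_i', m) : i\}$ is simultaneously realised by the consolidated $b$-witness, and a parallel Morley coordinate in $q$ along which $\theta$-inconsistency holds by Kim-dividing. These fit the hypotheses of the parallel sequences lemma (\thref{lem:sequences-give-sop1-not-indiscernible}), producing SOP$_1$, the sought contradiction. The main obstacle will be aligning the data so that after extracting the s-indiscernible copy the consistency witness and the Morley-inconsistency witness sit on parallel coordinates of the parallel-Morley sequence; the tools of Section \ref{sec:cr-morley}, especially $q$-spread-outness combined with EM-modelling, are designed precisely to replace the tree modelling property of \cite{kaplan_kim-independence_2020} used in the original first-order proof of symmetry.
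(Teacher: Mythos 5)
Your high-level roadmap is essentially the paper's: reduce to constructing a parallel-Morley sequence in a suitable $q \supseteq \Lstp(a/M)$ along which $\tp(b/Ma)$ stays consistent, build this by growing a $q$-spread-out tree and then taking an s-indiscernible tree via EM-modelling, and conclude through the parallel sequences lemma. The contradiction framing is only cosmetic — the paper's direct proof uses \thref{cor:cr-kims-lemma}, whose proof already combines Kim's lemma with \thref{lem:sequences-give-sop1-not-indiscernible} in exactly the way you unpack.

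However, you gloss over the inductive tree construction, which is the actual content of the proof, and you miss the ingredient that makes the induction go through. \thref{lem:q-spread-out-trees}(i) is a \emph{verification} tool, not a construction tool; it does not by itself let you grow the tree. What lets you add a new root at each stage is an invariant that you never state: the current root must be Kim-independent over $M$ from the rest of the tree (the paper's condition (A3)$_{k'}$). With this invariant, \thref{cor:q-ls-dividing-is-q-dividing} turns non-Kim-dividing into non-$r$-Ls-dividing, \thref{prop:ls-dividing-extension} lets you extend, and the chain condition (\thref{lem:chain-lemma}) restores the invariant for the enlarged tree. Without tracking (A3) the backward induction stalls after one step. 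Two further missing preconditions feed into this: you need to first replace $b$ by a $\lambda_T$-saturated model containing $M$ (so that \thref{cor:q-ls-dividing-is-q-dividing} applies, as it requires a saturated model inside the parameter tuple), and you need the $(*)_\lambda$ property of $q$ not merely to exist but to extend to the \emph{whole current tree} at each inductive step, not just to $ab$. Finally, the phrase ``the $b_\eta$'s are simultaneously realised by a single element'' is not well-formed — what you need is a single $b$-like leaf realising $\phi(y, a_\eta, m)$ for all $\eta$ along its branch; in the paper's tree of height $k+1$ this leaf is $c_{0^{k+1}}$, and your tree of height $k$ with a pair at every node does not obviously supply such a witness.
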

\begin{proof}
We may assume that $b$ enumerates a $\lambda_T$-saturated model containing $M$. If this is not the case let $N \supseteq Mb$ be a $\lambda_T$-saturated model. By extension, \thref{cor:extension}, we find $N' \equiv_{Mb} N$ such that $a \ind^K_M N'$. Now we replace $b$ by $N'$ and we continue the proof.

Set $\lambda = |ab|$. By \thref{lem:(*)} we can choose a global extension $q\supseteq \Lstp(a/M)$ satisfying $(*)_\lambda$. Let $p(y, a) = \tp(b/Ma)$. We will show that there is a parallel-Morley sequence $(a_i)_{i < \omega}$ in $q$ over $M$ such that  $\bigcup_{i < \omega} p(y, a_i)$ is consistent, which is enough by \thref{cor:cr-kims-lemma}.  All the properties we wish $(a_i)_{i < \omega}$ to have are type-definable. It is thus enough to find such a sequence of length $k$ for every $k < \omega$.

So fix any $k < \omega$. By backward induction on $k' = k+1, k, \ldots, 1$ we will define trees $(c_\eta)_{\eta \in S_{k'}}$  where $S_{k'} = \{ \xi \in \omega^{\leq k+1} : 0^{k'-1} \trianglelefteq \xi\}$. We will write $S_{k'}^*$ for $S_{k'}$ without the root, so $S_{k'}^* = S_{k'} - \{0^{k'-1}\}$. For each $k'$ the tree $(c_\eta)_{\eta \in S_{k'}}$ will satisfy the following conditions:
\begin{enumerate}[label=(A\arabic*)$_{k'}$]
\item $c_\eta c_\nu \equivls_M ab$ for all $\nu \triangleright \eta \in S_{k'}$ with $\nu\in {\omega}^{k+1}$ and $\eta\in {\omega}^{\leq k}$;

\item $(c_\eta)_{\eta\in S_{k'}\cap \omega^{\leq k}}$ is $q$-spread-out over $M$;

\item (the root is independent from the rest) we have $c_{0^{k'-1}} \ind_M^K (c_\eta)_{\eta \in S_{k'}^*}$.
\end{enumerate}

For $k' = k + 1$ we let $t$ be a global $M$-Ls-invariant extension of $\Lstp(b/M)$. Since $a \ind_M^K b$ we have that $\tp(a/Mb)$ does not $t$-divide. By \thref{cor:q-ls-dividing-is-q-dividing} and our assumption on $b$ this means that $\Lstp(a/Mb)$ does not $t$-Ls-divide. So we find an $Ma$-indiscernible Morley sequence $(c_{0^k \smallfrown \alpha})_{\alpha < \omega}$ in $t$ with $c_{0^{k+1}} = b$. By \thref{lem:chain-lemma} we have that $a \ind_M^K (c_{0^k \smallfrown \alpha})_{\alpha < \omega}$. So we pick $c_{0^k} = a$ and directly satisfy (A3)$_{k'}$. Condition (A2)$_{k'}$ is vacuous and (A1)$_{k'}$ follows directly from $Ma$-indiscernibility of $(c_{0^k \smallfrown \alpha})_{\alpha < \omega}$ and the fact that $c_{0^{k+1}} = b$.

For the inductive step, suppose that we have constructed $(c_\eta)_{\eta \in S_{k'}}$. By (A1)$_{k'}$ there is a tuple $d$ such that $c_{0^{k'-1}}(c_\eta)_{\eta \in S_{k'}^*} \equivls_M a d$. So, by $(*)_\lambda$ there is a global $M$-Ls-invariant type $r(x, z)\supseteq q(x)$ extending $\Lstp(c_{0^{k'-1}}(c_\eta)_{\eta \in S_{k'}^*}/M)$. By (A3)$_{k'}$ we have that $c_{0^{k'-1}} \ind_M^K (c_\eta)_{\eta \in S_{k'}^*}$. So since $b \subseteq (c_\eta)_{\eta \in S_{k'}^*}$ and using our assumption on $b$ we have by \thref{cor:q-ls-dividing-is-q-dividing} that $\Lstp(c_{0^{k'-1}} / M (c_\eta)_{\eta \in S_{k'}^*})$ does not $r|_{z}$-Ls-divide. By extension for Ls-dividing, \thref{prop:ls-dividing-extension}, we find $c$ such that $c (c_\eta)_{\eta \in S_{k'}^*} \equivls_M c_{0^{k'-1}}(c_\eta)_{\eta \in S_{k'}^*}$ and $\Lstp(c / M (c_\eta)_{\eta \in S_{k'}})$ does not $r$-Ls-divide. So there is an $Mc$-indiscernible Morley sequence $((d_{\eta, i})_{\eta \in S_{k'}})_{i < \omega}$ in $r$ such that $(d_{\eta, 0})_{\eta \in S_{k'}} = (c_\eta)_{\eta \in S_{k'}}$. We set $c_{0^{k'-2}} = c$ and $c_{0^{k'-2} \smallfrown i \smallfrown \zeta} = d_{0^{k'-1} \smallfrown \zeta, i}$. Again, using \thref{lem:chain-lemma} we directly get (A3)$_{k'-1}$. 

Now (A2)$_{k'-1}$ follows from \thref{lem:q-spread-out-trees}(i). We verify (A1)$_{k'-1}$. Everything above the root consists of copies (via a Lascar strong automorphism over $M$) of $(c_\eta)_{\eta \in S_{k'}}$, so we only need to check that $c_{0^{k'-2}} c_\nu \equivls_M ab$ for all $\nu \in S_{k'-1} \cap \omega^{k+1}$. By indiscernibility we may assume $\nu \in S_{k'} \cap \omega^{k+1}$. Then (A1)$_{k'-1}$ follows from (A1)$_{k'}$ and the fact that $c_{0^{k'-2}} (c_\eta)_{\eta \in S_{k'}^*} \equivls_M c_{0^{k'-1}}(c_\eta)_{\eta \in S_{k'}^*}$.

Thus the inductive step, and hence the construction of the tree $(c_\eta)_{\eta \in \omega^{k+1}} = (c_\eta)_{\eta \in S_1}$, is completed.

Consider the following condition:
\begin{enumerate}[label=(A\arabic*')$_1$]

\item $c_\eta c_\nu \equiv_M ab$ for all $\nu \triangleright \eta$ with $\nu \in {\omega}^{k+1}$ and $\eta \in {\omega}^{\leq k}$;


\end{enumerate}
which is clearly implied by (A1)$_1$ as it is seen by the $EM_s$-type of $(c_\eta)_{\eta \in \omega^{\leq k+1}}$ over $M$. Letting $(c_\eta')_{\eta \in \omega^{\leq k+1}}$ be an $s$-indiscernible tree which is $EM_s$-based on $(c_\eta)_{\eta \in \omega^{\leq k+1}}$ over $M$, we get that $(c_\eta')_{\eta \in \omega^{\leq k+1}}$ satisfies (A1')$_1$, and $(c_\eta')_{\eta \in \omega^{\leq k}}$ is weakly $q$-spread-out over $M$ by \thref{lem:q-spread-out-trees}(ii).

Put $a_i = c_{0^{k+1-i}}'$. Then $(a_1, \ldots, a_k)$ is a parallel-Morley sequence in $q$ over $M$ by \thref{lem:q-spread-out-trees}(iii), and by (A1')$_1$ we have that $\bigcup_{1 \leq i \leq k} p(y, a_i)$ is consistent because it is realised by $c_{0^{k+1}}'$. This completes the proof.
\end{proof}
\begin{lemma}
\thlabel{lem:sop1-implies-failure-of-weak-independence-theorem}
Let $T$ be a thick theory. Suppose that $\phi(x, y)$ has SOP$_1$, witnessed by $\psi(y_1, y_2)$. Then there is an e.c.\ model $M$ and $b_1$, $b_2$, $c_1$, $c_2$ such that $c_1 \ind_M^u c_2$, $c_1 \ind_M^u b_1$, $c_2 \ind_M^u b_2$ and $b_1 c_1 \equivls_M b_2 c_2$ and $\models \phi(b_1, c_1) \wedge \phi(b_2, c_2) \wedge \psi(c_1, c_2)$.
\end{lemma}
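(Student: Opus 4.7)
The goal is to extract from the SOP$_1$ witness a configuration living on an $M$-indiscernible sequence of pairs, from which the independence theorem fails. Specifically, the plan is to build an $M$-indiscernible sequence $(b_i,c_i)_{i<\omega}$, over some $\lambda_T$-saturated e.c.\ model $M$, such that $\models \phi(b_i,c_i)$ for all $i$ and $\models \psi(c_i,c_j)$ for all $i<j$, and then take $(b_1,c_1)$ and $(b_2,c_2)$ as the first two pairs. The formula conditions and $b_1c_1\equivls_M b_2c_2$ (via thickness and $M$-indiscernibility, which yields Lascar distance at most $1$) will then be essentially immediate; the three coheir conditions will follow from finite satisfiability in $M$, using that $M$ absorbs a long pre-segment of the sequence.

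\textbf{First step: raw configuration.} Given the SOP$_1$-tree $(a_\eta)_{\eta\in 2^{<\omega}}$ witnessed by $\psi$, set $c_i^0 = a_{0^i\smallfrown 1}$ and pick $b_i^0$ realising the consistent path $\{\phi(x,a_{0^i\smallfrown 1\smallfrown 0^k}) : k<\omega\}$ guaranteed by (i) of \thref{def:sop1}. Then $\models \phi(b_i^0,c_i^0)$ for all $i$, and for $i<j$ we have $0^i\smallfrown 0 = 0^{i+1}\preceq 0^j\smallfrown 1$, so (iii) of \thref{def:sop1} yields $\models\psi(c_i^0,c_j^0)$. By compactness we may take such a sequence of pairs of any prescribed length.

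\textbf{Second step: indiscernibility over a model.} Using compactness to make the sequence of length $\lambda_{|T|}$ and then applying \thref{lem:indiscernible-sequence-based-on-long-sequence}, extract an $\emptyset$-indiscernible sequence $(b_i',c_i')_{i<\omega}$ with the same positive properties (preserved by EM-basing since they are positive existential). By stretching and iterating the construction—or, more cleanly, by first fixing a base e.c.\ model and then carrying out the extraction over it—we arrange that the sequence is indiscernible over a $\lambda_T$-saturated e.c.\ model $M$ that moreover absorbs a long initial segment, i.e.\ we have an $M$-indiscernible extension $(b_\alpha',c_\alpha')_{\alpha<\kappa}$ (for $\kappa$ large) such that $M$ contains a cofinal set of indices $\{(b_\alpha',c_\alpha') : \alpha<\mu\}$ below our chosen pair. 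Setting $(b_1,c_1)=(b_\mu',c_\mu')$ and $(b_2,c_2)=(b_{\mu+1}',c_{\mu+1}')$, both formula conditions and $\models\psi(c_1,c_2)$ are immediate, and $b_1c_1\equivls_M b_2c_2$ follows from $M$-indiscernibility via \thref{def:lascar-distance}.

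\textbf{Third step: the coheir conditions.} For $c_1\ind_M^u c_2$, given any formula $\varphi(x,c_2,m)$ with $m\in M$ satisfied by $c_1$, using that $m$ mentions only finitely many of the $(b_\alpha',c_\alpha')$ with $\alpha<\mu$, $M$-indiscernibility of the sequence produces some $c_\beta'\in M$ (with $\beta<\mu$ beyond the finitely many indices in $m$) whose pair with $c_2$ has the same type over $m$ as $(c_1,c_2)$, so $\models\varphi(c_\beta',c_2,m)$. The same argument, applied along the sequence of pairs, yields $c_1\ind_M^u b_1$ and $c_2\ind_M^u b_2$.

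\textbf{Main obstacle.} The delicate point is arranging $M$-indiscernibility of a sequence over which $M$ itself absorbs an initial segment containing witnesses for the coheir conditions, while simultaneously retaining that the positive formula properties $\phi$ and $\psi$ are satisfied. Because these properties are positive existential, they pass through EM-basing and through the Lascar-distance extractions of \thref{lem:indiscernible-sequence-based-on-long-sequence} and \thref{lem:lstp-base-on-sequence}; thickness ensures the needed type-definability to go from $\emptyset$-indiscernibility to $M$-indiscernibility without losing witnesses. The construction can therefore be carried out as a single extraction over a pre-built saturated model, or, as is more standard, via an iterative stretching of the sequence followed by choice of $M$ absorbing an initial portion.
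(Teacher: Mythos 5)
Your plan has the right general shape — extract an indiscernible array of pairs from the SOP$_1$-tree, use $M$-indiscernibility for $b_1c_1\equivls_M b_2c_2$, and get the coheir conditions from the tail being indiscernible over $M$ — but it breaks down precisely at the coheir conditions $c_1\ind_M^u b_1$ and $c_2\ind_M^u b_2$, and the attempted patch (``the same argument, applied along the sequence of pairs'') does not go through.

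Here is the concrete problem. You take $(b_1,c_1)=(b_\mu',c_\mu')$ and $(b_2,c_2)=(b_{\mu+1}',c_{\mu+1}')$ from a single $M$-indiscernible sequence of pairs. For $c_1\ind_M^u c_2$ your argument replaces index $\mu$ with some $\beta<\mu$ while keeping $\mu+1$ fixed; the quantifier-free order types of $(\beta,\mu+1)$ and $(\mu,\mu+1)$ agree (both strictly increasing), so indiscernibility transfers the type and yields a witness $c_\beta'\in M$. For $c_1\ind_M^u b_1$, however, you would need to replace $c_\mu'$ with some $c_\beta'$ ($\beta<\mu$) while keeping $b_\mu'$ in place, i.e.\ you would need $\tp(c_\beta',b_\mu',m)=\tp(c_\mu',b_\mu',m)$. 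The relevant index tuples are $(\beta,\mu)$ with $\beta<\mu$ versus $(\mu,\mu)$ — different quantifier-free types in the index order (strict versus equal), so indiscernibility of the sequence of pairs says nothing. Worse: these witnesses are in fact guaranteed to fail, not merely unguaranteed. We have $\models\psi(c_\beta',c_1)$ for $\beta<\mu$, so $\phi(x,c_\beta')\wedge\phi(x,c_1)$ is inconsistent by clause (ii) of \thref{def:sop1}; since $\models\phi(b_1,c_1)$, we get $\not\models\phi(b_1,c_\beta')$, while $\phi(b_1,y)\in\tp(c_1/Mb_1)$ is one of the very formulas finite satisfiability would have to witness. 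There is no reason for any other element of $M$ to witness it either.

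The paper circumvents this by not putting $b_1$ and $b_2$ inside the pairs of the array at all. It chooses a single $b_2$ realising the whole $\phi$-path along $(a_{\eta_i})_i$, takes the array $(e_i,d_i)_{i<\omega+2}$ indiscernible \emph{over $b_2$}, and lets $M$ be the (positively Skolemised) definable closure of $\{e_i,d_i:i<\omega\}$ — the Skolemisation is needed so that every element of $M$ is a term in finitely many $e_i,d_i$, which makes the coheir argument go through for \emph{arbitrary} $m\in M$, not just for the sequence elements (your ``$M$ absorbs an initial segment'' has the same problem: a $\lambda_T$-saturated e.c.\ model containing the initial segment generally contains much more, and indiscernibility over those extra elements is not given). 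Then $c_2=e_{\omega+1}$ satisfies $c_2\ind_M^u b_2$ precisely because the sequence is indiscernible over $b_2$; and $b_1$ is manufactured afterwards as $f(b_2)$ for a Lascar-strong automorphism $f\in\Aut_f(\MM/M)$ sending $c_2$ to $c_1$ (this is where the Lascar-distance claim $\d_M(e_\omega,d_\omega)\leq 1$ earns its keep), which transports $c_2\ind_M^u b_2$ to $c_1\ind_M^u b_1$. These two moves — a single $b_2$ outside the array, plus producing $b_1$ by automorphism rather than as a sequence element — are the missing ideas in your proposal.
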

\begin{proof}
The proof is mostly the same as \cite[Proposition A.7]{haykazyan_existentially_2021} but we have to adjust a few things throughout to get equality of Lascar strong types rather than just equality of types. As in that proof, we will use a Skolemisation technique for positive logic \cite[Lemma A.6]{haykazyan_existentially_2021}. In such a Skolemised theory the positively definable closure of any set is an e.c.\ model and the reduct of an e.c.\ model (to the original language) is an e.c.\ model (of the original theory). It is not directly clear whether this Skolemisation construction preserves thickness, but that is not a problem. Ultimately we are interested in Lascar strong types in our original theory. So even though we technically work in a Skolemised theory the (type-definable) predicate $\d(x, y) \leq 1$ should be taken as in our original theory.

Let $\kappa$ be any cardinal. By compactness we find parameters $(a_\eta)_{\eta \in 2^{< \kappa}}$ such that:
\begin{enumerate}[label=(\roman*)]
\item for every $\sigma \in 2^\kappa$ the set $\{\phi(x, a_{\sigma|i}) : i < \kappa\}$ is consistent,
\item for every $\eta, \nu \in 2^{< \kappa}$ such that $\eta^\frown 0 \preceq \nu$ we have $\models \psi(a_{\eta^\frown 1}, a_\nu)$.
\end{enumerate}
For a big enough cardinal $\lambda$, we construct by induction a sequence $(\eta_i, \nu_i)_{i < \lambda}$ with $\eta_i,\nu_i\in 2^{<\kappa}$ such that:
\begin{enumerate}[label=(\arabic*)]
\item $\eta_i\trianglelefteq \eta_j$ and $\eta_i\trianglelefteq \nu_j$ for all $i<j<\lambda$;
\item $\eta_{i}\trianglerighteq (\eta_i\wedge \nu_i)\frown 0$, $\nu_{i}= (\eta_i\wedge \nu_i)\frown 1$, and $(a_{\eta_i}, a_{\nu_i})$ starts an $a_{\eta_{<i}}a_{\nu_{<i}}$-indiscernible sequence   for every $i<\lambda$.
\end{enumerate}

Assume $(\eta_j, \nu_j)_{j < i}$ has been constructed and set $\eta = \bigcup_{j < i} \eta_j$. If we chose $\kappa$ to be large enough then, by applying \thref{lem:indiscernible-sequence-based-on-long-sequence} to $(a_{\eta^\frown 0^{\alpha \frown} 1})_{\alpha>0}$, it follows that there are $0 < \alpha < \beta < \kappa$ such that $(a_{\eta^\frown 0^{\alpha \frown} 1}, a_{\eta^\frown 0^{\beta \frown} 1})$ starts an ${\{\eta_j, \nu_j : j < i\}}$-indiscernible sequence. We set $\nu_i = \eta^\frown 0^{\alpha \frown} 1$ and $\eta_i = \eta^\frown 0^{\beta \frown} 1$.

By (i) and (1), there is $b_2$ realising $\{\phi(x, a_{\eta_i}) : i < \lambda\}$. Now let $(e_i, d_i)_{i < \omega+2}$ be indiscernible over $b_2$ based on $(a_{\eta_i}, a_{\nu_i})_{i < \lambda}$.

Let $M$ be the positively definable closure of $\{e_i, d_i : i < \omega\}$. As discussed, we may assume $M$ to be an e.c.\ model. Set $c_1 = d_\omega$ and $c_2 = e_{\omega+1}$. Then $c_1 \ind_{\{e_i, d_i : i < \omega\}}^u c_2$ and $c_2 \ind_{\{e_i, d_i : i < \omega\}}^u b_2$ by indiscernibility. So $c_1 \ind_M^u c_2$, $c_2 \ind_M^u b_2$ and $\models \phi(b_2, c_2)$. By construction $c_1 c_2 = d_\omega e_{\omega+1} \equiv a_{\nu_{i_0}} a_{\eta_{i_1}}$ for some $i_0 < i_1 < \lambda$ and thus $\models \psi(c_1, c_2)$ by (ii), (1), and (2).

To find $b_1$ we first claim that $\d_M(e_\omega, d_\omega) \leq 1$. By compactness it suffices to prove that $\d_A(e_\omega, d_\omega) \leq 1$ for all finite $A \subseteq M$. By how we constructed $M$ it then suffices to prove that $(e_\omega, d_\omega)$ starts an indiscernible sequence over $\{e_i, d_i : i < n\}$ for all $n < \omega$. To prove this last statement we let $i_0 < \ldots < i_{n+1} < \lambda$ be such that
\[
e_0 d_0 \ldots e_n d_n e_\omega d_\omega \equiv a_{\eta_{i_0}} a_{\nu_{i_0}} \ldots a_{\eta_{i_n}} a_{\nu_{i_n}} a_{\eta_{i_{n+1}}} a_{\nu_{i_{n+1}}}.
\]
By how we constructed $(\eta_i, \nu_i)_{i < \lambda}$ we have $(a_{\eta_{i_{n+1}}}, a_{\nu_{i_{n+1}}})$ starts an indiscernible sequence over $\{a_{\eta_{i_0}} a_{\nu_{i_0}} \ldots a_{\eta_{i_n}} a_{\nu_{i_n}}\}$. So the claim follows after applying the automorphism.

Now we leave the Skolemised theory and work in the original theory, so $\d(x, y) \leq 1$ corresponds to actually having Lascar distance one. We have $c_2=e_{\omega+1}\equivls_M e_\omega\equivls_M d_\omega=c_1$, so there is $f \in \Aut_f(\MM / M)$ such that $f(c_2) =  c_1$. Let $b_1 = f(b_2)$. Then  $c_2 b_2 \equivls_M c_1 b_1$, hence also $\models \phi(b_1, c_1)$ and $c_1 \ind_M^u b_1$, as required.
\end{proof}
\begin{theorem}
\thlabel{thm:symmetry-iff-nsop1}
Let $T$ be a thick theory. The following are equivalent:
\begin{enumerate}[label=(\roman*)]
\item $T$ is NSOP$_1$;
\item (symmetry) $a \ind_M^K b$ implies $b \ind_M^K a$;
\item (weak symmetry) $a \ind_M^{iLs} b$ implies $b \ind_M^K a$.
\end{enumerate}
\end{theorem}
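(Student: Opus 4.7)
The plan is to close the cycle (i) $\Rightarrow$ (ii) $\Rightarrow$ (iii) $\Rightarrow$ (i). The first implication is the symmetry theorem \thref{thm:symmetry} just established, and the second is immediate: $a \ind_M^{iLs} b$ implies $a \ind_M^K b$ by \thref{rem:kim-dividing-implies-dividing}, and (ii) then yields $b \ind_M^K a$. So all the content lies in (iii) $\Rightarrow$ (i), which I would prove by contrapositive.

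Assuming $T$ has SOP$_1$ witnessed by $\phi(x, y)$ with inconsistency witness $\psi(y_1, y_2)$, I would apply \thref{lem:sop1-implies-failure-of-weak-independence-theorem} to produce an e.c.\ model $M$ and tuples $b_1, b_2, c_1, c_2$ satisfying $c_1 \ind_M^u b_1$, $c_1 \ind_M^u c_2$, $b_1 c_1 \equivls_M b_2 c_2$ (so in particular $c_1 \equivls_M c_2$), and $\models \phi(b_1, c_1) \wedge \phi(b_2, c_2) \wedge \psi(c_1, c_2)$. The aim is to show that $a := c_1$ and $b := b_1$ violate weak symmetry: $c_1 \ind_M^{iLs} b_1$ but $b_1 \nind_M^K c_1$. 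The first half is immediate from $c_1 \ind_M^u b_1$ via \thref{prop:independence-strengths}. For the second, I would lift $\tp(c_2/Mc_1)$, which is finitely satisfiable in $M$ because $c_1 \ind_M^u c_2$, to a global $M$-Ls-invariant type $q(y)$ via \thref{lem:extend-to-global-ls-invariant-type}; since $c_1 \equivls_M c_2$ gives $q|_M = \tp(c_1/M)$, the type $q$ is a legitimate test type for Kim-dividing of $\tp(b_1/Mc_1)$.

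Take any Morley sequence $(c^i)_{i<\omega}$ in $q$ over $M$. By \thref{lem:tensor-smaller-ordinal} each pair $(c^i, c^j)$ with $i < j$ realises $q^{\otimes 2}|_M$, and by the universal property \thref{thm:tensor-theorem}, applied to $c_1 \models q|_M$ and $c_2 \models q|_{Mc_1}$, the pair $(c_1, c_2)$ also realises $q^{\otimes 2}|_M$. Hence the positive formula $\psi$, which lies in $\tp(c_1 c_2/M)$, also lies in $\tp(c^i c^j/M)$, so $\models \psi(c^i, c^j)$ for all $i < j$. The defining property of $\psi$ as an SOP$_1$ inconsistency witness then forces $\{\phi(x, c^i) : i < \omega\}$ to be pairwise, hence totally, inconsistent; since $\phi(x, c_1) \in \tp(b_1/Mc_1)$, this shows $\tp(b_1/Mc_1)$ $q$-divides, hence Kim-divides, over $M$, contradicting (iii). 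The most delicate step I expect is the type-level identification $(c_1, c_2) \models q^{\otimes 2}|_M$; should the universal property not suffice directly, one can preliminarily replace $M$ by a $\lambda_T$-saturated e.c.\ model (using \thref{lem:find-indisc-sequence-through-model} to make type and Lascar strong type coincide over the new base) while preserving the coheir conditions supplied by the lemma.
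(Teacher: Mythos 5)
Your overall strategy for the cycle (i) $\Rightarrow$ (ii) $\Rightarrow$ (iii) $\Rightarrow$ (i) is the same as the paper's, and (i) $\Rightarrow$ (ii) and (ii) $\Rightarrow$ (iii) are handled correctly. The gap lies in (iii) $\Rightarrow$ (i), specifically in how you obtain the test type $q$.

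You write that $\tp(c_2/Mc_1)$ is finitely satisfiable in $M$ ``because $c_1 \ind_M^u c_2$.'' That conflates the two directions: by \thref{def:heir-coheir}, $c_1 \ind_M^u c_2$ says that $\tp(c_1/Mc_2)$ is finitely satisfiable in $M$, not $\tp(c_2/Mc_1)$. Coheir is not symmetric over models in an unstable theory, so you cannot invoke \thref{lem:extend-to-global-ls-invariant-type} to extend $\tp(c_2/Mc_1)$ to a global $M$-Ls-invariant type. This breaks the subsequent claim that $c_2 \models q|_{Mc_1}$ and hence the identification $(c_1,c_2) \models q^{\otimes 2}|_M$. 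The correct move (which is what the paper does) is to let $q$ extend $\Lstp(c_1/Mc_2)$, which does exist because $\tp(c_1/Mc_2)$ is a coheir, and observe that the pair in the order $(c_2, c_1)$ then realises $q^{\otimes 2}$ over $M$ (Lascar-wise): $c_2 \equivls_M c_1$, and $c_1 \equivls_{Mc_2} \beta$ for $\beta \models q$. For $i < j$ this gives $\models \psi(c^j, c^i)$ along a Morley sequence $(c^i)_{i<\omega}$ in $q$, which still yields pairwise inconsistency of $\{\phi(x, c^i)\}$, and since $q|_M = \tp(c_1/M)$ and $\phi(x,c_1) \in \tp(b_1/Mc_1)$ you recover $b_1 \nind_M^K c_1$ (or, equivalently using $b_1c_1 \equivls_M b_2c_2$, the paper's conclusion $b_2 \nind_M^K c_2$).

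A secondary point: you flag a potential mismatch between type-level data and the Lascar-strong-type form of \thref{thm:tensor-theorem}, and propose to replace $M$ by a $\lambda_T$-saturated model. This is dangerous: the base model $M$ produced by \thref{lem:sop1-implies-failure-of-weak-independence-theorem} is specific (a positively definable closure in a Skolemised expansion), and it is not clear that the coheir facts survive enlarging the base. The point is that you do not need to replace $M$: \thref{lem:sop1-implies-failure-of-weak-independence-theorem} already delivers $b_1 c_1 \equivls_M b_2 c_2$ as a Lascar-strong-type equality, which, combined with \thref{cor:type-over-model-extends-to-global-ls-invariant-type}-style bookkeeping, supplies exactly the Lascar data needed for the tensor universal property over the original $M$.
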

\begin{proof}
\thref{thm:symmetry} is precisely (i) $\implies$ (ii). For (ii) $\implies$ (iii) we just note that $a \ind_M^{iLs} b$ implies $a \ind_M^K b$. Finally, for (iii) $\implies$ (i) we proceed is as in \cite[Proposition 3.22]{kaplan_kim-independence_2020} replacing their reference to \cite{chernikov_model-theoretic_2015} by \thref{lem:sop1-implies-failure-of-weak-independence-theorem} and being careful about using global Ls-invariant types instead of just global invariant types.

We prove the contrapositive, so assume $T$ has SOP$_1$. Then by \thref{lem:sop1-implies-failure-of-weak-independence-theorem} there is an e.c.\ model $M$ and $b_1$, $b_2$, $c_1$, $c_2$ such that $c_1 \ind_M^u c_2$, $c_1 \ind_M^u b_1$, $c_2 \ind_M^u b_2$ and $b_1 c_1 \equivls_M b_2 c_2$. Furthermore, for $p(x, c_1) = \tp(b_1 c_1 / M)$, we have that $p(x, c_1) \cup p(x, c_2)$ is inconsistent. In particular we have that $\Lstp(c_1 / M c_2)$ extends to a global $M$-Ls-invariant $q$. Then as $c_1 \equivls_M c_2$ there is a Morley sequence $(d_i)_{i < \omega}$ in $q$ with $d_0 d_1 = c_2 c_1$. We thus have that $\bigcup \{p(x, d_i) : i < \omega\}$ is inconsistent. So $b_2 \nind_M^K c_2$. Since also $c_2 \ind_M^u b_2$ and thus $c_2 \ind_M^{iLs} b_2$ we see that weak symmetry fails and this concludes our proof.
\end{proof}
\section{Independence theorem}
\label{sec:independence-theorem}
We recall the following facts. The first is the same as \cite[Lemma 7.4]{kaplan_kim-independence_2020} and the second is the same as the claim in \cite[Lemma 5.3]{dobrowolski_independence_2019}. Their proofs work in our setting as well.
\begin{fact}
\thlabel{fact:kim-dividing-indep-theorem-facts}
The following hold in any thick NSOP$_1$ theory.
\begin{enumerate}[label=(\roman*)]
\item If $a \ind_M^d bc$ and $b \ind_M^K c$ then $ab \ind_M^K c$.
\item If $a \ind_M^K b$ and $a \ind_M^K c$ then there is $c'$ with $ac' \equiv_M ac$ such that $a \ind_M^K bc'$.
\end{enumerate}
\end{fact}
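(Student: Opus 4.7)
My plan is to prove (i) directly by producing the needed indiscernible Morley-type sequence, and to prove (ii) by reducing to (i) via symmetry and extension, following \cite[Lemma 7.4]{kaplan_kim-independence_2020} and \cite[Lemma 5.3]{dobrowolski_independence_2019}.

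For (i), I would fix a global $M$-Ls-invariant extension $q$ of $\tp(c/M)$. By $b \ind_M^K c$ and \thref{prop:q-dividing-equivalent-statements}(iv), there is an $Mb$-indiscernible sequence $(c_i)_{i<\omega} \models q^{\otimes \omega}|_M$ with $c_0 = c$. Since $b$ is constant along it, $(b, c_i)_{i<\omega}$ is $M$-indiscernible, so the hypothesis $a \ind_M^d bc$ provides $a^*$ with $a^* \models \bigcup_{i<\omega} \tp(a/Mbc_i)$. Prolonging the sequence sufficiently and applying \thref{lem:indiscernible-sequence-based-on-long-sequence} over $Ma^*b$, I extract an $Ma^*b$-indiscernible sequence $(c_i')_{i<\omega}$ based on $(c_i)$; this still realises $q^{\otimes \omega}|_M$ because by \thref{conv:modelling-tensor-type} the latter is a condition on types of finite subtuples, which is preserved by basing. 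Finally, since $c_0' \equiv_M c$ and $a^* b c_0' \equiv_M abc$, an $M$-automorphism transports $(c_i')$ to an $Mab$-indiscernible sequence in $q^{\otimes \omega}|_M$ starting at $c$, which by \thref{prop:q-dividing-equivalent-statements}(iv) witnesses $ab \ind_M^K c$.

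For (ii), apply \thref{thm:symmetry} to $a \ind_M^K b$ to obtain $b \ind_M^K a$, and then invoke \thref{cor:extension} with the additional parameter $c$ to produce $c' \equivls_{Ma} c$ together with $b \ind_M^K ac'$. From $a \ind_M^K c$ and invariance (via a Lascar strong $Ma$-automorphism sending $c$ to $c'$), one has $a \ind_M^K c'$, and hence $c' \ind_M^K a$ by symmetry. The target $a \ind_M^K bc'$ is, by symmetry, equivalent to $bc' \ind_M^K a$, which I would obtain by applying (i) with the triple $(b, c', a)$ in the roles of $(a, b, c)$: the hypothesis $c' \ind_M^K a$ is already secured, and the non-dividing hypothesis $b \ind_M^d c'a$ of (i) is extracted by unpacking the proof of extension (\thref{cor:extension}, via \thref{prop:ls-dividing-extension}), which constructs the independence of $b$ from $ac'$ through a Morley sequence in a specific global $M$-Ls-invariant extension of $\Lstp(ac'/M)$ that is indiscernible over $Mb$, and this is exactly the $q$-dividing data that the argument of (i) consumes. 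Finally, $c' \equivls_{Ma} c$ yields $ac' \equiv_M ac$.

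The main subtlety is in (ii): part (i) requires non-dividing on the first slot, whereas the bare conclusion of \thref{cor:extension} is only non-Kim-dividing and in general $\ind_M^d$ is strictly stronger than $\ind_M^K$ (cf.\ \thref{rem:kim-dividing-implies-dividing}). To bridge the gap one must use the explicit global Ls-invariant Morley-sequence witness produced inside the proof of extension, not merely its statement, as is standard in the NSOP$_1$ literature; the passage to the thick positive setting is clean once one replaces global invariant types by global Ls-invariant ones throughout.
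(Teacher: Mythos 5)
Your argument for part (i) is essentially right and follows the standard route of \cite[Lemma 7.4]{kaplan_kim-independence_2020}: fix $q$ extending $\tp(c/M)$, use $b\ind_M^K c$ to produce an $Mb$-indiscernible $(c_i)\models q^{\otimes\omega}|_M$ with $c_0=c$, observe that $(bc_i)$ is $M$-indiscernible so $a\ind_M^d bc$ gives $a^*\models\bigcup_i\tp(a/Mbc_i)$, extract an $Ma^*b$-indiscernible subsequence, and move by an automorphism. Two small points to clean up: you must prolong $(c_i)$ \emph{before} choosing $a^*$, not after; and the final appeal to \thref{prop:q-dividing-equivalent-statements}(iv) only yields non-$q$-dividing for this one $q$, so closing the argument to $ab\ind_M^K c$ requires Kim's Lemma (\thref{rem:kim-dividing-over-ec-models}).

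Part (ii) is where there is a real gap. You have correctly identified the subtlety — (i) needs $b\ind_M^d c'a$, while extension only gives $b\ind_M^K ac'$, and $\ind^d$ is strictly stronger — but the proposed repair does not work. Unpacking the proof of \thref{cor:extension} via \thref{prop:ls-dividing-extension} yields a Morley sequence $(a_jc'_j)_j$ in some global $M$-Ls-invariant $q'\supseteq\Lstp(ac'/M)$ which is $Mb$-indiscernible and in which \emph{both} coordinates move. When you run the proof of (i) on the triple $(b,c',a)$, the sequence it consumes is of the shape $(c'a_i)_i$ with $c'$ \emph{fixed} and $(a_i)$ an $Mc'$-indiscernible sequence realising $q^{\otimes\omega}|_M$ for $q$ extending $\tp(a/M)$, and what you then need is some $b^*\models\bigcup_i\tp(b/Mc'a_i)$. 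The extension witness is simply not that object: it is a different sequence with different indiscernibility, and there is no apparent way to transport the consistency from one to the other. In short, (ii) does not seem to be a formal consequence of (i), and ``use the explicit witness from the proof of extension'' does not supply the missing non-dividing.

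The intended proof of (ii) is a direct argument parallel to (i), and in fact simpler, precisely because the relevant sequence \emph{is} a $q$-Morley sequence, so Kim's Lemma can substitute for the $\ind^d$ hypothesis. Sketch: by symmetry $b\ind_M^K a$ and $c\ind_M^K a$. Fix a global $M$-Ls-invariant $q\supseteq\tp(a/M)$. By $b\ind_M^K a$, Kim's Lemma and \thref{prop:q-dividing-equivalent-statements}(iv), take a long $Mb$-indiscernible $(a_i)_{i<\lambda}\models q^{\otimes\lambda}|_M$ with $a_0=a$. By $c\ind_M^K a$, Kim's Lemma and \thref{prop:q-dividing-equivalent-statements}(iii), $\bigcup_{i<\lambda}\tp(c/Ma_i)$ is consistent; realise it by $c^*$. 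Extract an $Mbc^*$-indiscernible $(a'_i)_{i<\omega}$ based on $(a_i)$. Since $(a'_i)\equiv_{Mb}(a_i)$ (both are $Mb$-indiscernible and the finite subtuple types over $Mb$ match), pick an $Mb$-automorphism $f$ with $f((a'_i))=(a_i)_{i<\omega}$ and set $c':=f(c^*)$. Then $(a_i)_{i<\omega}$ is $Mbc'$-indiscernible with $a_0=a$ and $ac'\equiv_M ac$, so \thref{prop:q-dividing-equivalent-statements}(iv) gives $bc'\ind_M^K a$, and symmetry finishes. Note that this route uses neither (i) nor extension.
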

For the following lemma we borrow a trick from \cite[Lemma 5.4]{dobrowolski_independence_2019}.
\begin{lemma}
\thlabel{lem:indep-theorem-add-c}
Let $T$ be thick NSOP$_1$ and let $a \equivls_M a'$, $a \ind_M^K b$ and $a' \ind_M^K c$. Then there is $c'$ such that $ac' \equivls_M a'c$ and $a \ind_M^K bc'$.
\end{lemma}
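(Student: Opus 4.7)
The proof strategy has two main stages: first, reduce to the case $a = a'$ by exploiting the Lascar strong equivalence; second, combine the two Kim-independence relations using \thref{fact:kim-dividing-indep-theorem-facts}(ii) and upgrade the resulting equality of types to an equality of Lascar strong types.

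For the reduction, since $a \equivls_M a'$, pick $f \in \Aut_f(\MM/M)$ with $f(a') = a$ and set $c_0 := f(c)$. Then $f^{-1} \in \Aut_f(\MM/M)$ sends $(a, c_0)$ to $(a', c)$, so $ac_0 \equivls_M a'c$. By invariance of Kim-independence under automorphisms over $M$, applying $f$ to the hypothesis $a' \ind^K_M c$ yields $a \ind^K_M c_0$. Combined with the given $a \ind^K_M b$, the task reduces to finding $c'$ with $ac' \equivls_M ac_0$ and $a \ind^K_M bc'$.

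For the combination, a direct application of \thref{fact:kim-dividing-indep-theorem-facts}(ii) to $a \ind^K_M b$ and $a \ind^K_M c_0$ yields $c'$ with $ac' \equiv_M ac_0$ and $a \ind^K_M bc'$; the only thing missing is the upgrade from $\equiv_M$ to $\equivls_M$. The trick borrowed from \cite{dobrowolski_independence_2019} is to apply (ii) not to $c_0$ alone, but to an enlarged tuple that includes a $\lambda_T$-saturated model. Concretely, choose a $\lambda_T$-saturated model $N \supseteq Mc_0$ and, using \thref{cor:extension} applied to $a \ind^K_M c_0$ with the tuple $N \setminus Mc_0$, arrange $a \ind^K_M Nc_0$ (after replacing $N$ by a Lascar-equivalent copy over $Mc_0$, which preserves $\lambda_T$-saturation). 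Now apply (ii) to $a \ind^K_M b$ and $a \ind^K_M Nc_0$ to obtain $(N^*, c^*)$ with $aN^*c^* \equiv_M aNc_0$ and $a \ind^K_M bN^*c^*$. The automorphism $\sigma \in \Aut(\MM/Ma)$ witnessing the type equality sends $N$ to a $\lambda_T$-saturated model $N^* = \sigma(N)$ with $Ma \subseteq N^*$ and $c^* = \sigma(c_0) \in N^*$. Setting $c' := c^*$, monotonicity gives $a \ind^K_M bc'$; the upgrade $ac' \equivls_M ac_0$ follows by invoking \thref{lem:find-indisc-sequence-through-model} on the $\lambda_T$-saturated model $N^*$ to convert the type equality coming from $\sigma$ into a Lascar strong type equality over $Ma$.

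The main obstacle is the final upgrade: one must carefully orchestrate how the saturated model $N$ enters the construction so that after applying (ii), the output is already positioned inside a $\lambda_T$-saturated model containing $Ma$, making \thref{lem:find-indisc-sequence-through-model} applicable to promote $\equiv_{Ma}$ to $\equivls_{Ma}$. The rest of the argument is a straightforward combination of invariance, extension, and monotonicity.
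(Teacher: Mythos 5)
The first paragraph (the reduction to $a = a'$) is correct and is the same conjugation-by-a-Lascar-strong-automorphism step that the paper performs implicitly when it picks $c^*$ with $ac^* \equivls_M a'c$ at the start of its proof.

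The gap is in the second stage, specifically the final upgrade. You apply \thref{fact:kim-dividing-indep-theorem-facts}(ii) to $a \ind^K_M b$ and $a \ind^K_M Nc_0$, so the saturated model $N$ sits on the \emph{right-hand side} of $\ind^K$, i.e.\ in the ``$c$'' slot of (ii). Fact (ii) replaces its ``$c$'' argument by a conjugate; so the output is an automorphism $\sigma \in \Aut(\MM/Ma)$ with $\sigma(Nc_0) = N^*c^*$, where $N^* = \sigma(N)$ is in general a \emph{different} model from $N$. Consequently the only type equality you obtain is $c_0 \equiv_{Ma} c^*$. You do not have $c_0 \equiv_{N} c^*$ or $c_0 \equiv_{N^*} c^*$, because $\sigma$ does not fix $N$ pointwise. \thref{lem:find-indisc-sequence-through-model} requires equality of types \emph{over the $\lambda_T$-saturated model itself}; having $c_0 \equiv_{Ma} c^*$ over the non-saturated set $Ma$, together with each of $c_0, c^*$ lying in some saturated model, is not enough to conclude $c_0 \equivls_{Ma} c^*$. (There is also a secondary slip: you take $N \supseteq Mc_0$, so $\sigma(N) \supseteq Ma$ does not follow; you would at least need $N \supseteq Mac_0$. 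But fixing this does not repair the main gap.)

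The cure is to keep the saturated model on the \emph{left-hand side} of $\ind^K$ when invoking (ii), so that it is the ``$a$'' argument and hence preserved by the automorphism produced. This requires first lifting $a \ind^K_M b$ and $a \ind^K_M c_0$ to $Na \ind^K_M b$ and $Na \ind^K_M c_0$, which is what \thref{fact:kim-dividing-indep-theorem-facts}(i) is for: one places $N$ so that $N \ind^{iLs}_M abc_0$ (hence $N \ind^d_M abc_0$), then (i) gives the two statements with $Na$ on the left. Applying (ii) with ``$a$'' $= Na$ then yields $Nac' \equiv_M Nac_0$ with $N$ and $a$ fixed, i.e.\ $ac' \equiv_N ac_0$, and \thref{lem:find-indisc-sequence-through-model} applies over $N$. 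This is precisely the paper's argument. Your extension-based route for getting $N$ into the picture is not wrong per se, but it irreversibly places $N$ in the wrong slot of (ii), and no amount of bookkeeping afterwards recovers the anchoring that the Lascar-strong-type upgrade needs.
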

\begin{proof}
Let $c^*$ be such that $a c^* \equivls_M a' c$, so $a \ind_M^K c^*$. Let $N' \supseteq M$ be $\lambda_T$-saturated and let $q$ be a global $M$-Ls-invariant extension of $\Lstp(N'/M)$. Let $N$ realise $q|_{Mabc^*}$, so we have $N \ind_M^{iLs} abc^*$. By \thref{fact:kim-dividing-indep-theorem-facts}(i) we then have $Na \ind_M^K b$ and $Na \ind_M^K  c^*$. So by fact \thref{fact:kim-dividing-indep-theorem-facts}(ii) we find $c'$ with $Nac' \equiv_M Nac^*$ and $Na \ind_M^K bc'$. We thus have $ac' \equivls_M ac^* \equivls_M a'c$, as required.
\end{proof}
\begin{definition}
\thlabel{def:star-independence}
We write $b \ind_M^* c$ to mean that $\Lstp(b/Mc)$ extends to a global $M$-Ls-invariant type $\tp(N/\MM)$ for some $\beth_\omega(\lambda_T + |Mbc|)$-saturated model $N \supseteq M$. Extending $\Lstp(b/Mc)$ here means that there is some $\beta \in N$ with $\beta \equivls_{Mc} b$.
\end{definition}
The point of the enormous cardinal $\beth_\omega(\lambda_T + |Mbc|)$ is that we will want to find a $\lambda_T$-saturated model $M'$ containing $M$ and a copy of $b$ in $N$, and then again some $\lambda_T$-saturated $M'' \supseteq M'$ inside $N$. By \thref{fact:saturated-models} we can choose these $\lambda_T$-saturated models small enough so that this process can be repeated any finite number of times.

We easily see that $\ind^*$ is invariant under automorphisms and, assuming thickness, that $b \ind_M^* M$ for all $M$.
\begin{lemma}
\thlabel{lem:star-independence-extension}
We have that $\ind^*$ satisfies the following extension properties.
\begin{enumerate}[label=(\roman*)]
\item (left extension) If $b \ind_M^* c$ and $|d| < \beth_\omega(\lambda_T + |Mbc|)$, then there is $d' \equivls_{Mb} d$ such that $bd' \ind_M^* c$.
\item (right extension) If $b \ind_M^* c$ and $|d| < \beth_\omega(\lambda_T + |Mbc|)$, then there is $d' \equivls_{Mc} d$ such that $b \ind_M^* cd'$.
\end{enumerate}
\end{lemma}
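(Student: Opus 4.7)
The plan is to reduce both parts to the case where $b$ itself lies in the witnessing model $N$, and then exploit the large saturation of $N$. Concretely, given a witness $N$ for $b \ind_M^* c$ with $\beta \in N$ and $\beta \equivls_{Mc} b$, one chooses $\sigma \in \Aut_f(\MM/Mc)$ with $\sigma(b) = \beta$ and replaces $N$ by $\sigma^{-1}(N)$. This preserves the saturation level, preserves $\tp(N/\MM)$ being $M$-Ls-invariant (by \thref{lem:ls-invariance-easy-facts}(i), since $\sigma^{-1} \in \Aut(\MM/M)$), and now arranges $b = \sigma^{-1}(\beta) \in N$.

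Having $b \in N$, part (ii) becomes immediate: take $d' := d$ and use $N$ as the witness for $b \ind_M^* cd$ with $\beta' := b$, since $b \equivls_{Mcd} b$ holds trivially. The only arithmetic to check is $\beth_\omega(\lambda_T + |Mbcd|) \leq \beth_\omega(\lambda_T + |Mbc|)$; this follows from $|d| < \beth_\omega(\lambda_T + |Mbc|)$ giving $|d| \leq \beth_n(\lambda_T + |Mbc|)$ for some $n < \omega$, together with the identity $\beth_\omega(\beth_n(\mu)) = \beth_\omega(\mu)$. So $N$ already has the saturation level required for a witness involving $d$.

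For part (i) the task is to produce $d' \in N$ with $d' \equivls_{Mb} d$. First, by a Löwenheim--Skolem style construction inside $N$ (using \thref{fact:saturated-models} and the very large saturation of $N$), build a $\lambda_T$-saturated e.c.\ submodel $M' \subseteq N$ containing $Mb$ of cardinality at most $2^{\lambda_T + |Mb|}$, which is well below $\beth_\omega(\lambda_T + |Mbc|)$. Next, realise $\tp(d/M')$ inside $N$ by saturation: the type is finitely satisfiable in $M'$ (since $M' \hookrightarrow \MM$ is an immersion, any positive existential formula over $M'$ true of $d$ is true of some element of $M'$), hence finitely satisfiable in $N$ via the immersion $M' \hookrightarrow N$, and $|M'| + |d| < \beth_\omega(\lambda_T + |Mbc|)$. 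This gives $d' \in N$ with $d' \equiv_{M'} d$. Applying \thref{lem:find-indisc-sequence-through-model} over the $\lambda_T$-saturated $M'$ upgrades this to $\d_{M'}(d', d) \leq 2$, hence $d' \equivls_{Mb} d$. The pair $bd' \in N$ then witnesses $bd' \ind_M^* c$, since $bd' \equivls_{Mc} bd'$ trivially and the cardinal bookkeeping of paragraph two takes care of saturation.

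The main obstacle is producing the intermediate $\lambda_T$-saturated e.c.\ submodel $M' \subseteq N$ of the correct cardinality while preserving that the required type over $M'$ can subsequently be realised inside $N$. This is precisely the nested use of $\lambda_T$-saturated models inside $N$ that the authors flag after \thref{def:star-independence}, and is what the outsized cardinal $\beth_\omega(\lambda_T + |Mbc|)$ is engineered to accommodate—leaving headroom for further iterations in later applications.
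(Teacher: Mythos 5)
Your high-level plan—place $b$ and $d'$ inside a witness model and then exploit the cardinal headroom—matches the paper's, but the reduction step where you arrange $b \in N$ has a genuine gap. You choose $\sigma \in \Aut_f(\MM/Mc)$ with $\sigma(b)=\beta$ and replace $N$ by $\sigma^{-1}(N)$, citing \thref{lem:ls-invariance-easy-facts}(i). But $\sigma$, being an automorphism of $\MM$, does not act on $N$ at all, since $N$ (and $\beta$) live in the bigger monster. If you instead take $\sigma$ to be a Lascar strong automorphism of the bigger monster over $Mc$, then $\sigma^{-1}(N)$ makes sense, but \thref{lem:ls-invariance-easy-facts}(i) no longer applies: that lemma moves a global type by an automorphism that fixes $\MM$ setwise, whereas your $\sigma$ does not fix $\MM$. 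Checking $M$-Ls-invariance of $\tp(\sigma^{-1}(N)/\MM)$ directly reduces to: for $m \equivls_M m'$ in $\MM$, is $N\sigma(m) \equivls_M N\sigma(m')$? Here $\sigma(m), \sigma(m')$ may lie outside $\MM$, so the Ls-invariance of $\tp(N/\MM)$ (which is formulated only for parameters in $\MM$) gives you nothing. This gap propagates to both parts, since part (ii) with $d' = d$ and part (i) both presuppose the reduced witness.

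The paper's proof sidesteps this. It leaves $N$ untouched and instead takes a copy $N' \subseteq \MM$ with $N' \equivls_{Mc} N$; since $N'$ is in $\MM$, one may apply a Lascar strong automorphism of $\MM$ over $Mc$ to arrange $b \in N'$ without touching the global type $q = \tp(N/\MM)$. The model $M'$ and the element $d'$ are then found inside $N'$ rather than inside $N$, and the witness condition ``$q$ extends $\Lstp(bd'/Mc)$'' is verified via the correspondence $N' \equivls_{Mc} N$ (which supplies a tuple in $N$ with the required Lascar strong type over $Mc$). For part (ii), the paper likewise avoids the reduction: it chooses $b' \in \MM$ with $b' \equivls_{Mcd} \beta$, notes $b' \ind_M^* cd$ using $N$ itself as witness, and then conjugates by a Lascar strong automorphism over $Mc$ sending $b'$ to $b$ to produce $d'$. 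Your saturation construction of $M'$, the realisation of $\tp(d/M')$, and the cardinal bookkeeping are all fine; it is only the initial reduction that needs replacing by the in-$\MM$ copy argument.
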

\begin{proof}
In both cases we assume $b \ind_M^* c$. So let $q = \tp(N/\MM)$ be a global $M$-Ls-invariant extension of $\Lstp(b/Mc)$ for some $\beth_\omega(\lambda_T + |Mbc|)$-saturated $N \supseteq M$.

We first prove left extension. Let $N' \equivls_{Mc} N$ be in $\MM$. By moving things by a Lascar strong automorphism over $Mc$ we may assume $b \in N'$. By \thref{fact:saturated-models} there is $Mb \subseteq M' \subseteq N'$ where $M'$ is $\lambda_T$-saturated and of cardinality $\leq 2^{\lambda_T+|Mb|}$. Let $d'$ realise $\tp(d/M')$ in $N'$. So $d' \equivls_{Mb} d$ while $q$ also extends $\Lstp(bd'/Mc)$, so indeed $bd' \ind_M^* c$.

Now we prove right extension. Let $\beta \in N$ be such that $\beta \equivls_{Mc} b$. Pick $b' \in \MM$ such that $b' \equivls_{Mcd} \beta$. Then clearly $b' \ind_M^* cd$. We finish the proof by picking $d'$ such that $bd' \equivls_{Mc} b'd$.
\end{proof}
\begin{proposition}[Weak independence theorem]
\thlabel{prop:weak-independence-theorem}
Let $T$ be thick NSOP$_1$. Suppose that $a \equivls_M a'$, $a \ind_M^K b$, $a' \ind_M^K c$ and $b \ind_M^* c$. Then there is $a''$ with $a'' \equivls_{Mb} a$ and $a'' \equivls_{Mc} a'$ such that $a'' \ind_M^K bc$.
\end{proposition}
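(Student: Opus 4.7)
The plan is to adapt the proof of the weak independence theorem from \cite[Proposition 5.7]{kaplan_kim-independence_2020} to the thick positive-logic setting, with $\ind_M^*$ playing the role of invariance there. First, the hypothesis $b \ind_M^* c$ provides a global $M$-Ls-invariant type $q = \tp(N/\MM)$ with $N$ very saturated and containing some $\beta \equivls_{Mc} b$. Using the restriction of $q$ to $\beta$'s variables, we build a Morley sequence $(b_i)_{i<\omega}$ that starts with $b_0 = b$ and is $Mc$-indiscernible (automatic from $Mc$-Ls-invariance of $q$), so that each $b_i \equivls_{Mc} b$. Combining with $a \ind_M^K b$ via Kim's lemma (\thref{prop:kims-lemma}) and \thref{prop:q-dividing-equivalent-statements}(iv), the same sequence can additionally be arranged to be $Ma$-indiscernible.

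Next, we apply \thref{lem:indep-theorem-add-c} to the data $a \equivls_M a'$, $a \ind_M^K b$ and $a' \ind_M^K c$ to obtain $c'$ with $ac' \equivls_M a'c$ and $a \ind_M^K bc'$; fix $\sigma \in \Aut_f(\MM/M)$ with $\sigma(ac') = a'c$ and let $b_* := \sigma(b) \equivls_M b$. The key observation is that if one can additionally arrange $b_* \equivls_{Mc} b$ (equivalently $c' \equivls_{Mb} c$), then picking $\tau \in \Aut_f(\MM/Mc)$ with $\tau(b_*) = b$ and setting $a'' := \tau(a') = \tau\sigma(a)$ does the job: $a''c = \tau(a'c) \equivls_M a'c$ since $\tau$ is an $M$-Ls-automorphism fixing $c$; $a''b = \tau(a' b_*) = \tau\sigma(ab) \equivls_M ab$ since $\tau\sigma \in \Aut_f(\MM/M)$; and $a'' \ind_M^K bc$ by invariance of Kim-independence under $\tau\sigma$, which sends $bc'$ to $bc$.

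The remaining task is therefore to produce $c'$ satisfying the additional property $c' \equivls_{Mb} c$. Here the Morley sequence $(b_i)_{i<\omega}$ enters: each $b_i \equivls_{Mc} b$, the sequence is $Ma$-indiscernible, and by the chain condition (\thref{lem:chain-lemma}) we have $a \ind_M^K (b_i)_{i<\omega}$. Working inside the very saturated $N$ and exploiting the left extension property of $\ind_M^*$ (\thref{lem:star-independence-extension}), we realise a Lascar-strong conjugate $c'$ of $c$ over $Mb$ that simultaneously satisfies $ac' \equivls_M a'c$ and $a \ind_M^K bc'$.

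The main obstacle is carrying out this last step precisely: ensuring all three conditions ($ac' \equivls_M a'c$, $a \ind_M^K bc'$, and $c' \equivls_{Mb} c$) hold at once. This is the heart of the argument and will rely on the chain condition, Kim's lemma, and the strong saturation built into $\ind_M^*$, which supplies enough room to align all the required Lascar-strong types.
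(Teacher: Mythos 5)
Your outline correctly identifies the relevant tools (Lemma~\ref{lem:indep-theorem-add-c}, the chain condition, Kim's lemma, the extension properties of $\ind^*$), but it stops exactly where the real work begins, and two of the intermediate claims you do make are not justified.

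First, you explicitly defer the step of producing $c'$ with all three properties ($ac'\equivls_M a'c$, $a\ind_M^K bc'$, and $c'\equivls_{Mb}c$ simultaneously) to a future argument, calling it ``the heart of the argument''. That is precisely the part the proposition is about, so as written this is not a proof. The paper's proof does \emph{not} try to force the third condition onto $c'$ itself; instead, from $b\ind_M^* c$ it uses left and then right extension of $\ind^*$ to manufacture a fresh pair $(b'',c'')$ with $b''c''\equivls_M bc'$, $c''\equivls_{Mb}c$, and $bc'\ind_M^* b''c''$. This lets it build a Morley sequence in a global $M$-Ls-invariant type extending $\Lstp(bc'/Mb''c'')$ whose first two entries are $(b'',c'')$ and $(b,c')$, find $a^*$ by Kim-independence making this sequence $Ma^*$-indiscernible, use the chain condition, and only at the very end apply a Lascar strong automorphism over $Mb$ sending $c''\mapsto c$. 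Aligning the types happens \emph{after} the independence is established, not before.

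Second, your ``key observation'' asserts that $b_*\equivls_{Mc}b$ is \emph{equivalent} to $c'\equivls_{Mb}c$. This is not true in general: the automorphism you would build (e.g.\ $\theta\sigma^{-1}$ for $\theta\in\Aut_f(\MM/Mb)$ with $\theta(c')=c$) lies in $\Aut_f(\MM/M)$ and fixes $c$ pointwise, but an $M$-Lascar strong automorphism fixing $c$ pointwise need not be an $Mc$-Lascar strong automorphism. Relatedly, your final computation $a''b=\tau\sigma(ab)\equivls_M ab$ with $\tau\sigma\in\Aut_f(\MM/M)$ gives only $a''b\equivls_M ab$, not the required $a''\equivls_{Mb}a$ (again, an $M$-Lascar strong automorphism fixing $b$ need not be in $\Aut_f(\MM/Mb)$). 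Both gaps disappear if one first reduces to the case where $b$ and $c$ enumerate $\lambda_T$-saturated models containing $M$ (using extension for $\ind^K$ and left extension for $\ind^*$); then $\equiv_{Mb}$ implies $\equivls_{Mb}$ by Lemma~\ref{lem:find-indisc-sequence-through-model}. The paper's proof opens with exactly this reduction, and without it these Lascar strong type claims do not follow.
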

\begin{proof}
We may assume that $b$ and $c$ both enumerate a $\lambda_T$-saturated model containing $M$. If this is not the case let $N \supseteq Mb$ be $\lambda_T$-saturated and such that $|N| < \beth_\omega(\lambda_T + |Mbc|)$. By left extension from \thref{lem:star-independence-extension} we then find $N' \equivls_{Mb} N$ with $N' \ind_M^* c$. By \thref{cor:extension} we find $a_0$ with $a_0 \equivls_{Mb} a$ and $a_0 \ind_M^K N'$. Now we can replace $a$ by $a_0$ and $b$ by $N'$ and continue the proof. The case for $c$ is analogous.

By \thref{lem:indep-theorem-add-c} there is $c'$ such that $ac' \equivls_M a'c$ and $a \ind_M^K bc'$. Apply left extension from \thref{lem:star-independence-extension} to $b \ind_M^* c$ and $c'$ to find $c'' \equivls_{Mb} c$ with $bc' \ind_M^* c''$. Let $b^*$ be such that $b^* c'' \equivls_M bc'$ and apply right extension from \thref{lem:star-independence-extension} to $bc' \ind_M^* c''$ and $b^*$ to find $b'' \equivls_{Mc''} b^*$ with $bc' \ind_M^* b'' c''$. In particular we have $b'' c'' \equivls_M bc'$ and $\Lstp(bc'/Mb''c'')$ extends to a global $M$-Ls-invariant type $q$. So there is a Morley sequence $(b_i c_i)_{i < \omega}$ in $q$ with $(b_0, c_0) = (b'', c'')$ and $(b_1, c_1) = (b, c')$. As $a \ind_M^K bc'$, we can find $a^*$ with $a^*b''c'' \equiv_M abc'$ such that $(b_i c_i)_{i < \omega}$ is $Ma^*$-indiscernible. By construction we had $c'' \equivls_{Mb} c$, so there is a Lascar strong automorphism $\sigma$ over $Mb$ such that $\sigma(c'') = c$. Set $a'' = \sigma(a^*)$, we check that this is indeed the $a''$ we are looking for.

By the chain condition (\thref{lem:chain-lemma}) we have $a^* \ind_M^K (b_i c_i)_{i < \omega}$, so we have $a^* \ind^K_M b c''$ and $a'' \ind_M^K bc$ then follows by invariance. By $Ma^*$-indiscernibility we have $a'' b \equiv_M a^* b \equiv_M a^* b'' \equiv_M a b$. We assumed $b$ to enumerate a $\lambda_T$-saturated model, so indeed $a'' \equivls_{Mb} a$. By construction of $c'$ we have $a'' c \equiv_M a^* c'' \equiv_M a c' \equiv_M a' c$. We assumed $c$ to enumerate a $\lambda_T$-saturated model, so indeed $a'' \equivls_{Mc} a'$, which concludes the proof.
\end{proof}
\begin{fact}
\thlabel{fact:global-ls-invariant-type-determined-by-restriction}
In a thick theory, if $N \supseteq M$ is $(2^{|M| + \lambda_T})^+$-saturated and $q$ and $r$ are global $M$-Ls-invariant types with $q|_N = r|_N$ then $q = r$.
\end{fact}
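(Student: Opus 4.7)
The plan is to reduce $q = r$ to $q|_N = r|_N$ by moving arbitrary parameters from $\MM$ into $N$ via $M$-Ls-invariance. It suffices to show that for every formula $\phi(x,y)$ and every tuple $b \in \MM$ of length $|y|$, we have $\phi(x,b) \in q$ if and only if $\phi(x,b) \in r$; the strategy is to produce $b' \in N$ with $b' \equivls_M b$ and then chase invariance.

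To obtain such a $b'$, I would first locate a $\lambda_T$-saturated intermediate model $M^*$ with $M \subseteq M^* \subseteq N$ and $|M^*| \leq 2^{|M|+\lambda_T}$. Such an $M^*$ can be carved out of $N$ by a standard iterative construction of length $\lambda_T$: starting from $M$, at each stage use the saturation of $N$ to realise inside $N$ all types over the small subsets already produced, and take unions at limit stages. The cardinality bookkeeping works because $N$ is $(2^{|M|+\lambda_T})^+$-saturated, so at each stage there is room inside $N$, while the total size stays bounded by $2^{|M|+\lambda_T}$. Next, realise $\tp(b/M^*)$ in $N$: this is possible since $|M^*| \leq 2^{|M|+\lambda_T}$ and $N$ is $(2^{|M|+\lambda_T})^+$-saturated, and the result $b' \in N$ satisfies $b' \equiv_{M^*} b$. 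Applying \thref{lem:find-indisc-sequence-through-model} to the $\lambda_T$-saturated model $M^*$, equality of types over $M^*$ upgrades to Lascar equivalence over $M^*$, so $b' \equivls_{M^*} b$, and hence $b' \equivls_M b$ since $M \subseteq M^*$.

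Finally, for the invariance transfer, fix a realisation $\alpha \models q$. The $M$-Ls-invariance of $q$ applied to $b \equivls_M b'$ yields $\alpha b \equivls_M \alpha b'$, hence $\phi(x,b) \in q$ iff $\phi(x,b') \in q$. Symmetrically, $\phi(x,b) \in r$ iff $\phi(x,b') \in r$. Since $b' \in N$, the hypothesis $q|_N = r|_N$ supplies the middle equivalence $\phi(x,b') \in q$ iff $\phi(x,b') \in r$. Chaining these three equivalences produces $\phi(x,b) \in q$ iff $\phi(x,b) \in r$, as required.

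The only mildly delicate point is the cardinality bookkeeping in building $M^*$ inside $N$ of size at most $2^{|M|+\lambda_T}$; once this intermediate model is in place the actual transfer is purely formal. This matches why the specific saturation threshold $(2^{|M|+\lambda_T})^+$ appears in the hypothesis: it is exactly what is needed both to produce $M^*$ and to realise types over it.
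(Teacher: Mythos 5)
Your proposal is correct and follows essentially the same route as the paper's proof: find a $\lambda_T$-saturated $M' \subseteq N$ of cardinality at most $2^{|M|+\lambda_T}$ (the paper obtains this from \thref{fact:saturated-models}), realise $\tp(b/M')$ inside $N$ by a parameter $b'$, upgrade $b' \equiv_{M'} b$ to $b' \equivls_M b$ via \thref{lem:find-indisc-sequence-through-model}, and then chase the chain of equivalences through $M$-Ls-invariance and $q|_N = r|_N$. You spell out the carving-out of $M'$ inside $N$ slightly more explicitly than the paper does, but there is no substantive difference.
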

\begin{proof}
By \thref{fact:saturated-models} there is $M \subseteq M' \subseteq N$ where $M'$ is a $\lambda_T$-saturated model and $|M'| < (2^{|M| + \lambda_T})^+$. Let $\phi(x, b)$ be any formula with parameters $b$. Let $b' \in N$ realise $\tp(b/M')$. Then $b \equivls_M b'$. By $M$-Ls-invariance and $q|_N = r|_N$ we have
\[
\phi(x, b) \in q \quad \Leftrightarrow \quad
\phi(x, b') \in q \quad \Leftrightarrow \quad
\phi(x, b') \in r \quad \Leftrightarrow \quad
\phi(x, b) \in r,
\]
which concludes the proof.
\end{proof}
\begin{theorem}[Independence theorem]
\thlabel{thm:independence-theorem}
Let $T$ be a thick NSOP$_1$ theory. Suppose that $a \equivls_M a'$, $a \ind_M^K b$, $a' \ind_M^K c$ and $b \ind_M^K c$. Then there is $a''$ with $a'' \equivls_{Mb} a$, $a'' \equivls_{Mc} a'$ and $a'' \ind_M^K bc$.
\end{theorem}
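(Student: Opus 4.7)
The plan is to reduce to the weak independence theorem (\thref{prop:weak-independence-theorem}), which has the stronger hypothesis $b \ind_M^* c$, by replacing $c$ with a suitable $c^* \equivls_{Mb} c$ satisfying $b \ind_M^* c^*$; we then transport the conclusion back along a Lascar strong automorphism fixing $Mb$.

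First I would make the standard reduction, modelled on the opening paragraph of the proof of the weak independence theorem: using \thref{cor:extension} I can replace $b$ by a $\lambda_T$-saturated model $N_b \supseteq Mb$ and $c$ by a $\lambda_T$-saturated $N_c \supseteq Mc$, while preserving $a \ind_M^K b$, $a' \ind_M^K c$, and $b \ind_M^K c$ (for the last one, apply \thref{thm:symmetry} followed by extension, then symmetry back). Under this reduction, any $\equiv_M$-equivalence with $b$ or $c$ among the base parameters upgrades to $\equivls_M$ by \thref{lem:find-indisc-sequence-through-model}.

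The key step is to find $c^*$ with $c^* \equivls_{Mb} c$ and $b \ind_M^* c^*$. By \thref{thm:symmetry} we have $c \ind_M^K b$, and by \thref{prop:q-dividing-equivalent-statements}(iv) there is an $Mb$-indiscernible Morley sequence $(c_i)_{i<\omega}$ with $c_0 = c$ in some global $M$-Ls-invariant extension $q$ of $\Lstp(c/M)$. The point is to choose $q$ cleverly: pick a $\beth_\omega(\lambda_T + |Mbc|)^+$-saturated $\tilde{N} \supseteq M$ and realise $\tp(b/M)$ as a coheir inside $\tilde{N}$, then apply \thref{lem:extend-to-global-ls-invariant-type} to secure a global $M$-Ls-invariant type whose realisations witness the $\ind^*$ property on the $b$-side. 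Using this $q$ (or a tensor combining it with a suitable $M$-Ls-invariant type over $c$), extract $c^* := c_1$. By $Mb$-indiscernibility and $\lambda_T$-saturation of $N_b$, \thref{lem:find-indisc-sequence-through-model} gives $c^* \equivls_{Mb} c$, while the saturated realisation of $q$ yields $b \ind_M^* c^*$. Now pick $\sigma \in \Aut_f(\MM/Mb)$ with $\sigma(c^*) = c$ and set $\tilde{a} := \sigma^{-1}(a)$, $\tilde{a}' := \sigma^{-1}(a')$. All four hypotheses of the weak independence theorem hold for $(\tilde{a}, \tilde{a}', b, c^*)$: the invariance of $\ind^K$ transfers $\tilde{a} \ind_M^K b$ and $\tilde{a}' \ind_M^K c^*$, while $\tilde{a} \equivls_M \tilde{a}'$ is immediate and $b \ind_M^* c^*$ was just arranged.

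Applying \thref{prop:weak-independence-theorem} yields $\alpha$ with $\alpha \equivls_{Mb} \tilde{a}$, $\alpha \equivls_{Mc^*} \tilde{a}'$, and $\alpha \ind_M^K bc^*$. Setting $a'' := \sigma(\alpha)$ and applying $\sigma$ to each conclusion (using that $\sigma$ fixes $Mb$ at the Lascar level, $\sigma(c^*) = c$, and $\sigma(\tilde{a}') = a'$) gives $a'' \equivls_{Mb} a$, $a'' \equivls_{Mc} a'$, and $a'' \ind_M^K bc$, as required.

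The main obstacle is the upgrade step: producing $c^*$ with $c^* \equivls_{Mb} c$ and $b \ind_M^* c^*$ from the weaker hypothesis $b \ind_M^K c$. The delicate point is that $\ind^*$ is asymmetric — it demands a $\beth_\omega(\lambda_T + |Mbc^*|)$-saturated realisation on the $b$-side — so one must be careful that the global $M$-Ls-invariant extension of $\Lstp(b/Mc^*)$ is in fact realised in a sufficiently saturated model, while simultaneously keeping $c^*$ on the prescribed $Mb$-indiscernible Morley sequence starting with $c$. Coordinating the choice of $q$, the coheir/saturation source, and the Lascar-level agreement $c^* \equivls_{Mb} c$ is where the main technical work lies.
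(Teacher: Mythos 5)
The key step of your proposal cannot work. You want to find $c^*$ with $c^* \equivls_{Mb} c$ and $b \ind_M^* c^*$, starting from the hypothesis $b \ind_M^K c$. But $\ind^*$ is invariant under all automorphisms in $\Aut(\MM/M)$ (the paper notes this explicitly just after Definition~\ref{def:star-independence}), and $\Aut_f(\MM/Mb) \subseteq \Aut(\MM/M)$. So if $\sigma \in \Aut_f(\MM/Mb)$ sends $c^*$ to $c$, then $b \ind_M^* c^*$ holds if and only if $b = \sigma(b) \ind_{\sigma(M)=M}^* \sigma(c^*) = c$. In other words, producing such a $c^*$ is \emph{equivalent} to already having $b \ind_M^* c$. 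You therefore need to deduce $b \ind_M^* c$ from $b \ind_M^K c$, and this is false in general: $\ind^*$ implies $\ind^{iLs}$, hence $\ind^f$ and $\ind^d$ (Proposition~\ref{prop:independence-strengths} and Remark~\ref{rem:kim-dividing-implies-dividing}), whereas $\ind^K$ is strictly weaker than these in non-simple NSOP$_1$ theories. No choice of the global $M$-Ls-invariant type $q$ in which you run the Morley sequence $(c_i)_{i<\omega}$ can repair this: the type $q$ lives in the $c$-variable, while $b \ind_M^* c_1$ requires extending $\Lstp(b/Mc_1)$ in the $b$-variable into the type of a very saturated model, and nothing in the construction produces such an extension.

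This is exactly the gap that makes the paper's proof long. The actual argument does not try to strengthen $b \ind_M^K c$ to $b \ind_M^* c'$ for a Lascar conjugate $c'$. Instead, it first fixes a well-chosen global $M$-Ls-invariant type $q(y,z) \supseteq \Lstp(bc/M)$ realized inside a very saturated model, takes a long Morley sequence $(b_ic_i)_{i<\kappa}$ in $q$ (so that $b_ic_i \ind_M^* b_{<i}c_{<i}$ holds \emph{along the Morley sequence}, not between $b$ and $c$ themselves), and then inductively builds a new sequence $(b_i')_{i\leq\kappa}$ by repeated applications of the weak independence theorem. The output of this bootstrapping, together with an $EM_s$-based tree construction using parallel-Morley sequences and the parallel sequences lemma, is what eventually yields consistency of $p_0(x,b)\cup p_1(x,c)$ without Kim-dividing. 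The weak independence theorem serves as a building block applied many times, not as the target of a one-shot reduction.
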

\begin{proof}
We may assume that $b$ and $c$ both enumerate a $\lambda_T$-saturated model containing $M$. If this is not the case let $N \supseteq Mb$ be $\lambda_T$-saturated. By extension (\thref{cor:extension}) and symmetry then find $N' \equivls_{Mb} N$ with $N' \ind_M^K c$. Applying extension again we find $a_0$ with $a_0 \equivls_{Mb} a$ and $a_0 \ind_M^K N'$. Now we can replace $a$ by $a_0$ and $b$ by $N'$ and continue the proof. The case for $c$ is analogous.

Let $N_0 \supseteq M$ be $(2^{|M| + \lambda_T})^+$-saturated and let $\kappa$ be a big enough cardinal (depending only on $|N_0bc|$). Pick some global $M$-Ls-invariant type $q(y, z)$ extending $\Lstp(bc/M)$ such that $q$ also extends to a global $M$-Ls-invariant type $\tp(N/\MM)$ for some saturated enough $N \supseteq M$ (depending only on $\kappa$). So there is $\beta$ realising $q|_y$ with $\beta \equivls_M b$. Let $(b_i c_i)_{i < \kappa}$ be a Morley sequence in $q$ with $b_0 = b$ and let $b_\kappa \equivls_{M (b_i c_i)_{i < \kappa}} \beta$. Then we have $b_i c_i \ind_M^* b_{<i} c_{<i}$ for all $i < \kappa$ and $b_\kappa \ind_M^* (b_i c_i)_{i < \kappa}$.

We will inductively construct a sequence $(b_i')_{i \leq \kappa}$ with $b_0' = b$ such that at step $i$:
\begin{enumerate}[label=(\roman*)]
\item $c \ind_M^K b_{\leq i}'$,
\item $cb_i' \equivls_M cb$,
\item $b_{\leq i}' \equivls_M b_{\leq i}$.
\end{enumerate}
The base case is already fixed: $b_0' = b$. So suppose we have constructed $b_{\leq i}'$. By induction hypothesis (iii) we can find $b^* b_{\leq i}' \equivls_M b_{i+1} b_{\leq i}$. So $b^* \ind_M^* b_{\leq i}'$. Let $c^*$ be such that $c^* b^* \equivls_M cb$, so $c^* \ind_M^K b^*$. So also using (i) from the induction hypothesis we can apply the weak independence theorem (\thref{prop:weak-independence-theorem}) to find $c'$ such that $c' \ind_M^K b_{\leq i}' b^*$, $c' \equivls_{Mb^*} c^*$ and $c' \equivls_{M b_{\leq i}'} c$. We now pick $b_{i+1}'$ to be such that $c b_{i+1}' \equivls_{M b_{\leq i}'} c' b^*$. Then indeed $c \ind_M^K b_{\leq i+1}'$. We also have $b_{\leq i}' b_{i+1}' \equivls_M b_{\leq i}' b^* \equivls_M b_{\leq i} b_{i+1}$. Finally, $c b_{i+1}' \equivls_M c' b^* \equivls_M c^* b^* \equivls_M cb$. So this concludes the successor step. For the limit stage we assume we have constructed $b_{<i}'$. We then have $c \ind_M^K b_{<i}'$ by finite character. We also have $b_{\leq j}' \equivls_M b_{\leq j}$ for all $j < i$. So we have $b_{< i}' \equiv_M b_{< i}$. We assumed $b$ to enumerate a $\lambda_T$-saturated model containing $M$, so because $b_0' = b = b_0$ we do in fact have $b_{< i}' \equivls_M b_{< i}$. We then construct $b_i'$ in an analogous way to the successor step.

We let $(c_i')_{i < \kappa}$ be such that $b_\kappa' (b_i' c_i')_{i < \kappa} \equivls_M b_\kappa (b_i c_i)_{i < \kappa}$. So by $M$-Ls-invariance of $q|_y$ we have $\beta b_\kappa' (b_i' c_i')_{i < \kappa} \equivls_M \beta b_\kappa (b_i c_i)_{i < \kappa}$ and thus by how we chose $b_\kappa$ we have $b_\kappa' \equivls_{M (b_i' c_i')_{i < \kappa}} \beta$.

Since $q \subseteq \tp(N/\MM)$ for some saturated enough $N$ we can find $\beta \gamma (\beta_i, \gamma_i)_{i < \kappa} \equivls_M b_\kappa' c (b_i', c_i')_{i < \kappa}$ in $N$, where $\beta \gamma \models q$. Here we used the fact that $b_\kappa' c \equivls_M bc$. Set $q'((y_i, z_i)_{i < \kappa}, y, z) = \tp((\beta_i, \gamma_i)_{i < \kappa} \beta \gamma/\MM)$. Then $q'$ is global $M$-Ls-invariant because $\tp(N/\MM)$ is global $M$-Ls-invariant. By \thref{fact:global-ls-invariant-type-determined-by-restriction} and our choice of $\kappa$ we get that some global $M$-Ls-invariant type $q'|_{y_i z_i y z}$ occurs for $\kappa$ many $i$ (modulo identifying the the variables for different $i$'s). We now focus on a subsequence of length $\omega$ such that (after relabelling) $q'|_{y_i z_i y z}$ does not depend on $i$, and we forget about $\kappa$. We also relabel $b_\kappa'$ to $b'$. \\
\\
\textbf{Claim 1.} In summary, we have just constructed the following.
\begin{enumerate}[label=(\roman*)]
\item A Morley sequence $(b_i' c_i')_{i < \omega}$ in $q$, where $q$ is a global $M$-Ls-invariant extension of $\Lstp(bc/M)$.
\item For every $i < \omega$ we have $b_i' c \equivls_M b'c \equivls_M bc$.
\item Let $\beta \models q|_y$ then $b' \equivls_{M(b_i' c_i')_{i < \omega}} \beta$.
\item $q(y, z) \subseteq q'((y_i, z_i)_{i < \omega}, y, z)$ and $q'$ is global $M$-Ls-invariant and extends $\Lstp((b_i', c_i')_{i < \omega} b' c / M)$.
\item There is some sufficiently saturated $N$ such that $q' \subseteq \tp(N/\MM)$ and $\tp(N/\MM)$ is $M$-Ls-invariant.
\item The type $q'|_{y_i z_i y z}$ does not depend on $i$, modulo identifying variables for different $i$'s.
\end{enumerate}
\textbf{Claim 2.} For every $k < \omega$ there are $g_0 h_0 g_1 h_1 \ldots g_{k-1} h_{k-1} g_k$, $g_0' h_0' g_1' h_1' \ldots g_{k-1}' h_{k-1}'$ and $h_0'' g_1'' h_1'' \ldots g_{k-1}'' h_{k-1}'' g_k''$ such that:
\begin{enumerate}[label=(\roman*)]
\item $(g_i' h_i')_{i < k} \models (q'|_{y_0, z})^{\otimes k}|_M$,
\item $(h_i'' g_{i+1}'')_{i < k} \models (q'|_{z_0, y})^{\otimes k}|_M$,
\item $(g_i h_i, g_i' h_i')$ starts an $M g_{>i} h_{>i} g_{>i}' h_{>i}'$-indiscernible sequence for every $i < k$,
\item $(h_i g_{i+1}, h_i'' g_{i+1}'')$ starts an $M h_{>i} g_{>i+1} h_{>i}'' g_{>i+1}''$-indiscernible sequence for every $i < k$.
\end{enumerate}
We first prove that the theorem follows from Claim 2. We set $p_0(x, y) = \tp(ab/M)$ and $p_1(x, z) = \tp(a'c/M)$. We will prove that $p_0(x, b) \cup p_1(x, c)$ does not Kim-divide over $M$. This is enough, because by \thref{prop:q-dividing-extending-to-complete-types} we can then extend it to a complete type that does not Kim-divide over $M$. Since we assumed $b$ and $c$ to enumerate $\lambda_T$-saturated models containing $M$, any realisation $a''$ of that complete type is then what we needed to construct.

By compactness we can find $M$-indiscernible $(g_i h_i g_i' h_i' g_i'' h_i'')_{i \in \Z}$ such that $(g_i' h_i')_{i \in \Z} \models (q'|_{y_0, z})^{\otimes \Z}|_M$ and $(h_i'' g_{i+1}'')_{i \in \Z} \models (q'|_{z_0, y})^{\otimes \Z}|_M$. Furthermore, we can make it so that for every $i \in \Z$ we have $g_i h_i \equiv_{M g_{>i} h_{>i} g_{>i}' h_{>i}'} g_i' h_i'$ and $h_i g_{i+1} \equiv_{M h_{>i} g_{>i+1} h_{>i}'' g_{>i+1}''} h_i'' g_{i+1}''$. We have that $q'|_{y, z_0} \supseteq \tp(b' c_0'/M)$, by Claim 1(iv). So by Claim 1(iii) and (v) we have that $b' \ind_M^* c_0'$. Then by \thref{prop:weak-independence-theorem} we have that $p_0(x, g_1'') \cup p_1(x, h_0'')$ does not Kim-divide. Then because $(h_i'' g_{i+1}'')_{i \geq n} \models (q'|_{z_0, y})^{\otimes \omega}|_M$ for all $n \in \Z$, we get that $\bigcup_{i \in \Z} p_0(x, g_{i+1}'') \cup p_1(x, h_i'')$ is consistent. By the parallel sequences lemma (\thref{lem:sequences-give-sop1}) we thus have that $\bigcup_{i \in \Z} p_0(x, g_{i+1}) \cup p_1(x, h_i)$ is consistent. This is the same set as $\bigcup_{i \in \Z} p_0(x, g_i) \cup p_1(x, h_i)$. So again by the parallel sequences lemma we get that $\bigcup_{i \in \Z} p_0(x, g_i') \cup p_1(x, h_i')$ is consistent. By Claim 1(ii) and (iii) we have that $q'|_{y_0, z}$ extends $\Lstp(bc/M)$. So we conclude that $p_0(x, b) \cup p_1(x, c)$ does not Kim-divide over $M$, as required.

We are left to verify Claim 2. We fix $k$ and by backwards induction on $k' = 2k, 2k-1, \ldots, 1$ we will define trees $(d_\eta e_\eta)_{\eta \in S_{k'}}$ where $S_{k'} = \{ \xi \in \omega^{\leq 2k+1} : 0^{k'-1} \trianglelefteq \xi\}$ such that for each $k'$ the tree $(d_\eta e_\eta)_{\eta \in S_{k'}}$ satisfies the following condition:

\textbf{(P)$_{k'}$} For every $\eta \in \omega^{\leq 2k-1}$ and $i < \omega$ such that $\eta \smallfrown i \in S_{k'}$ we have that:
\[
(d_{\eta \smallfrown i \smallfrown j} e_{\eta \smallfrown i \smallfrown j})_{j < \omega} d_{\eta \smallfrown i} e_{\eta \smallfrown i} \equivls_{M (d_{\trianglerighteq \eta \smallfrown i'} e_{\trianglerighteq \eta \smallfrown i'})_{i' < i}} (\beta_j \gamma_j)_{j < \omega} \beta \gamma.
\]
Recall that $q' = \tp((\beta_j \gamma_j)_{j < \omega} \beta \gamma/\MM)$. So in particular $(d_{\eta \smallfrown j} e_{\eta \smallfrown j})_{j < \omega} d_\eta e_\eta \equivls_M (\beta_j \gamma_j)_{j < \omega} \beta \gamma$ for all $\eta \in \omega^{\leq 2k} \cap S_{k'}$.

For $k' = 2k$ we let $(d_\eta e_\eta)_{\eta \in S_{2k}}$ just be $(b_i' c_i')_{i < \omega} b'c$. Suppose now that we have constructed $(d_\eta e_\eta)_{\eta \in S_{k'}}$. By (P)$_{k'}$ we have that $(d_{0^{k'-1} \smallfrown i} e_{0^{k'-1} \smallfrown i})_{i < \omega} d_{0^{k'-1}} e_{0^{k'-1}} \equivls_M (\beta_i \gamma_i)_{i < \omega} \beta \gamma$. So by Claim 1(v) there is global $M$-Ls-invariant $r \supseteq q'$ such that $r$ also extends $\Lstp((d_\eta e_\eta)_{\eta \in S_{k'}}/M)$. Here we match $(d_{0^{k'-1} \smallfrown i} e_{0^{k'-1} \smallfrown i})_{i < \omega} d_{0^{k'-1}} e_{0^{k'-1}}$ with the variables in $q'$. Let $((d_{\eta,i} e_{\eta,i})_{\eta \in S_{k'}})_{i < \omega}$ be a Morley sequence in $r$ with $(d_{\eta,0} e_{\eta,0})_{\eta \in S_{k'}} = (d_\eta e_\eta)_{\eta \in S_{k'}}$. We set $d_{0^{k'-2} \smallfrown i \smallfrown \xi} e_{0^{k'-2} \smallfrown i \smallfrown \xi} = d_{0^{k'-1} \smallfrown \xi, i} e_{0^{k'-1} \smallfrown \xi, i}$ for all $i < \omega$ and $\xi \in \omega^{\leq 2k+2-k'}$. We directly get (P)$_{k'-1}$ for $\eta \in S_{k'} - \{0^{k'-2}\}$ by virtue of $((d_{\eta,i} e_{\eta,i})_{\eta \in S_{k'}})_{i < \omega}$ being a Morley sequence. By Claim 1(iv) we have that $(d_{0^{k'-2} \smallfrown i} e_{0^{k'-2} \smallfrown i})_{i < \omega}$ is a Morley sequence in $q$. So we can find $d_{0^{k'-2}} e_{0^{k'-2}}$ such that $(d_{0^{k'-2} \smallfrown i} e_{0^{k'-2} \smallfrown i})_{i < \omega} d_{0^{k'-2}} e_{0^{k'-2}} \equivls_M (\beta_i \gamma_i)_{i < \omega} \beta \gamma$ and that concludes the construction of $(d_\eta e_\eta)_{\eta \in S_{k'-1}}$.

Similarly as in the proof of \thref{lem:q-spread-out-trees}, we will now show by induction on $n \leq k$ that the following holds.

\textbf{(Q)$_n$} Let $\eta_{2k-2m} \in \omega^{2k-2m}$ and $\nu_{2k-2m+1} \in \omega^{2k-2m+1}$ for $0 \leq m \leq n$. Suppose that $\eta_{2k-2m} \triangleleft \nu_{2k-2m+1}$ for all $0 \leq m \leq n$, $\eta_{2k} >_\lex \eta_{2k-2} >_\lex \ldots >_\lex \eta_{2k - 2n}$ and for all $0 \leq m' < m \leq n$ we have that $\eta_{2k - 2m} \wedge \eta_{2k - 2m'} \in \omega^{2k - 2m - 1}$. Then $(d_{\nu_{2k - 2m + 1}} e_{\nu_{2k - 2m + 1}} d_{\eta_{2k-2m}} e_{\eta_{2k-2m}})_{m \leq n}$ is a Morley sequence in $q'|_{y_0 z_0 y z}$.

For $n = 0$ this follows immediately from (P)$_1$ and Claim 1(vi). So suppose (Q)$_n$ holds for some $n < k$ and let $\eta_{2k-2m} \in \omega^{2k-2m}$ and $\nu_{2k-2m+1} \in \omega^{2k-2m+1}$ for $0 \leq m \leq n+1$ be as in the statement of (Q)$_{n+1}$. For any $m < n$ we have that $\eta_{2k-2m} \wedge \eta_{2k - 2n - 2} = \eta_{2k - 2n - 2}|_{2k-2n-3}$. So we can write $\eta_{2k - 2n - 2} = \xi \smallfrown i$ for some $\xi \in \omega^{2k-2n-3}$ and $i < \omega$. We then have $\eta_{2k-2m} \trianglerighteq \xi \smallfrown i'$ for some $i' < i$ for all $m \leq n$. So it follows from (P)$_1$, Claim 1(vi) and the induction hypothesis that $(d_{\nu_{2k - 2m + 1}} e_{\nu_{2k - 2m + 1}} d_{\eta_{2k-2m}} e_{\eta_{2k-2m}})_{m \leq n+1}$ is a Morley sequence in $q'|_{y_0 z_0 y z}$.

By exactly the same argument we also have the following condition. It differs from (Q)$_n$ in that the levels have been shifted by one (so we only consider it for $n < k$).

\textbf{(Q')$_n$} Let $\eta_{2k-2m-1} \in \omega^{2k-2m-1}$ and $\nu_{2k-2m} \in \omega^{2k-2m}$ for $0 \leq m \leq n$. Suppose that $\eta_{2k-2m-1} \triangleleft \nu_{2k-2m}$ for all $0 \leq m \leq n$, $\eta_{2k-1} >_\lex \eta_{2k-3} >_\lex \ldots >_\lex \eta_{2k - 2n - 1}$ and for all $0 \leq m' < m \leq n$ we have that $\eta_{2k - 2m - 1} \wedge \eta_{2k - 2m' - 1} \in \omega^{2k - 2m - 2}$. Then $(d_{\nu_{2k - 2m}} e_{\nu_{2k - 2m}} d_{\eta_{2k-2m-1}} e_{\eta_{2k-2m-1}})_{m \leq n}$ is a Morley sequence in $q'|_{y_0 z_0 y z}$.

Now let $(d_\eta' e_\eta')_{\eta \in \omega^{2k+1}}$ be an $s$-indiscernible over $M$ tree which is $EM_{s}$-based on $(d_\eta e_\eta)_{\eta \in \omega^{2k+1}}$ over $M$. We put $g_i = d_{0^{2(k-i)+1}}'$ for $i \leq k$, and for $i < k$ we put $h_i = e_{0^{2(k-i)}}'$, $g_i' = d_{0^{2(k-i)-1} \smallfrown 1 \smallfrown 0}'$, $h_i' = e_{0^{2(k-i)-1} \smallfrown 1}'$, $g_{i+1}'' = d_{0^{2(k-i-1)} \smallfrown 1}'$ and $h_i'' = e_{0^{2(k-i-1)} \smallfrown 1 \smallfrown 0}'$, see Figure \ref{fig:independence-theorem-tree}. Then conditions (i) and (ii) from Claim 2 follow from (Q)$_k$ and (Q')$_{k-1}$, while conditions (iii) and (iv) follow from $s$-indiscernibility.
\begin{figure}[ht]
\centering
\input{graphics/indep-thm-tree}
\caption{Choice of the $g_i h_i g'_i h'_i g''_i h''_i$.}
\label{fig:independence-theorem-tree}
\end{figure}
\end{proof}
Now that we have proved the independence theorem, we first note some useful immediate consequences in \thref{cor:g-compact}. After that, the rest of this section will be devoted to proving a stronger version of the independence theorem, \thref{thm:strong-independence-theorem}.
\begin{definition}
\thlabel{def:kim-morley-sequence}
Let $I$ be a linear order. We will say that $(a_i)_{i\in I}$ is a \emph{$\ind_M^K$-independent sequence} if $a_i \ind_M^K a_{<i}$ for every $i\in I$. We will say that $(a_i)_{i\in I}$ is \emph{$\ind_M^K$-Morley} if it is $\ind_M^K$-independent and $M$-indiscernible.
\end{definition}
\begin{lemma}
\thlabel{lem:kim-dividing-type-def}
Let $T$ be thick NSOP$_1$ with an e.c.\ model $M$, and let $a,b,c$ be any tuples of parameters and $x$ a tuple of variables. Then there exists a (partial) type $\Sigma(x,y)$ over $Mab$ such that for any $x,y$ we have that $$\models \Sigma(x,y) \iff (y\equiv_{Mb} c)\wedge (xa\ind^K_M yb).$$
In particular, taking $y=\emptyset$, we get that the condition $xa\ind^K_M b$ is type definable over $Mab$ in the variable $x$. 
\end{lemma}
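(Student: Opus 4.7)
The plan is to construct the partial type $\Sigma(x,y)$ as the union of two pieces. The first, $\tp(c/Mb)$ read as a partial type in $y$ over $Mb$, encodes exactly $y\equiv_{Mb} c$ by maximality of positive existential types. The second piece consists of all positive existential formulas $\sigma(x,y,a,b,m)$ over $Mab$ that are satisfied by every pair $(x',y')$ in the target set $X=\{(x,y):y\equiv_{Mb} c \text{ and } xa\ind_M^K yb\}$; this piece is trivially satisfied by every element of $X$, so the content of the lemma is that it excludes every pair outside $X$.

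The core step is to show, for every $(x_0,y_0)$ with $y_0\equiv_{Mb} c$ but $x_0 a \nind_M^K y_0 b$, that some formula in $\Sigma$ fails on $(x_0,y_0)$. Strong finite character (\thref{prop:obvious-properties-kim-dividing}(i)) supplies a positive existential $\varphi(u_1,u_2,v_1,v_2,m)$ with $m\in M$, $\models\varphi(x_0,a,y_0,b,m)$, and such that any $(x',a')\models\varphi(\cdot,\cdot,y_0,b,m)$ has $(x',a')\nind_M^K y_0 b$. Reading this as $\psi(x,y,a,b,m):=\varphi(x,a,y,b,m)$, a formula over $Mab$ in the variables $(x,y)$, I claim that no $(x',y')\in X$ satisfies $\psi$. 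Indeed, from $y'\equiv_{Mb} y_0$ pick $f\in\Aut(\MM/Mb)$ sending $y_0$ to $y'$, pull $\models\psi(x',y',a,b,m)$ back along $f$ to $\models\varphi(f^{-1}(x'),f^{-1}(a),y_0,b,m)$, apply strong finite character to get $(f^{-1}(x'),f^{-1}(a))\nind_M^K y_0 b$, and push forward using $\Aut$-invariance of $\ind_M^K$ (with $f$ fixing $M$) to obtain $x'a \nind_M^K y'b$, contradicting $(x',y')\in X$.

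This yields a formula $\psi$ over $Mab$ satisfied by $(x_0,y_0)$ and incompatible with membership in $X$. Converting this dual separation into the primal separation required by $\Sigma$ is the final step. Using characterisation (iii) of \thref{def:ec-model} applied to $\MM$, each $(x',y')\in X$ gives a formula in $\tp((x',y')/Mab)$ incompatible with $\psi$; a compactness argument in the positive type space over $Mab$, together with the $\Aut(\MM/Mab)$-invariance of $X$, then extracts a single $\sigma\in\Sigma$ incompatible with $\psi$, which therefore does not hold at $(x_0,y_0)$. This compactness conversion is the main obstacle of the argument: in classical logic it would be trivial via negation, but in positive logic it must be handled carefully, relying on the e.c.\ structure of $\MM$ and the orbit structure of $X$ under automorphisms fixing $Mab$.
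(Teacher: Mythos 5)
Your approach tries to realise $\Sigma$ as the intersection of all $Mab$-types of pairs in $X=\{(x,y):y\equiv_{Mb}c \text{ and } xa\ind^K_M yb\}$, then deduce from strong finite character that no pair outside $X$ satisfies $\Sigma$. The first half is fine, and your use of strong finite character plus an $\Aut(\MM/Mb)$-transport argument correctly produces, for a bad pair $(x_0,y_0)$ with $y_0\equiv_{Mb}c$, a positive formula $\psi$ over $Mab$ holding at $(x_0,y_0)$ and failing on every element of $X$. The gap is exactly the ``dual to primal'' conversion you flag. To exclude $(x_0,y_0)$ from $[\Sigma]$ you need a \emph{single} $\sigma\in\Sigma$ (holding on all of $X$) that is incompatible with $\psi$. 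Characterisation (iii) of \thref{def:ec-model} gives, for each type $p$ realised in $X$, some $\chi_p\in p$ with $\chi_p\wedge\psi$ inconsistent; but no individual $\chi_p$ holds on all of $X$, and extracting a finite disjunction $\chi_{p_1}\vee\dots\vee\chi_{p_n}$ that covers $X$ would require the set of types realised in $X$ to be compact, i.e.\ closed in $S_{xy}(Mab)$. That closedness is precisely what the lemma asserts, so the argument is circular. The $\Aut(\MM/Mab)$-invariance of $X$ makes the set of types well-defined but confers no compactness. In a Boolean theory one would simply take $\sigma=\neg\psi$; in general positive logic this step genuinely fails, and I do not see how to repair it along these lines.

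The paper takes an explicit route that sidesteps the separation problem entirely. Fix one global $M$-Ls-invariant $q(y,z)\supseteq\tp(cb/M)$. For $y\equiv_{Mb}c$ we have $yb\equiv_M cb$, so $q$ also extends $\tp(yb/M)$; by Kim's Lemma (\thref{prop:kims-lemma}) and \thref{prop:q-dividing-equivalent-statements}(iv), the condition $xa\ind^K_M yb$ is then equivalent to the existence of an $Max$-indiscernible $(y_iz_i)_{i<\omega}\models q^{\otimes\omega}|_M$ with $y_0z_0=yb$. Each ingredient is type-definable ($q^{\otimes\omega}|_M$ is a partial type over $M$, indiscernibility over $Max$ by thickness, and the equality is a formula), and the existential over the auxiliary sequence is absorbed by saturation of $\MM$ into a partial type over $Mab$ in $(x,y)$. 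The constraint $y\equiv_{Mb}c$ is even forced automatically, since $y_0z_0=yb\models q|_M=\tp(cb/M)$ and any $M$-automorphism carrying $yb$ to $cb$ fixes $b$. The content here is really Kim's Lemma: the uniform choice of one $q$ is what turns the abstract condition into a concrete, and hence type-definable, description.
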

\begin{proof}
Let $q(y, z)$ be a global $M$-Ls-invariant type extending $\tp(cb/M)$. Then, by Kim's Lemma, for any $y\equiv_{Mb}c$ and any $x$, the condition $ xa\ind^K_M yb$ is equivalent to:
\[
\exists (y_iz_i)_{i<\omega} \left( q^{\otimes \omega}|_{M}((y_iz_i)_{i<\omega}) \text{ and } y_0z_0 = yb \text{ and } (y_iz_i)_{i<\omega} \text{ is $Max$-indiscernible} \right),
\]
which is clearly a type-definable over $Mab$ condition by thickness. 
\end{proof}
In particular, we get that being an $\ind^K_M$-independent sequence in a fixed type over $M$ is type-definable over $M$ in thick NSOP$_1$ theories. That is, for a linear order $I$ we can use the type
\[
\bigcup_{i \in I} \Sigma(x_{<i}, x_i),
\]
where $\Sigma$ is as in \thref{lem:kim-dividing-type-def}. Then by symmetry, \thref{thm:symmetry}, this (partial) type expresses exactly what we wanted.
\begin{corollary}
\thlabel{cor:g-compact}
Suppose $T$ is thick NSOP$_1$ with an e.c.\ model $M$.
\begin{enumerate}[label=(\roman*)]
\item If $a\ind^K_M b$ and $a\equivls_M b$ then there exists an infinite $M$-indiscernible sequence starting with $(a,b)$.
\item If $a\equivls_M b$ then $a$ and $b$ are at Lascar distance at most 2 over $M$. In particular, Lascar equivalence over e.c.\ models is type-definable.
\item (Generalised independence theorem) Let $(a_i)_{i<\kappa}$ be an $\ind^K_M$-independent sequence. Suppose $b_i\equivls_M b$ and $b_i\ind^K_M a_i$ for every $i<\kappa$. Then there exists $b'$ such that $b'a_i\equivls_{M}b_ia_i$ for every $i<\kappa$ and $b'\ind^K_M (a_i)_{i<\kappa}$.
\end{enumerate}
\end{corollary}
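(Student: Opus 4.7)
The plan is to prove the three parts in the order (i), (ii), (iii). First, I note that for any \emph{finite} $\kappa$, part (iii) is proved by a straightforward induction on $\kappa$ using only \thref{thm:independence-theorem}: the successor stage from $\kappa'$ to $\kappa'+1$ follows by applying \thref{thm:independence-theorem} with roles $(a,a',b,c)=(b'',b_{\kappa'},a_{<\kappa'},a_{\kappa'})$, where $b''$ is the witness supplied by the inductive hypothesis; the required hypothesis $a_{<\kappa'}\ind_M^K a_{\kappa'}$ comes from symmetry (\thref{thm:symmetry}) applied to $\ind_M^K$-independence. This finite case of (iii) is what I use in the argument for (i).

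For (i), I would construct inductively a long $\ind_M^K$-independent sequence $(a_i)_{i<\lambda}$ with $a_0=a$, $a_1=b$, satisfying $(a_i,a_j)\equivls_M (a,b)$ for all $i<j<\lambda$. At stage $n$, for each $i<n$ I pick $\sigma_i\in\Aut_f(\MM/M)$ with $\sigma_i(a)=a_i$ (possible because $a_i\equivls_M a$, as is maintained by the inductive hypothesis) and set $b_i=\sigma_i(b)$. Then $b_i\equivls_M b$, and applying $\sigma_i$ to $b\ind_M^K a$ (a consequence of symmetry) gives $b_i\ind_M^K a_i$. The finite case of (iii) then produces $a_n$ with $a_n a_i\equivls_M b_i a_i\equivls_M ba$, i.e.\ $(a_i,a_n)\equivls_M (a,b)$, for all $i<n$, and $a_n\ind_M^K a_{<n}$. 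Choosing $\lambda$ large enough, \thref{lem:indiscernible-sequence-based-on-long-sequence} yields an $M$-indiscernible sequence $(a'_i)_{i<\omega}$ based on $(a_i)_{i<\lambda}$; the lemma's basing property combined with $M$-indiscernibility forces $(a'_0,a'_1)\equiv_M(a,b)$, so applying an $M$-automorphism of $\MM$ (which exists by homogeneity) sending $(a'_0,a'_1)$ to $(a,b)$ produces the required sequence.

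For (ii), given $a\equivls_M b$, I use \thref{cor:type-over-model-extends-to-global-ls-invariant-type} to extend $\Lstp(a/M)$ to a global $M$-Ls-invariant type $q$, and let $c\models q|_{Mab}$. Then $c\equivls_M a\equivls_M b$ and $c\ind_M^{iLs} ab$, whence $c\ind_M^K a$ and $c\ind_M^K b$ by \thref{rem:kim-dividing-implies-dividing} and symmetry. Two applications of (i) give $\d_M(a,c)\leq 1$ and $\d_M(c,b)\leq 1$, so $\d_M(a,b)\leq 2$. Type-definability of $\equivls_M$ over $M$ then follows from \thref{fact:thick-lascar-distance}.

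Finally, for (iii) with arbitrary $\kappa$, I proceed by induction on $\kappa$, treating the base and successor stages as above. For the limit stage, I use that (a) the condition $b'a_i\equivls_M b_ia_i$ is $M$-type-definable in $b'$ (for fixed $a_i,b_i$) by (ii) combined with thickness, and (b) the condition $b'\ind_M^K(a_i)_{i\in I_0}$ for finite $I_0\subseteq\kappa$ is $M$-type-definable by \thref{lem:kim-dividing-type-def}; finite character of $\ind_M^K$ (\thref{prop:obvious-properties-kim-dividing}(iii)) reduces independence from $(a_i)_{i<\kappa}$ to these finite fragments. The inductive hypothesis supplies a witness for every finite subfragment, so compactness yields the desired $b'$. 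The main obstacle is this limit step, which requires simultaneous type-definability of $\equivls_M$ and of Kim-independence; the apparent circularity is resolved by the fact that (i) only invokes (iii) for finite $\kappa$, which needs nothing beyond \thref{thm:independence-theorem}, so that (i), then (ii), then the full (iii) can be proved in sequence.
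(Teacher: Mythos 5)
Your proof follows essentially the same route as the paper's: (i) is an iterated application of \thref{thm:independence-theorem} followed by compactness and \thref{lem:indiscernible-sequence-based-on-long-sequence}; (ii) follows from (i) once one has a Lascar-conjugate $c$ of $a$ over $M$ with $c\ind_M^K ab$; and (iii) has a successor step via \thref{thm:independence-theorem} and a limit step via (ii), \thref{lem:kim-dividing-type-def}, and compactness. Your repackaging, proving the finite case of (iii) first and reusing it as the inductive step of (i), is logically sound and correctly resolves the apparent circularity, and is essentially the same iterated application of the independence theorem that the paper embeds directly in the construction of the sequence in (i).

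Two points to tighten. In (i), the stage-by-stage construction only yields a sequence of length $\omega$ --- the finite case of (iii) cannot carry you through limit ordinals, since that would already need (ii) --- so a separate compactness step is required to reach length $\lambda$, preserving only the condition $(a_i,a_j)\equiv_M (a,b)$ for $i<j$; this is all that \thref{lem:indiscernible-sequence-based-on-long-sequence} uses. The paper makes this step explicit. In (ii), an arbitrary realisation $c\models q|_{Mab}$ gives $c\equiv_M a$ but not necessarily $c\equivls_M a$, because $Mab$ is not a $\lambda_T$-saturated model and so $\tp(c/Mab)$ does not pin down $\Lstp(c/M)$. To fix this you should realise $q$ over a $\lambda_T$-saturated $N\supseteq Mab$ (as in \thref{rem:local-ls-realisation-of-global-type}), which does give $c\equivls_M a$; alternatively, follow the paper and obtain $c\equivls_M a$ with $c\ind_M^K ab$ directly from \thref{cor:extension}.
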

\begin{proof}
(i) We can inductively find a sequence $(c_i)_{i<\omega}$ such that $c_0c_1=ab$, $c_i\equivls_M b$, $c_i\ind^K_M{c_{<i}}$ and $c_ic_j\equiv_M ab$ for all $i<j<\omega$: indeed, if we have constructed $c_{\leq i}$ then by the independence theorem we can choose $c_{i+1}$ such that $c_{i+1}\equivls_{Mc_{<i}} c_i$, 
$c_ic_{i+1}\equivls_M ab$ and $c_{i+1}\ind^K_M c_{\leq i}$. 

By compactness we can find a sequence $(c'_i)_{i<\lambda_{|T|+|Ma|}}$ with $c'_ic'_j\equiv_M ab$ for all $i<j<\lambda_{|T|+|Ma|}$. Choose an $M$-indiscernible sequence $(d_i)_{i<\omega}$ based on $(c'_i)_{i<\lambda_{|T|+|Ma|}}$ over $M$. Then $d_0d_1\equiv_M ab$, so we conclude that the pair $(a,b)$ starts an $M$-indiscernible sequence.\\
(ii) By extension (\thref{cor:extension}) we can choose $c\equivls_M a$ with $c\ind^K_M ab$. By (i) we get that $(a,c)$ and $(b,c)$ both start $M$-indiscernible sequences.\\
(iii) We choose inductively a sequence $(b'_j)_{j\leq \kappa}$ such that $b_j'a_i\equivls_{M}b_ia_i$ for every $i< j$ and $b'_j\ind^K_M (a_i)_{i< j}$, so that we can put $b':=b_\kappa$. The successor step follows directly by the independence theorem, and the limit step follows by type-definability of Lascar equivalence over $M$, \thref{lem:kim-dividing-type-def} and compactness.
\end{proof}
\begin{definition}
\thlabel{def:spread-out-over-m}
We will say that a tree $(c_\eta)_{\eta\in \omega^{\leq k}}$ is \emph{spread-out over $M$} if $(c_{\trianglerighteq \eta\smallfrown i})_{i<\omega}$ is a Morley sequence in some global $M$-Ls-invariant type for every $\eta\in \omega^{\leq k-1}$.
\end{definition}
There are two differences between being spread-out over $M$ and being $q$-spread-out over $M$ (see \thref{def:spread-out} for the latter). In the latter the global $M$-Ls-invariant type involved has to be $q$, while the former just requires some global $M$-Ls-invariant type. The second difference is in the sequence in the tree that is required to be a Morley sequence. In the former we consider a sequence of subtrees above some fixed node, all at the same level. In the latter we consider a sequence of nodes in the tree, one in every level (except for the root), as pictured in Figure \ref{fig:spread-out}.

The following lemma follows from the independence theorem exactly as in \cite[Lemma 6.2/Remark 6.3]{kaplan_kim-independence_2020}, so we omit the proof.
\begin{fact}
\thlabel{fact:spread-out-tree-gives-consistence}
Suppose $T$ is thick NSOP$_1$, $M$ an e.c.\ model, $a\ind^K_M b$, $(b_\eta)_{\eta\in \omega^{\leq k}}$ (with $k < \omega$) is a spread-out over $M$ tree such that $b_\eta\ind^K_M b_{\triangleright \eta}$ and $b_\eta \equivls_M b$ for every $\eta \in \omega^{\leq k}$. Then, writing $p(x,b)= tp(a/Mb)$, there exists $a'\models \bigcup_{\eta\in \omega^{\leq k}} p(x,b_\eta)$ with $a'\ind^K_M (b_\eta)_{\eta\in \omega^{\leq k}}$ and $a' \equivls_M a$.
\end{fact}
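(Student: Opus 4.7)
The plan is to proceed by induction on $k$, mirroring the strategy of \cite[Lemma 6.2]{kaplan_kim-independence_2020}. The base case $k=0$ is immediate: since $b_\emptyset \equivls_M b$, any $f \in \Aut_f(\MM/M)$ with $f(b) = b_\emptyset$ yields $a' := f(a)$, which realises $p(x, b_\emptyset)$, satisfies $a' \equivls_M a$, and has $a' \ind^K_M b_\emptyset$ by invariance.

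For the inductive step, set $T_i := (b_{i\smallfrown\eta})_{\eta \in \omega^{\leq k-1}}$ for each $i<\omega$. Each $T_i$ inherits the hypotheses: it is spread-out over $M$, every node is $\equivls_M b$, and $b_{i\smallfrown\eta} \ind^K_M b_{i\smallfrown\triangleright\eta}$ by monotonicity. The induction hypothesis thus gives, for each $i$, some $a_i \equivls_M a$ with $a_i \models \bigcup_{\eta \in \omega^{\leq k-1}} p(x, b_{i\smallfrown\eta})$ and $a_i \ind^K_M T_i$. Spread-outness at the root furthermore says that $(T_i)_{i<\omega}$ is a Morley sequence in some global $M$-Ls-invariant type $q$. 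Finally, from $b_\emptyset \equivls_M b$ and $a \ind^K_M b$ we obtain $a^0 \equivls_M a$ with $a^0 \models p(x, b_\emptyset)$ and $a^0 \ind^K_M b_\emptyset$.

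To conclude, we apply the generalised independence theorem (\thref{cor:g-compact}(iii)) to the sequence $(b_\emptyset, T_0, T_1, \ldots)$ together with the Lascar-strong-type copies $(a^0, a_0, a_1, \ldots)$ of $a$. This produces the desired $a' \equivls_M a$ with $a' b_\emptyset \equivls_M a^0 b_\emptyset$, $a' T_i \equivls_M a_i T_i$ for every $i<\omega$, and $a' \ind^K_M (b_\emptyset, (T_i)_{i<\omega})$, which is exactly the required conclusion once one observes that these parameters exhaust $(b_\eta)_{\eta \in \omega^{\leq k}}$.

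The main obstacle is verifying that $(b_\emptyset, T_0, T_1, \ldots)$ is itself $\ind^K_M$-independent. The initial relation $T_0 \ind^K_M b_\emptyset$ is easy, following from $b_\emptyset \ind^K_M b_{\triangleright \emptyset}$ by monotonicity and symmetry (\thref{thm:symmetry}). The delicate point is $T_j \ind^K_M (b_\emptyset, T_{<j})$ for $j \geq 1$: the Morley property of $(T_i)$ yields $T_j \ind^{iLs}_M T_{<j}$, and $b_\emptyset \ind^K_M (T_i)_{i<\omega}$ combined with symmetry yields $T_j \ind^K_M b_\emptyset$, but merging these into joint Kim-independence over $M$ requires arranging $(T_i)_{i<\omega}$ to be $Mb_\emptyset$-indiscernible while preserving its Morley-sequence-in-$q$ structure (exploiting type-definability as provided by \thref{lem:kim-dividing-type-def} and extracting from a sufficiently long sequence), and then iteratively applying the chain condition (\thref{lem:chain-lemma}) to tails $(T_i)_{i \geq j}$ after absorbing $T_{<j}$ via further indiscernibility extraction. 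This is the analogue, in our setting of ordinary Morley sequences in $M$-Ls-invariant types, of the strengthened chain condition \cite[Corollary 5.15]{kaplan_kim-independence_2020}.
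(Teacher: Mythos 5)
Your overall strategy — induction on $k$, with the generalised independence theorem driving the inductive step — is the right one, and it is essentially the approach the paper has in mind (it omits the proof, citing Kaplan--Ramsey's Lemma~6.2). The base case and the preparation of the data $T_i$, $a_i$, $a^0$ are all correct. However, there is a genuine gap in the way you assemble them.

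You apply \thref{cor:g-compact}(iii) to the sequence $(b_\emptyset, T_0, T_1, \ldots)$ with $b_\emptyset$ at position $0$, and then have to verify that $T_j \ind^K_M (b_\emptyset, T_{<j})$ for every $j$. This does not follow from the given hypotheses, and your proposed repair does not work: extending $(T_i)_{i<\omega}$ and extracting an $Mb_\emptyset$-indiscernible subsequence replaces the $T_i$'s with fresh copies, losing the connection to the actual tree data (and hence to the $a_i$'s constructed by the inductive hypothesis). The chain condition (\thref{lem:chain-lemma}) hypothesises that the sequence is already $Ma$-indiscernible for the particular $a$ in question; it does not produce that indiscernibility. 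There is no tool in the paper that turns ``$T_j \ind^K_M T_{<j}$'' together with ``$T_j \ind^K_M b_\emptyset$'' into ``$T_j \ind^K_M b_\emptyset T_{<j}$''. (Compare \thref{fact:kim-dividing-indep-theorem-facts}(i), which requires genuine dividing-independence on one side, not merely Kim-independence.)

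The fix is simply to put $b_\emptyset$ at the \emph{end}, i.e.\ apply \thref{cor:g-compact}(iii) to the sequence $(T_0, T_1, \ldots, b_\emptyset)$ indexed by $\omega+1$. Then the required $\ind^K_M$-independence is immediate from the hypotheses: for $j<\omega$, spread-outness at the root and \thref{prop:properties-morley-sequence-in-q}(i) give $T_j \ind^{iLs}_M T_{<j}$, hence $T_j \ind^K_M T_{<j}$; and the root condition $b_\emptyset \ind^K_M b_{\triangleright\emptyset}$ gives $b_\emptyset \ind^K_M (T_i)_{i<\omega}$ directly. Running \thref{cor:g-compact}(iii) with the Lascar-strong-type copies $(a_0, a_1, \ldots, a^0)$ of $a$ then yields $a'$ with $a'T_i \equivls_M a_iT_i$, $a'b_\emptyset \equivls_M a^0 b_\emptyset$, and $a' \ind^K_M (b_\eta)_{\eta\in\omega^{\leq k}}$, which is exactly the conclusion. (Equivalently, one can first apply \thref{cor:g-compact}(iii) to $(T_i)_{i<\omega}$ alone and then glue in the root by one application of \thref{thm:independence-theorem}; either way, $b_\emptyset$ must come last.)
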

\begin{lemma}
\thlabel{lem:find-kim-morley-cr-morley-sequence}
Suppose $T$ is thick NSOP$_1$, $M$ an e.c.\ model, $b\equivls_M b'$, $b\ind^K_M b'$ and $I$ is a linear order with two distinct elements $0$ and $1$. Then there is a $\ind_M^K$-Morley parallel-Morley in $\tp(b/M)$ sequence $(b_i)_{i\in I}$  with $b_0=b$ and $b_1=b'$.
\end{lemma}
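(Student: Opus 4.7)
The plan is to fix a global $M$-Ls-invariant extension $q$ of $\Lstp(b/M) = \Lstp(b'/M)$ satisfying $(*)_\lambda$ for a sufficiently large $\lambda$ (by \thref{lem:(*)}), and then produce a sequence that is parallel-Morley specifically in $q$. All three conditions we must arrange are type-definable over $Mbb'$: $M$-indiscernibility by thickness, $\ind^K_M$-independence by \thref{lem:kim-dividing-type-def}, and being parallel-Morley in $q$ (this is visibly type-definable once $q$ is fixed, since $q^{\otimes I}|_M$ is a partial type and ``starts an indiscernible sequence over $\ldots$'' is type-definable by thickness). Hence by compactness it suffices, for every finite $n\ge 2$, to build $(b_0,\dots,b_{n-1})$ with $b_0=b$, $b_1=b'$ that is simultaneously $\ind^K_M$-Morley and parallel-Morley in $q$ over $M$.

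For the finite case I adapt the backward-induction tree construction from the proof of \thref{thm:symmetry}. I build a tree $(c_\eta)_{\eta\in S_{k'}}$ on $S_{k'}=\{\xi\in\omega^{\leq n-1}:0^{k'-1}\trianglelefteq\xi\}$ for $k'=n-1,\dots,1$, with each node of Lascar strong type $\Lstp(b/M)$, the tree $q$-spread-out, and the root Kim-independent from the rest. The base step (top of the tree) is seeded from the input: by \thref{cor:q-ls-dividing-is-q-dividing} the assumption $b\ind^K_M b'$ together with an appropriate choice of $q$ yields an $Mb$-indiscernible Morley sequence in $q$ starting with $(b,b')$, which I use to populate the top level while attaching $b$ as the root of that subtree. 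The successor steps use $(*)_\lambda$ of $q$ and Ls-dividing extension (\thref{prop:ls-dividing-extension}) to Morley-extend the subtree upward one level, then \thref{lem:chain-lemma} to ensure the new root is Kim-independent from everything above. After the construction, apply $EM_s$-modelling (\thref{prop:em-modeling}) over $Mbb'$ to obtain an $s$-indiscernible tree $(c'_\eta)_{\eta\in\omega^{\leq n-1}}$ which is $EM_s$-based on $(c_\eta)$ over $Mbb'$. By \thref{lem:q-spread-out-trees}(ii) it remains weakly $q$-spread-out, so \thref{lem:q-spread-out-trees}(iii) applied to its leftmost branch yields a parallel-Morley sequence in $q$ over $M$; its $M$-indiscernibility is immediate from $s$-indiscernibility restricted to a single path, and the $\ind^K_M$-independence is read off from the spread-out structure via \thref{fact:spread-out-tree-gives-consistence} (symmetrised through \thref{thm:symmetry}). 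Since the $EM_s$-type used mentions $b$ and $b'$ as fixed parameters, the leftmost-branch pair at depths $n-2,n-1$ has Lascar strong type $\Lstp(bb'/M)$ over $M$; applying a single Lascar strong automorphism over $M$ then aligns this pair exactly with $(b,b')$, and this automorphism preserves all three desired properties of the sequence.

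The main obstacle is the tracking of $b$ and $b'$ as \emph{specific} parameters (not merely up to type) through the tree construction and the $EM_s$-modelling step. Freedom to build spread-out trees is usually exploited up to Lascar strong type of the root, but here we need the first two leftmost-branch entries to be literally $b$ and $b'$. I resolve this by $(i)$ incorporating $b,b'$ into the base of the construction using \thref{cor:g-compact}(i), $(ii)$ taking the $EM_s$-modelling over $Mbb'$ so that these parameters remain distinguished, and $(iii)$ finishing with a Lascar strong automorphism over $M$, whose existence is guaranteed by the fact that the extracted pair has Lascar strong type $\Lstp(bb'/M)$. A secondary technical point is verifying that parallel-Morleyness and $\ind^K_M$-independence genuinely transfer through $EM_s$-basing; the former is handled by quantifier-free $L_s$-definability of weak $q$-spread-outness, while the latter uses the type-definability of Kim-independence from \thref{lem:kim-dividing-type-def} and \thref{fact:spread-out-tree-gives-consistence}.
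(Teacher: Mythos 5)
There is a genuine gap in your finite-stage construction: you claim that the $M$-indiscernibility of $(b_0,\dots,b_{n-1})$ ``is immediate from $s$-indiscernibility restricted to a single path.'' This is false. In the Shelah language $L_s$, the level predicates $P_\alpha$ distinguish nodes at different depths, so the nodes $0^{k},0^{k-1},\ldots,0$ on the leftmost branch have pairwise distinct quantifier-free $L_s$-types. Consequently, $s$-indiscernibility of the tree gives no constraint whatsoever relating, say, $\tp(c'_{0^k}c'_{0^{k-1}}/M)$ and $\tp(c'_{0^{k-1}}c'_{0^{k-2}}/M)$, and the extracted sequence $(a_i)_{i<k}$ is in general not $M$-indiscernible (this is precisely why the paper introduced \emph{parallel}-Morley sequences without an indiscernibility requirement in \thref{def:cr-morley-sequence}). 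Since your compactness reduction takes $M$-indiscernibility as one of the type-definable conditions the finite stages must realize, the reduction does not go through as written. The paper avoids this entirely: its finite stages only arrange $a_ia_j\equiv_M bb'$ (together with parallel-Morleyness in $q$ and $\ind^K_M$-independence), compactness produces a long non-indiscernible sequence, and \emph{then} an $M$-indiscernible sequence indexed by $I$ is extracted from it; the identification $b_0b_1=bb'$ is finally achieved by a single automorphism over $M$ at the very end, with the paper explicitly noting that this may change $q$ (which is harmless, as the statement only asks for parallel-Morleyness in \emph{some} $M$-Ls-invariant extension of $\tp(b/M)$).

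There are two further points you would need to address even after repairing the above. First, your per-stage automorphism must be Lascar strong over $M$, which requires the extracted pair to satisfy $\equivls_M bb'$, not merely $\equiv_M bb'$. This does not follow just because the $EM_s$-modelling is carried out over $Mbb'$ — $EM_s$-modelling preserves the $EM_s$-type, not specific elements — but rather requires (a) the explicit reduction of $b,b'$ to $\lambda_T$-saturated models (which you invoke only implicitly through \thref{cor:q-ls-dividing-is-q-dividing}), and (b) an argument upgrading the tree condition $c_\eta c_\nu\equiv_M bb'$ to $\equivls_M bb'$ once the nodes are Lascar-equivalent over $M$ to a $\lambda_T$-saturated model. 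Second, for your compactness argument with a fixed $q$ to work, you also need that Lascar strong automorphisms over $M$ preserve $q$ as a global type; this is true (by $M$-Ls-invariance of $q$ and \thref{lem:find-indisc-sequence-through-model}, every $\sigma\in\Aut_f(\MM/M)$ fixes $q$), but it is neither stated nor proved in the paper and you do not address it. The paper sidesteps both of these subtleties by performing the automorphism once at the end and allowing $q$ to change.
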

\begin{proof}
By extension (\thref{cor:extension}) there is a $\lambda_T$-saturated model $N\supseteq Mb$ with $N\ind^K_M b'$. Then there is a $\lambda_T$-saturated model $N'\supseteq Mb'$ with $N'\equivls_M N$. Hence, again by extension, we can find $N''\equivls_{Mb'} N'$ with $N\ind^K_M N''$. So replacing $b$ and $b'$ by $N$ and $N''$ we may assume without loss of generality that $b$ and $b'$ are $\lambda_T$-saturated models containing $M$. Put $\lambda=|b|$ and (using \thref{lem:(*)}) choose a global $M$-Ls-invariant extension $q$ of $\Lstp(b'/M)$ satisfying $(*)_{\lambda}$. 

We claim that it is enough to show that for any $1<k<\omega$ there is a $\ind_M^K$-independent parallel-Morley sequence $(a_i)_{i<k}$ in $q$ over $M$ with $a_i\equivls_M b'$ and $a_ia_j\equiv_M bb'$ for all $i<j<k$: indeed, if we show this, then, as all these conditions are type-definable by \thref{lem:kim-dividing-type-def} and \thref{cor:g-compact}(ii), we can find by compactness a $\ind^K$-independent over $M$ parallel-Morley sequence $(a_i)_{i < \lambda_{|T|+|b|}}$ in $q$ over $M$ with $a_ia_j\equiv_M bb'$ for each $i<j$, and then taking an $M$-indiscernible sequence indexed by $I$ which is based on  $(a_i)_{i < \lambda_{|T|+|Mb|}}$ over $M$ and moving it by an automorphism to guarantee  that $b_0b_1=bb'$ (note this may change $q$) will do the job.

So fix any  $1<k<\omega$ and put $p=tp(b'/Mb)$. By backward induction on $k'=k+1,k,\dots,1$ we will define trees $(c_\eta)_{\eta\in S_{k'}}$ where $S_{k'}:=\{\xi\in {\omega}^{\leq k}:0^{k'-1}\trianglelefteq \xi\}$ such that for each $k'$ the tree  $(c_\eta)_{\eta\in S_{k'}}$ is spread-out over $M$ and satisfies the following conditions:
\begin{enumerate}[label=(A\arabic*)$_{k'}$]
\item $c_{\eta}c_\nu\equiv_M bb'$ for any $\nu,\eta \in S_{k'}$ with $\nu\triangleleft \eta$ and $c_\eta \equivls_M b'$ for any $\eta\in S_{k'}$;
\item $(c_\eta)_{\eta\in S_{k'}}$ is $q$-spread-out over $M$;
\item $c_\eta\ind^K_M c_{\triangleright \eta}$ for every $\eta\in S_{k'}$.
\end{enumerate}
For $k'=k+1$ putting $c_{0^k}=b'$ works. 
Now suppose we are done for some $k'\leq k+1$. By \thref{fact:spread-out-tree-gives-consistence} we can find $ c'\models \bigcup_{\eta\in S_{k'}} p(x,c_\eta)$ with $c'\equivls_M b'$ and
$c'\ind^K_M  (c_\eta)_{\eta\in S_{k'}}$.
By (A1)$_{k'}$ there is a tuple $d$ such $c_{0^{k'-1}}(c_{\eta})_{\eta\in S^*_{k'}}\equivls_M b'd$. Now, by $(*)_{\lambda}$ there is some global $M$-Ls-invariant type $r(x,z)\supseteq q(x)$  which extends $\Lstp(b'd/M)=\Lstp( c_{0^{k'-1}}(c_{\eta})_{\eta\in S^*_{k'}}/M)$. Also, as $c'\ind^K_M (c_\eta)_{\eta\in S_{k'}}$ and  $c_\eta$'s are $\lambda_T$-saturated models (as $b'$ is), we get by \thref{cor:q-ls-dividing-is-q-dividing} that  $\Lstp(c'/M(c_\eta)_{\eta\in S_{k'}})$ does not $r(x,z)$-Ls-divide over $M$. Hence, there is an $Mc'$-indiscernible Morley sequence  $I:=((c_{\eta,i})_{\eta \in S_{k'}})_{i<{\omega}}$ in $r(x,z)$ over $M$ with $c_{\eta,0}=c_\eta$ for each $\eta\in S_{k'}$. By the chain condition (\thref{lem:chain-lemma}) we have that $c'\ind^K_M I$.  Thus, putting $c_{0^{k'-2}\smallfrown i\smallfrown \zeta}:=c_{0^{k'-1}\smallfrown \zeta,i}$ for all $i<\omega$, $\zeta\in {\omega}^{\leq k+1-k'}  $, and $c_{0^{k'-2}}:=c'$,  we immediately get that the tree $(c_{\eta})_{\eta\in S_{k'-1}} $ satisfies (A3)$_{k'-1}$. (A1)$_{k'-1}$ follows from (A1)$_{k'}$, the choice of $c'$ and $Mc'$ indiscernibility of $I$. (A2)$_{k'-1}$ follows from (A2)$_{k'}$ and \thref{lem:q-spread-out-trees}(i). This completes the inductive construction.

Letting $(c'_{\eta})_{\eta\in {\omega}^{\leq k}}$ be an $s$-indiscernible over $M$ tree which is $EM_{s}$-based on $(c_{\eta})_{\eta\in {\omega}^{\leq k}}$ over $Mb'$, we get that $(c'_{\eta})_{\eta\in {\omega}^{\leq k}}$ satisfies (A1)$_{1}$ and (A3)$_1$ (by \thref{lem:kim-dividing-type-def} and \thref{cor:g-compact}(ii)) and is weakly $q$-spread-out over $M$ by \thref{lem:q-spread-out-trees}(ii).

Put $a_i:=c'_{0^{k-i}}$ for $i<k$. Then by \thref{lem:q-spread-out-trees}(iii) we have that $(a_i)_{i<k}$ is parallel-Morley in $q$ over $M$. Also, $a_ia_j\equiv_M bb'$ for all $i<j<k$ by (A1)$_{1}$, and $(a_i)_{i<k}$ is $\ind^K_M$-independent over $M$ by (A3)$_1$. This completes the proof.
\end{proof}

\begin{lemma}[Chain condition for $\ind^K$-Morley sequences]
\thlabel{lem:chain-lemma-kim-morley-sequences}
Suppose $T$ is thick NSOP$_1$ with an e.c.\ model $M$, $(d_i)_{i\in I}$ is an infinite $\ind_M^K$-Morley sequence and $a\ind^K_M d_{i_0}$ for some $i_0\in I$. Then there exists $a^*d_{i_0}\equivls_{M}ad_{i_0}$ such that $(d_i)_{i\in I}$ is indiscernible over $Ma^*$ and $a^*\ind^K_M (d_i)_{i\in I}$.
\end{lemma}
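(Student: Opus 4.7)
The plan is to combine the generalised independence theorem (\thref{cor:g-compact}(iii)) with an extraction step, analogous to but more delicate than the one used in \thref{lem:chain-lemma}. By type-definability of the relevant conditions (via \thref{lem:kim-dividing-type-def}, thickness, and \thref{cor:g-compact}(ii)), we first extend $(d_i)_{i\in I}$ to an $\ind^K_M$-Morley sequence $(d_i)_{i<\lambda}$ for any sufficiently large $\lambda$, keeping $d_{i_0}$. By $M$-indiscernibility every $d_i\equivls_M d_{i_0}$, so picking $\sigma_i\in\Aut_f(\MM/M)$ sending $d_{i_0}$ to $d_i$ and setting $b_i:=\sigma_i(a)$ produces tuples with $b_id_i\equivls_M ad_{i_0}$, all Lascar strong equivalent to $a$ over $M$, and with $b_i\ind^K_M d_i$ by invariance. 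Applying \thref{cor:g-compact}(iii) then yields $a'$ satisfying $a'd_i\equivls_M ad_{i_0}$ for every $i<\lambda$ and $a'\ind^K_M(d_i)_{i<\lambda}$. Using symmetry (\thref{thm:symmetry}) and extension (\thref{cor:extension}), $a'$ is then enlarged to a $\lambda_T$-saturated model $N\supseteq Ma'$ with $N\ind^K_M(d_i)_{i<\lambda}$.

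Next, \thref{lem:lstp-base-on-sequence} applied with $A=M$, $B=N$ extracts an $N$-indiscernible sequence $(e_i)_{i<\lambda}$ based on $(d_i)_{i<\lambda}$ over $N$ and satisfying $(e_i)_{i<\lambda}\equivls_M(d_i)_{i<\lambda}$ as $\lambda$-tuples. Taking $\tau\in\Aut_f(\MM/M)$ sending $(e_i)_{i<\lambda}$ to $(d_i)_{i<\lambda}$ coordinatewise, and setting $a^*:=\tau(a')$, the sequence $(d_i)_{i<\lambda}$ becomes $Ma^*$-indiscernible (by transferring $Ma'$-indiscernibility of $(e_i)$ via $\tau$), and $a^*\ind^K_M(d_i)_{i<\lambda}$ by invariance; by monotonicity these restrict to $(d_i)_{i\in I}$. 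For the Lascar strong equality, $a^*d_{i_0}=\tau(a'e_{i_0})\equivls_M a'e_{i_0}$ because $\tau$ is Lascar strong over $M$; and since $(e_i)$ is based on $(d_j)$ over $N$, there exists $j<\lambda$ with $e_{i_0}\equiv_N d_j$, which by $\lambda_T$-saturation of $N$ together with \thref{lem:find-indisc-sequence-through-model} upgrades to $e_{i_0}\equivls_N d_j$; since $a'\in N$, this yields $a'e_{i_0}\equivls_M a'd_j\equivls_M ad_{i_0}$.

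The hard part is precisely this Lascar-strong-type bookkeeping at the end: \thref{lem:lstp-base-on-sequence} only guarantees that the extracted sequence is at Lascar distance $1$ from the original over $M$, not over $Ma'$. Enlarging $a'$ to the $\lambda_T$-saturated model $N$ before the extraction is the crucial manoeuvre, because it allows the mere $N$-equality of types $e_{i_0}\equiv_N d_j$ coming from the ``based on'' relation to be upgraded to $\equivls_N$, which then combines with $a'\in N$ to produce the Lascar strong equivalence needed in the final chain.
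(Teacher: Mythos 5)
Your proof is correct, and it follows the same overall scheme as the paper: prolong the $\ind_M^K$-Morley sequence, apply the generalised independence theorem (\thref{cor:g-compact}(iii)) to obtain $a'$ with the right Lascar strong types and Kim-independence, extract an indiscernible sequence based on the prolonged one, and transport everything back with a Lascar strong automorphism over $M$. The one genuine divergence is in the final Lascar-strong-type bookkeeping. The paper performs the extraction of $(d'_i)_{i\in I}$ over $Maa'(d_i)_{i\in I}$ and obtains $a'd'_{i_0}\equivls_M ad_{i_0}$ by exploiting that the condition ``$a'y\equivls_M ad_{i_0}$'' is type-definable over $Mad_{i_0}a'$ (via \thref{cor:g-compact}(ii)), so it passes from the $d''_j$'s to the $d'_i$'s through the EM-type containment built into ``based on''. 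You instead enlarge $a'$ to a $\lambda_T$-saturated model $N$ via symmetry and extension before extracting, so that the extraction carried out over $N$ (using \thref{lem:lstp-base-on-sequence}) gives $e_{i_0}\equiv_N d_j$, which upgrades to $e_{i_0}\equivls_N d_j$ automatically by \thref{lem:find-indisc-sequence-through-model}; since $a'\in N$, this yields the needed $\equivls_M$. Both routes rely on the same underlying phenomenon (that Lascar equivalence can be type-detected over the right kind of base in thick NSOP$_1$ theories), just packaged differently; yours makes the Lascar step more explicit, at the modest cost of introducing $N$. A further cosmetic difference is that the paper applies \thref{cor:g-compact}(iii) to a freshly attached tail $(d''_i)_{i<\lambda}$ rather than to the whole prolonged sequence; both work equally well.
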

\begin{proof}
 By compactness there is a $\ind_M^K$-Morley sequence $(d''_i)_{i<\lambda}$  such that $(d_i)_{i\in I}\frown (d''_i)_{i<\lambda}$ is $M$-indiscernible, 
 where $\lambda=\lambda_{|T|+|Mad_0|+|I|}$.
As $d_{i_0}\equivls_Md''_0$, 
 $a\ind^K_M d_{i_0}$ and $(d''_i)_{i<\lambda}$ is  $\ind^K$-independent over $M$ , we get by \thref{cor:g-compact}(iii) that there exists $a'$ with $a'd''_i\equivls_{M} ad_{i_0}$ for every $i<\lambda$ and $a'\ind^K_M (d''_i)_{i<\lambda}$. Let $(d'_i)_{i\in I}$ be an $Ma'$-indiscernible sequence based on $(d''_i)_{i<\lambda}$ over $Maa'(d_i)_{i\in I}$. Then (by finite character and invariance of $\ind^K$) $a'\ind^K_M(d'_i)_{i\in I}$, $(d'_i)_{i\in I}\equivls_M(d_i)_{i\in I}$ (as $(d_i)_{i\in I}\frown (d'_i)_{i\in I}$ is indiscernible over $M$), and $a'd'_{i_0}\equivls_M ad_{i_0}$. Hence, letting $f$ be a Lascar strong automorphism over $M$ sending $(d'_i)_{i\in I}$ to $(d_i)_{i\in I}$ and putting $a^*=f(a')$ we get that $a^*\ind^K_M (d_i)_{i\in I}$ and $(d_i)_{i\in I}$ is $Ma^*$-indiscernible. Also $a^*d_{i_0}\equivls_M a'd'_{i_0}\equivls_M ad_{i_0}$ as required.
 \end{proof}
\begin{theorem}[Strong independence theorem]
\thlabel{thm:strong-independence-theorem}
Suppose $T$ is thick NSOP$_1$ with an e.c.\ model $M$, $a_0\ind_M^K b$, $a_1\ind^K_M c$, $b\ind^K_M c$, and $a_0\equivls_M a_1$. Then there is $a$ such that $a \equivls_{Mb} a_0$, $a \equivls_{Mc} a_1$, 
$a\ind ^K_M bc$, $b\ind^K_M ac$, $c\ind^K_M ab$. 
\end{theorem}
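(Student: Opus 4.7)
The plan is to strengthen the witness produced by the ordinary Independence Theorem (Theorem~\ref{thm:independence-theorem}) by combining it with a Kim-Morley sequence argument, the chain condition of Lemma~\ref{lem:chain-lemma-kim-morley-sequences}, and a tree construction in the spirit of those used in Sections~\ref{sec:symmetry} and~\ref{sec:independence-theorem}. First I would apply Theorem~\ref{thm:independence-theorem} to find a preliminary witness $a^\flat$ satisfying $a^\flat \equivls_{Mb} a_0$, $a^\flat \equivls_{Mc} a_1$, and $a^\flat \ind^K_M bc$; by symmetry (Theorem~\ref{thm:symmetry}) we then also have $bc \ind^K_M a^\flat$. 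It remains to modify $a^\flat$ within its Lascar strong type over $Mbc$ so that additionally $b \ind^K_M a^\flat c$ and $c \ind^K_M a^\flat b$, equivalently (by symmetry) $a^\flat c \ind^K_M b$ and $a^\flat b \ind^K_M c$.

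Next I would construct a $\ind^K_M$-Morley sequence $((b_i, c_i))_{i<\omega}$ in $\Lstp(bc/M)$ with $(b_0, c_0) = (b, c)$, by iterated use of extension (Corollary~\ref{cor:extension}) to produce a sufficiently long $\ind^K_M$-independent sequence in $\Lstp(bc/M)$ and then extracting an $M$-indiscernible subsequence, exploiting the type-definability of Kim-independence (Lemma~\ref{lem:kim-dividing-type-def}) and of Lascar-equivalence over e.c.\ models (Corollary~\ref{cor:g-compact}(ii)). Since $a^\flat \ind^K_M (b_0, c_0)$, the chain condition for $\ind^K_M$-Morley sequences (Lemma~\ref{lem:chain-lemma-kim-morley-sequences}) then yields $a^*$ with $a^* (b, c) \equivls_M a^\flat (b, c)$ such that $((b_i, c_i))_{i<\omega}$ is $Ma^*$-indiscernible and $a^* \ind^K_M ((b_i, c_i))_{i<\omega}$. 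This $a^*$ still enjoys the three conclusions from the first step, since its Lascar strong type over $Mbc$ coincides with that of $a^\flat$.

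The final and most delicate step is to deduce $a^* c \ind^K_M b$ and $a^* b \ind^K_M c$ from this setup. The subtlety is that $((b_i, c_i))_{i<\omega}$ is only $Ma^*$-indiscernible and not $Ma^*c$-indiscernible, so the consistency of $\bigcup_i \tp(a^* c/M b_i)$ does not follow directly from Proposition~\ref{prop:q-dividing-equivalent-statements}(iv). My plan is to perform a backwards-induction tree construction along the lines of Theorems~\ref{thm:symmetry} and~\ref{thm:independence-theorem}: build a $q$-spread-out tree of copies of $(b, c)$ arranged so that the required type-equalities and Kim-independence statements are maintained at every node, $EM_s$-base it on an $s$-indiscernible tree via Proposition~\ref{prop:em-modeling}, and then read off a parallel-Morley sequence in $\Lstp(b/M)$ via Lemma~\ref{lem:q-spread-out-trees}(iii). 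Corollary~\ref{cor:cr-kims-lemma} (Kim's Lemma for parallel-Morley sequences) will then give $b \ind^K_M a^* c$, and a symmetric argument will yield $c \ind^K_M a^* b$. The hard part will be precisely the lack of a chain condition for parallel-Morley sequences, flagged in the introduction as a source of technical difficulty: the tree must be designed with enough $q$-spread-outness to compensate for that missing ingredient, and the roles of $b$ and $c$ must be interlocked inside a single combined tree construction rather than handled independently of each other.
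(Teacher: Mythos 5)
Your first two steps match the paper's: apply Theorem~\ref{thm:independence-theorem} to get the preliminary witness, and then improve it via a Kim-Morley sequence of copies of $(b,c)$ together with the chain condition (Lemma~\ref{lem:chain-lemma-kim-morley-sequences}). But you miss the single trick that makes the final step come for free, and this sends you into a detour that does not obviously close.

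The paper does not take $(b_0,c_0)=(b,c)$ at the same index. Instead it first uses Corollary~\ref{cor:extension} twice to produce $b'\equivls_{Mc}b$ with $b'c\ind^K_M b$ and then $c'\equivls_{Mb}c$ with $b'c\ind^K_M bc'$, and then invokes Lemma~\ref{lem:find-kim-morley-cr-morley-sequence} to obtain a two-sided, $\Z$-indexed sequence $(b_i,c_i)_{i\in\Z}$ that is simultaneously $\ind^K_M$-Morley \emph{and} parallel-Morley in $\tp(bc/M)$, with $(b_0,c_0)=(b,c')$ and $(b_1,c_1)=(b',c)$. The crucial point is that $b=b_0$ and $c=c_1$ sit at \emph{different} indices. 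After the chain condition gives $a$ with $abc'\equivls_M a_2bc$ and the whole sequence $Ma$-indiscernible, the staggering pays off automatically: $(b_i)_{i\le 0}$ is a parallel-Morley sequence in $\tp(b/M)$ starting at $b$ which is $Mac$-indiscernible (because $c=c_1$ lives at index~$1>0$), so Corollary~\ref{cor:cr-kims-lemma} gives $b\ind^K_M ac$; symmetrically $(c_i)_{i\ge 1}$ is $Mab$-indiscernible and gives $c\ind^K_M ab$. No further tree construction is needed. Your proposal, by placing $b$ and $c$ at the same index, blocks exactly this mechanism, and your fix — a fresh spread-out tree construction to re-derive a parallel-Morley sequence for each of $b$ and $c$ — is essentially re-proving Lemma~\ref{lem:find-kim-morley-cr-morley-sequence} in a less convenient form, without supplying the details that would make it close.

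Two further specific problems. First, your sequence $((b_i,c_i))_{i<\omega}$ built by iterated extension plus extracting an indiscernible subsequence is $\ind^K_M$-Morley but not, as described, parallel-Morley, and parallel-Morleyness is exactly what Corollary~\ref{cor:cr-kims-lemma} requires; that property has to come from a tree construction of the type in Lemma~\ref{lem:find-kim-morley-cr-morley-sequence}, which you do not cite. Second, your claim that the chain-condition output $a^*$ ``still enjoys the three conclusions from the first step, since its Lascar strong type over $Mbc$ coincides with that of $a^\flat$'' uses more than the chain condition hands you: Lemma~\ref{lem:chain-lemma-kim-morley-sequences} only gives $a^*(b,c)\equivls_M a^\flat(b,c)$, and to upgrade this to $a^*\equivls_{Mb}a_0$ and $a^*\equivls_{Mc}a_1$ you need $b$ and $c$ to enumerate $\lambda_T$-saturated models over $M$, a reduction the paper performs at the outset and which you omit.
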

\begin{proof}
By a similar trick as at the start of the proof of \thref{thm:independence-theorem} we may assume that $b$ and $c$ enumerate $\lambda_T$-saturated models containing $M$.

By the independence theorem there is $a_2$ with $a_2\equivls_{Mb} a_0$, $a_2\equivls_{Mc} a_1$ and $a_2\ind^K_M bc$. By extension (\thref{cor:extension}) there is $b'\equivls_{Mc} b$ such that $b\ind^K_M b'c$, so $b'c\ind^K_M b$ by symmetry. By extension again, there is $c'\equivls_{Mb}c$ with $b'c\ind^K_M bc'$. As $b'c\equivls_M bc\equivls_M bc'$, we get by \thref{lem:find-kim-morley-cr-morley-sequence} that there is a $\ind_M^K$-Morley parallel-Morley in $\tp(bc/M)$ sequence $I=(b_i,c_i)_{i\in \mathbb{Z}}$  with $b_0c_0=bc'$ and $b_1c_1=b'c$. As $a_2\ind^K_M bc$, we get by \thref{lem:chain-lemma-kim-morley-sequences} that there is some $a$ such that $abc'\equivls_M a_2bc$, $I$ is $Ma$-indiscernible and $a\ind^K_M I$.
 
Then by monotonicity $a\ind^K_Mbc$. We also have $ab\equivls_{M} a_2b\equivls_{M} a_0b$, and, by indiscernibility, $ac\equivls_M ac'\equivls_Ma_2c\equivls_Ma_1c$. Since $b$ and $c$ were assumed to enumerate $\lambda_T$-saturated models we get $a \equivls_{Mb} a_0$ and $a \equivls_{Mc} a_1$. Also, $(b_i)_{i\leq 0}$ is an $Mac$-indiscernible parallel-Morley sequence in $\tp(b/M)$ with $b_0=b$, which gives $b\ind^K_M ac$ by \thref{cor:cr-kims-lemma}. Similarly, as $(c_i)_{i\geq 1}$ is an $Mab$-indiscernible parallel-Morley sequence in $\tp(c/M)$ with $c_1=c$, we get that $c\ind^K_M ab$.
\end{proof}
\section{Transitivity}
\label{sec:transitivity}
\begin{lemma}
\thlabel{lem:transitivity-find-cr-morley-sequence}
If $M \subseteq N$ are e.c.\ models of a thick NSOP$_1$ theory, $a\ind^K_M N$, and $\mu$ is a small cardinal, then there is a parallel-Morley in $\tp(a/N)$ sequence $(a_i)_{i\in \mu}$  with $a_0=a$ such that $a_i\ind^K_M Na_{<i}$ for every $i<\mu$.
\end{lemma}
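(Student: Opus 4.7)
The plan is to construct the desired sequence by a backwards-inductive tree construction, adapting the strategy of \thref{lem:find-kim-morley-cr-morley-sequence} to our setting where parallel-Morleyness lives over $N$ while Kim-independence is over $M \subseteq N$. Using \thref{lem:kim-dividing-type-def}, \thref{cor:g-compact}(ii), and thickness, the conditions ``$a_0 = a$'', ``$a_i \ind_M^K Na_{<i}$'' and ``$(a_i, b_i)_{i<\mu}$ witnesses parallel-Morleyness in a fixed global $N$-Ls-invariant extension $q$ of $\Lstp(a/N)$'' are jointly type-definable over $Na$. So by compactness it suffices to build $(a_i)_{i < k}$ for each finite $k<\omega$.

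Fix $k < \omega$, and by \thref{lem:(*)} choose a global $N$-Ls-invariant extension $q$ of $\Lstp(a/N)$ satisfying $(*)_\lambda$ for a sufficiently large $\lambda$. By backwards induction on $k' = k+1, k, \ldots, 1$ I would build trees $(c_\eta)_{\eta \in S_{k'}}$, where $S_{k'} = \{\xi \in \omega^{\leq k} : 0^{k'-1} \trianglelefteq \xi\}$, satisfying:
\begin{itemize}
\item[(A1)$_{k'}$] each $c_\eta \equivls_N a$,
\item[(A2)$_{k'}$] $(c_\eta)_{\eta \in S_{k'}}$ is $q$-spread-out over $N$,
\item[(A3)$_{k'}$] $c_{0^{k'-1}} \ind_M^K N(c_\eta)_{\eta \in S_{k'}^*}$.
\end{itemize}
The base $k' = k+1$ sets $c_{0^k} := a$, with (A3) being the hypothesis. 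For the inductive step, mimicking the proof of \thref{lem:find-kim-morley-cr-morley-sequence}, apply $(*)_\lambda$ to extend $\Lstp(c_{0^{k'-1}}(c_\eta)_{\eta \in S_{k'}^*}/N)$ to a global $N$-Ls-invariant $r(x, z) \supseteq q(x)$; then use (A3)$_{k'}$ combined with \thref{cor:extension} and a chain-condition argument (based on \thref{lem:chain-lemma} and \thref{cor:q-ls-dividing-is-q-dividing}, with $N$ or a suitable $\lambda_T$-saturated extension serving as the intermediate saturated model) to choose a new root $c_{0^{k'-2}}$ together with a Morley sequence $((c_{\eta, i})_{\eta \in S_{k'}})_{i < \omega}$ in $r$ over $N$ starting with $(c_\eta)_{\eta \in S_{k'}}$ that is $Mc_{0^{k'-2}}$-indiscernible. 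Setting $c_{0^{k'-2} \smallfrown i \smallfrown \zeta} := c_{0^{k'-1}\smallfrown \zeta, i}$, (A1)$_{k'-1}$ and (A2)$_{k'-1}$ follow from $N$-Ls-invariance of $r$ and \thref{lem:q-spread-out-trees}(i), while (A3)$_{k'-1}$ is built in.

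Once the tree $(c_\eta)_{\eta \in \omega^{\leq k}}$ is constructed, apply the $EM_s$-modelling property (\thref{prop:em-modeling}) to obtain an $s$-indiscernible over $N$ tree $(c_\eta')_{\eta \in \omega^{\leq k}}$ that is $EM_s$-based on $(c_\eta)$ over $N$. By \thref{lem:q-spread-out-trees}(ii) it is weakly $q$-spread-out over $N$; by type-definability of $\ind_M^K$-independence (\thref{lem:kim-dividing-type-def}) and Lascar equivalence (\thref{cor:g-compact}(ii)), analogues of (A1) and (A3) for $(c_\eta')$ transfer from those of $(c_\eta)$. Extracting $a_i := c_{0^{k-i}}'$ via \thref{lem:q-spread-out-trees}(iii) yields the desired parallel-Morley sequence in $q$ over $N$ with $a_i \ind_M^K Na_{<i}$; a final Lascar strong automorphism over $N$ arranges $a_0 = a$. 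The main obstacle is the mismatch between the two bases $M$ and $N$: the chain condition (\thref{lem:chain-lemma}) and its supporting tools are formulated for global $M$-Ls-invariant types, whereas our $q$ and $r$ can only be taken $N$-Ls-invariant (global $M$-Ls-invariant extensions of $\tp(a/N)$ need not exist). Navigating this mismatch, particularly ensuring the $Mc_{0^{k'-2}}$-indiscernibility of the Morley sequence in $r$ in the inductive step, is the principal technical challenge, handled by suitably extending $N$ to a $\lambda_T$-saturated model and carefully using the hypothesis $a \ind_M^K N$.
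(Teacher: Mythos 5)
Your overall architecture matches the paper's: reduce to finite $k$ by type-definability, build the tree $(c_\eta)_{\eta\in\omega^{\leq k}}$ by backwards induction on $k'$, then $EM_s$-base via \thref{prop:em-modeling} and extract the diagonal $a_i=c'_{0^{k-i}}$ via \thref{lem:q-spread-out-trees}(iii). The gap is in your inductive step. You propose to produce the Morley sequence $I:=((c_{\eta,i})_{\eta\in S_{k'}})_{i<\omega}$ in $r$ over $N$ so that it is $Mc_{0^{k'-2}}$-indiscernible, via a chain-condition argument built on \thref{lem:chain-lemma} and \thref{cor:q-ls-dividing-is-q-dividing}. But those results require the Morley sequence to live in a global $M$-Ls-invariant type, whereas your $r$ is only $N$-Ls-invariant; since $N$-Ls-invariance is \emph{weaker} than $M$-Ls-invariance when $M\subseteq N$ (see \thref{lem:ls-invariance-easy-facts}(ii)), you cannot invoke the chain condition over $M$ for a sequence indiscernible merely over $N$. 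This is exactly the base mismatch you identify as ``the principal technical challenge'', but extending $N$ to a $\lambda_T$-saturated model does not fix it: there is in general no global $M$-Ls-invariant extension of a type over $N$ at all, so there is no way to turn $r$ into something the chain condition over $M$ can see.

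The resolution is simpler than you expect and requires neither $Mc_{0^{k'-2}}$-indiscernibility nor the chain condition. Fix the Morley sequence $I$ in $r$ over $N$ with $c_{\eta,0}=c_\eta$ first. Then apply extension (\thref{cor:extension}) to the hypothesis $a\ind^K_M N$ against $I$ to obtain $I'\equivls_N I$ with $a\ind^K_M NI'$, and push $I'$ back to $I$ by a Lascar strong automorphism over $N$; the image $c_{0^{k'-2}}$ of $a$ satisfies $c_{0^{k'-2}}\equivls_N a$ and $c_{0^{k'-2}}\ind^K_M NI$, which is precisely your (A3)$_{k'-1}$. The remaining nodes inherit their conditions because the copies $(c_{\eta,i})_{\eta}$ are $\equivls_N$ the original (so the paper in fact records the stronger invariant ``$c_\eta\ind^K_M Nc_{\triangleright\eta}$ for \emph{all} $\eta$'', which is what actually survives $EM_s$-basing; your root-only (A3) only yields this for $(c'_\eta)$ after one notes it propagates to all nodes by invariance). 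Once the inductive step is repaired as above, the rest of your argument goes through as you describe.
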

\begin{proof}
 Put $\lambda = |Na| + \aleph_0$ and  (using \thref{lem:(*)}) choose  a global $N$-Ls-invariant extension $q$ of $\Lstp(a/N)$ satisfying $(*)_\lambda$.
 
By \thref{lem:kim-dividing-type-def}, compactness, finite character of Kim-independence, and an automorphism, it is enough to find for any given $k<\omega$ a parallel-Morley sequence $(a_i)_{i< k}$ in $q$ over $N$ such that $a_i\ind^K_M Na_{<i}$ for every $i<k$. 

So fix any $k<\omega$. By backward induction on $k'=k+1,k,\dots,1$ we will construct trees $(c_\eta)_{\eta\in S_{k'}}$, where $S_{k'}:=\{\xi\in {\omega}^{\leq k}:0^{k'-1}\trianglelefteq \xi\}$, such that for each $k'$ the tree $(c_\eta)_{\eta\in S_{k'}}$ satisfies the following conditions:
\begin{enumerate}[label=(A\arabic*)$_{k'}$]
\item For any $\eta\in S_{k'}$ we have $c_{\eta}\ind^K_M Nc_{ \triangleright \eta}$ and $c_\eta\equivls_N a$;
\item $(c_\eta)_{\eta\in S_{k'}}$ is $q$-spread-out over $N$.
\end{enumerate}

For $k'=k+1$ we let $c_{0^{k}}=a$.
For the inductive step, suppose we are done for some $k'$. 
By (A1)$_{k'}$ we have $c_{0^{k'-1}}\equivls_Na$, so by $(*)_\lambda$ there is a global $N$-Ls-invariant type $r(x,y)\supseteq  q(x)$ extending $\Lstp(c_{0^{k'-1}},(c_{\eta})_{\eta\in S^*_{k'}}/N)$ where $x$ corresponds to $c_{0^{k'-1}}$. Choose a Morley sequence $I:=((c_{\eta,i})_{\eta \in S_{k'}})_{i<{\omega}}$  in $r(x,y)$ over $N$ with $c_{\eta,0}=c_\eta$ for each $\eta\in S_{k'}$.
By extension (\thref{cor:extension}) there is $c'\equivls_N a$ with $c'\ind^K_M NI$.
Put $c_{0^{k'-2}\smallfrown i\smallfrown \zeta}:=c_{0^{k'-1}\smallfrown \zeta,i}$ for all $i<\omega$, $\zeta\in {\omega}^{\leq k+1-k'}  $, and $c_{0^{k'-2}}:=c'$. Then  (A2)$_{k'-1}$ follows by \thref{lem:q-spread-out-trees}(i), whereas (A1)$_{k'-1}$ with $\eta\in S_{k'-1}^*$ follows by invariance of Kim-independence, and (A1)$_{k'-1}$ with $\eta=0^{k'-2}$ follows by the choice of $c_{0^{k'-2}}=c'$. Thus the inductive step, and hence the construction of the tree $(c_{\eta})_{\eta\in {\omega}^{\leq k}}=(c_{\eta})_{\eta\in S_1}$, is completed.

Letting $(c'_{\eta})_{\eta\in {\omega}^{\leq k}}$ be an s-indiscernible over $N$ tree which is $EM_s$-based on $(c_{\eta})_{\eta\in {\omega}^{\leq k}}$ over $Na$, we get that $(c'_{\eta})_{\eta\in {\omega}^{\leq k}}$ satisfies (A1)$_{1}$ by \thref{lem:kim-dividing-type-def} and \thref{cor:g-compact}(ii), and is weakly $q$-spread-out over $N$ by \thref{lem:q-spread-out-trees}(ii). Thus, by \thref{lem:q-spread-out-trees}(iii), putting $a_i=c'_{0^{k-i}}$ for $i<k$ we get a parallel-Morley sequence $(a_i)_{i< k}$ in $q$ over $N$ satisfying the requirements.
\end{proof}
\begin{lemma}
\thlabel{lem:transitivity-technical-lemma}
Suppose $T$ is thick NSOP$_1$ and $M \subseteq N$ are e.c.\ models of $T$. If $a\ind^K_M N$ and $c\ind^K_M N$ then there is $c'\equivls_N c$ such that $ac'\ind^K_M N$ and $a\ind^K_N c'$.
\end{lemma}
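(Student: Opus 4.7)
The plan is to invoke Lemma \ref{lem:transitivity-find-cr-morley-sequence} to produce a parallel-Morley sequence $(c_i)_{i<\omega}$ in $\Lstp(c/N)$ with $c_0=c$ and $c_i\ind^K_M Nc_{<i}$ for every $i<\omega$; each $c_i$ is then Lascar-equivalent to $c$ over $N$, and $c_i\ind^K_M N$ by Lascar-strong invariance of Kim-independence. Applying extension (Corollary \ref{cor:extension}) to $a\ind^K_M N$ with the additional parameter $(c_i)_{i<\omega}$ and then conjugating by a Lascar strong automorphism over $N$ that returns the sequence to its original position yields $a^*\equivls_N a$ satisfying $a^*\ind^K_M N(c_i)_{i<\omega}$.

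The crux is to refine $a^*$ inductively along the parallel-Morley sequence so that moreover $a^*c_i\equivls_N ac$ for every $i<\omega$. At stage $i$ I would pick $\sigma_i\in\Aut_f(\MM/N)$ with $\sigma_i(c)=c_i$ and set the local candidate $\tilde a_i:=\sigma_i(a)$; this automatically has $\tilde a_ic_i\equivls_N ac$ and $\tilde a_i\ind^K_M N$ by invariance. One then amalgamates $\tilde a_i$ with the previously constructed $a^*$ using the strong independence theorem (Theorem \ref{thm:strong-independence-theorem}) over $M$, with hypotheses verified from $\tilde a_i\equivls_M a^*$, the Kim-independences $\tilde a_i\ind^K_M N$ and $a^*\ind^K_M (Nc_{<i})$ (monotonicity from $a^*\ind^K_M N(c_j)_j$), and $Nc_{<i}\ind^K_M c_i$ (which is $c_i\ind^K_M Nc_{<i}$ read via Theorem \ref{thm:symmetry}); the conclusion of the strong independence theorem updates $a^*$ so that it agrees with $\tilde a_i$ over $Nc_i$ while preserving the Kim-independence from $N(c_j)_{j\le i}$.

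Once such a refined $a^*$ is in hand, two conclusions drop out. For the over-$N$ part, the parallel-Morley sequence $(c_i)$ over $N$ together with the consistency of $\bigcup_{i<\omega}\tp(a/Nc)(x,c_i)$ (witnessed by $a^*$ itself, since $a^*c_i\equivls_N ac$ for each $i$) gives, via Corollary \ref{cor:cr-kims-lemma}, that $\tp(a/Nc)$ does not Kim-divide over $N$, hence $a\ind^K_N c$ after relabelling. For the over-$M$ part, the strong-independence-theorem output at the first stage also delivers $a^*c\ind^K_M N$ (this is among the conclusions of Theorem \ref{thm:strong-independence-theorem} applied with $b=N$ and $c=c$, having verified $a\ind^K_M N$, $\tilde a_0\ind^K_M c$ using extension of $c\ind^K_M N$, and $N\ind^K_M c$ by Theorem \ref{thm:symmetry}). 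Transferring via the Lascar strong automorphism over $N$ that sends $a^*$ back to $a$ produces the image $c'$ of $c$, which satisfies $c'\equivls_N c$, $ac'\ind^K_M N$, and $a\ind^K_N c'$ as required.

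The principal obstacle is Step 2, the inductive refinement: one must simultaneously maintain Kim-independence over $M$ and Lascar-strong type compatibility over $N$ along the entire parallel-Morley sequence. This is delicate because Kim-independence over $M$ does not automatically yield Kim-independence over $N$ (base monotonicity fails in NSOP$_1$), so the Lascar-strong bookkeeping at each stage must be coordinated carefully with the application of the strong independence theorem, using the Kim-independences $c_i\ind^K_M Nc_{<i}$ built into the parallel-Morley sequence by Lemma \ref{lem:transitivity-find-cr-morley-sequence}.
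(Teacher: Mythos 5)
Your approach is dual to the paper's---you build a parallel-Morley sequence $(c_i)$ in $\tp(c/N)$ via \thref{lem:transitivity-find-cr-morley-sequence} and refine $a$, whereas the paper builds a parallel-Morley sequence $(a_i)$ in $\tp(a/N)$ and refines $c$---but this dual direction runs into a circularity that the paper's direction avoids.

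The gap is in the inductive refinement. You want each application of the strong independence theorem (\thref{thm:strong-independence-theorem}) to update $a^*$ so that it ``agrees with $\tilde a_i$ over $Nc_i$,'' thereby preserving the invariant $a^*c_i\equivls_N ac$. But that theorem is invoked over base $M$, and its type-equality conclusions are only $a\equivls_{Mb}a_0$ and $a\equivls_{Mc}a_1$; with your choice $c=c_i$ you therefore only get agreement with $\tilde a_i$ over $Mc_i$, not over $Nc_i$, and the invariant is not preserved. To force agreement over $Nc_i$ one would have to place $Nc_i$ in the second right-hand slot, which requires the hypothesis $\tilde a_i\ind^K_M Nc_i$. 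Since $\tilde a_i=\sigma_i(a)$ for a Lascar strong automorphism $\sigma_i$ over $N$ sending $c$ to $c_i$, this hypothesis is equivalent to $a\ind^K_M Nc$---which is essentially the conclusion of the lemma and certainly does not follow from $a\ind^K_M N$ and $c\ind^K_M N$. The difficulty is already visible at stage $0$: there the needed hypothesis $\tilde a_0\ind^K_M c$ amounts to $a\ind^K_M c$, not a hypothesis, and choosing a different $\tilde a_0$ by extension to force $\tilde a_0\ind^K_M c$ destroys $\tilde a_0c\equivls_N ac$. The paper's proof escapes this tension by putting $c$-like tuples in the $a$-slots of the strong independence theorem and $Na_{<n}, a_n$ in the $b,c$-slots; the inductive invariant it maintains is not a type-equality over $Na_i$ but the weaker condition $c_n\models\Gamma(x;N,a_i)$, with $\Gamma$ the type from \thref{lem:kim-dividing-type-def} expressing $a_ix\ind^K_M N$. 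This invariant is inherited from a Kim-independence clause of the strong independence theorem rather than a type-equality clause, so the local candidate needed at each step is merely some $c'\equivls_M c$ with $c'\ind^K_M a_n$, obtainable unconditionally from extension. The over-$N$ conclusion then comes from basing an $Nc''$-indiscernible sequence on $(a_i)$ and applying \thref{cor:cr-kims-lemma}.
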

\begin{proof}
By \thref{lem:kim-dividing-type-def} 
there is a type $\Gamma(x;N,a)$ equivalent to the condition $ax\ind^K_M N$.  
By \thref{lem:transitivity-find-cr-morley-sequence} there is a parallel-Morley in  $\tp(a/N)$ sequence $(a_i)_{i< \lambda_T}$   with $a_0=a$ such that $a_i\ind^K_M Na_{<i}$ for every $i<\lambda_T$. Replacing $(a_i)_{i< \lambda_T}$ with an $N$-indiscernible sequence based on it over $N$ and moving by an automorphism (to keep $a_0=a$), we may assume $(a_i)_{i< \lambda_T}$ is $N$-indiscernible.
 \begin{claim}
 $\bigcup_{i<\lambda_T} \Gamma(x;N,a_i)$ has a realisation $c''$ such that $c''\equivls_N c$.
 \end{claim}
\begin{proof}
By induction on $n<\omega$ we will find $c_n\equivls_N c$ such that $c_n\ind^K_M Na_{<n}$ and $c_n\models\bigcup_{i<n}\Gamma(x;N,a_i)$, which is enough by compactness, $N$-indiscernibility of $(a_i)_{i<\lambda_T}$ and \thref{cor:g-compact}(ii). For $n=0$ put $c_0=c$. Assume we have found $c_n$ and find by extension (\thref{cor:extension}) some $c' \equivls_M c$ be such that $c'\ind^K_M a_n$. By \thref{thm:strong-independence-theorem} there is $c_{n+1}$ with $c_{n+1}a_{<n}\equivls_{N}c_na_{<n}$, $c_{n+1}a_n \equivls_{M}c'a_n$, $c_{n+1}\ind^K_M Na_{<n+1}$ and $a_nc_{n+1} \ind^K_M Na_{<n}$. In particular $c_{n+1} \equivls_N c_{n}\equivls_N c$ and $c_{n+1}\models \bigcup_{i<n+1} \Gamma(x;N,a_i)$.
\end{proof}
Let $c''$ be given by the claim, and let $(a'_i)_{i<\omega}$ be an $Nc''$-indiscernible sequence based on $(a_i)_{i<\lambda_T}$ over $Nc''a$. Then $a'_0 \equivls_N a$ (as $a_i \equivls_N a$ for every $i<\lambda_T$) so there is a Lascar strong automorphism $f$ over $N$ sending $a'_0$ to $a=a_0$. Put $c':=f(c'')$. Then $(f(a'_i))_{i<\omega}$ is an $Nc'$-indiscernible parallel-Morley sequence in $\tp(a/N)$ starting with $a$, so $c'\ind^K_N a$ by \thref{cor:cr-kims-lemma}. Also, $c'\models \Gamma(x;N,a)$, so $ac'\ind^K_M N$ by the choice of $\Gamma$, and we are done.
\end{proof}
\begin{lemma}
\thlabel{lem:find-lifted-morley-sequence}
Suppose $T$ is thick NSOP$_1$ with e.c.\ models $M \subseteq N$ and  $a\ind_M^K N$. Then there is a $\ind_N^K$-Morley parallel-Morley in $\tp(a/M)$ sequence $(a_i)_{i<\omega}$  with $a=a_0$.
\end{lemma}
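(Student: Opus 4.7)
The plan is to mimic the tree construction used in \thref{lem:transitivity-find-cr-morley-sequence} and \thref{lem:find-kim-morley-cr-morley-sequence}, building a $q$-spread-out tree over $M$ (so that the extracted sequence is parallel-Morley in $\tp(a/M)$) while simultaneously maintaining Kim-independence over $N$ along the branch that gets extracted. First I would WLOG make $N$ $\lambda_T$-saturated: take a $\lambda_T$-saturated $N'\supseteq N$ and apply \thref{cor:extension} to find $N''\equivls_{MN} N'$ with $a\ind_M^K N''$, then replace $N$ by $N''$. Fix a sufficiently large cardinal $\lambda$ and, using \thref{lem:(*)}, pick a global $M$-Ls-invariant extension $q$ of $\Lstp(a/M)$ satisfying $(*)_\lambda$. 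By type-definability of Kim-dividing (\thref{lem:kim-dividing-type-def}) and of the parallel-Morley condition (via the tensor definition), compactness together with the extraction of \thref{lem:indiscernible-sequence-based-on-long-sequence} and a final Lascar strong automorphism over $N$ reduce the task to producing, for each $k<\omega$, a parallel-Morley sequence $(a_i)_{i<k}$ in $q$ over $M$ with $a_0\equivls_N a$ and $a_i\ind_N^K a_{<i}$.

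For fixed $k$ I would build a tree $(c_\eta)_{\eta\in\omega^{\leq k}}$ by backwards induction on $k'=k+1,\ldots,1$ of subtrees $(c_\eta)_{\eta\in S_{k'}}$ with the following invariants, retaining the $S_{k'}$-notation of the previous two lemmas:
(A1) $c_{0^j}\equivls_N a$ for every main-branch node $k'-1\leq j\leq k$;
(A2) $(c_\eta)_{\eta\in S_{k'}}$ is $q$-spread-out over $M$;
(A3) $c_{0^j}\ind_N^K c_{\triangleright 0^j}$ for $k'-1\leq j\leq k$; and
(A4) $(c_\eta)_{\eta\in S_{k'}}\ind_M^K N$.
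The base case $c_{0^k}:=a$ is immediate. For the inductive step, $(*)_\lambda$ yields a global $M$-Ls-invariant $r$ with $r|_x=q$ extending $\Lstp(c_{0^{k'-1}}(c_\eta)_{\eta\in S^*_{k'}}/M)$, and Kim's lemma (\thref{prop:kims-lemma}) applied to (A4) via symmetry produces an $MN$-indiscernible Morley sequence $I=((c_{\eta,i})_{\eta\in S_{k'}})_{i<\omega}$ in $r$ over $M$ starting from the original tree; the chain condition (\thref{lem:chain-lemma}) then gives $I\ind_M^K N$. To supply the new root, apply \thref{cor:extension} to $a\ind_M^K N$ to obtain $c^*\equivls_N a$ with $c^*\ind_M^K NI$; since $c^*\ind_M^K N$ and $I\ind_M^K N$, \thref{lem:transitivity-technical-lemma} yields $I'\equivls_N I$ with $c^*I'\ind_M^K N$ and $c^*\ind_N^K I'$. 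An $N$-Lascar strong automorphism $\sigma$ sending $I'$ back to $I$ and putting $c_{0^{k'-2}}:=\sigma(c^*)$ gives (A1) and (A3) for the new root (the latter by invariance of $\ind_N^K$) and (A4) globally, while (A2) follows from \thref{lem:q-spread-out-trees}(i). Conditions (A1) and (A3) for the old main-branch nodes survive unchanged because those nodes live inside the $0$-th copy of $I$, which coincides with the original tree.

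After the induction closes at $k'=1$, I would apply the $EM_s$-modelling property (\thref{prop:em-modeling}) over $Na$ to obtain an $s$-indiscernible-over-$Na$ tree $(c'_\eta)_\eta$ that is $EM_s$-based on $(c_\eta)$ over $Na$; by \thref{lem:q-spread-out-trees}(ii) this is weakly $q$-spread-out over $M$, and the main-branch conditions (A1), (A3)—type-definable over $Na$ and $N$ respectively via \thref{lem:kim-dividing-type-def}—are preserved. Setting $a_i:=c'_{0^{k-i}}$ for $i<k$ and invoking \thref{lem:q-spread-out-trees}(iii), the sequence $(a_i)_{i<k}$ is parallel-Morley in $q$ over $M$ with $a_0\equivls_N a$ and $a_i\ind_N^K a_{<i}$, which is what was needed for the reduction. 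The hardest part will be the inductive step: invariant (A4) is essential so that Kim's lemma delivers the $MN$-indiscernible Morley sequence whose $0$-th copy preserves the $N$-Lascar structure of the old tree, and the combination of \thref{cor:extension} with \thref{lem:transitivity-technical-lemma} is what allows the new root to simultaneously witness $c_{0^{k'-2}}\equivls_N a$, $c_{0^{k'-2}}\ind_N^K I$ (for A3) and $c_{0^{k'-2}}I\ind_M^K N$ (for A4)—a combination that \thref{cor:extension} alone cannot deliver because converting $\ind_M^K N$-independence into $\ind_N^K$-independence requires stepping through the transitivity-technical-lemma.
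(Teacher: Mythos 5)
The overall architecture of your proposal — inductive tree construction maintaining $q$-spread-outness, the appeal to $(*)_\lambda$, the role of \thref{lem:transitivity-technical-lemma} in supplying new roots, the final $EM_s$-basing and branch extraction — matches the paper, and you correctly identified that \thref{lem:transitivity-technical-lemma} is the key new ingredient beyond the symmetry proof. However, your opening WLOG is backwards, and this creates a genuine gap at the heart of the inductive step. You replace $N$ by a $\lambda_T$-saturated model; the paper instead replaces $a$ by a $\lambda_T$-saturated model extending $M$ (via symmetry and \thref{cor:extension}). What the induction actually needs is for \emph{every tree node} to be a $\lambda_T$-saturated model, because the step where you produce an $N$-indiscernible Morley sequence $I=((c_{\eta,i})_{\eta\in S_{k'}})_{i<\omega}$ in $r$ over $M$ whose zeroth term is \emph{exactly} the tree you already built rests on \thref{cor:q-ls-dividing-is-q-dividing}, whose hypothesis requires a $\lambda_T$-saturated model sitting \emph{between $M$ and the tree $(c_\eta)_{\eta\in S_{k'}}$}. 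Making $N$ saturated is irrelevant to that hypothesis, since $N$ is not a subtuple of the tree. Without the upgrade, (A4) plus symmetry plus \thref{prop:q-dividing-equivalent-statements}(iv) only yields an $N$-indiscernible sequence $(b_i)\models r^{\otimes\omega}|_M$ with $b_0$ the tree — type-equality over $M$, not the Lascar-strong-type equality needed for a Morley sequence in $r$. If you instead work with a conjugate $N'\equiv_{M(c_\eta)}N$ and an automorphism over $M(c_\eta)$ pulling $N'$ back to $N$, the Morley sequence lands in some $g(r)$ with $g(r)|_x=g(q)$, which in general differs from $q$; that destroys invariant (A2) ($q$-spread-outness for the \emph{fixed} $q$) and hence the final extraction via \thref{lem:q-spread-out-trees}. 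Note also that Kim's lemma itself does not produce the sequence; it only says non-$r$-dividing is independent of the choice of $r$, and the sequence comes from \thref{cor:q-ls-dividing-is-q-dividing}.

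A further problem with your WLOG: Kim-independence over models is not base-monotone, so a $\ind^K_{N''}$-Morley sequence for the enlarged $N''\supseteq N$ does not automatically give you a $\ind^K_N$-Morley sequence, whereas enlarging $a$ is harmless by monotonicity of $\ind^K_N$ and \thref{lem:tensor-facts}(ii). A smaller issue: you state invariants (A1), (A3) only on the main branch $0^j$, but $EM_s$-basing preserves a property only if it holds for \emph{every} tuple of the same quantifier-free $L_s$-type; you should phrase them for all $\eta\in S_{k'}$ as the paper does (your construction in fact delivers this, since $I$ is $N$-indiscernible, but the statement of the invariant must reflect it for the preservation claim to go through).
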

\begin{proof}
By extension (\thref{cor:extension}) we may assume that $a$ is a $\lambda_T$-saturated model extending $M$. By \thref{lem:(*)} there is  a global $M$-Ls-invariant extension $q(x)\supseteq \tp(a/M)$ satisfying the property $(*)_{\lambda}$ with $\lambda = |a| + \aleph_0$. We claim that it is enough to find for any given $k<\omega$ a parallel-Morley sequence $(a_i)_{i< k}$ in $q$ over $M$ such that $a_i\ind^K_N a_{<i}$ and $a_i \equiv_N a$ for every $i<k$: indeed, if we prove this, then, since the condition $(a_i \equiv_N a) \wedge (a_i \ind^K_N a_{<i})$ is type-definable by \thref{lem:kim-dividing-type-def}, we can find by compactness such a sequence of length $\lambda_{|T|+|Na|}$. Then taking an $N$-indiscernible sequence based on $(a_i)_{i<\lambda_{|T|+|Na|}}$ over $N$ and moving it by an automorphism we obtain a desired sequence.

So fix any $k<\omega$. 
By backward induction on $k'=k+1,k,\dots,1$ we will define trees $(c_\eta)_{\eta\in S_{k'}}$, where $S_{k'}:=\{\xi\in {\omega}^{\leq k}:0^{k'-1}\trianglelefteq \xi\}$, such that for each $k'$ the tree $(c_\eta)_{\eta\in S_{k'}}$ satisfies the following conditions:
\begin{enumerate}[label=(A\arabic*)$_{k'}$]
\item For any $\eta\in S_{k'}$ we have $c_{\eta}\ind^K_N c_{ \triangleright \eta}$ and $c_\eta \equivls_N a$;

\item $(c_\eta)_{\eta\in S_{k'}}$ is $q$-spread-out over $M$;

\item $(c_\eta)_{\eta\in S_{k'}}\ind^K_M N$.
\end{enumerate}

For $k'=k+1$ we let $c_{0^{k}}=a$.
For the inductive step, suppose we are done for some $k'$. By $(*)_\lambda$ and (A1)$_{k'}$ there is a global $M$-invariant type $r(x,y)$ extending $\Lstp(c_{0^{k'-1}},(c_\eta)_{\eta\in S^*_{k'}}/M)$ and $q(x)$. 
As $c_\eta$'s are $\lambda_T$-saturated models, we get by (A3)$_{k'}$ and \thref{cor:q-ls-dividing-is-q-dividing} that $\Lstp(N/(c_\eta)_{\eta\in S_{k'}})$ does not $r$-Ls-divide over $M$.  Thus there is an $N$-indiscernible Morley sequence $I=((c_{\eta,i})_{\eta\in S_{k'}})_{i<\omega}$ in $r(x,y)$ over $M$  with $c_{\eta,0}=c_\eta$ for each $\eta\in S_{k'}$ and $I\ind^K_M N$. By \thref{lem:transitivity-technical-lemma} there is $a'\equivls_N a$ such that $a'\ind^K_N I$ and $a'I\ind^K_M N$. Put $c_{0^{k'-2}\smallfrown i\smallfrown \zeta}:=c_{0^{k'-1}\smallfrown \zeta,i}$ for all $i<\omega$, $\zeta\in {\omega}^{\leq k+1-k'}  $, and $c_{0^{k'-2}}:=a'$. Then we get (A2)$_{k'-1}$ by \thref{lem:q-spread-out-trees}(i), (A1)$_{k'-1}$, using that $a'\ind^K_N I$, and (A3)$_{k'-1}$ holds as $a'I\ind^K_M N$. 
Thus the inductive step, and hence the construction of the tree $(c_{\eta})_{\eta\in {\omega}^{\leq k}}=(c_{\eta})_{\eta\in S_1}$, is completed.

Letting $(c'_{\eta})_{\eta\in {\omega}^{\leq k}}$ be an s-indiscernible over $N$ tree which is $EM_s$-based on $(c_{\eta})_{\eta\in {\omega}^{\leq k}}$ over $Na$, we get that $(c'_{\eta})_{\eta\in {\omega}^{\leq k}}$ is weakly $q$-spread-out over $M$ by \thref{lem:q-spread-out-trees}(ii) and satisfies (A1)$_{1}$ by \thref{lem:kim-dividing-type-def} and \thref{cor:g-compact}(ii).
Thus putting $a_i=c'_{0^{k-i}}$ for $i<k$ we get by \thref{lem:q-spread-out-trees}(iii) a parallel-Morley sequence $(a_i)_{i< k}$ in $q$ over $M$ satisfying the requirements.
\end{proof}
\begin{theorem}[Transitivity]
\thlabel{thm:transitivity}
 Suppose $T$ is thick NSOP$_1$ with models $M \subseteq N$. If $a\ind^K_M N$ and $a\ind^K_N c$, then $a\ind^K_M Nc$.
\end{theorem}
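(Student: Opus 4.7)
The plan is to reduce to \thref{cor:cr-kims-lemma}. By symmetry (\thref{thm:symmetry}) it is enough to prove $Nc \ind^K_M a$, and for this I aim to exhibit a sequence $(a_i)_{i<\omega}$ with $a_0 = a$ that is simultaneously a parallel-Morley sequence in $\tp(a/M)$ over $M$ and indiscernible over $Nc$. Once such a sequence is in hand, $Nc$-indiscernibility forces $\tp(Nc/Ma_i) = \tp(Nc/Ma)$ for every $i$, so the constant sequence $Nc$ realises $\bigcup_{i<\omega} \tp(Nc/Ma_i)$; \thref{cor:cr-kims-lemma} applied with parameter $b=a$ and partial type $\Sigma(x,a) := \tp(Nc/Ma)$ then yields immediately that $\tp(Nc/Ma)$ does not Kim-divide over $M$.

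I would construct the sequence in three steps. First, apply \thref{lem:find-lifted-morley-sequence} to the hypothesis $a \ind^K_M N$ to obtain a $\ind^K_N$-Morley sequence $(a_i)_{i<\omega}$ with $a_0 = a$ that is also parallel-Morley in $\tp(a/M)$ over $M$. Second, rewrite $a \ind^K_N c$ as $c \ind^K_N a_0$ via symmetry and invoke the chain condition for $\ind^K$-Morley sequences (\thref{lem:chain-lemma-kim-morley-sequences}) over the e.c.\ model $N$; this produces some $c^*$ with $c^* a_0 \equivls_N c a_0$ such that $(a_i)_{i<\omega}$ is $Nc^*$-indiscernible. Third, pick a Lascar strong automorphism of $\MM$ over $N$ sending $c^*$ to $c$ (it automatically fixes $a_0 = a$) and apply it to the whole sequence; the image, which I relabel $(a_i)_{i<\omega}$, is now $Nc$-indiscernible, still starts with $a$, and remains parallel-Morley in $\tp(a/M)$ over $M$, the witnessing global $M$-Ls-invariant type being the pushforward of the old one and still $M$-Ls-invariant by \thref{lem:ls-invariance-easy-facts}(i).

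The main obstacle is really packaged inside \thref{lem:find-lifted-morley-sequence}: one needs a single sequence that is $\ind^K_N$-Morley (so that the chain condition can absorb $c$ into its indiscernibility base) and at the same time parallel-Morley over $M$ (so that the resulting indiscernibility translates into non-Kim-dividing at the lower base $M$). Once this dual-purpose sequence is available, the transitivity argument is a short verification that Lascar strong automorphisms over $N$ preserve both features while permitting the replacement of $c^*$ by $c$.
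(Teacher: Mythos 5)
Your proof is correct and follows the paper's argument step for step: \thref{lem:find-lifted-morley-sequence} produces the dual-purpose sequence, \thref{lem:chain-lemma-kim-morley-sequences} over $N$ absorbs $c$ into the indiscernibility base, and \thref{cor:cr-kims-lemma} plus symmetry finishes. The only slip is in the parenthetical in your third step: a Lascar strong automorphism over $N$ sending $c^*$ to $c$ does not \emph{automatically} fix $a_0$; rather, the relation $c^*a_0 \equivls_N ca_0$ supplied by the chain condition guarantees you can \emph{choose} one that sends $c^*a_0$ to $ca_0$ and hence fixes $a_0 = a$.
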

\begin{proof}
By \thref{lem:find-lifted-morley-sequence} there is a  $\ind_N^K$-Morley parallel-Morley in $\tp(a/M)$ sequence $I=(a_i)_{i<\omega}$  with $a_0=a$. As $a\ind^K_N c$, we get by \thref{lem:chain-lemma-kim-morley-sequences} an $Nc$-indiscernible sequence $I'=(a'_i)_{i<\omega}\equiv_{Na}I$. As $I'$ is also parallel-Morley in $\tp(a/M)$ and $a'_0=a$, we get by \thref{cor:cr-kims-lemma} that $Nc\ind^K_M a$, so by symmetry we are done.
\end{proof}
\section{Kim-Pillay style theorem}
\label{sec:kim-pillay-style-theorem}
\begin{theorem}
\thlabel{thm:kim-pillay-style}
Let $T$ be a thick positive theory. Then $T$ is NSOP$_1$ if and only if there is an automorphism invariant ternary relation $\ind$ on small subsets of the monster model, only allowing e.c.\ models in the base, satisfying the following properties:
\begin{description}
\item[\textsc{Finite Character}] if $a \ind_M b_0$ for all finite $b_0 \subseteq b$ then $a \ind_M b$.
\item[\textsc{Existence}] $a \ind_M M$ for any model $M$.
\item[\textsc{Monotonicity}] $a a' \ind_M b b'$ implies $a \ind_M b$.
\item[\textsc{Symmetry}] $a \ind_M b$ implies $b \ind_M a$.
\item[\textsc{Local Character}] let $a$ be a finite tuple and $\kappa > |T|$ be regular then for every continuous chain $(M_i)_{i < \kappa}$, with $|M_i| < \kappa$ for all $i$, there is $i < \kappa$ such that $a \ind_{M_i} M$, where $M = \bigcup_{i < \kappa} M_i$.
\item[\textsc{Independence Theorem}] if $a \ind_M b$, $a' \ind_M c$ and $b \ind_M c$ with $a \equivls_M a'$ then there is $a''$ such that $a''b \equivls_M ab$, $a''c \equivls_M a'c$ and $a'' \ind_M bc$.
\item[\textsc{Extension}] if $a \ind_M b$ then for any $c$ there is $a' \equiv_{Mb} a$ such that $a' \ind_M bc$.
\item[\textsc{Transitivity}] if $a \ind_M N$ and $a \ind_N b$ with $M \subseteq N$ then $a \ind_M Nb$.
\end{description}
Furthermore, in this case $\ind = \ind^K$.
\end{theorem}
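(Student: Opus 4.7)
The forward direction is the easy one: take $\ind = \ind^K$ and verify each axiom by citing earlier results. Finite character, existence and monotonicity come from \thref{prop:obvious-properties-kim-dividing}; symmetry is \thref{thm:symmetry}; the independence theorem is \thref{thm:independence-theorem}; extension is \thref{cor:extension}; transitivity is \thref{thm:transitivity}; and local character is the promised \thref{cor:kim-independence-local-character}. The furthermore clause ($\ind = \ind^K$) is then a consequence of the converse direction applied with $\ind := \ind^K$.

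For the converse, suppose $\ind$ satisfies all the listed axioms. I first establish that $T$ is NSOP$_1$ by contradiction. If $\phi(x,y)$ has SOP$_1$ with inconsistency witness $\psi(y_1,y_2)$, \thref{lem:sop1-implies-failure-of-weak-independence-theorem} produces an e.c.\ model $M$ and parameters $b_1,b_2,c_1,c_2$ with $b_1c_1 \equivls_M b_2c_2$, the conjunction $\phi(b_1,c_1)\wedge\phi(b_2,c_2)\wedge\psi(c_1,c_2)$, and coheir independences. Starting from $c_1 \ind_M M$ (existence) and applying extension for $\ind$ repeatedly, I pass to a configuration where $c_1 \ind_M c_2'$, $b_1' \ind_M c_1$, $b_2' \ind_M c_2'$, with $b_1'c_1 \equivls_M b_1 c_1$, $b_2'c_2' \equivls_M b_2 c_2$ and $b_1' \equivls_M b_2'$; this requires passing through $\lambda_T$-saturated enlargements inside which $\equiv$ and $\equivls$ coincide, then using existence and invariance to retract back to $M$. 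The independence theorem for $\ind$ then produces $b^*$ with $b^*c_1 \equivls_M b_1'c_1$ and $b^*c_2' \equivls_M b_2' c_2'$, whence $\models \phi(b^*,c_1) \wedge \phi(b^*,c_2') \wedge \psi(c_1,c_2')$, a contradiction. The main technical care is the bookkeeping of when ordinary type equalities have to be upgraded to Lascar-strong type equalities.

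Knowing $T$ is NSOP$_1$, so that $\ind^K$ too satisfies all the axioms, I show $\ind = \ind^K$, doing $\ind^K \subseteq \ind$ first to avoid circularity. For $\ind^K \subseteq \ind$: given $a \ind^K_M b$, use extension for $\ind^K$ to assume $a \ind^K_M N$ for some sufficiently saturated $N \supseteq Mb$; fit $M \subseteq N$ into a long continuous chain $(M_i)_{i<\kappa}$ of small models with union $N$; apply local character for $\ind$ and for $\ind^K$ to obtain $M_\alpha$ and $M_\beta$ witnessing $a \ind_{M_\alpha} N$ and $a \ind^K_{M_\beta} N$; combine, using transitivity and monotonicity, climbing the chain inductively to reach $a \ind_M M_\gamma$ and hence $a \ind_M b$. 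For $\ind \subseteq \ind^K$: given $a \ind_M b$ and $\phi(x,b) \in \tp(a/Mb)$, fix a Morley sequence $(b_i)_{i<\omega}$ in a global $M$-Ls-invariant extension of $\Lstp(b/M)$ with $b_0 = b$. By induction build $a_n \equivls_M a$ with $a_n \ind_M b_{<n}$ and $a_n \models \bigwedge_{i<n}\phi(x,b_i)$; the inductive step applies the independence theorem for $\ind$ to $a_n$ (with $a_n \ind_M b_{<n}$) and a Lascar conjugate $\tilde a \equivls_M a$ obtained by moving $b_0$ to $b_n$ (so $\tilde a \models \phi(x,b_n)$ and $\tilde a \ind_M b_n$), using the side hypothesis $b_{<n} \ind_M b_n$. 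The latter holds because $b_n \ind^{iLs}_M b_{<n}$ by the defining property of Morley sequences, hence $b_n \ind^K_M b_{<n}$ by \thref{rem:kim-dividing-implies-dividing}, hence $b_n \ind_M b_{<n}$ using the already-proved $\ind^K \subseteq \ind$, and then symmetry. A compactness argument using type-definability of Lascar equivalence over e.c.\ models (\thref{cor:g-compact}) produces a realization of $\bigcup_i \phi(x,b_i)$, contradicting that $\phi$ Kim-divides.

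The main obstacle is the $\ind^K \subseteq \ind$ inclusion: no axiom directly compares bases, so one has to route the argument through simultaneous local character witnesses for the two relations and climb back up the chain by transitivity and monotonicity. It is essential to do this before $\ind \subseteq \ind^K$, because the inductive step in the latter explicitly uses it to transfer Morley-sequence independence from $\ind^K$ over to $\ind$, and local character for $\ind$ is the only axiom expressive enough to pin down such a transfer.
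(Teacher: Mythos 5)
Your overall skeleton (forward direction by citing the earlier theorems; converse by proving NSOP$_1$ and then matching $\ind$ with $\ind^K$) matches the paper, and your $\ind \subseteq \ind^K$ sketch via Morley sequences and the independence theorem is in the right spirit. But both of the other two converse steps have genuine gaps, and they trace to a single missing ingredient: the paper's notion of long dividing $\ind^{ld}$ and the chain $\ind^u \subseteq \ind^{iLs} \subseteq \ind^{ld} \subseteq \ind$ (\thref{lem:ls-invariance-nondividing-implies-isi-nondividing}, \thref{prop:isi-nondividing-implies-abstract-independence}), which never appears in your plan.

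For NSOP$_1$: \thref{lem:sop1-implies-failure-of-weak-independence-theorem} hands you \emph{coheir} independences $c_1 \ind^u_M c_2$, $c_1 \ind^u_M b_1$, $c_2 \ind^u_M b_2$, and to feed the independence theorem for $\ind$ you need these to become $\ind$-independences. You propose to rebuild the configuration ``from scratch'' using \textsc{Existence} and \textsc{Extension}, but this cannot simultaneously preserve all the constraints: if you replace $c_2$ with $c_2'$ obtained by extending $c_2 \ind_M M$ to $c_2' \ind_M c_1$ with $c_2' \equiv_M c_2$, you lose control of $\tp(c_1 c_2'/M)$ --- there is no automorphism over $Mc_1$ moving $c_2$ to $c_2'$, so $\models \psi(c_1, c_2')$ need not hold, and the inconsistency you need to contradict evaporates. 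The obvious fix (conclude $\ind^u \subseteq \ind$ directly, as in \thref{lem:coheir-indep-implies-abstract-indep}) needs \textsc{Strong Finite Character}, which is precisely \emph{not} among your hypotheses --- only \textsc{Finite Character} and \textsc{Local Character} are. The paper works around this by proving $\ind^{iLs} \subseteq \ind^{ld} \subseteq \ind$ (where the second step, \thref{prop:isi-nondividing-implies-abstract-independence}, uses \textsc{Local Character} together with \textsc{Symmetry}, \textsc{Transitivity}, \textsc{Extension} etc.\ but not the independence theorem), and then obtains NSOP$_1$ from weak symmetry as in \thref{thm:symmetry-iff-nsop1}.

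For $\ind^K \subseteq \ind$: the ``climb the chain'' plan does not close. \textsc{Local Character} gives you some $M_\alpha$ in the chain with $a \ind_{M_\alpha} N$, but there is no axiom that lets you descend from base $M_\alpha$ to base $M$ --- base monotonicity is exactly the property that fails in NSOP$_1$. To descend you must pair $a \ind_{M_\alpha} N$ with an independence of the \emph{other} side over $M$, which is what the paper's construction supplies: build a long sequence $(b_i)_{i<\kappa}$ with an $\ind_M$-independence chain $(M_i)$ (\thref{rem:building-witnesses-of-independence}), observe that since $\ind \subseteq \ind^K$ is already proved the chain is also an $\ind^K_M$-independence chain, apply \thref{lem:kim-nondividing-implies-kim-isi-sequence-is-consistent} to get a realisation $a'$ of $\bigcup p(x,b_i)$ on a cofinal subsequence, then use \textsc{Local Character} to find $i_0$ with $a' \ind_{M_{i_0}} b_{i_0}$, combine with $b_{i_0} \ind_M M_{i_0}$ via \textsc{Symmetry} and \textsc{Transitivity}, and transport back to $a,b$ by an automorphism. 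Notice this forces the order $\ind \subseteq \ind^K$ \emph{before} $\ind^K \subseteq \ind$, opposite to yours, and your $\ind \subseteq \ind^K$ sketch in turn invokes $\ind^K \subseteq \ind$ for the side hypothesis $b_n \ind_M b_{<n}$ --- a circularity you can only break by going through $\ind^{iLs} \subseteq \ind^{ld} \subseteq \ind$ directly, as the paper does.
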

The properties in \thref{thm:kim-pillay-style} are not as strong as they could be. For example, we actually proved the strong independence theorem for $\ind^K$, see \thref{thm:strong-independence-theorem}. The slightly simpler formulation of the properties in \thref{thm:kim-pillay-style} is easier to verify for an arbitrary independence relation $\ind$. Then it follows immediately from $\ind = \ind^K$ that such an independence relation $\ind$ also satisfies the stronger formulations.
\begin{remark}
\thlabel{rem:strong-finite-character}
In the existing Kim-Pillay style theorems for full first-order logic \cite[Theorem 9.1]{kaplan_kim-independence_2020}, \cite[Theorem 6.11]{kaplan_transitivity_2019} and \cite[Theorem 5.1]{chernikov_transitivity_2020} there are still various properties that mention syntax. Our \thref{thm:kim-pillay-style} is completely syntax-free. One syntax-dependent property is mentioned in all of the above theorems, and is called \textsc{Strong Finite Character}: if $a \nind_M b$ then there is $\phi(x, b, m) \in \tp(a / Mb)$ such that for any $a' \models \phi(x, b, m)$ we have $a' \nind_M b$.

We could replace \textsc{Finite Character} and \textsc{Local Character} in \thref{thm:kim-pillay-style} by \textsc{Strong Finite Character}. Obviously \textsc{Strong Finite Character} implies \textsc{Finite Character} and modulo the other properties it also implies \textsc{Local Character} by \thref{lem:coheir-indep-implies-abstract-indep} and \thref{lem:abstract-indep-local-character}.
\end{remark}
\begin{remark}
\thlabel{rem:less-assumptions-for-nsop1}
To conclude that a theory is NSOP$_1$ it is enough to find an independence relation with the properties \textsc{Strong Finite Character}, \textsc{Existence}, \textsc{Monotonicity}, \textsc{Symmetry} and \textsc{Independence Theorem}, see \cite[Theorem 6.4]{haykazyan_existentially_2021}. However, that does not guarantee that the independence relation is also Kim-independence, see \cite[Remark 9.39]{kaplan_kim-independence_2020} for an example (already in full first-order logic). We also point out that \cite[Theorem 6.4]{haykazyan_existentially_2021} says nothing about the properties that Kim-independence generally has in NSOP$_1$ theories. Finally, our proof is also different because we do not rely on the syntactic property \textsc{Strong Finite Character}.
\end{remark}
\begin{remark}
\thlabel{rem:kim-pillay-thm-independence-theorem}
We point out a minor difference between \thref{thm:independence-theorem} and \textsc{Independence Theorem} in \thref{thm:kim-pillay-style}. In the former we get $a'' \equivls_{Mb} a$, which is generally stronger than the $a''b \equivls_M ab$ in the latter (and similar for $c$). Again, the reason is that the latter is easier to verify. Definitely in semi-Hausdorff theories, because then $a''b \equivls_M ab$ is equivalent to $a''b \equiv_M ab$, so we do not have to worry about Lascar strong types. For a concrete example of this, see \thref{fact:ecef-facts}(i). The only place where \textsc{Independence Theorem} is used, namely to get consistency along a certain sequence, we only need this weaker version.
\end{remark}
\begin{lemma}
\thlabel{lem:coheir-indep-implies-abstract-indep}
Let $\ind$ satisfy \textsc{Strong Finite Character}, \textsc{Existence}, \textsc{Monotonicity} and \textsc{Symmetry}. Then $a \ind_M^u b$ implies $a \ind_M b$.
\end{lemma}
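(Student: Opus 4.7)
The plan is a short contrapositive argument. Assume $a \ind_M^u b$, and suppose towards contradiction that $a \nind_M b$. By \textsc{Strong Finite Character} applied to $\nind$, there is a formula $\varphi(x, b, m) \in \tp(a/Mb)$ (with $m \in M$) such that every realisation $a'$ of $\varphi(x, b, m)$ satisfies $a' \nind_M b$.

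Next I would use the coheir hypothesis. Since $\tp(a/Mb)$ is finitely satisfiable in $M$ and $\varphi(x, b, m) \in \tp(a/Mb)$, there exists $a' \in M$ with $\models \varphi(a', b, m)$. By the choice of $\varphi$ we obtain $a' \nind_M b$, and then \textsc{Symmetry} yields $b \nind_M a'$.

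On the other hand, \textsc{Existence} gives $b \ind_M M$, and since $a' \in M$, \textsc{Monotonicity} produces $b \ind_M a'$. This contradicts $b \nind_M a'$, so $a \ind_M b$ as required.

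The only place that needs care is step one: \textsc{Strong Finite Character} is essential here, since ordinary \textsc{Finite Character} (as stated in the Kim--Pillay theorem) would not suffice to extract a single formula witnessing $\nind$ that can be shot down by a parameter from $M$. Everything else is a direct unfolding of the axioms, so there is no real obstacle beyond ensuring the witness formula lies in a language in which $M$-parameters may appear, which is automatic from the definition.
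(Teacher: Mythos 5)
Your proof is correct and is precisely the standard argument the paper invokes by reference (it says ``exactly as in Chernikov--Ramsey, Proposition 5.8''), namely: extract a formula by \textsc{Strong Finite Character}, realise it in $M$ via finite satisfiability, and derive a contradiction with \textsc{Existence} and \textsc{Monotonicity} after applying \textsc{Symmetry}. No differences in approach.
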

\begin{proof}
Exactly as in \cite[Proposition 5.8]{chernikov_model-theoretic_2015}.
\end{proof}
\begin{lemma}
\thlabel{lem:abstract-indep-local-character}
Let $\ind$ be as in \thref{lem:coheir-indep-implies-abstract-indep}. Then it satisfies \textsc{Local Character}.
\end{lemma}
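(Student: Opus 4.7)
I would proceed by contradiction, supposing that $a \nind_{M_i} M$ for every $i < \kappa$. \textsc{Strong Finite Character} then yields, for each $i$, a witnessing formula $\phi_i(x, b_i, m_i) \in \tp(a / M)$ with $b_i \subseteq M$ and $m_i \subseteq M_i$ finite tuples, such that any realisation of $\phi_i(x, b_i, m_i)$ is $\nind_{M_i}$-dependent on $b_i$. Since $\kappa > |T|$ is regular, pigeonholing on the $|T|$-many formula schemata leaves a cofinal $I_0 \subseteq \kappa$ on which $\phi_i$ is a single fixed formula $\phi$.

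Next, observe that no $a' \in M_i$ can realise $\phi(x, b_i, m_i)$: otherwise \textsc{Existence} (giving $b_i \ind_{M_i} M_i$) together with \textsc{Symmetry} and \textsc{Monotonicity} would force $a' \ind_{M_i} b_i$, contradicting the witness property. In particular $b_i \not\subseteq M_i$. Applying \thref{lem:coheir-indep-implies-abstract-indep} contrapositively to $a \nind_{M_i} b_i$ gives $a \nind^u_{M_i} b_i$, and $\phi(x, b_i, m_i)$ is itself a formula in $\tp(a / M_i b_i)$ unrealised in $M_i$, witnessing the failure of finite satisfiability. Continuity of the chain together with regularity of $\kappa$ then allows me to thin $I_0$ further to a cofinal $I_1 = \{i_\alpha : \alpha < \kappa\}$ with $b_{i_\alpha} \subseteq M_{i_{\alpha+1}}$ by a standard closing-off recursion.

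The final step is to extract a genuine contradiction from this data. By e.c.-ness of $M_{i_{\alpha+1}}$ and the fact that $\exists x\,\phi(x, b_{i_\alpha}, m_{i_\alpha})$ is a positive existential sentence over $M_{i_{\alpha+1}}$ that holds in $\MM$, we find $a^*_\alpha \in M_{i_{\alpha+1}}$ realising $\phi(x, b_{i_\alpha}, m_{i_\alpha})$; so $a^*_\alpha \nind_{M_{i_\alpha}} b_{i_\alpha}$ by the witness property. The main obstacle is the absence of base monotonicity for $\ind$: although \textsc{Existence} and \textsc{Monotonicity} applied inside $M_{i_{\alpha+1}}$ give $a^*_\alpha \ind_{M_{i_{\alpha+1}}} b_{i_\alpha}$, this does not directly contradict $a^*_\alpha \nind_{M_{i_\alpha}} b_{i_\alpha}$.

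To close this gap I would extract an indiscernible sequence $(\tilde b_j, \tilde m_j)_{j < \omega}$ from the long sequence $(b_{i_\alpha}, m_{i_\alpha})_{\alpha < \kappa}$ using \thref{lem:indiscernible-sequence-based-on-long-sequence}, transport the \textsc{Strong Finite Character} witness property along the ambient automorphisms taking finite subtuples of $(b_{i_\alpha}, m_{i_\alpha})$ to initial segments of $(\tilde b_j, \tilde m_j)$, and then invoke \thref{lem:coheir-indep-implies-abstract-indep} a second time, in combination with \textsc{Symmetry}, to convert the resulting consistency statement for $\phi(x, \tilde b_j, \tilde m_j)$ into an outright contradiction with the fact that $\phi(x, b_{i_0}, m_{i_0})$ has no realisation in $M_{i_0}$. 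The hard part is exactly this transport: ensuring that the indiscernible extraction preserves both the witness property and the side condition that the extracted base data still sits inside the original chain.
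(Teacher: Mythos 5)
Your first two paragraphs correctly reproduce the set-up of the argument in \cite[Theorem 3.2]{kaplan_local_2017}, and your observation that $\phi_i(x, b_i, m_i)$ cannot be realised inside $M_i$ is precisely the point at which \thref{lem:coheir-indep-implies-abstract-indep} enters: an $M_i$-realisation $a'$ would have $\tp(a'/M_i b_i)$ realised by $a'$ itself in $M_i$, hence $a' \ind^u_{M_i} b_i$, hence $a' \ind_{M_i} b_i$, contradicting the strong finite character witness. (Your alternative route via \textsc{Existence}, \textsc{Symmetry} and \textsc{Monotonicity} also works once you first invoke \textsc{Finite Character} to cut $M$ down to the finite tuple $b_i$, so that the witness property is genuinely about $b_i$ rather than about $M$.) This is exactly the observation the paper's one-line proof points to when it says that by \thref{lem:coheir-indep-implies-abstract-indep} ``the proof from \cite[Theorem 3.2]{kaplan_local_2017} applies'': the paper's own proof consists of that remark plus a citation to the remaining combinatorics.

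However, you have not actually proved the lemma. After locating the $a^*_\alpha \in M_{i_{\alpha+1}}$ and correctly diagnosing that the absence of base monotonicity blocks the naive conclusion, your final paragraph proposes extracting an indiscernible sequence $(\tilde b_j, \tilde m_j)_{j<\omega}$ and ``transporting the witness property,'' but you explicitly say that the transport is the hard part and you do not carry it out. As written there is no derivation of a final contradiction: you have a cofinal family $(b_{i_\alpha}, m_{i_\alpha})$ with $\phi(x, b_{i_\alpha}, m_{i_\alpha})$ realised by $a$, unrealised in $M_{i_\alpha}$, and $b_{i_\alpha} \subseteq M_{i_{\alpha+1}}$, but nothing connecting this configuration to the abstract hypotheses in a way that yields an inconsistency. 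It is also unclear that the route you sketch could succeed with only the properties of \thref{lem:coheir-indep-implies-abstract-indep} at hand: the natural tool to kill consistency of $\bigcup_j \phi(x, \tilde b_j, \tilde m_j)$ along an extracted sequence would be something like Kim's lemma, which is a property of $\ind^K$ in NSOP$_1$ theories and is not available for your abstract $\ind$. That remaining combinatorial core is precisely what the cited argument of \cite[Theorem 3.2]{kaplan_local_2017} supplies and what is missing from your proposal.
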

\begin{proof}
By \thref{lem:coheir-indep-implies-abstract-indep} the proof from \cite[Theorem 3.2]{kaplan_local_2017} applies. Our formulation of local character then follows.
\end{proof}
\begin{corollary}[Local Character]
\thlabel{cor:kim-independence-local-character}
In a thick NSOP$_1$ theory Kim-independence satisfies \textsc{Local Character}.
\end{corollary}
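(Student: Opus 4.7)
The plan is to deduce the corollary immediately from \thref{lem:abstract-indep-local-character} applied to the ternary relation $\ind^K$. So the entire content of the argument is to check that $\ind^K$ meets the hypotheses of \thref{lem:coheir-indep-implies-abstract-indep}, namely invariance plus \textsc{Strong Finite Character}, \textsc{Existence} over models, \textsc{Monotonicity} and \textsc{Symmetry}. Since \thref{lem:abstract-indep-local-character} quotes the proof of \cite[Theorem 3.2]{kaplan_local_2017}, which is a counting argument over a chain of models of regular length $\kappa > |T|$, there is no additional combinatorics to redo on the positive side; we just need to confirm that each invocation of the abstract axioms there is justified.

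First I would point out that invariance under automorphisms is built into the definition of Kim-dividing (\thref{def:kim-dividing}), because global Lascar-invariant extensions of a type move under automorphisms to global Lascar-invariant extensions of the image type. Then \textsc{Strong Finite Character}, \textsc{Existence} over models and \textsc{Monotonicity} are precisely \thref{prop:obvious-properties-kim-dividing}(i)--(iii), and \textsc{Symmetry} is \thref{thm:symmetry}, which in fact required the full NSOP$_1$ hypothesis on $T$ (this is the only place NSOP$_1$ enters, beyond its use in defining $\ind^K$ via Kim's Lemma).

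With these properties in hand, \thref{lem:coheir-indep-implies-abstract-indep} gives that $a \ind_M^u b$ implies $a \ind_M^K b$, and then \thref{lem:abstract-indep-local-character} yields the statement of \textsc{Local Character} exactly in the form we want: for a finite tuple $a$, a regular cardinal $\kappa > |T|$, and a continuous chain $(M_i)_{i < \kappa}$ of e.c.\ models with $|M_i| < \kappa$ and union $M$, there is some $i < \kappa$ with $a \ind_{M_i}^K M$.

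The only step that needs any care, and which I would expect to be the sole potential obstacle, is the invocation of the coheir-based local-character argument of Kaplan--Ramsey in the positive setting: one must make sure that the finite-satisfiability relation $\ind^u$ still makes sense over arbitrary e.c.\ models and that the pigeonhole/counting step from \cite[Theorem 3.2]{kaplan_local_2017} does not implicitly use full first-order negation. Since $\ind^u$ was defined in \thref{def:heir-coheir} directly for positive formulas and since the counting step depends only on the boundedness of the number of positive types over a small set, this transfers verbatim. Thus no essentially new work is required and the corollary follows.
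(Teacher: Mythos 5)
Your proposal is correct and matches the paper's (implicit) argument: the corollary is stated with no proof precisely because it follows at once by feeding $\ind^K$ into \thref{lem:abstract-indep-local-character}, whose hypotheses are supplied by \thref{prop:obvious-properties-kim-dividing}(i)--(iii) and \thref{thm:symmetry}. Your additional remarks about why the counting argument of \cite[Theorem 3.2]{kaplan_local_2017} transfers to the positive setting are accurate but already absorbed into the paper's \thref{lem:abstract-indep-local-character}.
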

\begin{remark}
\thlabel{rem:kim-independence-local-character}
In \cite{kaplan_local_2017} there are also different formulations of \textsc{Local Character}, for example in terms of club sets of $[M]^{|T|}$. Since their arguments apply directly, these formulations also hold for Kim-independence in any thick NSOP$_1$ theory.
\end{remark}
The following definition is based on the notion of isi-dividing from \cite{kamsma_kim-pillay_2020}.
\begin{definition}
\thlabel{def:long-dividing}
We say that a type $p(x, b) = \tp(a/Cb)$ \emph{long divides} over $C$ if there is $\mu$ such that for every $\lambda \geq \mu$ there is a sequence $(b_i)_{i < \lambda}$ with $b_i \equiv_C b$ for all $i < \lambda$ such that for some $\kappa < \lambda$ and every $I \subseteq \lambda$ with $|I| \geq \kappa$ we have that $\bigcup_{i \in I} p(x, b_i)$ is inconsistent. We write $a \ind_C^{ld} b$ if $\tp(a/Cb)$ does not long divide over $C$.
\end{definition}
There is a close connection between long dividing and dividing. Even though we do not need this connection in our proofs, it is still interesting to explore it. Dividing implies long dividing. Given an indiscernible sequence that witnesses dividing of a type $p$, we can use compactness to make it as long as we wish. So we find arbitrarily long sequences where $p$ is inconsistent along any infinite subsequence, so $p$ long divides. The converse is not so clear to us.
\begin{question}
\thlabel{q:long-dividing}
Does long dividing imply dividing?
\end{question}
At least if we assume the existence of a proper class of Ramsey cardinals then the answer is positive. To see this, suppose that $p$ long divides and let $\lambda$ be a big enough Ramsey cardinal. Then there is some sequence $(b_i)_{i < \lambda}$ witnessing that $p$ long divides. Since we assumed $\lambda$ to be Ramsey there is a cofinal $I \subseteq \lambda$ such that $(b_i)_{i \in I}$ is indiscernible. By the definition of long dividing $\bigcup_{i \in I} p(x, b_i)$ is then inconsistent and so we conclude that $p$ divides.
\begin{lemma}
\thlabel{lem:ls-invariance-nondividing-implies-isi-nondividing}
We have that $a \ind_C^{iLs} b$ implies $a \ind_C^{ld} b$.
\end{lemma}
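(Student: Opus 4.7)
The plan is to prove this directly: assume we have a global $C$-Ls-invariant extension $q$ of $p(x,b) := \tp(a/Cb)$, and show that $p(x,b)$ does not long divide over $C$. The key input is that Lascar strong equivalence over $C$ is a bounded equivalence relation (condition (ii) of \thref{def:lascar-strong-type}), so the realisations of $\tp(b/C)$ fall into only boundedly many $\equivls_C$-classes; call this bound $\nu$.

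Given $\mu$ I would choose a regular cardinal $\lambda \geq \mu$ with $\lambda > \nu$. Now let $(b_i)_{i<\lambda}$ be any sequence with $b_i \equiv_C b$. By regularity of $\lambda$ together with $\lambda > \nu$, pigeonhole yields some $b'$ (with $b' \equiv_C b$) such that $\{i<\lambda : b_i \equivls_C b'\}$ has cardinality $\lambda$. Pick $\sigma \in \Aut(\MM/C)$ with $\sigma(b)=b'$; then $\sigma(q)$ is again global $C$-Ls-invariant by \thref{lem:ls-invariance-easy-facts}(i), and it extends $\tp(\sigma(a)/Cb') = p(x,b')$ (using that $\sigma(a)b' \equiv_C ab$, so $p(x,y)$ realised at $b'$ is precisely $\tp(\sigma(a)/Cb')$). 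Taking $\alpha' \models \sigma(q)$ in a larger monster, $C$-Ls-invariance of $\sigma(q)$ gives $\alpha' b' \equivls_C \alpha' b_i$, and hence $\alpha' \models p(x,b_i)$, for every $i$ with $b_i \equivls_C b'$.

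It follows that $\bigcup\{p(x,b_i) : b_i \equivls_C b'\}$ is consistent (being realised by $\alpha'$), and since its index set has cardinality $\lambda$, for every $\kappa < \lambda$ we can extract $I \subseteq \lambda$ of cardinality $\geq \kappa$ along which $p$ is consistent. This shows that the fixed $\mu$ cannot witness long dividing, and as $\mu$ was arbitrary, $a \ind_C^{ld} b$.

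The only genuine subtlety is that the sequence $(b_i)_{i<\lambda}$ is a priori only a sequence of $C$-conjugates of $b$, and none of the $b_i$'s need be Lascar equivalent to $b$ itself; one cannot simply realise $q$ and apply Ls-invariance directly. The remedy is the automorphism $\sigma$, transporting $q$ to the $\equivls_C$-class singled out by pigeonhole, which is harmless because $C$-Ls-invariance is preserved by elements of $\Aut(\MM/C)$. Apart from this bookkeeping, the argument is the standard ``invariant types yield many simultaneous realisations'' trick, with Lascar strong types playing the role that complete types play in the classical implication ``non-forking over $M$ implies non-dividing''.
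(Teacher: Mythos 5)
Your proof is correct and follows essentially the same path as the paper's: both use boundedness of Lascar strong types over $C$ to extract (via pigeonhole on a regular cardinal beyond that bound) a $\lambda$-sized subfamily of the $b_i$'s lying in a single $\equivls_C$-class, and then use a global $C$-Ls-invariant extension of the type over a chosen representative of that class to produce a simultaneous realisation; the only cosmetic difference is that you transport the given $q$ by an automorphism $\sigma$ while the paper chooses a fresh $a'$ conjugate to $a$ and takes a new $C$-Ls-invariant extension over $b_{i_0}$, which amount to the same thing.
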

\begin{proof}
Let $p(x, y) = \tp(ab/C)$ and let $\lambda$ be any regular cardinal bigger than the number of Lascar strong types over $C$ (compatible with $b$). Let $(b_i)_{i < \lambda}$ be any sequence in $\tp(b/C)$. By choice of $\lambda$ there must be $I \subseteq \lambda$ such that $b_i \equivls_C b_j$ for all $i, j \in I$ and $|I| = \lambda$. Pick some $i_0 \in I$ and let $a'$ be such that $a'b_{i_0} \equiv_C ab$. By assumption $a \ind_C^{iLs} b$, so $a' \ind_C^{iLs} b_{i_0}$. Let $q \supseteq \tp(a'/Cb_{i_0})$ be a global $C$-Ls-invariant extension and let $\alpha \models q$. Then $\alpha b_i \equivls_C \alpha b_{i_0}$ for all $i \in I$, so $\bigcup \{ p(x, b_i) : i \in I \}$ is consistent.
\end{proof}
\begin{definition}
\thlabel{def:independent-sequence}
Let $\ind$ be some independence relation and let $(a_i)_{i < \kappa}$ be some sequence. Suppose furthermore that there is a continuous chain $(M_i)_{i < \kappa}$ of e.c.\ models, with $M \subseteq M_0$, such that $a_{<i} \subseteq M_i$ and $a_i \ind_M M_i$ for all $i < \kappa$. Then we call $(M_i)_{i < \kappa}$ an \emph{$\ind_M$-independence chain (for $(a_i)_{i < \kappa}$)}.
\end{definition}
\begin{remark}
\thlabel{rem:building-witnesses-of-independence}
Let $\ind$ be an independence relation satisfying \textsc{Existence} and \textsc{Extension}, let $a$ be any tuple and let $M$ be any model. Then as usual we can inductively build arbitrarily long sequences $(a_i)_{i < \kappa}$ together with an $\ind_M$-independence chain $(M_i)_{i < \kappa}$, such that $a \equiv_M a_i$ for all $i < \kappa$.
\end{remark}
The following is adapted from one half of the original Kim-Pillay theorem, and occurs in \cite[Theorem 1.1]{kamsma_kim-pillay_2020}. We just have to check that the use of base-monotonicity can be replaced with our more carefully formulated form of local character.
\begin{proposition}
\thlabel{prop:isi-nondividing-implies-abstract-independence}
Let $\ind$ be as in \thref{thm:kim-pillay-style} then $a \ind_M^{ld} b$ implies $a \ind_M b$.
\end{proposition}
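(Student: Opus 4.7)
I would argue the contrapositive: assume $a \nind_M b$ and show that $\tp(a/Mb)$ long divides over $M$. By \textsc{Finite Character} we may replace $a$ with a finite subtuple, so assume $a$ is finite. Fix a regular strong-limit cardinal $\kappa_0 > |T| + |M| + |b|$ and, for each $\lambda \geq \kappa_0$, construct a sequence $(b_i)_{i<\lambda}$ of realisations of $\tp(b/M)$ witnessing long dividing with threshold $\kappa = \kappa_0$. Using \textsc{Existence} (giving $b \ind_M M$) and \textsc{Extension} iteratively, build $(b_i)_{i<\lambda}$ simultaneously with a continuous $\ind_M$-independence chain $(P_i)_{i<\lambda}$ of sufficiently saturated e.c.\ models: $M \subseteq P_0$, $\bigcup_{j<i} P_j \cup b_{<i} \subseteq P_i$, $b_i \equiv_M b$, and $b_i \ind_M P_i$. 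Saturation is arranged via \thref{fact:saturated-models}, and the strong-limit assumption on $\kappa_0$ keeps the $P_i$'s small enough for the extraction step below.

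Now suppose for contradiction that some $I \subseteq \lambda$ with $|I| \geq \kappa_0$ admits $a^* \models \bigcup_{i \in I} p(x, b_i)$, where $p(x,b) = \tp(a/Mb)$; this forces $a^* b_i \equiv_M ab$ for every $i \in I$. Enumerate the first $\kappa_0$ elements of $I$ in increasing order as $(i_j)_{j<\kappa_0}$. Using saturation of $P_{i_j+1}$, build a continuous chain $(N_j)_{j<\kappa_0}$ of e.c.\ models of size $<\kappa_0$ with $M \cup \{b_{i_{j'}} : j' \leq j\} \subseteq N_j \subseteq P_{i_j+1}$. Apply \textsc{Local Character} to the finite tuple $a^*$ and the chain $(N_j)_{j<\kappa_0}$ to obtain $j_0 < \kappa_0$ with $a^* \ind_{N_{j_0}} N_{\kappa_0}$. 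Fix any $j > j_0$. Since $b_{i_j} \in N_{j+1} \subseteq N_{\kappa_0}$, \textsc{Monotonicity} gives $a^* \ind_{N_{j_0}} b_{i_j}$; and since $N_{j_0} \subseteq P_{i_{j_0}+1} \subseteq P_{i_j}$ together with $b_{i_j} \ind_M P_{i_j}$, the same axiom gives $b_{i_j} \ind_M N_{j_0}$. Combining via \textsc{Symmetry} (so $b_{i_j} \ind_{N_{j_0}} a^*$), then \textsc{Transitivity} with base $M \subseteq N_{j_0}$ (yielding $b_{i_j} \ind_M N_{j_0} a^*$), then \textsc{Monotonicity} and \textsc{Symmetry}, we arrive at $a^* \ind_M b_{i_j}$. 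Since $a^* b_{i_j} \equiv_M ab$, automorphism invariance of $\ind$ forces $a \ind_M b$, contradicting our assumption. Hence the union is inconsistent for every $I$ of size $\geq \kappa_0$, which is exactly long dividing.

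The main obstacle is the cardinal-arithmetic and saturation bookkeeping needed to produce the auxiliary chain $(N_j)_{j<\kappa_0}$ inside the models $P_{i_j+1}$ while keeping each $|N_j| < \kappa_0$; choosing $\kappa_0$ to be a regular strong-limit cardinal and each $P_i$ sufficiently saturated makes a downward Löwenheim-Skolem-style extraction of small e.c.\ submodels available. Once this is in place, the argument is a direct combination of the axioms in \thref{thm:kim-pillay-style}, with the role played by base-monotonicity in the classical proof taken over by the \textsc{Symmetry} + \textsc{Transitivity} manoeuvre that pushes the base of $a^* \ind_{N_{j_0}} b_{i_j}$ down from $N_{j_0}$ to $M$.
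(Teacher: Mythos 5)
Your proof is correct and is essentially the paper's own argument, reorganised as a contrapositive: build a long sequence with an $\ind_M$-independence chain, shrink to a small continuous chain of e.c.\ models via downward L\"owenheim-Skolem, apply \textsc{Local Character}, and push the base down from the small model to $M$ via \textsc{Symmetry} and \textsc{Transitivity}. Two trivial points to tidy up: the reduction to finite $a$ also requires \textsc{Symmetry} (since \textsc{Finite Character} as stated acts on the right-hand argument), and the threshold $\mu$ in the long-dividing witness should be taken strictly above $\kappa_0$ so that the inner cardinal $\kappa$ is strictly less than $\lambda$.
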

Note that we will actually not need \textsc{Independence Theorem} here.
\begin{proof}
It follows directly from the definition of long dividing that $\ind^{ld}$ has monotonicity on the left side. So by \textsc{Finite Character} and \textsc{Symmetry} we may assume $a$ to be finite.

By \thref{rem:building-witnesses-of-independence} we find a sequence $(b_i)_{i < \kappa}$ with an $\ind_M$-independence chain $(M_i)_{i < \kappa}$ such that $b \equiv_M b_i$ for all $i < \kappa$. Picking the right $\kappa > (|T| + |M|)^+$ there must be $I \subseteq \kappa$ with order type $(|T| + |M|)^+$ such that $\bigcup_{i \in I} p(x, b_i)$ is consistent, where $p(x, y) = \tp(ab/M)$. Let $a'$ be a realisation of this set. By \textsc{Monotonicity} and downward L\"owenheim-Skolem we may assume that $(M_i)_{i \in I}$ is a continuous chain with $|M_i| \leq |T| + |M|$ for all $i \in I$. Then by \textsc{Local Character} there is $i_0 \in I$ such that $a' \ind_{M_{i_0}} M_I$, where $M_I = \bigcup_{i \in I} M_i$. By \textsc{Monotonicity} we have $a' \ind_{M_{i_0}} b_{i_0}$ and by construction we also have $b_{i_0} \ind_M M_{i_0}$. So by \textsc{Symmetry} and \textsc{Transitivity} we obtain $a' \ind_M b_{i_0}$. The result now follows because $a' b_{i_0} \equiv_M ab$.
\end{proof}
We note that in the above proof it is relevant that we work with long dividing instead of dividing. This is because the application of \textsc{Local Character} only really makes sense if the chain consists of e.c.\ models, as we only allow e.c.\ models in the base. At the same time we need those e.c.\ models to form an independence chain for the rest of the proof to work. If we would try to follow the same proof just for dividing then we would have to work with indiscernible sequences. Finding an indiscernible $\ind$-independent sequence is not an issue. This can be done as usual: we first build a very long $\ind$-independent sequence and then base an indiscernible sequence on it. This preserves being $\ind$-independent due to \textsc{Finite Character}, but it does not carry over the independence chain. In long dividing this is not an issue, because we work directly with the very long sequence we constructed. So any `decorations', such as the independence chain, are then at our disposal.

The following lemma and its proof are a weaker version of the chain condition for $\ind^K$-Morley sequences (\thref{lem:chain-lemma-kim-morley-sequences}) that works for long enough $\ind^K$-independent sequences.
\begin{lemma}
\thlabel{lem:kim-nondividing-implies-kim-isi-sequence-is-consistent}
Let $T$ be a thick NSOP$_1$ theory. Suppose that $a \ind_M^K b$. Let $(b_i)_{i < \kappa}$ be an $\ind_M^K$-independent sequence, where $\kappa$ is a regular cardinal larger than the number of Lascar strong types over $M$ (compatible with $b$) and where $b \equiv_M b_i$ for all $i < \kappa$. Then there is $I \subseteq \kappa$ with $|I| = \kappa$ such that $\bigcup_{i \in I} p(x, b_i)$ does not Kim-divide (and is thus consistent), where $p(x, y) = \tp(ab/M)$.
\end{lemma}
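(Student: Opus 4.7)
The plan is to reduce to a situation where the generalised independence theorem (\thref{cor:g-compact}(iii)) applies directly. First, since the number of Lascar strong types over $M$ compatible with $\tp(b/M)$ is bounded and $\kappa$ is regular and strictly larger than this bound, pigeonhole yields $I \subseteq \kappa$ with $|I| = \kappa$ such that all $b_i$ for $i \in I$ share a common Lascar strong type over $M$. By monotonicity the subsequence $(b_i)_{i \in I}$ remains $\ind^K_M$-independent.

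Next I would fix $i_0 \in I$ and, using $b_{i_0} \equiv_M b$, pick $a^*$ with $a^* b_{i_0} \equiv_M ab$, so that $a^* \ind^K_M b_{i_0}$ by invariance. For each $i \in I$ the relation $b_i \equivls_M b_{i_0}$ yields a Lascar strong automorphism $\tau_i$ over $M$ sending $b_{i_0}$ to $b_i$; set $a_i := \tau_i(a^*)$. Then $a_i b_i \equivls_M a^* b_{i_0}$, hence $a_i \equivls_M a^*$, $a_i b_i \equiv_M ab$, and $a_i \ind^K_M b_i$ by Lascar-strong invariance of Kim-independence.

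The generalised independence theorem, applied (after using symmetry, \thref{thm:symmetry}) to the $\ind^K_M$-independent sequence $(b_i)_{i \in I}$ with the $a_i$ playing the role of the ``$b_i$'' of \thref{cor:g-compact}(iii) and $a^*$ playing the role of ``$b$'', then produces an element $a'$ with $a' b_i \equivls_M a_i b_i$ for every $i \in I$ and $a' \ind^K_M (b_i)_{i \in I}$. Since $a_i b_i \equiv_M ab$, it follows that $a' \models p(x, b_i)$ for every $i \in I$, so $a'$ realises $\bigcup_{i \in I} p(x, b_i)$; this partial type is contained in $\tp(a'/M(b_i)_{i \in I})$, which does not Kim-divide over $M$, hence neither does $\bigcup_{i \in I} p(x, b_i)$.

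The main thing to get right is the bookkeeping when matching parameters with the statement of \thref{cor:g-compact}(iii), and confirming that the initial pigeonhole on Lascar strong types is legitimate under the cardinality assumption on $\kappa$. Once that is in place, the remainder of the argument is a direct application.
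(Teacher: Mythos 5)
Your proposal is correct and follows the same route as the paper: pigeonhole on the (boundedly many) Lascar strong types over $M$ to find $I$ of full size with all $b_i$ Lascar-equivalent, then invoke the generalised independence theorem (\thref{cor:g-compact}(iii)) with the $b_i$ as the independent sequence and Lascar-conjugate copies of $a$ as the varying elements. The paper's proof is just a one-line compression of exactly this argument (the parenthetical appeal to symmetry in your write-up is unnecessary, since the hypothesis $a_i \ind^K_M b_i$ is already in the orientation required by \thref{cor:g-compact}(iii), but this causes no harm).
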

\begin{proof}
By the choice of $\kappa$ there is $I \subseteq \kappa$ with $|I| = \kappa$ such that $b_i \equivls_M b_j$ for all $i,j \in I$. We conclude by the generalised independence theorem (\thref{cor:g-compact}(iii)).


\end{proof}
\begin{proof}[Proof of \thref{thm:kim-pillay-style}]
We already proved that $\ind^K$ has all the listed properties if $T$ is NSOP$_1$. So now we assume that we have an abstract independence relation $\ind$ satisfying the listed properties and we prove that $\ind = \ind^K$ and that $T$ is NSOP$_1$.

\textbf{The direction $a \ind_M b \implies a \ind_M^K b$.} This proof is based on the proof of the same direction in \cite[Theorem 9.1]{kaplan_kim-independence_2020}. Let $p(x, b) = \tp(a/Mb)$ and let $q$ be any global $M$-Ls-invariant extension of $\tp(b/M)$. Then a Morley sequence $(b_i)_{i < \omega}$ in $q$ is a $\ind_M$-Morley sequence by \thref{lem:ls-invariance-nondividing-implies-isi-nondividing} and \thref{prop:isi-nondividing-implies-abstract-independence}. By the standard \textsc{Independence Theorem} argument we thus find that $\bigcup_{i < \omega} p(x, b_i)$ is consistent, hence $a \ind_M^K b$.

\textbf{The theory $T$ is NSOP$_1$.} We prove weak symmetry as in \thref{thm:symmetry-iff-nsop1}. So suppose that $a \ind_M^{iLs} b$. Then combining \thref{lem:ls-invariance-nondividing-implies-isi-nondividing} and \thref{prop:isi-nondividing-implies-abstract-independence} again we get $a \ind_M b$. So by \textsc{Symmetry} we have $b \ind_M a$ and then $b \ind_M^K a$ follows from the above.

\textbf{The direction $a \ind_M^K b \implies a \ind_M b$.} This proof is based on the proof of the same direction in \cite[Theorem 5.1]{chernikov_transitivity_2020}. By \thref{rem:building-witnesses-of-independence} we obtain a long enough sequence $(b_i)_{i < \kappa}$ with an $\ind_M$-independence chain $(M_i)_{i < \kappa}$ and $b_i \equiv_M b$ for all $i < \kappa$. By the above $(M_i)_{i < \kappa}$ is also an $\ind_M^K$-independence chain. So by \thref{lem:kim-nondividing-implies-kim-isi-sequence-is-consistent} there is $I \subseteq \kappa$ with order type $\kappa$ such that $\bigcup_{i \in I} p(x, b_i)$ is consistent, where $p(x, b) = \tp(a/Mb)$. Let $a'$ be a realisation of this set. By deleting an end-segment, \textsc{Monotonicity} and downward L\"owenheim-Skolem we may assume that $(M_i)_{i \in I}$ is a continuous chain with $|M_i| \leq |T| + |M|$ for all $i \in I$ and $I$ has order type $(|T| + |M|)^+$. By \textsc{Local Character} there is $i_0 \in I$ such that $a' \ind_{M_{i_0}} M_I$, where $M_I = \bigcup_{i \in I} M_i$, and thus $a' \ind_{M_{i_0}} b_{i_0}$. We also have $b_{i_0} \ind_M M_{i_0}$ so by \textsc{Symmetry} and \textsc{Transitivity} we get $a' \ind_M b_{i_0}$, hence $a \ind_M b$.
\end{proof}
\section{Examples}
\label{sec:examples}
In this section we present some examples of thick NSOP$_1$ theories. First, we recall Poizat's example of a thick non-semi-Hausdorff theory (which is bounded hence NSOP$_1$). Next, we look at (the JEP refinements of) the positive theory of existentially closed exponential fields, which was shown to be NSOP$_1$ in \cite{haykazyan_existentially_2021} by constructing a suitable independence relation. We deduce from the known results that this theory is Hausdorff (hence thick), and then we show that Kim-independence coincides in it with the independence relation studied in \cite{haykazyan_existentially_2021}. Finally, we show that NSOP$_1$ is preserved under taking hyperimaginary extensions; in particular, the hyperimaginary extension of an arbitrary NSOP$_1$ theory in full first-order logic is a Hausdorff NSOP$_1$ theory.

Let us also briefly mention the class of non-simple NSOP$_1$ thick theories found recently in \cite{acf0g}.
For any integral domain $R$, the authors consider in the language of rings enriched by a predicate $P$ and constants for elements of $R$ the theory $F_{R\text{-module}}$: the theory of fields together with the quantifier-free diagram of $R$ and where $P$ defines an $R$-submodule. By \cite[Theorem 4.2, Theorem 4.8]{acf0g}, for any integral domain $R$, the theory  $F_{R\text{-module}}$ is non-simple and NSOP$_1$ in the sense of positive logic. Also, by \cite[Remark 4.9]{acf0g} it is thick and Kim-independence in the sense of our paper coincides there with weak independence, as defined in \cite[Definition 4.4]{acf0g}. In the particular case $R=\mathbb{Z}$ this shows that the theory of algebraically closed fields of characteristic zero with a generic additive subgroup, which is known to be non-companionable by \cite[Remark 1.20]{delbee0}, is non-simple and NSOP$_1$ in positive logic (cf.\ \cite[Remark 5.35]{delbee}).

\subsection{A thick, non-semi-Hausdorff theory}
\label{subsec:thick-non-semi-hausdorff}
The following is an example of a thick non-semi-Hausdorff theory from \cite[Section 4]{poizat_positive_2018}. Consider a language $L=\{P_n,R_n:n<\omega\}\cup\{r\}$ where $P_n$'s and $R_n$'s are unary relation symbols and $r$ is a binary relation symbol. Let $M = \{a_n,b_n:n<\omega \}$ be an $L$-structure with $a_0,b_0,a_1,a_2,\dots$ pairwise distinct, in which $P_n$ is interpreted as $\{a_n,b_n\}$, $R_n$ as the complement of $P_n$, and $r$ as the symmetric anti-reflexive relation $\{(a_n,b_n),(b_n,a_n):n<\omega\}$. Let $T$ be the  h-inductive theory of the structure $M$. Then the models of $T$ are bounded (in fact any e.c.\ extension of $M$ adds at most two new points), so $T$ is thick (and also NSOP$_1$). However, $T$ is not semi-Hausdorff. In fact, it was observed by Rosario Mennuni that the unique non-algebraic maximal type over $M$ does not have any global $M$-invariant extensions. This shows that, in the definition of Kim-independence in thick theories, it is necessary to work with Ls-invariant types rather than just invariant types. This is also an example where having the same type over an e.c.\ model does not guarantee having the same Lascar strong type (over that model).
\subsection{Existentially closed exponential fields}
\label{subsec:ecef}
In \cite{haykazyan_existentially_2021} the class of existentially closed exponential fields is studied using positive logic. They prove that this is NSOP$_1$ by providing a nice enough independence relation. We verify that this independence relation is indeed Kim-independence.
\begin{definition}
\thlabel{def:ecef}
An \emph{exponential field} or \emph{E-field} is a field of characteristic zero with a group homomorphism $E$ from the additive group to the multiplicative group. We call such a field an \emph{EA-field} if it is also an algebraically closed field. We can axiomatise EA-fields by a positive theory and call this theory $T_\EAfield$. The existentially closed exponential fields are then the e.c.\ models of $T_\EAfield$.
\end{definition}
Our definition is slightly different from \cite{haykazyan_existentially_2021} where they consider the class of e.c.\ models of just the theory of E-fields. However, these classes of e.c.\ models coincide, see \cite[Proposition 3.3]{haykazyan_existentially_2021} and the discussion after it.

There are also many different JEP-refinements, see \cite[Corollary 4.6]{haykazyan_existentially_2021}. To work in a monster model we need to fix one such JEP-refinement. This is not an issue, since everything we discuss here works in any JEP-refinement.
\begin{definition}[{\cite[Definition 5.1]{haykazyan_existentially_2021}}]
\thlabel{def:ecef-independence}
For any set $A$ write $\langle A \rangle^\EA$ for the smallest EA-subfield containing $A$. We define an independence notion $\ind$ as follows
\[
A \ind_C B \quad \Longleftrightarrow \quad \langle AC \rangle^\EA \ind_{\langle C \rangle^\EA}^\ACF \langle BC \rangle^\EA,
\]
where $\ind^\ACF$ is the usual independence relation in algebraically closed fields.
\end{definition}
Note that the independence relation $\ind$ actually makes sense over arbitrary sets. It would be interesting to compare this once Kim-independence over arbitrary sets has been developed in positive logic (see \thref{q:sets} below). For now we will restrict ourselves to working over e.c.\ models.
\begin{fact}
\thlabel{fact:ecef-facts}
We recall the following facts about $T_\EAfield$.
\begin{enumerate}[label=(\roman*)]
\item The independence relation $\ind$ satisfies \textsc{Strong Finite Character}, \textsc{Existence}, \textsc{Monotonicity}, \textsc{Symmetry}, \textsc{Independence Theorem}.
\item Any span $F_1 \leftarrow F \to F_2$ of embeddings of EA-fields can be amalgamated in such a way that, after embedding the result into the monster model, $F_1 \ind_F F_2$.
\item For EA-fields $F_1$ and $F_2$, if $\qftp(F_1) = \qftp(F_2)$ then $\tp(F_1) = \tp(F_2)$.
\end{enumerate}
\end{fact}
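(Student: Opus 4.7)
The plan is to treat the three parts separately, relying heavily on the framework set up in \cite{haykazyan_existentially_2021}; indeed (i)--(iii) correspond to results proved there and our task is to indicate where each appears and how it would be verified.

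For part (ii), I would construct the amalgam of $F_1 \leftarrow F \to F_2$ in two stages. First take the field-theoretic free compositum: form the tensor product $F_1 \otimes_F F_2$ inside an ambient algebraically closed field and take the algebraic closure $K = \langle F_1 F_2 \rangle^{\textup{alg}}$, so that $F_1$ and $F_2$ are algebraically independent over $F$ by construction. The non-trivial point is to extend the exponential maps on $F_1$ and $F_2$ to a single group homomorphism $E : (K,+) \to (K^\times,\cdot)$ agreeing with the given ones on $F_i$; this is done by picking a $\mathbb{Q}$-linear complement of $F_1 + F_2$ in $K$, defining $E$ freely on a $\mathbb{Q}$-basis of it, and using divisibility of the multiplicative group of an algebraically closed field to check compatibility on the intersection $F_1 \cap F_2 = F$ (this last agreement is automatic because both $E|_{F_i}$ restrict to the exponential on $F$). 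Once embedded into the monster, algebraic independence $F_1 \ind^{\ACF}_F F_2$ holds by construction, and since $\langle F_i\rangle^{\textup{EA}} = F_i$, we obtain $F_1 \ind_F F_2$.

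For part (iii), the idea is that in a positive Robinson-style setting, the positive existential type of a tuple in an e.c.\ model is determined by quantifier-free information whenever the structure generated by the tuple is ``rich enough'' to embed uniquely. Concretely, given EA-fields $F_1,F_2 \subseteq \MM$ with $\qftp(F_1) = \qftp(F_2)$, I would use part (ii) (over the prime EA-field) to move the $F_i$ to a common copy, and then invoke the fact that any isomorphism of EA-subfields extends to an automorphism of an e.c.\ extension (and hence of $\MM$ by homogeneity). This uses that EA-fields are amalgamation bases for the class, which is built into the construction of part (ii).

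For part (i), each property can be checked routinely from (ii) and (iii). Existence, monotonicity and symmetry are immediate transfers from the corresponding properties of $\ind^{\ACF}$ over an algebraically closed base, using that $\langle M \rangle^{\textup{EA}} = M$ for an e.c.\ model $M$. Strong finite character follows because algebraic dependence over $\langle M \rangle^{\textup{EA}}$ is witnessed by a single polynomial equation, and such a witness yields a positive existential formula in the qf-type of the tuple, whose realisations all witness dependence. The serious work is the \emph{independence theorem}: given $a \ind_M b$, $a' \ind_M c$, $b \ind_M c$ with $a \equivls_M a'$, part (ii) supplies an amalgam of the EA-hulls in which the three independence relations are preserved (taking pairwise algebraic independence over $\langle M \rangle^{\textup{EA}}$ in $\ACF$), and part (iii) then upgrades equality of qf-types to equality of types, yielding the required $a''$. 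The main obstacle, as usual, is checking that the ACF-level amalgamation can be done simultaneously respecting all three independences and that the exponential can be extended consistently to the triple amalgam; this is precisely where the argument of \cite{haykazyan_existentially_2021} does the real work.
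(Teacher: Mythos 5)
The paper treats this as a ``Fact'' whose proof is essentially a citation: parts (i) and (ii) are \cite[Theorem 6.5]{haykazyan_existentially_2021} and \cite[Theorem 4.3]{haykazyan_existentially_2021} respectively (with the observation that $F_1 \ind_F F_2$ is immediate from the proof of the latter), and (iii) is noted to follow from (ii). Your proposal instead tries to re-derive the results from scratch. That is a legitimately different route, and your sketches of (ii) (free compositum plus extending the exponential via a $\mathbb{Q}$-linear complement and divisibility of $K^\times$) and (iii) (amalgamate over $F_1$ along the isomorphism $F_1 \cong F_2$ and use homogeneity) are correct in outline. But for (i), you explicitly defer ``the real work'' of the independence theorem to \cite{haykazyan_existentially_2021}, so your argument is really a citation in disguise for the hardest part, and your sketch of strong finite character is too hasty: the EA-hull $\langle Ma\rangle^\EAfield$ is built by transfinitely iterating algebraic closure and exponential closure, so exhibiting a dividing witness as a single positive existential formula requires some argument (it works because algebraic closure and the exponential are positive existentially definable, but you should say so).

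The more serious omission is the one substantive point the paper actually adds beyond citation: the statement of \textsc{Independence Theorem} here uses Lascar strong types ($a \equivls_M a'$, and the conclusion $a''b \equivls_M ab$, etc.), whereas Haykazyan and Kirby state their independence theorem in terms of ordinary types over e.c.\ models. These two formulations coincide only because $T_\EAfield$ is Hausdorff (\thref{prop:ecef-hausdorff}), which forces types over e.c.\ models to be Lascar strong types. Your proposal writes the hypothesis $a \equivls_M a'$ but never explains why the conclusion can be obtained with $\equivls_M$ rather than just $\equiv_M$, nor that the translation relies on Hausdorffness. Without this, your part (i) does not actually establish the statement as written in this paper.
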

\begin{proof}
(i) This is \cite[Theorem 6.5]{haykazyan_existentially_2021}. They do not mention Lascar strong types in their formulation of \textsc{Independence Theorem}. However, as we will see in \thref{prop:ecef-hausdorff}, the theory is Hausdorff, so the types over e.c.\ models are Lascar strong types.

(ii) This is \cite[Theorem 4.3]{haykazyan_existentially_2021}. The fact that $F_1 \ind_F F_2$ is not mentioned in \cite[Theorem 4.3]{haykazyan_existentially_2021}, but it is direct from their proof.

(iii) This follows directly from (ii).
\end{proof}

To apply our theorem, \thref{thm:kim-pillay-style}, we need to verify a few more things.
\begin{proposition}
\thlabel{prop:ecef-hausdorff}
The theory $T_\EAfield$ is Hausdorff.
\end{proposition}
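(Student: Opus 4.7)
The plan is to show Hausdorff-ness by combining the determination of types by quantifier-free types (\thref{fact:ecef-facts}(iii)) with the positive-existential definability of inequality in a field. The main idea is that two distinct types must differ on some atomic formula $t(x) = s(x)$ between terms in the language of EA-fields, and both this formula and its negation $\exists z((t(x) - s(x)) \cdot z = 1)$ are positive existential, yielding the required separation.

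I would begin by showing that if $a \models p$ and $b \models q$ with $p \neq q$, then $\qftp(a) \neq \qftp(b)$. Indeed, if their quantifier-free types agreed, the assignment $a \mapsto b$ would extend to an isomorphism $\langle a \rangle^\EA \to \langle b \rangle^\EA$ of EA-fields, since the isomorphism type of $\langle a \rangle^\EA$ with $a$ distinguished depends only on $\qftp(a)$. Enumerating the two EA-fields compatibly one then has the same quantifier-free type as structures, so by \thref{fact:ecef-facts}(iii) the same positive existential type. Very homogeneity of the monster model then produces an automorphism of $\MM$ sending $a$ to $b$, contradicting $\tp(a) \neq \tp(b)$.

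Next, I would extract a separating pair of formulas from an atomic witness. Since $\qftp(a) \neq \qftp(b)$, there is an atomic formula $\theta(x)$ of the form $t(x) = s(x)$ with, up to swapping the roles of $a$ and $b$, $\models \theta(a)$ and $\not\models \theta(b)$. In a field the inequation $u \neq v$ is positive-existentially definable by $\exists z((u - v) \cdot z = 1)$; applied to $t(x) - s(x)$, this yields a positive existential formula $\chi(x)$ which holds precisely when $\theta(x)$ fails. Setting $\psi(x) = \theta(x)$ and $\phi(x) = \chi(x)$, one has $\psi \in p \setminus q$, $\phi \in q \setminus p$, and $\models \forall x(\phi(x) \vee \psi(x))$, which is exactly the required Hausdorff separation.

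I do not expect any serious obstacle here; the argument is essentially routine once one has \thref{fact:ecef-facts}(iii) in hand. The only minor subtlety is the claim that $\qftp(a) = \qftp(b)$ implies $\langle a \rangle^\EA \cong \langle b \rangle^\EA$ via $a \mapsto b$, but this follows from the fact that the EA-closure is built canonically from $a$ by closing the field generated by $a$ under $E$ and taking an algebraic closure, so its isomorphism type is dictated by $\qftp(a)$.
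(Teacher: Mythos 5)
Your approach is genuinely different from the paper's, but it has a gap at the very step you flag as a ``minor subtlety'', and I believe that step is in fact false for this theory.

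You need that $\qftp(a) = \qftp(b)$ implies $\tp(a) = \tp(b)$; your route to this is the claim that $\qftp(a)$ determines the isomorphism type of $\langle a \rangle^\EA$ with $a$ marked, which would let you invoke \thref{fact:ecef-facts}(iii). But $\qftp(a)$ only determines the isomorphism type of the $E$-subfield \emph{generated} by $a$ (the closure under ring operations and $E$), because those are exactly the values of terms. The EA-closure is built by repeatedly taking relative algebraic closure inside $\MM$ and then applying the ambient $E$ to the newly acquired algebraic elements, and the values of $E$ on those elements are not recorded in $\qftp(a)$ in any way --- there is no term in the language producing, say, a square root. Concretely, for two $E$-generic elements $a$ and $a'$ with identical quantifier-free types, $E(\sqrt{a})$ could satisfy an algebraic relation (e.g.\ $E(\sqrt{a}) = \sqrt{a}+1$, which can be arranged in some E-field and hence, via amalgamation and existential closure, is realised in some e.c.\ model) while $E(\sqrt{a'})$ is transcendental over everything in sight; the positive existential formula $\exists z\,(z^2 = x \wedge E(z) = z+1)$ then lies in $\tp(a)$ but not in $\tp(a')$. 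So the field isomorphism $a \mapsto b$ extends to an isomorphism of algebraically closed fields, but there is no reason that extension respects $E$; ``the construction is canonical'' is true only as a recipe relative to the fixed $\MM$, not as a determination of the abstract isomorphism type. In effect you are asserting a positive quantifier elimination result for $T_\EAfield$ that is strictly stronger than \thref{fact:ecef-facts}(iii) (which applies to tuples enumerating entire EA-subfields, not arbitrary tuples) and does not follow from it.

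The second half of your argument --- once an atomic discrepancy $t(x) = s(x)$ is located, pairing it with $\exists z\,((t(x)-s(x))\cdot z = 1)$ to obtain the Hausdorff pair --- is correct and is a nice observation, but it is only reached after the unsupported step. The paper avoids needing anything like quantifier elimination by invoking Poizat's characterisation of Hausdorff theories as those whose Kaiser hull has all its models as amalgamation bases, and then using the amalgamation of EA-fields from \thref{fact:ecef-facts}(ii); that route does not require any control over which formulas separate types, only that amalgamation is available.
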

\begin{proof}
Let $T_k$ be the set of all h-inductive sentences that are true in all e.c.\ models of $T_\EAfield$. By \cite[Theorem 8]{poizat_positive_2018} being Hausdorff is equivalent to the models of $T_k$ being amalgamation bases. By \thref{fact:ecef-facts}(ii) the models of $T_\EAfield$ are already amalgamation bases, so the models of $T_k$ are in particular also amalgamation bases. So we conclude that $T_\EAfield$ is indeed Hausdorff.
\end{proof}
Note that Hausdorff is the best we can get, because \cite[Corollary 3.8]{haykazyan_existentially_2021} tells us that $T_\EAfield$ cannot be Boolean. They prove this by showing that in every e.c.\ model $F$ of $T_\EAfield$ we have for all $a \in F$ that
\[
a \in \Z \quad \Longleftrightarrow \quad F \models \forall x(E(x) = 1 \to E(ax) = 1),
\]
so if the theory were Boolean this would contradict compactness.
\begin{proposition}
\thlabel{prop:ecef-independence-extension-transitivity}
The independence relation $\ind$ in $T_\EAfield$ satisfies \textsc{Extension} and \textsc{Transitivity}.
\end{proposition}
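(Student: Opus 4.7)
The plan is to reduce both properties to structural features of $\ind^\ACF$, using the amalgamation statement \thref{fact:ecef-facts}(ii) and the qftp-to-tp transfer \thref{fact:ecef-facts}(iii) as bridges between abstract EA-field constructions and the monster. Throughout I use that every e.c.\ model $M$ of $T_\EAfield$ is already an EA-field, so $\langle M\rangle^\EA = M$, and the condition $a \ind_M b$ unpacks to $\langle Ma\rangle^\EA \ind^\ACF_M \langle Mb\rangle^\EA$.

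For \textsc{Transitivity} the plan is essentially a one-liner. Unfolding the definitions, $a \ind_M N$ and $a \ind_N b$ become $\langle Ma\rangle^\EA \ind^\ACF_M N$ and $\langle Na\rangle^\EA \ind^\ACF_N \langle Nb\rangle^\EA$. Left-monotonicity of $\ind^\ACF$ applied to the inclusion $\langle Ma\rangle^\EA \subseteq \langle Na\rangle^\EA$ produces $\langle Ma\rangle^\EA \ind^\ACF_N \langle Nb\rangle^\EA$, and transitivity of $\ind^\ACF$ over the chain $M \subseteq N$ then yields $\langle Ma\rangle^\EA \ind^\ACF_M \langle Nb\rangle^\EA$. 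Since $\langle MNb\rangle^\EA = \langle Nb\rangle^\EA$, this is exactly $a \ind_M Nb$.

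For \textsc{Extension}, given $a \ind_M b$ and a further tuple $c$, I plan first to build an EA-field witnessing the desired configuration abstractly and then to import it into $\MM$. Set $F_0 = \langle Mb\rangle^\EA$, $F_1 = \langle Mab\rangle^\EA$, and $F_2 = \langle Mbc\rangle^\EA$; these are EA-subfields of $\MM$ with $F_0 \subseteq F_1 \cap F_2$. Apply \thref{fact:ecef-facts}(ii) to amalgamate $F_1$ and $F_2$ over $F_0$ into an EA-field $F$ in which the images of $F_1$ and $F_2$ are ACF-independent over $F_0$. If $a'$ denotes the image of $a$ in $F$, then combining the hypothesis $\langle Ma\rangle^\EA \ind^\ACF_M F_0$ with $\langle Ma'\rangle^\EA \ind^\ACF_{F_0} F_2$ via transitivity of $\ind^\ACF$ produces $\langle Ma'\rangle^\EA \ind^\ACF_M F_2$ inside $F$. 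Next, embed $F$ into an e.c.\ extension (via existential completion) and then into $\MM$, extending the identity on $M$; this yields $a'', b'', c'' \in \MM$ with $\qftp(b''c''/M) = \qftp(bc/M)$ and $\langle Ma''\rangle^\EA \ind^\ACF_M \langle Mb''c''\rangle^\EA$. Finally apply \thref{fact:ecef-facts}(iii): equality of quantifier-free types forces $b''c'' \equiv_M bc$, so some $\phi \in \Aut(\MM/M)$ sends $b''c''$ to $bc$, and $a_* := \phi(a'')$ is the required witness, with one more appeal to \thref{fact:ecef-facts}(iii) upgrading $\qftp(a_*b/M) = \qftp(ab/M)$ to $a_* \equiv_{Mb} a$.

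The main obstacle should be the embedding bookkeeping in the extension argument: one must keep $M$ fixed pointwise throughout, and \thref{fact:ecef-facts}(iii) is what ultimately lets an abstract amalgamation be realised as an honest triple $(a_*, b, c)$ in $\MM$ with the right type over $Mb$. Without the qftp-to-tp transfer the entire strategy would stop at agreement of quantifier-free types and would not deliver a witness in the monster. The ACF side, by contrast, is routine: the whole argument only invokes left-monotonicity and transitivity of $\ind^\ACF$.
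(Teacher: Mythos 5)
Your proof is correct and follows essentially the same path as the paper's: unfold $\ind$ to $\ind^\ACF$, invoke left-monotonicity and transitivity of $\ind^\ACF$ for \textsc{Transitivity}, and for \textsc{Extension} amalgamate $\langle Mab\rangle^\EA \supseteq \langle Mb\rangle^\EA \subseteq \langle Mbc\rangle^\EA$ via \thref{fact:ecef-facts}(ii) and upgrade $\qftp$ to $\tp$ via \thref{fact:ecef-facts}(iii). The paper handles the \textsc{Extension} bookkeeping slightly more leanly: it embeds the amalgamation into $\MM$ fixing $\langle Mbc\rangle^\EA$ \emph{pointwise} (not just $M$), which removes the need for your closing automorphism $\phi$, and it then finishes by applying the just-proved \textsc{Transitivity} of $\ind$ to $a' \ind_M Mb$ and $a' \ind_{Mb} Mbc$ in the monster rather than redoing the $\ind^\ACF$-transitivity computation inside the abstract amalgam — also note that \thref{fact:ecef-facts}(ii) already produces an embedding into $\MM$, so your extra ``existential completion then embed'' step is redundant.
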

\begin{proof}
We first prove \textsc{Transitivity}. Let $A \ind_B C$ and $A \ind_C D$ with $B \subseteq C$. So we have $\langle AB \rangle^\EA \ind_{\langle B \rangle^\EA}^\ACF \langle BC \rangle^\EA$, which is just $\langle AB \rangle^\EA \ind_{\langle B \rangle^\EA}^\ACF \langle C \rangle^\EA$. We also have $\langle AC \rangle^\EA \ind_{\langle C \rangle^\EA}^\ACF \langle CD \rangle^\EA$ and thus by monotonicity of ACF-independence $\langle AB \rangle^\EA \ind_{\langle C \rangle^\EA}^\ACF \langle CD \rangle^\EA$. Then by transitivity of ACF-independence the result follows.

Now we prove \textsc{Extension}. Let $a \ind_C b$ and let $d$ be arbitrary, so from the definition we directly get $a \ind_C Cb$. We apply \thref{fact:ecef-facts}(ii) to $\langle Cab \rangle^\EA \supseteq \langle Cb \rangle^\EA \subseteq \langle Cbd \rangle^\EA$, and we can embed the amalgamation in the monster in such a way that $\langle Cbd \rangle^\EA$ remains the same. So we get some EA-field $F$ with $\qftp(F / \langle Cb \rangle^\EA) = \qftp(\langle Cab \rangle^\EA / \langle Cb \rangle^\EA)$ and $F \ind_{\langle Cb \rangle^\EA} \langle Cbd \rangle^\EA$, which simplifies to $F \ind_{Cb} Cbd$. By \thref{fact:ecef-facts}(iii) and restricting ourselves to the copy $a' \in F$ of $a$ we thus have $\tp(a' / Cb) = \tp(a / Cb)$. So we get $a' \ind_C Cb$ and $a' \ind_{Cb} Cbd$, and $a' \ind_C bd$ follows from \textsc{Transitivity} and \textsc{Monotonicity}.
\end{proof}
\begin{corollary}
\thlabel{cor:ecef-kim-dividing}
The independence relation $\ind$ in $T_\EAfield$ is the same as Kim-independence over e.c.\ models.
\end{corollary}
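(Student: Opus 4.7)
The plan is to apply the Kim--Pillay style theorem, \thref{thm:kim-pillay-style}, to the independence relation $\ind$ on $T_\EAfield$. By \thref{prop:ecef-hausdorff}, $T_\EAfield$ is Hausdorff and hence thick, so the hypothesis of \thref{thm:kim-pillay-style} applies. It therefore suffices to verify that $\ind$ satisfies the eight listed properties over e.c.\ models; once this is done, \thref{thm:kim-pillay-style} will immediately give both that $T_\EAfield$ is NSOP$_1$ and that $\ind = \ind^K$.

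Most of the work has already been done. By \thref{fact:ecef-facts}(i), $\ind$ satisfies \textsc{Strong Finite Character}, \textsc{Existence}, \textsc{Monotonicity}, \textsc{Symmetry} and \textsc{Independence Theorem}, and by \thref{prop:ecef-independence-extension-transitivity} it satisfies \textsc{Extension} and \textsc{Transitivity}. The two remaining properties are \textsc{Finite Character} and \textsc{Local Character}. \textsc{Finite Character} is a direct consequence of \textsc{Strong Finite Character}: if $\varphi(x, b_0, m) \in \tp(a/Mb)$ witnesses $a' \nind_M b_0$ for any $a' \models \varphi(x, b_0, m)$, then the same formula witnesses $a \nind_M b$ as long as $b_0 \subseteq b$, so the contrapositive of \textsc{Finite Character} follows.

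For \textsc{Local Character}, I would appeal to \thref{rem:strong-finite-character}: as observed there, \textsc{Strong Finite Character} together with \textsc{Existence}, \textsc{Monotonicity} and \textsc{Symmetry} already suffice, via \thref{lem:coheir-indep-implies-abstract-indep} and \thref{lem:abstract-indep-local-character}, to deduce \textsc{Local Character}. Concretely, \thref{lem:coheir-indep-implies-abstract-indep} yields $a \ind_M^u b \Rightarrow a \ind_M b$, and then \thref{lem:abstract-indep-local-character} delivers the desired chain formulation of \textsc{Local Character}. All the required hypotheses are already in hand for $\ind$ in $T_\EAfield$.

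The only genuine subtlety, and what I would flag as the main thing to be careful about, is the \textsc{Independence Theorem} condition: \thref{thm:kim-pillay-style} requires that the amalgamated $a''$ satisfies $a''b \equivls_M ab$ and $a''c \equivls_M a'c$, while \thref{fact:ecef-facts}(i) is phrased with ordinary types. Here \thref{prop:ecef-hausdorff} saves us: in a Hausdorff theory, types over e.c.\ models are Lascar strong types, so $\equiv_M$ and $\equivls_M$ coincide on realisations, and the Lascar strong type version follows immediately. With all eight properties verified, \thref{thm:kim-pillay-style} yields $\ind = \ind^K$ over e.c.\ models, completing the proof.
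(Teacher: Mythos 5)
Your proposal is correct and takes essentially the same route as the paper, which also invokes \thref{thm:kim-pillay-style} together with \thref{rem:strong-finite-character} (the paper addresses the Lascar-strong-type subtlety inside the proof of \thref{fact:ecef-facts}(i) rather than here, but the substance is the same). Your version just spells out the bookkeeping that the paper leaves implicit.
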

\begin{proof}
This is a direct application of \thref{thm:kim-pillay-style}, using \thref{rem:strong-finite-character} to replace \textsc{Local Character} by \textsc{Strong Finite Character}.
\end{proof}
\subsection{Hyperimaginaries}
\label{subsec:hyperimaginaries}
One of the main motivations for studying positive logic in \cite{ben-yaacov_positive_2003} was to be able to add hyperimaginaries in the same way we usually add imaginaries. It is well known that by doing so we leave the framework of full first-order logic, for example because we might get a bounded infinite definable set. However, we do stay within the framework of positive logic. We show that adding hyperimaginaries as real elements does not essentially change anything. So working with hyperimaginaries in positive logic requires no special treatment.

The construction in this section is based on \cite[Example 2.16]{ben-yaacov_positive_2003}, but we work things out in far greater detail. This then allows us to prove that certain properties are invariant under adding hyperimaginaries.

We fix the following things throughout the rest of this section. A positive theory $T$ in a signature $\L$ with monster model $\MM$. For simplicity we assume $\L$ is single sorted (extending this to the multi-sorted setting is straightforward). Let $\E$ be a set of partial types (over $\emptyset$) $E(x, y)$, where $x$ and $y$ are (possibly infinite but small) tuples of variables, such that each $E$ defines an equivalence relation in $\MM$.
\begin{definition}
\thlabel{def:hyperimaginary-language}
We define the \emph{hyperimaginary language} $\L_\E$ as a multi-sorted extension of $\L$. The sort of $\L$ will be called the \emph{real sort} and is denoted by $S_\real$. Then for each $E \in \E$ we add a sort $S_E$, called a \emph{hyperimaginary sort}. For a variable $y$ of sort $S_E$ we denote by $y_r$ a tuple of variables of the real sort, matching the length of the representatives of the $E$-equivalence classes.

For all $E_1, \ldots, E_n \in \E$ we add a relation symbol $R_\phi(x, y_1, \ldots, y_n)$ of sort $S_\real^{|x|} \times S_{E_1} \times \ldots \times S_{E_n}$ for each $\L$-formula $\phi(x, y_{1,r}, \ldots, y_{n,r})$.
\end{definition}
In the above definition, not all variables in $\phi(x, y_{1,r}, \ldots, y_{n,r})$ need to actually appear in the formula. In particular, it is not problem for the $y_{i,r}$ to be infinite tuples. Similarly, when we write something like $\exists y_r \phi(y_r)$, then we really only quantify over the variables that actually appear in $\phi$.
\begin{definition}
\thlabel{def:add-hyperimaginaries}
We extend $\MM$ to an $\L_\E$-structure $\MM^\E$ as follows. The real sort $S_\real$ is just $\MM$, and for each $E \in \E$ the sort $S_E$ is $\MM^\alpha / E$, where $\alpha$ is the length of the tuples of free variables in $E$. From now on we will use the shorthand notation $\MM/E$ and not mention $\alpha$. For $E_1, \ldots, E_n \in \E$ and $\phi(x, y_{1,r}, \ldots, y_{n,r})$ we interpret the relation symbol $R_\phi$ as follows. We let $\MM^\E \models R_\phi(a, c_1, \ldots, c_n)$ if and only if there are representatives $b_1, \ldots, b_n$ of $c_1, \ldots, c_n$ such that $\MM \models \phi(a, b_1, \ldots, b_n)$.
\end{definition}
For a real tuple $b$ and some $E \in \E$ we will write $[b]$ for the corresponding hyperimaginary in $\MM/E$. To prevent cluttering of notation, we will actually also use the notation $[b]$ for a tuple of hyperimaginaries. This notation leaves implicit which sort(s) $[b]$ belongs to, but that should not be a problem in what follows.
\begin{definition}
\thlabel{def:hyperimaginary-theory}
We define the $\L_\E$-theory $T^\E$ as the set of all h-inductive $\L_\E$-sentences true in $\MM^\E$.
\end{definition}
In this construction $\MM^\E$ will be a monster model of $T^\E$ (\thref{thm:hyperimaginaries-monster-preserved}). Being Hausdorff / semi-Hausdorff / thick is preserved under adding hyperimaginaries (\thref{thm:hyperimaginaries-preserving-hausdorff-thickness}). We have that $T$ is NSOP$_1$ if and only if $T^\E$ is NSOP$_1$ (\thref{thm:hyperimaginaries-nsop1}). So in particular this means that if we start with an NSOP$_1$ theory $T$ in full first-order logic, viewed as a positive theory, then $T^\E$ is a Hausdorff (and thus thick) NSOP$_1$ theory, and all our results apply. Finally, we also have that $T$ satisfies the existence axiom for forking if and only if $T^\E$ satisfies the existence axiom for forking (\thref{thm:hyperimaginaries-existence-axiom-for-forking}).

We set up our construction in such a way that we can add any set $\E$ of hyperimaginaries. If we wish to study $\MM^\text{heq}$, where we have added all hyperimaginaries, we would have to add a proper class of hyperimaginaries. We can formalise this by taking $\E$ to be the set of all equivalence relations $E(x, y)$ where $|x| \leq |T|$. Then, by \cite[Corollary 3.3]{ben-yaacov_thickness_2003}, every possible hyperimaginary is interdefinable with a set of hyperimaginaries in $\E$. So we can take $\MM^\text{heq}$ and $T^\text{heq}$ to be $\MM^\E$ and $T^\E$.
\begin{lemma}
\thlabel{lem:translation-lemma-formula}
Let $\phi(x, y)$ be an $\L_\E$-formula, where $x$ is a tuple of real variables and $y$ is a tuple of hyperimaginary variables. Then there is a set of $\L$-formulas $\Sigma_\phi(x, y_r)$ such that $\MM \models \Sigma_\phi(a, b)$ if and only if $\MM^\E \models \phi(a, [b])$.
\end{lemma}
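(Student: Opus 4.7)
The plan is to prove this by induction on the complexity of the positive existential $\L_\E$-formula $\phi$, repeatedly using the very saturation of the monster model $\MM$ to convert infinitary conditions into partial types.

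For the atomic base cases, $\top$ and $\bot$ translate trivially, and an equality $y_1 = y_2$ between variables of sort $S_E$ is translated as the partial type $E(y_{1,r}, y_{2,r})$. The key atomic case is a relation symbol $R_\psi(x, y_1, \ldots, y_n)$ with $y_i$ of sort $S_{E_i}$: here $\MM^\E \models R_\psi(a, [b_1], \ldots, [b_n])$ iff there exist representatives $b_i'$ of $[b_i]$ with $\MM \models \psi(a, b_1', \ldots, b_n')$, which by saturation of $\MM$ is equivalent to finite satisfiability, so I would set
\[
\Sigma_{R_\psi} := \Bigl\{ \exists z_1 \ldots z_n\Bigl(\psi(x, z_1, \ldots, z_n) \wedge \bigwedge_{i=1}^n \delta_i(y_{i,r}, z_i)\Bigr) : \delta_i \text{ a finite conjunction from } E_i \Bigr\}.
\]

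For the inductive steps, given $\phi_1, \phi_2$ with $\Sigma_{\phi_1}, \Sigma_{\phi_2}$ already constructed: conjunction is just union, $\Sigma_{\phi_1 \wedge \phi_2} := \Sigma_{\phi_1} \cup \Sigma_{\phi_2}$. Disjunction requires a small trick because a partial type behaves conjunctively, so I will take $\Sigma_{\phi_1 \vee \phi_2} := \{\sigma_1 \vee \sigma_2 : \sigma_1 \in \Sigma_{\phi_1}, \sigma_2 \in \Sigma_{\phi_2}\}$; a direct check (if neither $\Sigma_{\phi_i}$ is realised at $(a,b)$, then picking witnesses of failure $\sigma_i^* \in \Sigma_{\phi_i}$ gives $\sigma_1^* \vee \sigma_2^* \notin \Sigma$ at $(a,b)$) shows this is correct. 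For the existential $\phi(x, y) = \exists z \, \psi(x, y, z)$, noting that whether $z$ is of real or hyperimaginary sort it reduces to producing a real representative, I would set
\[
\Sigma_{\exists z \, \psi}(x, y_r) := \Bigl\{ \exists z_r \bigwedge_{i \in F} \sigma_i(x, y_r, z_r) : F \subseteq \Sigma_\psi \text{ finite} \Bigr\},
\]
where $\Sigma_\psi = \{\sigma_i\}_i$; realising this partial type at $(a, b)$ amounts to $\Sigma_\psi(a, b, z_r)$ being finitely satisfiable, hence satisfiable by saturation of $\MM$.

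The main potential obstacle lies in checking that the uses of saturation of $\MM$ are legitimate at each step, and in handling the fact that a hyperimaginary variable corresponds to a possibly infinite tuple of real variables (so each quantifier block in our formulas may be infinite); however, since only finitely many of these real variables occur in any individual $\L$-formula or finite piece $\delta_i$, every formula actually written down mentions only finitely many variables. Provided $\E$ consists of small partial types, each $\Sigma_\phi$ built recursively will itself be small, and the very saturation of $\MM$ yields finite satisfiability implies satisfiability at each use. With these constructions in place, the induction on the structure of $\phi$ then completes the proof.
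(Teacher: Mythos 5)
Your proof is correct but takes a genuinely different route from the paper's. The paper first observes that every positive existential $\L_\E$-formula is a disjunction of "regular" (disjunction-free) formulas, brings each regular formula into the prenex normal form $\exists w z\bigl(\psi(x,w)\wedge\varepsilon(y,z)\wedge\bigwedge_{i\in I}R_{\chi_i}(x,w,y,z)\bigr)$, and handles that entire class in one bulk construction by introducing tuples of representative variables $y_i,z_i$ and tying them to $y_r,z_r$ via the defining partial types of the relevant equivalence relations; the only inductive step it then needs is disjunction, done exactly as you do. You instead run a bona-fide structural induction, handling atomics, $\wedge$, $\vee$, and $\exists$ separately, which makes the argument more modular and somewhat cleaner to audit locally, but at the price of having to verify the existential step explicitly (namely, that realising $\Sigma_{\exists z\,\psi}$ is equivalent to $\Sigma_\psi$ being realisable in the extra variable $z_r$, which needs an invocation of saturation at every quantifier). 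Your atomic case for $R_\psi$ and your disjunction trick are essentially the same moves the paper makes, just at a different level of granularity, and your remark that each $\Sigma_\phi$ remains small when $\E$ is small is exactly the cardinality bookkeeping one needs for the repeated saturation appeals. Both arguments ultimately rest on the same underlying idea: translate an $\L_\E$-formula to a partial $\L$-type by replacing each occurrence of a hyperimaginary by an existentially-quantified representative constrained by the defining equivalence relation, and use saturation to pass from finite to full satisfiability.
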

\begin{proof}
We first assume that $\phi(x, y)$ is of the form
\[
\exists w z \left( \psi(x, w) \wedge \varepsilon(y, z) \wedge \bigwedge_{i \in I} R_{\chi_i}(x, w, y, z) \right).
\]
Here $w$ is a tuple of real variables and $z$ a tuple of hyperimaginary variables. The formula $\psi(x, w)$ is an $\L$-formula and $\varepsilon(y, z)$ is a conjunction of equalities of hyperimaginaries.

We define the partial type $\Gamma_\phi$ as follows. For each $i \in I$ we introduce tuples of real variables $y_i$ and $z_i$ matching $y_r$ and $z_r$ respectively. We let $E_\varepsilon(y_r, z_r)$ be the union of partial types in $\E$ expressing $\varepsilon([y_r], [z_r])$, and we close $E_\varepsilon$ under conjunctions. Then we set
\begin{align}
\nonumber
&\Gamma_\phi(x, y_r, w, z_r, (y_i)_{i \in I}, (z_i)_{i \in I}) &= \\
\label{eq:gamma_phi-actual-formulas}
&\left\{ \psi(x, w) \wedge \epsilon(y_r, z_r) \wedge \bigwedge_{i \in I} \chi_i(x, w, y_i, z_i) : \epsilon \in E_\varepsilon \right\} &\cup \\
\label{eq:gamma_phi-E_y}
&\bigcup \{ E_y(y_r, y_i) : i \in I \} &\cup \\
\label{eq:gamma_phi-E_z}
&\bigcup\{ E_z(z_r, z_i) : i \in I \}.
\end{align}
Here $E_y$ and $E_z$ are the equivalence relations corresponding to the hyperimaginary variables $y$ and $z$ respectively.

Let $\Sigma_\phi(x, y_r)$ express the following:
\[
\exists w z_r (y_i)_{i \in I} (z_i)_{i \in I} \Gamma_\phi(x, y_r, w, z_r, (y_i)_{i \in I}, (z_i)_{i \in I}).
\]
Now suppose that $a, b$ are such that $\MM \models \Sigma_\phi(a, b)$. Then we find realisations such that
\[
\MM \models \Gamma_\phi(a, b, c, d, (b_i)_{i \in I}, (d_i)_{i \in I}).
\]
Then (\ref{eq:gamma_phi-E_y}) and (\ref{eq:gamma_phi-E_z}) tell us that $[b] = [b_i]$ and $[d] = [d_i]$ for all $i \in I$, while (\ref{eq:gamma_phi-actual-formulas}) guarantees that $\MM^\E \models \phi(a, [b])$. This proves the forward direction and the converse is straightforward by just taking representatives of the hyperimaginaries that are involved.

We assumed $\phi$ to be of a particular form. Since every formula can be written as a disjunction of regular formulas (i.e.\ formulas built using conjunction and existential quantification), we are only left an induction step for disjunction. So let $\phi_1(x, y)$ and $\phi_2(x, y)$ with $\Sigma_{\phi_1}(x, y_r)$ and $\Sigma_{\phi_2}(x, y_r)$ be given. We define $\Sigma_{\phi_1 \vee \phi_2}(x, y_r)$ as
\[
\{ \psi_1 \vee \psi_2 : \psi_1 \in \Sigma_{\phi_1}, \psi_2 \in \Sigma_{\phi_2} \}.
\]
One easily checks that $\MM \models \Sigma_{\phi_1 \vee \phi_2}(a, b)$ precisely when $\MM \models \Sigma_{\phi_1}(a, b)$ or $\MM \models \Sigma_{\phi_2}(a, b)$ or both, and the result follows.
\end{proof}
\begin{lemma}
\thlabel{lem:translation-lemma}
Let $\Gamma(x, y)$ be a set of $\L_\E$-formulas, where $x$ is a tuple of real variables and $y$ is a tuple of hyperimaginary variables. Then there is a set of $\L$-formulas $\Sigma_\Gamma(x, y_r)$ such that $\MM \models \Sigma_\Gamma(a, b)$ if and only if $\MM^\E \models \Gamma(a, [b])$.
\end{lemma}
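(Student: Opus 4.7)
The proof is essentially a routine lifting of \thref{lem:translation-lemma-formula} from single formulas to sets of formulas. The plan is to define
\[
\Sigma_\Gamma(x, y_r) = \bigcup_{\phi \in \Gamma} \Sigma_\phi(x, y_r),
\]
where each $\Sigma_\phi(x, y_r)$ is the set of $\L$-formulas provided by \thref{lem:translation-lemma-formula} for the single $\L_\E$-formula $\phi(x, y)$. Since the tuple of hyperimaginary variables $y$ is the same across all $\phi \in \Gamma$, the real tuple $y_r$ of representatives is fixed once and for all, so the union makes sense as a set of $\L$-formulas in the common variables $(x, y_r)$.

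The verification splits into the two directions. For the forward direction, suppose $\MM \models \Sigma_\Gamma(a, b)$. Then for every $\phi \in \Gamma$ we have $\MM \models \Sigma_\phi(a, b)$, and applying \thref{lem:translation-lemma-formula} yields $\MM^\E \models \phi(a, [b])$; conjoining over $\phi$ gives $\MM^\E \models \Gamma(a, [b])$. The converse direction is symmetric: if $\MM^\E \models \Gamma(a, [b])$ then $\MM^\E \models \phi(a, [b])$ for every $\phi \in \Gamma$, so \thref{lem:translation-lemma-formula} supplies $\MM \models \Sigma_\phi(a, b)$ for each such $\phi$, whence $\MM \models \Sigma_\Gamma(a, b)$.

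There is no real obstacle here; the work was already done in \thref{lem:translation-lemma-formula} (which handled the interaction between the relation symbols $R_\chi$, the equivalence relations in $\E$, and the representatives $y_r$). The only mild bookkeeping point is to ensure that, when we take the union, we are genuinely using a single tuple of real variables $y_r$ to represent the hyperimaginary tuple $y$ across all members of $\Gamma$; this is automatic once we agree on a fixed set of representative variables for each hyperimaginary sort appearing in $y$. Consequently the lemma follows immediately from its single-formula counterpart, and no further ideas are needed.
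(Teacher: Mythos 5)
Your proof is correct and takes exactly the same approach as the paper: define $\Sigma_\Gamma$ as the union of the $\Sigma_\phi$ from \thref{lem:translation-lemma-formula}, with the equivalence following immediately formula by formula. The paper's proof is just this one-line definition; your extra verification and bookkeeping remarks are accurate but not needed beyond what the single-formula lemma already gives.
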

\begin{proof}
Define
\[
\Sigma_\Gamma(x, y_r) = \bigcup_{\phi \in \Gamma} \Sigma_\phi(x, y_r),
\]
where $\Sigma_\phi$ is as in \thref{lem:translation-lemma-formula}.
\end{proof}
\begin{lemma}
\thlabel{lem:type-hyperimaginaries-determined-by-type-reals}
If $\tp(a[b]) = \tp(a'[b'])$ then there is $b''$ such that $\tp(ab) = \tp(a'b'')$ and $[b'] = [b'']$.
\end{lemma}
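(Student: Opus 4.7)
The plan is a straightforward compactness argument using the translation machinery, in particular the defining property of the relation symbols $R_\phi$.

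First I would unpack the hypothesis. The hyperimaginary tuple $[b]$ lies in sorts $S_{E_1} \times \dots \times S_{E_n}$ for some $E_1,\dots,E_n \in \E$, and the equality $\tp(a[b])=\tp(a'[b'])$ forces $[b']$ to lie in the same sorts. For every $\L$-formula $\phi(x,y_r)$ that holds of $(a,b)$ in $\MM$, the atomic $\L_\E$-formula $R_\phi(x,y)$ is satisfied by $(a,[b])$ in $\MM^\E$ by \thref{def:add-hyperimaginaries}; hence by hypothesis $\MM^\E \models R_\phi(a',[b'])$, which by definition furnishes some real tuple $b^*_\phi$ with $[b^*_\phi]=[b']$ and $\phi(a',b^*_\phi)$ in $\MM$.

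The second step is to uniformise these witnesses via compactness. Consider the partial $\L$-type over $a'b'$
\[
\Sigma(y_r) \;=\; \tp(ab)(a',y_r) \;\cup\; \bigcup_{i=1}^n E_i(y_{i,r},b'_i).
\]
A finite subset of $\Sigma$ is subsumed by one of the form $\{\phi(a',y_r)\} \cup \{\varepsilon_i(y_{i,r},b'_i) : i\le n\}$ with $\phi \in \tp(ab)$ (closing under conjunction) and $\varepsilon_i$ a finite conjunction from $E_i$. Since $\phi \in \tp(ab)$ we have $\MM^\E \models R_\phi(a,[b])$, so by hypothesis $\MM^\E \models R_\phi(a',[b'])$, yielding $c \in \MM$ with $\phi(a',c)$ and $[c]=[b']$. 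The latter means $E_i(c_i,b'_i)$ holds for each $i$, so in particular $\varepsilon_i(c_i,b'_i)$ holds. Thus $c$ realises the finite subset, establishing finite satisfiability of $\Sigma$ in $\MM$. By saturation of $\MM$ we obtain $b'' \in \MM$ realising $\Sigma$.

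Finally I would check that $b''$ works. By construction $E_i(b''_i,b'_i)$ holds for each $i$, so $[b'']=[b']$. And $b'' \models \tp(ab)(a',y_r)$ gives $\tp(ab) \subseteq \tp(a'b'')$; because $\MM$ is an e.c.\ model, this inclusion upgrades to equality by \thref{def:ec-model}(iii) (if some $\phi \notin \tp(ab)$ then there is $\psi \in \tp(ab) \subseteq \tp(a'b'')$ with $T\models \neg\exists xy_r(\phi\wedge\psi)$, hence $\phi \notin \tp(a'b'')$). The only conceptual point to keep an eye on is ensuring that the relation $R_\phi$ exists as a symbol of $\L_\E$ for the formula $\phi$ we care about---but this is precisely the content of \thref{def:hyperimaginary-language}---so no real obstacle is anticipated; the argument is essentially routine compactness once the dictionary between $R_\phi$ and $\phi$ is invoked.
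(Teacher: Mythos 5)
Your proof is correct and follows essentially the same route as the paper's: you form the partial type pairing $\tp(ab)(a',y_r)$ with the $E$-conditions at $b'$, show finite satisfiability via the defining property of the symbols $R_\phi$, and invoke saturation. The paper leaves the final upgrade from $\tp(ab)\subseteq\tp(a'b'')$ to equality implicit (it holds since both are maximal consistent sets of positive existential formulas), whereas you spell it out via \thref{def:ec-model}(iii); otherwise the two proofs coincide.
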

\begin{proof}
Define
\[
\Sigma(x, y) = \tp_\L(ab) \cup E(b', y).
\]
It is enough to prove that $\Sigma(a', y)$ is finitely satisfiable. Let $\phi(x, y) \in \tp_\L(ab)$. Then $\MM^\E \models R_\phi(a, [b])$, so $\MM^\E \models R_\phi(a', [b'])$. So there is $b'' \in \MM$ with $\MM \models E(b', b'')$ and $\MM \models \phi(a', b'')$, as required.
\end{proof}
\begin{lemma}
\thlabel{lem:hyperimaginary-projection-type}
For every tuple of hyperimaginary variables $y$ there is a partial $\L_\E$-type $\Xi(y_r, y)$ such that $\MM^\E \models \Xi(a, [a'])$ if and only if $[a] = [a']$.
\end{lemma}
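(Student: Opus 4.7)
The plan is to take $\Xi(y_r, y)$ to be the componentwise union of partial types, so it suffices to handle the case where $y$ is a single hyperimaginary variable of sort $S_E$ for some $E \in \E$, with $y_r$ the matching tuple of real variables. Semantically, $[a] = [a']$ means precisely that $a$ is itself a representative of the class $[a']$. The natural way to express this using only atomic $\L_\E$-formulas is
\[
\Xi(y_r, y) = \{ R_\phi(y_r, y) : \phi(y_r, z_r) \in E(y_r, z_r) \}.
\]
Recall from \thref{def:add-hyperimaginaries} that $R_\phi(a, [a'])$ holds iff some representative $b$ of $[a']$ satisfies $\phi(a, b)$ in $\MM$.

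For the forward direction, suppose $[a] = [a']$. Then $a$ is a representative of $[a']$, and by reflexivity of $E$ we have $\MM \models E(a, a)$, hence $\MM \models \phi(a, a)$ for every $\phi \in E$. Taking $b = a$ witnesses $\MM^\E \models R_\phi(a, [a'])$ for each $\phi \in E$.

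For the converse, assume $\MM^\E \models R_\phi(a, [a'])$ for all $\phi \in E$. Consider the partial $\L$-type
\[
\Pi(z_r) = E(z_r, a') \cup \{ \phi(a, z_r) : \phi \in E \}.
\]
Any finite fragment of $\Pi$ can be realized: after closing $E$ under finite conjunctions (which is harmless, as it generates the same partial type), a finite fragment involves a single $\phi^* \in E$ from the second piece and a finite conjunction from $E(z_r, a')$; the witness $b_{\phi^*}$ provided by $\MM^\E \models R_{\phi^*}(a, [a'])$ satisfies $\phi^*(a, b_{\phi^*})$ and $E(b_{\phi^*}, a')$, hence the fragment. By saturation of $\MM$ there is some $b$ realizing $\Pi$, so $E(a, b)$ and $E(b, a')$, and transitivity yields $E(a, a')$, i.e.\ $[a] = [a']$.

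The argument is mostly routine once the right candidate for $\Xi$ is identified; the only subtlety is the saturation step, and even that is standard once one notices that the condition ``some representative of $[a']$ satisfies $\phi(a, \cdot)$'' gets upgraded to ``some representative of $[a']$ is $E$-equivalent to $a$'' by intersecting over all $\phi \in E$. For the multi-component case no new idea is needed: one simply takes $\Xi(y_r, y) = \bigcup_i \Xi_i(y_{i,r}, y_i)$, since equality of tuples of hyperimaginaries is componentwise.
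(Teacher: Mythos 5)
Your proof is correct and takes essentially the same approach as the paper: the same choice of $\Xi$ as $\{R_\varepsilon(y_r, y) : \varepsilon \in E\}$, the easy direction by taking $b = a$ as representative, and the harder direction by showing the partial type $E(a, z_r) \cup E(z_r, a')$ is finitely satisfiable using the witnesses supplied by $R_\varepsilon(a, [a'])$, then invoking saturation and transitivity of $E$. (The paper leaves the tuple case and the ``right-to-left'' direction implicit, but these are trivial, as you note.)
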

\begin{proof}
We define
\[
\Xi(y_r, y) = \{ R_\varepsilon(y_r, y) : \varepsilon \in E \},
\]
where $E$ is the equivalence relation corresponding to $y$. The right to left direction is clear. For the forward direction we suppose $\MM^\E \models \Xi(a, [a'])$. Consider the partial type
\[
\Gamma(y_r) = E(a, y_r) \cup E(y_r, a').
\]
For any $\varepsilon(a, y_r) \in E(a, y_r)$ we have $\MM^\E \models R_\varepsilon(a, [a'])$. So there must be $a^* \in \MM$ such that $[a^*] = [a']$ and $\MM \models \varepsilon(a, a^*)$. Thus $\MM \models \varepsilon(a, a^*) \wedge E(a^*, a')$. We thus see that $\Gamma$ is finitely satisfiable, so there is a realisation $a''$. We conclude that $[a] = [a''] = [a']$.
\end{proof}
\begin{lemma}
\thlabel{lem:extend-automorphism}
Any automorphism $f: \MM \to \MM$ uniquely extends to an automorphism $f^\E: \MM^\E \to \MM^\E$ by setting $f^\E([b]) = [f(b)]$.
\end{lemma}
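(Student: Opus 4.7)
The plan is straightforward: the statement has four content items to verify (well-definedness of $f^\E$ on hyperimaginary sorts, bijectivity, preservation of the $\L_\E$-structure, and uniqueness), each of which reduces to a one-line observation using the tools already set up in the section. I expect no real obstacle — this is essentially a bookkeeping lemma that makes precise the slogan ``automorphisms of $\MM$ act naturally on hyperimaginaries.''

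First I would address well-definedness. If $[b] = [b']$ in the sort $S_E$, then $\MM \models E(b,b')$. Because $E$ is a partial $\L$-type over $\emptyset$ and $f$ is an $\L$-automorphism of $\MM$, we get $\MM \models E(f(b), f(b'))$, so $[f(b)] = [f(b')]$. Hence the assignment $f^\E([b]) := [f(b)]$ is a function on each hyperimaginary sort $\MM/E$; on the real sort we take $f^\E = f$.

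Second, for bijectivity I would apply the same construction to $f^{-1}$, obtaining $(f^{-1})^\E$, and observe it is a two-sided inverse of $f^\E$ on each sort. For preservation of the structure it suffices to check the relation symbols $R_\phi$, since equality of real and of hyperimaginary elements is handled by the first step. If $\MM^\E \models R_\phi(a, [b_1], \ldots, [b_n])$, pick representatives $b_i' \in [b_i]$ with $\MM \models \phi(a, b_1', \ldots, b_n')$; applying $f$ and using $[f(b_i')] = f^\E([b_i])$ gives $\MM^\E \models R_\phi(f^\E(a), f^\E([b_1]), \ldots, f^\E([b_n]))$, and the converse direction is symmetric via $(f^{-1})^\E$.

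Finally, for uniqueness I would invoke \thref{lem:hyperimaginary-projection-type}: any automorphism $g$ of $\MM^\E$ extending $f$ must satisfy $\MM^\E \models \Xi(b, [b])$ implies $\MM^\E \models \Xi(f(b), g([b]))$, which forces $g([b]) = [f(b)] = f^\E([b])$. The only place where one must be slightly careful is that the multi-sorted structure $\MM^\E$ has as its only primitive data the symbols $R_\phi$ together with equality on each sort, which is exactly why the preservation step reduces to the single computation above.
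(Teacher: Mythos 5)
Your proposal is correct and takes essentially the same approach as the paper: check that the $R_\phi$ are preserved and reflected by lifting along representatives, and derive uniqueness from \thref{lem:hyperimaginary-projection-type}. The paper leaves well-definedness and bijectivity as ``straightforward to check'' where you spell them out, but the logical content is identical.
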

\begin{proof}
It is straightforward to check that $f^\E$ is well-defined and bijective. We need to show that $f^\E$ preserves and reflects truth of the new relation symbols in $\L_\E$ (preservation of equality is just saying that $f^\E$ is well-defined). Suppose that $\MM^\E \models R_\phi(a, [b])$. By definition there is $b'$ such that $[b'] = [b]$ and $\MM \models \phi(a, b')$. Then $\MM \models \phi(f(a), f(b'))$ and hence $\MM^\E \models R_\phi(f(a), [f(b')])$ which is just $\MM^\E \models R_\phi(f^\E(a), f^\E([b]))$. The converse follows in a similar way.

Finally we check uniqueness of $f^\E$. Suppose that $g: \MM^\E \to \MM^\E$ also extends $f$. For $[b] \in \MM^\E$ we have $\MM^\E \models \Xi(b, [b])$ by \thref{lem:hyperimaginary-projection-type}. So if $g$ is an automorphism we must have $\MM^\E \models \Xi(g(b), g([b]))$, which means that $g([b]) = [g(b)] = [f(b)]$, as required.
\end{proof}
\begin{theorem}
\thlabel{thm:hyperimaginaries-monster-preserved}
The structure $\MM^\E$ is a monster model of $T^\E$.
\end{theorem}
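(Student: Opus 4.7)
The plan is to verify separately that $T^\E$ has JEP and that $\MM^\E$ is existentially closed, very homogeneous, and very saturated, in the sense of \thref{conv:monster-model}. JEP will be immediate from \thref{def:jep}(iii) once we check that $\MM^\E$ is e.c., because $T^\E$ is by definition the h-inductive theory of $\MM^\E$, so any h-inductive sentence follows from $T^\E$ if and only if it holds in $\MM^\E$. The unifying tool throughout is \thref{lem:translation-lemma-formula}, which converts $\L_\E$-truth at hyperimaginaries into $\L$-truth at representatives.

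For the e.c.\ property I would use characterisation~(iii) of \thref{def:ec-model}. Suppose $\MM^\E \not\models \phi(a, [b])$ for some $\L_\E$-formula $\phi(x, y)$ with $y$ of hyperimaginary sort. By \thref{lem:translation-lemma-formula} the partial $\L$-type $\Sigma_\phi(a, y_r) \cup E(b, y_r)$ has no realisation in $\MM$, so saturation of $\MM$ (together with closing $\Sigma_\phi$ under finite conjunctions) yields a single $\sigma \in \Sigma_\phi$ and a finite conjunction $\epsilon$ of formulas in $E$ such that the $\L$-formula $\phi'(x, y_r) := \exists y_r'(\epsilon(y_r, y_r') \wedge \sigma(x, y_r'))$ fails at $(a, b)$. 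Applying the e.c.\ property of $\MM$ to $\phi'$ produces $\tau$ with $\MM \models \tau(a, b)$ and $T \models \neg \exists x y_r(\phi'(x, y_r) \wedge \tau(x, y_r))$, and the claim is that $R_\tau$ is the desired $\L_\E$-witness: if $\MM^\E \models \phi(a', [b']) \wedge R_\tau(a', [b'])$, then $\sigma(a', b')$ holds in $\MM$ and some $b'' \in \MM$ with $\models E(b', b'')$ satisfies $\tau(a', b'')$; since $\epsilon \subseteq E$ we obtain $\epsilon(b'', b')$, whence $\phi'(a', b'')$ holds and contradicts the choice of $\tau$. The main obstacle here is precisely the passage from ``no representative satisfies $\Sigma_\phi$'' to a single-formula witness, and this is where saturation of $\MM$ is used to tame the partial type $E$.

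For very homogeneity, a partial immersion $f : \MM^\E \to \MM^\E$ with small domain and codomain can be converted into a partial immersion $f'$ of $\MM$ by choosing representatives of the hyperimaginaries in $\dom(f)$ and in $f(\dom(f))$, and setting $f'$ on each chosen representative of $[c]$ to be the chosen representative of $f([c])$. That $f'$ is an $\L$-immersion follows from $f$ being an $\L_\E$-immersion, applied to the relations $R_\chi$ for $\chi \in \L$. Very homogeneity of $\MM$ then extends $f'$ to an automorphism $g$ of $\MM$, and \thref{lem:extend-automorphism} lifts $g$ to an automorphism $g^\E$ of $\MM^\E$ which, by construction, agrees with $f$ on the real part of $\dom(f)$ and sends each $[c]$ to $[g(c)] = f([c])$.

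For very saturation, given a small finitely satisfiable $\L_\E$-type $\Gamma(x)$ over $\MM^\E$, split $x$ into real and hyperimaginary parts $(x_r, x_h)$ and apply \thref{lem:translation-lemma} to replace every formula $\phi(x, b) \in \Gamma$, with hyperimaginary parameters replaced by chosen real representatives, by the $\L$-partial type $\Sigma_\phi(x_r, x_h', b_r)$ in fresh real variables $x_h'$. Finite satisfiability transfers: a realisation in $\MM^\E$ of a finite subset of $\Gamma$ yields, by choosing representatives, a realisation in $\MM$ of the corresponding finite part of this $\L$-type. Very saturation of $\MM$ then produces a realisation whose hyperimaginary classes realise $\Gamma$ in $\MM^\E$. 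Combining the three steps, $\MM^\E$ is e.c., hence $T^\E$ has JEP by \thref{def:jep}(iii), and together with very homogeneity and very saturation this shows that $\MM^\E$ is a monster model of $T^\E$.
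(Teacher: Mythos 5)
Your proof follows the same three-part decomposition as the paper (existential closedness, saturation, homogeneity), and your observation that JEP then follows for free from \thref{def:jep}(iii) is correct and worth noting. The saturation argument matches the paper's, and the e.c.\ argument is sound though more roundabout than necessary: the paper simply notes that $\MM^\E \not\models\phi(a,[b])$ gives $\MM \not\models \Sigma_\phi(a,b)$ directly by \thref{lem:translation-lemma-formula}, so a single $\psi\in\Sigma_\phi$ already fails at $(a,b)$; there is no need to involve $E$ or appeal to saturation at this stage, and the verification of the inconsistency witness $R_\chi$ then uses the uniformity of $\Sigma_\phi$ across representatives rather than anything about $E$.

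The homogeneity step, however, has a genuine gap. You choose representatives of the hyperimaginaries in $\dom(f)$ and in $f(\dom(f))$ independently, define $f'$ on these representatives, and assert that $f'$ is an $\L$-immersion ``applied to the relations $R_\chi$''. But $\MM^\E\models R_\chi(a,[b])$ says only that \emph{some} representative $b^*$ of $[b]$ satisfies $\chi(a,b^*)$; preservation of $R_\chi$ under $f$ therefore tells you that the witnessing representatives exist on both sides, not that your \emph{chosen} representatives $b$ and $b'$ satisfy $\chi(a,b)\Leftrightarrow\chi(f(a),b')$. Distinct representatives of the same class can have different $\L$-types, so an arbitrary pairing of representatives need not be an $\L$-immersion at all. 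What is actually needed is exactly \thref{lem:type-hyperimaginaries-determined-by-type-reals}: given $\tp(a[b])=\tp(a'[b'])$ one can find a \emph{single compatible} representative $b''$ of $[b']$ with $\tp_\L(ab)=\tp_\L(a'b'')$. Its proof is a finite-satisfiability-plus-saturation argument, not a formula-by-formula translation, and your reasoning, taken at face value, would make that lemma vacuous. The paper invokes this lemma precisely here, then extends the resulting real automorphism via \thref{lem:extend-automorphism}; your argument needs the same ingredient inserted before it is complete.
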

\begin{proof}
We prove that $\MM^\E$ is e.c.\ and is just as saturated and homogeneous as $\MM$. So  let $\kappa$ be such that $\MM$ is $\kappa$-saturated and $\kappa$-homogeneous. Note that this means that $\kappa$ is definitely bigger than the length of any tuple representing a hyperimaginary.

\underline{Existentially closed.} We will use (iii) from \thref{def:ec-model}. Suppose that $\MM^\E \not \models \phi(a, [b])$. Then $\MM \not \models \Sigma_\phi(a, b)$, where $\Sigma_\phi$ is from \thref{lem:translation-lemma}. So there is $\psi(x, y_r) \in \Sigma_\phi(x, y_r)$ such that $\MM \not \models \psi(a, b)$. Because $\MM$ is e.c.\ we find $\chi(x, y_r)$ with $T \models \neg \exists x y_r (\psi(x, y_r) \wedge \chi(x, y_r))$ and $\MM \models \chi(a, b)$. We thus have $\MM^\E \models R_\chi(a, [b])$. We will conclude by proving that $\MM^\E \models \neg \exists xy(\phi(x, y) \wedge R_\chi(x, y))$. Suppose for a contradiction that there are $a'$ and $b'$ such that $\MM^\E \models \phi(a', [b']) \wedge R_\chi(a', [b'])$. Then there is $b''$ with $[b'] = [b'']$ and $\MM \models \chi(a', b'')$. So $\MM^\E \models \phi(a', [b''])$ and thus $\MM \models \Sigma_\phi(a', b'')$. We then get that $\MM \models \psi(a', b'') \wedge \chi(a', b'')$, which cannot happen.

\underline{Saturation.} Let $\Gamma(x, y, c, [d])$ be a finitely satisfiable partial $\L_\E$-type with $|c[d]| < \kappa$. Let $\Sigma_\Gamma(x, y, c, d)$ be the set of $\L$-formulas from \thref{lem:translation-lemma}.  By the construction there we have
\[
\Sigma_\Gamma(x, y, c, d) = \bigcup_{\phi \in \Gamma} \Sigma_\phi(x, y, c, d),
\]
where $\Sigma_\phi$ is as in \thref{lem:translation-lemma-formula}. So finite satisfiability of $\Gamma(x, y, c, [d])$ implies finite satisfiability of $\Sigma_\Gamma(x, y, c, d)$. We thus find $a, b \in \MM$ with $\MM \models \Sigma_\Gamma(a, b, c, d)$ and hence $\MM^\E \models \Gamma(a, [b], c, [d])$.

\underline{Homogeneity.} If $\tp(a[b]) = \tp(a'[b'])$, then by \thref{lem:type-hyperimaginaries-determined-by-type-reals} there is $b''$ such that $[b''] = [b']$ and $\tp(ab) = \tp(a'b'')$. Let $f: \MM \to \MM$ be an automorphism with $f(ab) = a'b''$. Then by \thref{lem:extend-automorphism} we find $f^\E: \MM^\E \to \MM^\E$ with $f^\E(a[b]) = f(a)[f(b)] = a'[b''] = a'[b']$, as required.
\end{proof}
\begin{lemma}
\thlabel{lem:indiscernible-sequences-real-representation}
A sequence $(a_i [b_i])_{i \in I}$ is indiscernible if and only if there are representatives $b_i'$ of $[b_i]$ such that $(a_i b_i')_{i \in I}$ is indiscernible.
\end{lemma}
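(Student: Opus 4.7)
The backward direction is immediate from \thref{lem:translation-lemma-formula}: given representatives $b_i'$ with $[b_i']=[b_i]$ and $(a_ib_i')_{i\in I}$ $\L$-indiscernible in $\MM$, the truth of any $\L_\E$-formula $\phi(a_{\bar i},[b_{\bar i}])$ in $\MM^\E$ is equivalent to the truth in $\MM$ of the set of $\L$-formulas $\Sigma_\phi(a_{\bar i},b_{\bar i}')$, which depends only on the order type of $\bar i$ by $\L$-indiscernibility.

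For the forward direction I will first produce an $\L$-indiscernible ``template'' sequence and then pull it back to order type $I$ by positive compactness. Pick arbitrary representatives $b_i$ of $[b_i]$. Using the saturation of $\MM^\E$ (\thref{thm:hyperimaginaries-monster-preserved}), prolong $(a_i[b_i])_{i\in I}$ to an $\L_\E$-indiscernible $(a_i^*[b_i^*])_{i<\lambda}$ of the same $\L_\E$-EM-type (this is just realisability of the relevant partial $\L_\E$-type, and by maximality of positive types realising this EM-type gives indiscernibility) for $\lambda$ as large as needed in \thref{lem:indiscernible-sequence-based-on-long-sequence}, and pick representatives $b_i^*$. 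The cited lemma applied in $\MM$ produces an $\L$-indiscernible sequence $(\tilde a_i,\tilde b_i)_{i<\omega}$ based on $(a_i^*,b_i^*)_{i<\lambda}$; combining \thref{lem:translation-lemma} with the $\L_\E$-indiscernibility of $(a_i^*[b_i^*])$ gives
\[
\tp_{\L_\E}(\tilde a_1,[\tilde b_1],\ldots,\tilde a_n,[\tilde b_n]) \;=\; \tp_{\L_\E}(a_{i_1},[b_{i_1}],\ldots,a_{i_n},[b_{i_n}])
\]
for every $n$ and every $i_1<\ldots<i_n\in I$. Setting $p_n:=\tp_\L(\tilde a_1,\tilde b_1,\ldots,\tilde a_n,\tilde b_n)$, consider the partial $\L$-type over $\{a_i,b_i:i\in I\}$
\[
\Sigma((y_i)_{i\in I}) \;=\; \bigcup_{i\in I} E(y_i,b_i) \;\cup\; \bigcup_{n,\,i_1<\ldots<i_n} \{\phi(a_{i_1},y_{i_1},\ldots,a_{i_n},y_{i_n}) : \phi\in p_n\},
\]
where $E$ is the equivalence relation for each hyperimaginary sort. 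Finite satisfiability follows from a tupled version of \thref{lem:type-hyperimaginaries-determined-by-type-reals}: given finitely many indices $i_1<\ldots<i_m\in I$, the displayed $\L_\E$-type equality supplies $b_{i_k}^\circ\in\MM$ with $[b_{i_k}^\circ]=[b_{i_k}]$ and $(a_{i_1}b_{i_1}^\circ\ldots a_{i_m}b_{i_m}^\circ)\equiv_\L(\tilde a_1\tilde b_1\ldots\tilde a_m\tilde b_m)$, simultaneously witnessing every condition of $\Sigma$ that mentions only those indices. By positive compactness (\thref{fact:positive-logic-facts}(iv)) and saturation of $\MM$, realise $\Sigma$ by $(b_i'')_{i\in I}$ in $\MM$; then $[b_i'']=[b_i]$ from the first clause, and maximality of the positive types $p_n$ upgrades the containment from the second clause to equality of $\L$-types, so $(a_i,b_i'')_{i\in I}$ is $\L$-indiscernible.

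The main subtlety is that $\L$-indiscernibility is not directly type-definable in general positive logic (without thickness), so one cannot simply compactify the statement ``$(x_i,y_i)_{i\in I}$ is $\L$-indiscernible with $[y_i]=[b_i]$''. The Ramsey-style extraction of \thref{lem:indiscernible-sequence-based-on-long-sequence} sidesteps this by producing concrete positive-existential $\L$-formulas (the types $p_n$) to serve as the ``target'' in the compactness argument, and the maximality of positive existential types turns ``realising every formula in $p_n$'' into ``having exactly $\L$-type $p_n$'', which yields genuine $\L$-indiscernibility.
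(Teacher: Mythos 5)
Your proof is correct and takes essentially the same route as the paper: both directions use \thref{lem:translation-lemma-formula} and \thref{lem:type-hyperimaginaries-determined-by-type-reals}, and the forward direction extracts an $\L$-indiscernible ``template'' via \thref{lem:indiscernible-sequence-based-on-long-sequence} and then transports it back to index set $I$ by positive compactness, using the tupled version of \thref{lem:type-hyperimaginaries-determined-by-type-reals} for finite satisfiability and maximality of positive types to recover genuine $\L$-indiscernibility. Your explicit $\L_\E$-prolongation step is a harmless rephrasing of the paper's ``by compactness we may assume $I$ to be long enough'', and your closing remark about the non-type-definability of indiscernibility correctly identifies why the template/compactness detour is needed.
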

\begin{proof}
We first prove the left to right direction. By compactness we may assume $I$ to be long enough. We can find indiscernible $(a_i^* b_i^*)_{i \in I}$ based on $(a_i b_i)_{i \in I}$. Let $p((x_i y_{i, r})_{i \in I}) = \tp((a_i^* b_i^*)_{i \in I})$ and define the following type
\[
\Gamma = p((a_i y_{i, r})_{i \in I}) \cup \{ \Xi(y_{i, r}, [b_i]) : i \in I \}.
\]
Then a realisation of $\Gamma$ is precisely what we need, so we prove that $\Gamma$ is finitely satisfiable. That is, for $i_1 < \ldots < i_n \in I$ we will produce a realisation of $\Gamma$ restricted to the variables $y_{i_1, r} \ldots y_{i_n, r}$ and parameters $a_{i_1}, \ldots, a_{i_n}, [b_{i_1}], \ldots, [b_{i_n}]$. By construction there are $j_1 < \ldots < j_n \in I$ such that $\tp(a_{i_1}^* b_{i_1}^* \ldots a_{i_n}^* b_{i_n}^*) = \tp(a_{j_1} b_{j_1} \ldots a_{j_n} b_{j_n})$. As $\tp(a_{i_1} [b_{i_1}] \ldots a_{i_n} [b_{i_n}]) = \tp(a_{j_1} [b_{j_1}] \ldots a_{j_n} [b_{j_n}])$, by \thref{lem:type-hyperimaginaries-determined-by-type-reals} we can find $b_{i_1}' \ldots b_{i_n}'$  with $\tp(a_{i_1} b_{i_1}' \ldots a_{i_n} b_{i_n}') = \tp(a_{j_1} b_{j_1} \ldots a_{j_n} b_{j_n})$ while also $[b_{i_k}'] = [b_{i_k}]$ for all $1 \leq k \leq n$. So $ b_{i_1}' \ldots  b_{i_n}'$ is the desired realisation of $\Gamma$ restricted to $ y_{i_1, r} \ldots  y_{i_n, r}$ and $a_{i_1}, \ldots, a_{i_n}, [b_{i_1}], \ldots, [b_{i_n}]$.

For the right to left direction we note that for any $i_1 < \ldots < i_n \in I$ and $j_1 < \ldots < j_n \in I$ we have
\[
\Sigma_{\tp(a_{i_1} [b_{i_1}] \ldots a_{i_n} [b_{i_n}])} \subseteq
\tp(a_{i_1} b_{i_1}' \ldots a_{i_n} b_{i_n}') =
\tp(a_{j_1} b_{j_1}' \ldots a_{j_n} b_{j_n}').
\]
So $\tp(a_{i_1} [b_{i_1}] \ldots a_{i_n} [b_{i_n}]) \subseteq \tp(a_{j_1} [b_{j_1}] \ldots a_{j_n} [b_{j_n}])$, and the claim follows by maximality of types.
\end{proof}
\begin{theorem}
\thlabel{thm:hyperimaginaries-preserving-hausdorff-thickness}
The following properties of $T$ are preserved when adding hyperimaginaries:
\begin{itemize}
\item Hausdorff,
\item semi-Hausdorff,
\item thick.
\end{itemize}
That is, if $T$ has the property then $T^\E$ has it as well.
\end{theorem}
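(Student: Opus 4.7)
My plan is to handle each of the three preservation results by a common translation scheme: render the relevant partial $\L$-type for $T$ as a partial $\L_\E$-type by existentially quantifying over real representatives, with those representatives controlled by finite fragments of the partial $\L_\E$-type $\Xi(y_r, y)$ supplied by \thref{lem:hyperimaginary-projection-type}. Saturation of $\MM^\E$ (\thref{thm:hyperimaginaries-monster-preserved}) provides the compactness needed to reconstitute genuine representatives from these finite approximations.

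For thickness, starting from the type-definition $\Theta((x_i, y_{i,r})_{i<\omega})$ of indiscernibility in $T$, I would define the partial $\L_\E$-type $\Theta'((x_i, y_i)_{i<\omega})$ consisting of the formulas
\[
\exists (y_{i,r}')_{i \in F} \Big( \theta((x_i, y_{i,r}')_{i \in F}) \wedge \bigwedge_{i \in F}\bigwedge_{\xi \in \Xi_0}\xi(y_{i,r}', y_i)\Big),
\]
ranging over finite $F \subseteq \omega$, formulas $\theta \in \Theta$ mentioning only $F$, and finite $\Xi_0 \subseteq \Xi$. The forward direction uses \thref{lem:indiscernible-sequences-real-representation} by taking the real representatives $b_i'$ as witnesses for the $y_{i,r}'$. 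Conversely, if $(a_i, [b_i])_{i<\omega}$ satisfies $\Theta'$, then the partial $\L_\E$-type $\Theta((a_i, y_{i,r}')_{i<\omega}) \cup \bigcup_{i<\omega} \Xi(y_{i,r}', [b_i])$ is finitely satisfiable in $\MM^\E$ by definition of $\Theta'$, so saturation yields real $b_i' \in \MM$ with $[b_i'] = [b_i]$ and $(a_i, b_i')_{i<\omega}$ $\L$-indiscernible; \thref{lem:indiscernible-sequences-real-representation} completes the argument. The semi-Hausdorff case is entirely parallel: replace $\Theta$ by the equality-of-types partial type $\Omega(x, y_r, x', y_r')$ of $T$, use \thref{lem:type-hyperimaginaries-determined-by-type-reals} in place of \thref{lem:indiscernible-sequences-real-representation}, and existentially quantify over two real representatives (one per hyperimaginary variable) each controlled by finite fragments of $\Xi$.

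The Hausdorff case is where I anticipate the main obstacle, because one must separate types by individual formulas rather than by whole partial types. The plan is to exploit the natural continuous surjection $\pi$ from the $\L$-type space of real tuples $(x, y_r)$ onto the $\L_\E$-type space of tuples $(x, y)$, defined by $\tp_\L(a, b) \mapsto \tp_{\L_\E}(a, [b])$. Given distinct $p, q$ realised at $(a, [b])$ and $(a', [b'])$, the fibres $\pi^{-1}(p)$ and $\pi^{-1}(q)$ are disjoint by the biconditional form of \thref{lem:type-hyperimaginaries-determined-by-type-reals}. Combining $T$-Hausdorff with positive compactness, and using the semi-Hausdorff property of $T$ (implied by Hausdorff) to type-define $\L$-equality of types, I would aim to extract a single pair of $\L$-formulas $\chi(x, y_r), \psi(x, y_r)$ with $\models_\L \forall xy_r(\chi \vee \psi)$ such that no representative $b^* \in [b]$ satisfies $\chi(a, b^*)$ and no representative $b'' \in [b']$ satisfies $\psi(a', b'')$. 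The corresponding atomic relation symbols $R_\chi(x, y), R_\psi(x, y) \in \L_\E$ then witness Hausdorffness of $T^\E$: the tautology $\models_{\L_\E} \forall xy(R_\chi \vee R_\psi)$ follows from the $\L$-version by evaluating at any representative, while $R_\chi \notin p$ and $R_\psi \notin q$ because any witnessing representative would land in $\pi^{-1}(p)$ or $\pi^{-1}(q)$ respectively, contradicting the choice of $\chi, \psi$. The technical heart of the Hausdorff case is thus the uniform extraction of this separating pair, which should amount to a normality-plus-compactness argument in the compact Hausdorff $\L$-type space, bolstered by positive compactness applied to the inconsistency of $\Omega((a, y_r), (a', y_r'')) \cup E(y_r, b) \cup E(y_r'', b')$.
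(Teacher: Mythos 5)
Your thick and semi-Hausdorff arguments coincide with the paper's: you push the $\L$-type-defining partial type through a $\Xi$-controlled existential quantification over real representatives, with \thref{lem:indiscernible-sequences-real-representation} (resp.\ \thref{lem:type-hyperimaginaries-determined-by-type-reals}) giving one direction and saturation of $\MM^\E$ the other. Spelling out the finite fragments explicitly, as you do, is exactly the right reading of the paper's shorthand ``$\exists y_r$ applied to a partial type.''

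For Hausdorff you have correctly identified the strategy---a two-stage compactness (normality) argument in the compact Hausdorff $\L$-type space $S_\L(x,y_r)$---but you have not carried it out, and the side-track through $\Omega$ is unnecessary and liable to distract. The fibres $\pi^{-1}(p)$ and $\pi^{-1}(q)$ are closed simply because $\pi$ is continuous (check that $\pi^{-1}([\phi]) = [\Sigma_\phi]$ via \thref{lem:translation-lemma-formula}) and singletons in $S_{\L_\E}$ are closed; you do not need to type-define them via $\Omega(\cdot,\cdot)\cup E(\cdot,b)\cup E(\cdot,b')$, nor to invoke semi-Hausdorffness. The substance is then the two compactness passes: fixing $s$ in one fibre, apply $T$-Hausdorffness pointwise to each $r$ in the other fibre to get $(\chi_{r},\theta_{r})$ with $\chi_r\notin r$, $\theta_r\notin s$, $\models\forall(\chi_r\vee\theta_r)$; take a finite subcover of the (compact) fibre $\pi^{-1}(p)$ and check that conjunctions/disjunctions preserve the tautology $\forall(\alpha_s\vee\beta_s)$; then repeat over $s\in\pi^{-1}(q)$. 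This is exactly what the paper does, working with the slightly larger disjoint closed sets $[\Sigma_\phi]$, $\,[\Sigma_\psi]$ (where $\psi$ is a ``negation'' of $\phi$ extracted from e.c.-ness of $\MM^\E$). It is worth comparing carefully: the bookkeeping of which Boolean combination of the pointwise formulas preserves both the disjointness of $[\alpha]^c,[\beta]^c$ and the membership conditions $\alpha\notin r$, $\beta\notin s$ across the two compactness passes is exactly where an informal ``should amount to'' is most likely to slip.
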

\begin{proof}
\underline{Hausdorff.}
Let $a[b] \not \equiv a'[b']$. Then there is $\phi \in \tp(a[b])$ such that $\phi \not \in \tp(a'[b'])$. So there is a negation $\psi \in \tp(a'[b'])$ of $\phi$. By \thref{lem:translation-lemma} we have $\Sigma_\phi$ and $\Sigma_\psi$ are consistent while $\Sigma_\phi \cup \Sigma_\psi$ is inconsistent.

Fix some type $q$ of $T$ such that $\Sigma_\psi \subseteq q$. We will produce formulas $\alpha_q$ and $\beta_q$ such that $\Sigma_\phi \cup \{\alpha_q\}$ is inconsistent, $\beta_q \not \in q$ and $T \models \forall xy_r(\alpha_q(x, y_r) \vee \beta_q(x, y_r))$. Let $p \supseteq \Sigma_\phi$ be a type of $T$. Then because $T$ is Hausdorff there are formulas $\chi_p$ and $\theta_p$ such that $\chi_p \not \in p$ and $\theta_p \not \in q$, while $T \models \forall x y_r(\chi_p(x, y_r) \vee \theta_p(x, y_r))$. Then $\Sigma_\phi \cup \{\chi_p : p \supseteq \Sigma_\phi\}$ is inconsistent, so there are $p_1, \ldots, p_n$ such that $\Sigma_\phi \cup \{\chi_{p_1} \wedge \ldots \wedge \chi_{p_n}\}$ is inconsistent. We can now take $\alpha_q$ to be $\chi_{p_1} \wedge \ldots \wedge \chi_{p_n}$ and $\beta_q$ to be $\theta_{p_1} \vee \ldots \vee \theta_{p_n}$.

Now $\Sigma_\psi \cup \{ \beta_q : q \supseteq \Sigma_\psi \}$ is inconsistent. So there are $q_1, \ldots, q_k$ such that $\Sigma_\psi \cup \{ \beta_{q_1} \wedge \ldots \wedge \beta_{q_k} \}$ is inconsistent. We set $\beta = \beta_{q_1} \wedge \ldots \wedge \beta_{q_k}$ and $\alpha = \alpha_{q_1} \vee \ldots \vee \alpha_{q_n}$. We then also have that $\Sigma_\phi \cup \{\alpha\}$ is inconsistent and $T \models \forall xy_r (\alpha(x, y_r) \vee \beta(x, y_r))$.

Now consider the formulas $R_\alpha(x, y)$ and $R_\beta(x, y)$. By construction we have $T^\E \models \forall xy(R_\alpha(x, y) \vee R_\beta(x, y))$. We claim that $R_\alpha \not \in \tp(a[b])$. Suppose for a contradiction that $\MM^\E \models R_\alpha(a, [b])$. Then there is $b^*$ with $[b^*] = [b]$ such that $\MM \models \alpha(a, b^*)$. Since $\phi \in \tp(a[b]) = \tp(a[b^*])$ we also have $\MM \models \Sigma_\phi(a, b^*)$, contradicting that $\Sigma_\phi \cup \{\alpha\}$ is inconsistent. So indeed $R_\alpha \not \in \tp(a[b])$. Analogously we get that  $R_\beta \not \in \tp(a'[b'])$, which concludes the proof that $T^\E$ is Hausdorff.

\underline{Semi-Hausdorff.} Suppose that equality of $\L$-types is type-definable by a partial $\L$-type $\Omega$. Then for a tuple $x$ of real variables and a tuple $y$ of hyperimaginary variables, we consider the partial $\L_\E$-type $\Omega^\E(xy, x'y')$ that expresses the following:
\[
\exists y_r y_r' (\Xi(y_r, y) \wedge \Xi(y_r', y') \wedge \Omega(xy_r, x'y_r')).
\]
We claim that $\Omega^\E$ expresses equality of $\L_\E$-types.

If $\MM^\E \models \Omega^\E(a[b], a'[b'])$ then we find $c, c'$ such that $\MM^\E \models \Xi(c, [b]) \wedge \Xi(c', [b']) \wedge \Omega(ac, a'c')$. By \thref{lem:hyperimaginary-projection-type} we have that $[c] = [b]$ and $[c'] = [b']$. Hence $\phi \in \tp(a[b]) = \tp(a[c])$ iff $\Sigma_\phi \subseteq \tp(ac) = \tp(a'c')$ iff $\phi \in \tp(a'[c']) = \tp(a'[b'])$. So $\tp(a[b]) = \tp(a'[b'])$, as required.

Conversely, if $\tp(a[b]) = \tp(a'[b'])$ then by \thref{lem:type-hyperimaginaries-determined-by-type-reals} we find $b''$ such that $[b''] = [b']$ and $\tp(ab) = \tp(a'b'')$. Hence $\models \Xi(b, [b]) \wedge \Xi(b'', [b']) \wedge \Omega(ab, a'b'')$.

\underline{Thick.} Let $\Theta$ express indiscernibility of a sequence of real tuples, then
\[
\exists (y_{i,r})_{i < \omega} \left( \Theta((x_i y_{i,r})_{i < \omega}) \wedge \bigwedge_{i < \omega} \Xi(y_{i,r}, y_i) \right)
\]
expresses indiscernibility of $(x_i y_i)_{i < \omega}$ in $T^\E$. Here we use that a sequence in $\MM^\E$ is indiscernible if and only if there is an indiscernible sequence of real representatives, see \thref{lem:indiscernible-sequences-real-representation}.
\end{proof}
\begin{theorem}
\thlabel{thm:hyperimaginaries-nsop1}
The theory $T$ is NSOP$_1$ if and only if $T^\E$ is NSOP$_1$.
\end{theorem}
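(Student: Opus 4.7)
The plan is to handle the two directions separately, with the backward direction being essentially immediate and the forward direction reducing to a compactness extraction on top of the translation machinery of Subsection~\ref{subsec:hyperimaginaries}.

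\textbf{Backward direction.} If $T$ has SOP$_1$, witnessed by $\L$-formulas $\phi, \psi$ and a tree $(a_\eta)_{\eta \in 2^{<\omega}} \subseteq \MM$, then the same tree and formulas witness SOP$_1$ in $T^\E$: reading $\phi$ and $\psi$ as $\L_\E$-formulas on the real sort, each of the three clauses of \thref{def:sop1} concerns an $\L$-formula evaluated on real tuples, and satisfaction of such formulas agrees between $\MM$ and $\MM^\E$.

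\textbf{Forward direction.} Suppose $T^\E$ has SOP$_1$, witnessed by $\L_\E$-formulas $\phi(x, y)$, $\psi(y_1, y_2)$ and a tree $(a_\eta) \subseteq \MM^\E$ of (possibly hyperimaginary) tuples. I would first choose a tree $(b_\eta) \subseteq \MM$ of real representatives of $(a_\eta)$, and then apply \thref{lem:translation-lemma-formula}---or rather a mild variant allowing hyperimaginary components of $x$, obtained by lumping them onto the hyperimaginary side---to replace $\phi$ and $\psi$ by partial $\L$-types $\Sigma_\phi(x, y_r)$ and $\Sigma_\psi(y_{1,r}, y_{2,r})$ in real variables. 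The three SOP$_1$ conditions for $(a_\eta)$ then translate respectively to: (a) for every $\sigma \in 2^\omega$, the set $\bigcup_{n} \Sigma_\phi(x, b_{\sigma|n})$ is $T$-consistent, since any realiser in $\MM^\E$ of $\{\phi(x, a_{\sigma|n})\}_n$ yields a real representative realising this partial $\L$-type in $\MM$; (b) for every $\eta, \nu$ with $\eta^\frown 0 \preceq \nu$, $\MM \models \Sigma_\psi(b_{\eta^\frown 1}, b_\nu)$; and (c) the partial $\L$-type $\Sigma_\phi(x, y_1) \cup \Sigma_\phi(x, y_2) \cup \Sigma_\psi(y_1, y_2)$ is $T$-inconsistent.

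The remaining step is positive-logic compactness applied to (c): there must be finite $\Sigma_\phi^0 \subseteq \Sigma_\phi$ and $\Sigma_\psi^0 \subseteq \Sigma_\psi$ whose schematic union is already $T$-inconsistent. Setting $\phi'(x, y) := \bigwedge \Sigma_\phi^0(x, y)$ and $\psi'(y_1, y_2) := \bigwedge \Sigma_\psi^0(y_1, y_2)$---both positive existential $\L$-formulas---one verifies that (a), (b), and the compactness choice supply precisely the three clauses of \thref{def:sop1} for $T$, witnessed by $(b_\eta)$, $\phi'$ and $\psi'$. There is no conceptually deep obstacle: the argument is really bookkeeping on top of the translation lemmas. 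The only mildly delicate point is checking that the $x$-variable in $\phi$ may itself be of mixed (real plus hyperimaginary) sort without breaking the quantifier translation, and this is handled by the variant of \thref{lem:translation-lemma-formula} mentioned above, together with replacing any $\MM^\E$-realiser by its real representative.
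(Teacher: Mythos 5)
Your proof is correct and takes essentially the same route as the paper: translate $\phi$ and $\psi$ via Lemma~\ref{lem:translation-lemma-formula}, observe that the inconsistency condition on the translated types is $T$-inconsistent, and extract finite subsets by compactness to produce positive $\L$-formulas $\phi'$ and $\psi'$ that, together with real representatives of the tree, witness SOP$_1$ in $T$. The only cosmetic difference is bookkeeping: you invoke a ``variant'' of the translation lemma allowing hyperimaginary components in both variable slots, whereas the paper avoids this by explicitly writing the SOP$_1$ formula as $\phi(x,y;w,z)$ with $x,w$ real and $y,z$ hyperimaginary, so that Lemma~\ref{lem:translation-lemma-formula} applies verbatim with real tuple $(x,w)$ and hyperimaginary tuple $(y,z)$; both amount to the same regrouping of variables.
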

The technique in the proof of \thref{thm:hyperimaginaries-nsop1} can also be applied to other combinatorial properties, such as the order property, TP, TP$_2$, IP, etc. Of course, to do this, one first needs to write down a proper definition of these properties for positive logic, such as \thref{def:sop1} for SOP$_1$ or \cite[Definition 6.1]{haykazyan_existentially_2021} for TP$_2$.
\begin{proof}
One direction is trivial: if $T$ has a formula with SOP$_1$, then so has $T^\E$.

We prove the other direction: suppose that $T^\E$ has a formula with SOP$_1$, we will show that $T$ already has a formula with SOP$_1$. So let $\phi(x, y; w, z)$ be an $\L_\E$-formula with SOP$_1$. Here $x$ and $w$ are tuples of real variables, and $y$ and $z$ are tuples of hyperimaginary variables. Let $(a_\eta [b_\eta] : \eta \in 2^{<\omega})$ and $\psi(w_1, z_1; w_2, z_2)$ be witnesses of SOP$_1$. Let $\Sigma_\phi(x, y_r; w, z_r)$ and $\Sigma_\psi(w_1, z_{1,r}; w_2, z_{2,r})$ be as in \thref{lem:translation-lemma-formula}. Then
\[
\Sigma_\psi(w_1, z_{1,r}; w_2, z_{2,r}) \cup
\Sigma_\phi(x, y_r, w_1; z_{1,r}) \cup 
\Sigma_\phi(x, y_r, w_2; z_{2,r})
\]
is inconsistent. Hence there are finite $\phi' \in \Sigma_\phi$ and $\psi' \in \Sigma_\psi$ that are inconsistent with each other. That is
\begin{align}
\label{eq:inconsistency-witness-sop1}
T \models \neg \exists x y_r w_1 z_{1,r} w_2 z_{2,r} (\psi'(w_1, z_{1,r}, w_2, z_{2,r}) \wedge \phi'(x, y_r, w_1, z_{1,r}) \wedge \phi'(x, y_r, w_2, z_{2,r})).
\end{align}
As usual, any variables not actually appearing in the formulas should be ignored in the existential quantifier. We claim that $\phi'$ has SOP$_1$, which is witnessed by $(a_\eta b_\eta : \eta \in 2^{<\omega})$ and $\psi'$. We check the items in \thref{def:sop1}.
\begin{enumerate}[label=(\roman*)]
\item Let $\sigma \in 2^\omega$, then $\{\phi(x, y, a_{\sigma|_n}, [b_{\sigma|_n}]) : n < \omega\}$ is consistent. So there are $c$ and $[d]$ such that $\models \phi(c, [d], a_{\sigma|_n}, [b_{\sigma|_n}])$ for all $n < \omega$. That is, we have $\Sigma_\phi(c, d, a_{\sigma|_n}, b_{\sigma|_n})$ for all $n < \omega$. In particular $\{\phi'(x, y_r, a_{\sigma|_n}, b_{\sigma|_n}) : n < \omega\}$ is consistent.
\item By construction, see (\ref{eq:inconsistency-witness-sop1}).
\item Let $\eta, \nu \in \omega^{<\omega}$ such that $\eta^\frown 0 \preceq \nu$. Then $\models \psi(a_{\eta^\frown 1}, [b_{\eta^\frown 1}], a_\nu, [b_\nu])$, so $\models \Sigma_\psi(a_{\eta^\frown 1}, b_{\eta^\frown 1}, a_\nu, b_\nu)$ and in particular $\models \psi'(a_{\eta^\frown 1}, b_{\eta^\frown 1}, a_\nu, b_\nu)$.
\end{enumerate}
\end{proof}
\begin{definition}
\thlabel{def:existence-axiom}
We say that a theory satisfies the \emph{existence axiom for forking} if $\tp(a/B)$ does not fork over $B$ for any $a$ and $B$.
\end{definition}
\begin{theorem}
\thlabel{thm:hyperimaginaries-existence-axiom-for-forking}
The theory $T$ satisfies the existence axiom for forking if and only if $T^\E$ satisfies the existence axiom for forking.
\end{theorem}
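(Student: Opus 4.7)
The plan is to prove each direction separately. The forward direction (existence for $T^\E$ implies existence for $T$) is the easy one: since $\MM \subseteq \MM^\E$ via the real sort and \thref{lem:extend-automorphism} tells us every $B$-automorphism of $\MM$ extends to a $B$-automorphism of $\MM^\E$, any $B$-indiscernible real sequence in $T$ is still $B$-indiscernible in $T^\E$, and inconsistency of sets of $\L$-formulas (which are a fortiori $\L_\E$-formulas via the real sort) is preserved. So any forking witness $\Phi$ for $\tp_T(a/B)$ in $T$ is automatically a forking witness for $\tp_{T^\E}(a/B)$ in $T^\E$. Contrapositively, existence in $T^\E$ implies existence in $T$.

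For the backward direction, I would argue by contradiction. Suppose $T$ has existence but $\tp_{T^\E}(a_0[a_1]/B_0[B_1]) \vdash \bigvee \Phi$, with each $\phi(x,y,c_\phi,[d_\phi]) \in \Phi$ dividing over $B_0[B_1]$ in $T^\E$ (here $a_1, B_1, d_\phi$ are chosen real representatives). The plan is to promote this to a forking witness of $\tp_T(a_0 a_1/B_0 B_1)$ in $T$, contradicting existence in $T$. For each $\phi$, let $\Sigma_\phi(x, y_r, c_\phi, d_\phi, B_0, B_1)$ be the partial $\L$-type produced by \thref{lem:translation-lemma-formula}. Candidate forking witness: $\Phi' := \bigcup_{\phi \in \Phi} \Sigma_\phi(x, y_r, c_\phi, d_\phi, B_0, B_1)$. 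The disjunction condition $\tp_T(a_0 a_1/B_0 B_1) \vdash \bigvee \Phi'$ is the easier half: given $(a_0', a_1') \models \tp_T(a_0 a_1/B_0 B_1)$, applying \thref{lem:extend-automorphism} to the witnessing $T$-automorphism over $B_0 B_1$ produces a $T^\E$-automorphism over $B_0[B_1]$, so $(a_0', [a_1']) \equiv_{B_0[B_1]} (a_0, [a_1])$ in $T^\E$; hence some $\phi \in \Phi$ holds of $(a_0', [a_1'], c_\phi, [d_\phi])$, which via $\Sigma_\phi$ gives $\models \sigma(a_0', a_1', c_\phi, d_\phi, B_0, B_1)$ for every $\sigma \in \Sigma_\phi$.

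The work is in showing each $\sigma \in \Sigma_\phi$ divides over $B_0 B_1$ in $T$. Starting with a $B_0[B_1]$-indiscernible witness $(c_i, [d_i])_{i<\omega}$ for $\phi$-dividing in $T^\E$ with $(c_0, [d_0]) = (c_\phi, [d_\phi])$, I apply \thref{lem:indiscernible-sequences-real-representation} to the sequence $(c_i, [d_i], B_0, [B_1])_{i<\omega}$ (viewed as $\emptyset$-indiscernible with $B_0, [B_1]$ constant in each slot), obtaining real representatives $(d_i', B_1^{(i)})$ of $([d_i], [B_1])$ such that $(c_i, d_i', B_0, B_1^{(i)})_{i<\omega}$ is $\emptyset$-indiscernible in $T$, hence $B_0$-indiscernible. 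Compactness extends this to length $\lambda_{|T|+|B_0B_1|}$, and \thref{lem:indiscernible-sequence-based-on-long-sequence} yields a $B_0B_1$-indiscernible $(c_i'', d_i'', B_1^{(i)''})_{i<\omega}$ based on the long version. Since $E(B_1, \cdot)$ is type-definable over $B_1$, $B_0B_1$-indiscernibility forces $[B_1^{(i)''}] = [B_1]$ for all $i$; similarly $[d_i''] = [d_{j_i}]$ for some $j_i$. Transferring indiscernibility to $T^\E$ (via the real representatives, using \thref{lem:translation-lemma-formula}), $(c_i'', [d_i''])$ is $B_0[B_1]$-indiscernible in $T^\E$, and the basing gives $(c_0'', [d_0'']) \equiv_{B_0[B_1]} (c_\phi, [d_\phi])$; hence $\{\phi(x, y, c_i'', [d_i'']) : i<\omega\}$ is inconsistent in $T^\E$, which translates via $\Sigma_\phi$ to inconsistency of $\bigcup_i \Sigma_\phi(x, y_r, c_i'', d_i'', B_0, B_1)$ in $T$. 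By compactness in $T$ and $B_0B_1$-indiscernibility of $(c_i'', d_i'')$, a finite conjunction $\sigma \in \Sigma_\phi$ witnesses dividing of $\sigma(x, y_r, c_0'', d_0'', B_0, B_1)$ over $B_0 B_1$ in $T$.

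The main obstacle is then to align this with the parameters $(c_\phi, d_\phi)$ appearing in the definition of $\Phi'$: what the construction hands us is a dividing formula with parameters $(c_0'', d_0'')$, whereas the second half of the argument requires $\sigma(x, y_r, c_\phi, d_\phi, B_0, B_1)$ itself to divide. I would resolve this by arranging the construction to start at $(c_\phi, d_\phi)$, which amounts to adding the constraint $y_{0,r} = d_\phi$ to the partial type realized in the proof of \thref{lem:indiscernible-sequences-real-representation}; finite satisfiability of the enlarged type follows because $d_\phi$ is already a valid representative of $[d_0] = [d_\phi]$ and we have freedom in the based-on-indiscernible sequence to place $(c_\phi, d_\phi)$ at index $0$. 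Alternatively, one can enlarge $\Phi'$ to include $\sigma(x, y_r, c', d', B_0, B_1)$ for every $(c', d')$ with $(c', [d']) \equiv_{B_0[B_1]} (c_\phi, [d_\phi])$ and perform the construction separately for each such parameter tuple; the subtle point to verify in that approach is that dividing is witnessed with the starting tuple being literally $(c', d')$ rather than merely $B_0[B_1]$-conjugate to it, which is where representatives of hyperimaginaries with distinct $\L$-types over $B_0B_1$ cause the genuine technical work.
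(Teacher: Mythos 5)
Your forward direction matches the paper, and the backward direction follows the same translate-via-$\Sigma_\phi$ outline. The gap is at exactly the place you flag, the parameter-alignment problem, and neither of your two proposed fixes resolves it.

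Fix 1 fails. Adding $y_{0,r} = d_\phi$ to the type $\Gamma$ in the proof of \thref{lem:indiscernible-sequences-real-representation} destroys finite satisfiability: realizing that constraint together with $\Gamma$ would require $\tp(c_0\,d_\phi) = \tp(c_{j_1}\,d_{j_1}')$, where $d_{j_1}'$ is the representative furnished by \thref{lem:type-hyperimaginaries-determined-by-type-reals} inside that proof, and there is no reason for $d_\phi$ to have the $\L$-type that the lemma dictates. More fundamentally, the $T^\E$-indiscernible dividing witness $(c_j,[d_j])_j$ only has constant $T^\E$-type over $B_0[B_1]$; the real representatives $d_j$ can have wildly different $\L$-types, and once you pass through \thref{lem:indiscernible-sequence-based-on-long-sequence} the resulting $(c_0'', d_0'')$ is $B_0B_1$-conjugate to \emph{some} pair from the long sequence, with no control over which one. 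So you cannot ``place $(c_\phi, d_\phi)$ at index 0''. Fix 2 you yourself leave as the ``genuine technical work''.

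The paper resolves the alignment by normalizing representatives \emph{before} any basing happens. Given the $T^\E$-dividing witness $(c_j,[d_j])_j$ with $(c_0,[d_0]) = (c_\phi,[d_\phi])$, it applies \thref{lem:type-hyperimaginaries-determined-by-type-reals} (with $B_0$ and $[B_1]$ folded into the parameters) to re-choose each $d_j$, keeping $[d_j]$ fixed, so that every pair $(c_j, d_j)$ acquires the same $\L$-type as $(c_\phi, d_\phi)$. This leaves the $T^\E$-indiscernibility and the inconsistency of $\{\phi(x,y,c_j,[d_j]) : j\}$ untouched. After this normalization, any $B_0B_1$-indiscernible sequence based on $(c_j d_j)_j$ over $B_0B_1$ automatically starts $B_0B_1$-equivalent to $(c_\phi, d_\phi)$, so the partial type $\Sigma_\phi(x, y_r, c_\phi, d_\phi, B_0, B_1)$ genuinely divides over $B_0B_1$. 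One then puts a single formula $\psi_\phi$, implied by this partial type and dividing over $B_0B_1$, into the forking witness, rather than all of $\bigcup_\phi \Sigma_\phi$: the latter contains formulas far too weak to divide, so taking the entire union as $\Phi'$ does not satisfy the definition of forking.
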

\begin{proof}
One direction is immediate: anything witnessing forking in $T$ will also be in $T^\E$. We prove the other direction. So assume that there is $\tp(a[b]/C[D])$ that forks over $C[D]$. That is, it implies a (possibly infinite) disjunction $\bigvee_{i \in I} \phi_i(xy, e^i[f^i])$ with $\phi_i(xy, e^i[f^i])$ dividing over $C[D]$ for each $i \in I$. For each $i \in I$ we let $(e^i_j [f^i_j])_{j \in J}$ be a long enough $C[D]$-indiscernible sequence with $e^i_0 [f^i_0] = e^i [f^i]$ such that $\{ \phi_i(xy, e^i_j [f^i_j])  : j \in J \}$ is inconsistent. By \thref{lem:type-hyperimaginaries-determined-by-type-reals} we may assume that $e^i_jf^i_j\equiv e^if^i$ for every $j\in J$. 
We claim that $\Sigma_{\phi_i}(x, y_r, e^i, f^i)$ (see \thref{lem:translation-lemma-formula}) divides over $CD$ for all $i \in I$. Note that $\Sigma_{\phi_i}$ may contain parameters from $CD$.

To prove the claim let $k$ be such that $\{ \phi_i(xy, e^i_j [f^i_j])  : j \in J_0 \}$ is inconsistent for all $J_0 \subseteq J$ with $|J_0| = k$. So $\bigcup_{j \in J_0} \Sigma_{\phi_i}(x, y_r, e^i_j, f^i_j)$ is inconsistent for all such $J_0$. Let $(e_n f_n)_{n < \omega}$ be a $CD$-indiscernible sequence based on $(e^i_j f^i_j)_{j \in J}$ over $CD$. Then there are $j_1 < \ldots < j_k \in J$ such that $e_1 f_1 \ldots e_k f_k \equiv_{CD} e^i_{j_1} f^i_{j_1} \ldots e^i_{j_k} f^i_{j_k}$, so $\bigcup_{n < \omega} \Sigma_{\phi_i}(x, y_r, e_n, f_n)$ is inconsistent. We conclude that $\Sigma_{\phi_i}(x, y_r, e^i, f^i)$ divides over $CD$, as claimed.

By the claim there is $\psi_i(x, y_r, e^i, f^i)$ that is implied by $\Sigma_{\phi_i}(x, y_r, e^i, f^i)$ such that $\psi_i(x, y_r, e^i, f^i)$ divides over $CD$, for all $i \in I$. Let $p = \tp(a[b]C[D])$, then $\Sigma_p(x, y_r, C, D)$ implies $\bigvee_{i \in I} \Sigma_{\phi_i}(x, y_r, e^i, f^i)$. We thus have that $\Sigma_p(x, y_r, C, D)$ implies $\bigvee_{i \in I} \psi_i(x, y_r, e^i, f^i)$. So $\Sigma_p(x, y_r, C, D)$ forks over $CD$.
\end{proof}
In the discussion following Definition 4.1 in \cite{Kim20} it is stated that one may produce results for Kim-independence for the hyperimaginary extension $M^{\text{heq}}$ of a first-order structure $M$ parallel with those for first-order structures, provided that $M^{\text{heq}}$ satisfies the existence axiom for forking (which, by the above theorem, is equivalent to the assumption that $T$ satisfies this axiom). More generally, one can ask if our results on Kim-independence over models in thick NSOP$_1$ theories can be extended to arbitrary base sets assuming the existence axiom for forking:
\begin{question}\thlabel{q:sets}
Suppose $T$ is a thick positive NSOP$_1$ theory satisfying the existence axiom for forking. Can $\ind^K$ be extended to an automorphism-invariant ternary relation between arbitrary small sets which satisfies the properties listed in \thref{thm:kim-pillay-style}?
\end{question}

\bibliographystyle{alpha}
\bibliography{bibfile}


\end{document}